\newlist{thmlist}{enumerate}{1}
\setlist[thmlist]{label=(\roman{thmlisti}), ref=\thetheorem(\roman{thmlisti})}
\declaretheorem[
    name=Theorem,
    Refname={Theorem,Theorems},
    ]{theorem}
\declaretheorem[
    name=Theorem,
    Refname={Theorem, Theorems},
   numbered=no
   ]{theorem*} 
\declaretheorem[
    name=Lemma,
    Refname={Lemma,Lemmas},
   sibling=theorem
   ]{lemma}
\declaretheorem[
    name=Proposition,
    Refname={Proposition,Propositions},
	sibling=theorem
	]{proposition}
\newtheorem{remark}[theorem]{Remark}
\newtheorem*{proposition*}{Proposition}
\Crefname{theorem}{Theorem}{Theorems}
\Crefname{lemma}{Lemma}{Lemmas}
\let\save@mathaccent\mathaccent
\newcommand*\if@single[3]{%
  \setbox0\hbox{${\mathaccent"0362{#1}}^H$}%
  \setbox2\hbox{${\mathaccent"0362{\kern0pt#1}}^H$}%
  \ifdim\ht0=\ht2 #3\else #2\fi
  }
\newcommand*\rel@kern[1]{\kern#1\dimexpr\macc@kerna}
\newcommand*\widebar[1]{\@ifnextchar^{{\wide@bar{#1}{0}}}{\wide@bar{#1}{1}}}
\newcommand*\wide@bar[2]{\if@single{#1}{\wide@bar@{#1}{#2}{1}}{\wide@bar@{#1}{#2}{2}}}
\newcommand*\wide@bar@[3]{%
  \begingroup
  \def\mathaccent##1##2{%
    \let\mathaccent\save@mathaccent
    \if#32 \let\macc@nucleus\first@char \fi
    \setbox\z@\hbox{$\macc@style{\macc@nucleus}_{}$}%
    \setbox\tw@\hbox{$\macc@style{\macc@nucleus}{}_{}$}%
    \dimen@\wd\tw@
    \advance\dimen@-\wd\z@
    \divide\dimen@ 3
    \@tempdima\wd\tw@
    \advance\@tempdima-\scriptspace
    \divide\@tempdima 10
    \advance\dimen@-\@tempdima
    \ifdim\dimen@>\z@ \dimen@0pt\fi
    \rel@kern{0.6}\kern-\dimen@
    \if#31
      \overline{\rel@kern{-0.6}\kern\dimen@\macc@nucleus\rel@kern{0.4}\kern\dimen@}%
      \advance\dimen@0.4\dimexpr\macc@kerna
      \let\final@kern#2%
      \ifdim\dimen@<\z@ \let\final@kern1\fi
      \if\final@kern1 \kern-\dimen@\fi
    \else
      \overline{\rel@kern{-0.6}\kern\dimen@#1}%
    \fi
  }%
  \macc@depth\@ne
  \let\math@bgroup\@empty \let\math@egroup\macc@set@skewchar
  \mathsurround\z@ \frozen@everymath{\mathgroup\macc@group\relax}%
  \macc@set@skewchar\relax
  \let\mathaccentV\macc@nested@a
  \if#31
    \macc@nested@a\relax111{#1}%
  \else
    \def\gobble@till@marker##1\endmarker{}%
    \futurelet\first@char\gobble@till@marker#1\endmarker
    \ifcat\noexpand\first@char A\else
      \def\first@char{}%
    \fi
    \macc@nested@a\relax111{\first@char}%
  \fi
  \endgroup
}
\renewcommand\paragraph{\@startsection{paragraph}{4}{\z@}%
                                    {1ex \@plus1ex \@minus.2ex}%
                                    {-1em}%
                                    {\normalfont\normalsize\bfseries}}
 \title[Central Limit Theorems for Smooth Optimal Transport Maps]{Central Limit Theorems for  \\ Smooth Optimal Transport Maps}
\author{Tudor Manole$^1$}
\author{Sivaraman Balakrishnan$^{2,3}$}
\author{Jonathan Niles-Weed$^{4,5}$}
\author{Larry Wasserman$^{2,3}$}
\address{$^1$\normalfont{Statistics and Data Science Center, Massachusetts Institute of Technology}}
\address{$^2$\normalfont{Department of Statistics and Data Science, Carnegie Mellon University}}
\address{$^3$\normalfont{Machine Learning Department, Carnegie Mellon University}}
\address{$^4$\normalfont{Center for Data Science, New York University}}
\address{$^5$\normalfont{Courant Institute of Mathematical Sciences, New York University}}
\email{\texttt{tmanole@mit.edu, siva@stat.cmu.edu, jnw@cims.nyu.edu, larry@stat.cmu.edu}}
\begin{document}

\begin{abstract} 
One of the central objects in the theory of optimal transport is the Brenier
map: the unique monotone transformation which pushes forward an absolutely continuous
probability law
onto any other given law. A line of recent work has 
analyzed  $L^2$ convergence rates of plugin estimators of  Brenier maps, 
which are defined as the Brenier map between  density estimators
of the underlying distributions. In this work, we show that such 
estimators satisfy a pointwise central limit theorem when the underlying laws 
are supported on the flat torus of dimension $d \geq 3$. 
 We also derive a negative result, showing
that these estimators do not converge weakly in $L^2$ when the
dimension is sufficiently large. Our proofs hinge upon
a quantitative linearization of the Monge-Amp\`ere equation, which may be of independent interest. 
This result allows us to reduce our
problem to that of deriving limit laws for the solution of a uniformly elliptic partial
differential equation with a stochastic right-hand side, subject to periodic boundary conditions.
\end{abstract} 
\maketitle

\allowdisplaybreaks
 \vspace{-0.2in}

%
%

\section{Introduction} 
\label{sec:introduction} 

One of the central objects in the   theory of optimal transport is the 
{\it Brenier map}: the unique monotone mapping which pushes forward an absolutely continuous probability law
onto any other given law.  
 Brenier maps play  a prominent  role in a wide range of 
 recent scientific applications; for instance, they have been used in high energy physics 
 to model the flow of energy between 
collider events~\citep{komiske2019a}, in computational 
biology to model the development of embryonic cells~\citep{schiebinger2019},  and 
in biophysics to quantify the colocalization of cellular samples~\citep{tameling2021}.
Brenier maps are also used as part of increasingly-many statistical methodologies, and we refer 
to~\cite{kolouri2017}, \cite{panaretos2019a}, and \cite{chewi2024} for surveys.

In these various applications, the underlying probability laws are typically unknown, 
and the practitioner must estimate the Brenier map on the basis of i.i.d.~observations. 
A common class of estimators are {\it plugin estimators}, which are defined as Brenier maps between estimators
of the underlying laws.
These plugin estimators are stochastic Brenier maps  that depend on the underlying
sample points in a highly nonlinear fashion, which makes their large-sample  behaviour difficult to determine.
Nevertheless, quantifying their limiting distribution is of great importance
for performing statistical inference. 
The aim of this manuscript is to inititate the study of limit laws for 
Brenier maps between  absolutely continuous probability measures in general dimension.

We will focus our attention throughout on probability measures supported on
the $d$-dimensional flat torus $\bbT^d = \bbR^d/\bbZ^d$.
Let $P$ and $Q$ denote two absolutely continuous probability measures
with respect to the uniform law on $\bbT^d$, with respective densities~$p$ and~$q$. 
Building upon the seminal work of~\cite{knott1984} and~\cite{brenier1991}, 
it was shown by~\cite{cordero-erausquin1999} that there exists a $P$-almost everywhere
uniquely-defined mapping $T_0:\bbT^d \to \bbT^d$ which is the gradient
of a convex function $\varphi_0:\bbR^d \to \bbR$, and which pushes forward $P$ onto $Q$, in the sense that:
\begin{equation}
\label{eq:pushforward}
Q(B) = P\big(T_0^{-1}(B)\big), \quad \text{for all Borel sets } B \subseteq \bbT^d.
\end{equation}
When equation~\eqref{eq:pushforward} holds, we write ${T_0}_{\#}P=Q$. 
The mapping $T_0=\nabla\varphi_0$ is called an optimal transport map, or Brenier map, 
while any convex function $\varphi_0$ whose gradient is equal to $T_0$ almost everywhere
is referred to as a Brenier potential. 
We will always assume in this manuscript that $p,q,1/p,$ and $1/q$ are bounded functions over $\bbT^d$.
As we recall below, these conditions imply that $T_0$ admits a unique representative which is continuous
over $\bbT^d$~\citep{caffarelli1992a}, and we always assume that $T_0$ is taken to be  this representative. Likewise, the potential $\varphi_0$ 
admits a representative which is continuously differentiable and uniquely defined up to an additive constant, 
and we will always assume that $\varphi_0$ is taken to be this representative, with the additive constant 
chosen such that $\varphi_0$ has mean zero over the unit hypercube. With this convention, $T_0$ and $\varphi_0$ are uniquely defined, 
and can both be evaluated pointwise without any~ambiguity.
 
We will be primarily interested in the one-sample problem, in which only the measure  $Q$ is sampled from. 
As discussed in Remark~\ref{rem:two_sample}, our techniques can immediately be 
extended to the case where $P$ is also sampled from, however 
we do not carry out this extension in order to keep our exposition concise. 
Let $Q_n = (1/n)\sum_{i=1}^n \delta_{Y_i}$ be an  empirical measure comprised of  
i.i.d. random variables  $Y_1, \dots, Y_n$  with common law $Q$.  
Our main object of interest is the optimal transport map  
which pushes forward $p$ onto the {\it kernel density estimator} of $q$, which is defined  as
$$\hat q_n = Q_n \star K_{h_n} = \int_{\bbR^d} K_{h_n}(\cdot - y) dQ_n(y),$$
where $K\in \calC^\infty(\bbR^d)$ is a smooth mollifier which integrates to unity,  $K_{h_n}(\cdot) = K(\cdot/h_n)/h_n^d$
for some nonnegative sequence $h_n \downarrow 0$, and where the integration in the above display is to be interpreted
by extending $Q_n$ to a measure on $\bbR^d$ via $\bbZ^d$-periodicity. The kernel $K$ will typically be permitted to take on negative values, 
thus $\hat q_n$ does not necessarily define a probability density. When it does, 
we define $\hat Q_n$ to be the probability law with density $\hat q_n$. 
Furthermore, over the probabilistic event that $\hat q_n$ and $1/\hat q_n$ are nonnegative bounded functions on $\bbT^d$, 
we define $\hat T_n$ to be the unique continuous  optimal
transport map pushing $P$ forward onto $\hat Q_n$, and $\hat\varphi_n$  to be  
the unique continuously differentiable Brenier potential, admitting mean zero over the unit hypercube,
such that $\hat T_n = \nabla\hat\varphi_n$. 
Over the complement of this event, 
$\hat T_n$ can be defined as any continuous vector field from $\bbT^d$ into itself, without changing any of our conclusions; for instance, 
one may take $\hat T_n = \mathrm{Id}$.   

When $K$ is taken to be a Gaussian kernel, the density estimator $\hat q_n$ is simply  the regularization
of the empirical measure $Q_n$ with respect to the heat kernel at time $h_n^2$. 
This form of regularization has received a great deal of recent interest in the optimal transport literature,
ever since it was used by~\cite{ambrosio2019b}, 
building upon a conjecture of~\cite{caracciolo2014}, to derive exact asymptotics for the quadratic  optimal matching problem. 
Unlike these works, however, our aim here is not to treat $\hat T_n$ as an approximation of the Brenier map $T_n$ pushing
$P$ forward onto $Q_n$. Indeed,  our results will typically require $h_n$ to vanish 
at too slow of a rate for such an approximation to be meaningful. 
We instead view~$\hat T_n$ as the object of interest,  motivated by the fact that it provides a better approximation
of $T_0$ than the empirical map $T_n$, when the underlying densities are smooth.

The estimator $\hat T_n$ has appeared in the past works of~\cite{gunsilius2022}, \cite{deb2021a}, and \cite{manole2021}, where the 
main goal was to derive its expected $L^2(\bbT^d)$ convergence rate. In particular, 
the latter of these references shows that if $P$ and $Q$ admit densities  $p,q$
which lie in the space $\calC_+^{s}(\bbT^d)$ of positive and $s$-H\"older smooth functions over~$\bbT^d$, for some $s > 0$, then
\begin{equation} \label{eq:map_rate_past}
\bbE \|\hat T_n - T_0\|_{L^2(\bbT^d)}^2 \lesssim\begin{cases}
1/n, & d = 1 \\
\log n / n, & d = 2 \\
n^{-\frac{2(s+1)}{2s+d}}, & d \geq 3,
\end{cases}
\end{equation}
assuming that $h_n \asymp h_n^* = n^{-1/(d+2s)}$. 
Furthermore, this convergence rate cannot be improved in full generality~\citep{hutter2021},
except potentially by a logarithmic factor when $d=2$. 
The above result is reminiscent of the corresponding convergence rate of the kernel density estimator itself: 
under the same conditions as above, and again taking $h_n\asymp h_n^*$, it holds that
\begin{equation} \label{eq:density_estimation_rate}
\bbE \|\hat q_n-q\|_{L^2(\bbT^d)}^2 \lesssim n^{-\frac{2s}{2s+d}},\quad \text{for any fixed }d \geq 1.
\end{equation}
The convergence rates~\eqref{eq:density_estimation_rate} and~\eqref{eq:map_rate_past}
share the same qualitative behaviour when the smoothness parameter $s$ grows to infinity, in which
case both rates formally
approach the parametric rate $1/n$, ignoring logarithmic factors. On the other hand, these two convergence rates differ significantly 
when $s$ is formally taken to vanish. In this case, the density estimation rate becomes slower than any polynomial rate, 
whereas the optimal transport map estimation rate approaches 
the familiar convergence rate of the empirical measure under the Wasserstein distance for positive and bounded densities
over the torus~\citep{ajtai1984,divol2021a,bobkov_akt}. More generally, the density estimation rate is always
strictly slower than the optimal transport map estimation rate, and, as discussed by~\cite{hutter2021}, 
this discrepancy can be anticipated from the fact that optimal transport maps are typically
of one degree of smoothness higher than the corresponding densities.  
Furthermore, it was shown by~\cite{manole2021} that
\begin{equation}\label{eq:stability_bound_past}
\|\hat T_n - T_0\|_{L^2(\bbT^d)}^2 \asymp \|\hat q_n - q\|_{H^{-1}(\bbT^d)}^2,
\end{equation}
which shows that equation~\eqref{eq:map_rate_past} can be interpreted 
as the convergence rate for the kernel density estimator under a negative Sobolev norm of first order. 

These results provide an essentially sharp characterization of the $L^2$ convergence rate of~$\hat T_n$, however they
do not offer any insight into its pointwise or uniform behaviour.  
The main result of this manuscript is to derive the pointwise limiting distribution
of $\hat T_n$, in the regime $d \geq 3$. 
\begin{theorem*}[Informal]
Let $s > 2$, $d \geq 3$, and assume $p,q \in \calC_+^s(\bbT^d)$. Then, 
there exists a sequence $h_n = o(h_n^*)$ such that for all $x \in \bbT^d$,  
there exists a positive  definite $d \times d$ matrix $\Sigma(x)$ such that, as $n\to\infty$, 
\begin{equation}
\label{eq:informal_stmt_clt}
\sqrt{nh_n^{d-2}}\big( \hat T_n(x) - T_0(x)\big) \overset{w}{\longrightarrow} N(0,\Sigma(x)).
\end{equation}
\end{theorem*}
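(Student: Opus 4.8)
\emph{Proof strategy.} The plan has three stages: linearize the Monge--Amp\`ere equation so as to write $\hat T_n(x)-T_0(x)$, up to a negligible remainder, as the gradient of the solution of a fixed uniformly elliptic divergence-form PDE on $\bbT^d$ with random right-hand side $\hat q_n-q$; represent this solution through the associated Green's function, obtaining $\hat T_n(x)-T_0(x)$ as a linear functional of $\hat q_n-q$ against a kernel with a single integrable singularity; and then prove a central limit theorem for this functional by an undersmoothed bias--variance analysis together with a triangular-array CLT. For Step~1, note that both $\varphi_0$ and $\hat\varphi_n$ solve Monge--Amp\`ere equations, which we use in the weak form of the pushforward constraints: $\int\zeta(\nabla\varphi_0)\,p=\int\zeta\,q$ and $\int\zeta(\nabla\hat\varphi_n)\,p=\int\zeta\,\hat q_n$ for all test functions $\zeta$ on $\bbT^d$, over the event that $\hat q_n$ induces a valid transport problem (an event of probability tending to one, since $\hat q_n\to q$ uniformly and $q,1/q$ are bounded). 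Writing $\hat\varphi_n=\varphi_0+f_n$ and subtracting,
\[
\int_{\bbT^d}\nabla\zeta\big(\nabla\varphi_0(x)\big)\cdot\nabla f_n(x)\,p(x)\,dx\;=\;\int_{\bbT^d}\zeta\,(\hat q_n-q)\;+\;(\text{terms of higher order in }f_n).
\]
Substituting $y=T_0(x)=\nabla\varphi_0(x)$, using $p(x)\,dx=q(y)\,dy$ from ${T_0}_{\#}P=Q$, and using the chain rule $\nabla f_n(x)=D^2\varphi_0(x)\,\nabla w_n(y)$ for the pulled-back function $w_n:=f_n\circ T_0^{-1}$, the left-hand side becomes $\int\nabla\zeta(y)\cdot a(y)\nabla w_n(y)\,dy$ with $a(y):=q(y)\,D^2\varphi_0\big(T_0^{-1}(y)\big)$, so $w_n$ solves, up to higher-order error,
\[
-\operatorname{div}\!\big(a\,\nabla w_n\big)\;=\;\hat q_n-q\qquad\text{on }\bbT^d,
\]
with periodic boundary conditions. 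By Caffarelli's regularity theory (applicable since $p,q,1/p,1/q$ are bounded and $s>2$), $D^2\varphi_0$ is bounded above and below, so $a$ is symmetric, H\"older continuous, and uniformly elliptic; and since $\hat q_n-q$ has zero mean the equation is solvable, $w_n$ being unique up to an additive constant. The role of the paper's quantitative linearization of Monge--Amp\`ere is to turn this heuristic into the rigorous statement
\[
\hat T_n(x)-T_0(x)\;=\;D^2\varphi_0(x)\,\nabla w_n\big(T_0(x)\big)\;+\;R_n(x),\qquad \sup_{x\in\bbT^d}\big\|R_n(x)\big\|=o_{\mathbb P}\!\big((nh_n^{d-2})^{-1/2}\big),
\]
for a suitably chosen sequence $h_n$; this is where $s>2$, $d\ge 3$, and the requirement that $h_n$ be polynomially small enter.

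For Step~2, let $G$ be the Green's function of $-\operatorname{div}(a\nabla\cdot)$ on $\bbT^d$ (defined modulo constants). Then $\nabla w_n(y)=\int_{\bbT^d}\nabla_y G(y,y')\,(\hat q_n-q)(y')\,dy'$, hence
\[
\hat T_n(x)-T_0(x)\;=\;\int_{\bbT^d}\Psi_x(y)\,(\hat q_n-q)(y)\,dy\;+\;R_n(x),\qquad \Psi_x(y):=D^2\varphi_0(x)\,\nabla_y G\big(T_0(x),y\big).
\]
Standard interior estimates for Green's functions of uniformly elliptic operators show that, for $d\ge3$, $\Psi_x$ is smooth away from $y=T_0(x)$ and there has a single singularity of order $|y-T_0(x)|^{1-d}$, with leading profile $\eta_x(v)$ proportional to $D^2\varphi_0(x)\,a_0^{-1}v/|a_0^{-1/2}v|^{d}$, where $a_0:=a(T_0(x))$; in particular $\Psi_x\in L^1(\bbT^d)\setminus L^2(\bbT^d)$.

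For Step~3, decompose $\hat q_n-q=(\hat q_n-q_{h_n})+(q_{h_n}-q)$ with $q_{h_n}:=q\star K_{h_n}$. Choosing $K$ of order at least $\lceil s\rceil$ gives $\|q_{h_n}-q\|_\infty\lesssim h_n^{s}$, so the deterministic contribution is at most $\|\Psi_x\|_{L^1}\,\|q_{h_n}-q\|_\infty\lesssim h_n^{s}$. For the stochastic contribution, with $\widetilde K(u):=K(-u)$,
\[
\int_{\bbT^d}\Psi_x\,(\hat q_n-q_{h_n})\;=\;\frac1n\sum_{i=1}^n\Big[(\Psi_x\star\widetilde K_{h_n})(Y_i)-\bbE(\Psi_x\star\widetilde K_{h_n})(Y_1)\Big]\;=:\;\frac1n\sum_{i=1}^n W_{n,i},
\]
an average of i.i.d., centered, square-integrable random vectors. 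Localizing near the singularity and rescaling $y=T_0(x)+h_n v$ yields $(\Psi_x\star\widetilde K_{h_n})(T_0(x)+h_n v)\sim h_n^{1-d}(\eta_x\star\widetilde K)(v)$, so that, by dominated convergence (using $|(\eta_x\star\widetilde K)(v)|\lesssim|v|^{1-d}$, square-integrable at infinity for $d\ge3$),
\[
h_n^{d-2}\,\operatorname{Cov}(W_{n,1})\;\longrightarrow\;\Sigma(x)\;:=\;q\big(T_0(x)\big)\int_{\bbR^d}(\eta_x\star\widetilde K)(v)\,(\eta_x\star\widetilde K)(v)^{\top}\,dv,
\]
which is positive definite because $e^{\top}\eta_x$ does not vanish identically for any nonzero $e\in\bbR^d$, the map $v\mapsto D^2\varphi_0(x)a_0^{-1}v$ being invertible. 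Hence $\sqrt{nh_n^{d-2}}\cdot\frac1n\sum_i W_{n,i}=n^{-1/2}\sum_i\sqrt{h_n^{d-2}}\,W_{n,i}$ is a triangular array of row-wise i.i.d.\ centered vectors with covariance converging to $\Sigma(x)$, and a Lyapunov $(2+\delta)$ bound (which after the same rescaling reduces to $nh_n^{d}\to\infty$) lets the Lindeberg--Feller theorem give $\sqrt{nh_n^{d-2}}\int\Psi_x(\hat q_n-q_{h_n})\overset{w}{\longrightarrow}N(0,\Sigma(x))$. Finally, choose $h_n$ polynomially small so that simultaneously $h_n=o(h_n^*)$, $nh_n^{d}\to\infty$, $\sqrt{nh_n^{d-2}}\,h_n^{s}\to0$, and $\sqrt{nh_n^{d-2}}\,\|R_n\|_\infty\to0$; the first three force $n^{-1/d}\ll h_n\ll n^{-1/(d+2s-2)}$, a nonempty window since $s>1$, and the fourth consumes the remaining room and is where $s>2$ is needed. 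Combining Steps~1--3 with Slutsky's lemma gives \eqref{eq:informal_stmt_clt}.

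The decisive difficulty is Step~1: producing a quantitative linearization of the Monge--Amp\`ere equation whose remainder is $o_{\mathbb P}((nh_n^{d-2})^{-1/2})$, which calls for Caffarelli-- and Schauder-type elliptic estimates and is the reason the linearization result is flagged as of independent interest. A secondary but still delicate point is the variance asymptotics in Step~3, which relies on the precise near-diagonal expansion of the Green's function of $-\operatorname{div}(a\nabla\cdot)$ and on the nondegeneracy of the resulting matrix $\Sigma(x)$.
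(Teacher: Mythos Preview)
Your proposal follows essentially the same three-stage architecture as the paper (linearize Monge--Amp\`ere $\to$ reduce to a linear elliptic PDE with stochastic right-hand side $\to$ bias--variance analysis and triangular-array CLT). The linearization step and the variance heuristics are both on the right track, and the operator you derive, $-\operatorname{div}(a\nabla\cdot)$ with $a(y)=q(y)\,D^2\varphi_0(T_0^{-1}(y))$, is exactly the self-adjoint operator the paper calls $E$ (written in the $y$-variable rather than the $x$-variable).

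There is, however, a genuine gap in the bias bound. You control the deterministic contribution by $\|\Psi_x\|_{L^1}\,\|q_{h_n}-q\|_{\infty}\lesssim h_n^{s}$. This loses one crucial order: the paper proves (Lemma~\ref{lem:bias_bound_map}) that $\big\|\nabla L^{-1}[q_{h_n}-q]\big\|_{L^\infty}\lesssim h_n^{s+1-\epsilon}$, exploiting the fact that $\nabla L^{-1}$ gains one derivative, so that the relevant norm of $q_{h_n}-q$ is a negative-order Sobolev norm (morally $H^{-1+\epsilon,r}$), which in turn requires the kernel $K$ to be of order $s+1$ rather than $s$. This extra factor of $h_n$ is not cosmetic. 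The linearization remainder (which you invoke but do not make explicit; in the paper it is $\delta_n\asymp h_n^{2s-1-3\beta}+(nh_n^{d+1+4\beta})^{-1}$) forces $h_n\gg n^{-1/(d+4)}$ for $\sqrt{nh_n^{d-2}}\,\delta_n\to0$. Combined with your undersmoothing condition $\sqrt{nh_n^{d-2}}\,h_n^{s}\to0$, i.e.\ $h_n\ll n^{-1/(d+2s-2)}$, the resulting window for $h_n$ is nonempty only when $s>3$. With the sharper bias bound $h_n^{s+1-\epsilon}$, the upper constraint becomes $h_n\ll n^{-1/(d+2s)}=h_n^*$, and the window is nonempty precisely for $s>2$, which is the theorem's hypothesis.

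On the variance side, your localize-and-rescale argument for $\Sigma(x)$ captures the right intuition, but as written it presumes a leading-order expansion of $\nabla_y G(T_0(x),y)$ near the diagonal with remainder controllable in $L^2$ after convolution with $K_{h_n}$. The paper makes this rigorous by an explicit coefficient-freezing decomposition (proof of Lemma~\ref{lem:variance_bound_map}): it replaces $E$ by the constant-coefficient operator $E_0=-\langle A(x),\nabla^2\cdot\rangle$, then replaces $\nabla\varphi_0$ inside the source term by its affine approximation $S_x$, and bounds the error terms $\calV_2,\calV_3$ via pointwise fundamental-solution estimates. The constant-coefficient piece is then computed by Fourier series and a Riemann-sum argument (Lemma~\ref{lem:riemann_sum_approx}), yielding the explicit formula~\eqref{eq:Sigma}. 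Your description via $\eta_x\star\widetilde K$ is equivalent in spirit, but the comparison of the variable-coefficient Green's function to its frozen version is where the work lies and cannot simply be absorbed into ``dominated convergence''.
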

The limiting covariance $\Sigma(x)$ can be made completely explicit, and will be described in 
Theorem~\ref{thm:clt_map} below. 
Furthermore, the result in fact holds for a wide range of sequences $h_n$ which
are of lower order than $h_n^*$. As we shall see, the latter condition is crucial for the centering constant 
in equation~\eqref{eq:informal_stmt_clt} to be the population quantity $T_0(x)$ itself.

To the best of our knowledge, this result is the first   central limit theorem 
for  optimal transport maps between   absolutely continuous distributions 
in dimension greater than one. The one-dimensional counterpart of this result was established 
by~\cite{ponnoprat2023},  in which case the appropriate scaling sequence is simply $\sqrt n$ rather than $\sqrt{nh_n^{d-2}}$, as could have 
heuristically been anticipated
from equation~\eqref{eq:map_rate_past}. 
Their work makes use of the representation of univariate optimal transport
maps in terms of quantile functions, which of course cannot be exploited in general dimension. 
The most challenging regime $d=2$ is left open by our work, except for the special
case where $P$ and $Q$ are both equal to the uniform distribution on the torus, which we discuss
in Section~\ref{sec:discussion}.
 
We note that limit laws for several related problems 
 have appeared
in recent literature. In the case where $P$ and $Q$ are discrete distributions, 
\cite{klatt2022} have derived central limit theorems for optimal transport
couplings, while~\cite{delbarrio2024} and~\cite{sadhu2023} considered the case where only one of $P$ or $Q$ is discrete. 
Furthermore,~\cite{harchaoui2020}, \cite{gunsilius2022_causal}, \cite{gonzalez-sanz2022}, \cite{goldfeld2022a},
and~\cite{gonzalez2023} 
derived limit laws for entropically regularized variants of the optimal transport map with a fixed regularization parameter. 
Let us emphasize that in each of these past works, the limit laws hold in a weak sense, and 
their scaling is of the parametric order $\sqrt n$, which is a reflection of the fact that the collection of 
optimal transport potentials arising
in these problems forms a Donsker class. This property fails to hold
in our setting, as can be anticipated from the fact  that our pointwise
central limit theorem
exhibits a  nonparametric scaling $\sqrt{nh_n^{d-2}}$. 
Furthermore, one cannot hope for a weak limit
law to hold under our assumptions: we will show in Theorem~\ref{thm:uniform_impossible} below
that there is no scaling of the process $\hat T_n - T_0$ which converges to a non-degenerate limit in $L^2(\bbT^d)$,
for a sensible range of values~$h_n$. Such a result is akin to the failure of weak limit
laws for the kernel density estimator itself~\citep{nishiyama2011,stupfler2014,stupfler2016}.

Let us provide a heuristic summary of our proof strategy. 
Our starting point is the celebrated interior regularity theory for optimal transport maps 
developed by \cite{caffarelli1992a},  which we recall in Section~\ref{sec:background} below.  
In their strongest form, these results imply that, 
under the smoothness condition $p,q \in \calC_+^s(\bbT^d)$, 
the Brenier potential $\varphi_0$ lies in $\calC_{\mathrm{loc}}^2(\bbR^d)$. 
In fact, we will see that with probability tending to one, 
the {\it estimator} $\hat\varphi_n$ itself lies in $\calC_{\mathrm{loc}}^2(\bbR^d)$, with H\"older
norm over any fixed compact set which is uniformly bounded in $n$. 
Over this high-probability event, the pushforward conditions ${{{\hat{T}}_n}}{\phantom{}}_\# P = \hat Q_n$ and ${T_0}_\# P =Q$
are equivalent to  the solvability of the 
following {\it Monge-Amp\`ere equations}
$$\det(\nabla^2\hat\varphi_n) = \frac{p}{\hat q_n(\nabla\hat\varphi_n)},\quad \text{and}\quad 
  \det(\nabla^2\varphi_0) = \frac{p}{q(\nabla\varphi_0)},
\quad \text{over } \bbR^d,$$
in the classical sense. It is well-known that these equations formally linearize  
to second-order uniformly elliptic partial differential equations~\citep{villani2003}. By leveraging this fact, 
we use a   Taylor expansion argument to show that 
\begin{equation}\label{eq:formal_expansion}
\hat T_n = T_0 + \nabla  u_n + R_n,
\end{equation}
where $R_n$ is a Taylor remainder which vanishes at a fast rate in $L^\infty(\bbT^d)$,
and where $ u_n$ is the unique mean-zero solution to the partial differential equation
\begin{equation}\label{eq:main_pde}
Lu_n=-\div(q \nabla  u_n(\nabla\varphi_0^*)) = \hat q_n - q,\quad \text{over } \bbT^d.
\end{equation}
Here, $\varphi_0^*$ denotes the Legendre-Fenchel transform of $\varphi_0$. 
Let us briefly comment on equation~\eqref{eq:main_pde}. In the special case where $p$
and $q$ are both equal to the uniform law on $\bbT^d$, the above is simply
the periodic Poisson equation, which has appeared in similar
linearization strategies in the works 
of~\cite{ambrosio2019}, \cite{ambrosio2019b},
\cite{ambrosio2019c},
\cite{goldman2022}, 
\cite{clozeau2023}, and references therein, in the context of deriving exact
asymptotics for various optimal matching problems. 
When  $p$ and $q$ are equal but not necessarily uniform, 
the differential operator above becomes the Witten Laplacian 
$-\div(q\nabla \cdot)$, as noted by~\cite{greengard2022}. 
For general densities $p$ and $q$, the operator $L$ 
is not
strictly-speaking of elliptic type, 
but with enough regularity assumptions on $\varphi_0$, 
we will show  that it is closely
connected to a self-adjoint uniformly elliptic operator.
See Remark~\ref{rem:operator} for further interpretation of the operator~$L$.

In view of the formal expansion~\eqref{eq:formal_expansion}, 
the problem of deriving a central limit theorem for~$\hat T_n$ reduces
to the problem of deriving a pointwise central limit theorem for the gradient
of the solution to the PDE~\eqref{eq:main_pde}. While $L^2$  rates for estimating coefficients of elliptic 
PDEs have been studied in the literature (cf.~\cite{nickl2020}, \cite{giordano2020}, and references therein), we are not aware of existing pointwise convergence rates or limit laws for solutions to elliptic PDEs with a stochastic right-hand side.
The bulk of our effort goes into this point. Letting $Q_{h_n}$ be the probability distribution with density 
$q_{h_n} = \bbE[\hat q_n(\cdot)]$, 
we begin by decomposing $\nabla u_n$ into the
terms
\begin{align}
\label{eq:nabla_un_decomp}
\nabla  u_n(x) = \nabla L^{-1}[\hat q_n - q_{h_n}](x) + \nabla L^{-1}[q_{h_n} - q](x),\quad \text{for any given } x \in \bbT^d.
\end{align}
The first (stochastic) term on the right-hand side of the above display
will   characterize the fluctuations of $\hat T_n(x)$ around its mean, 
while the second (deterministic) term will characterize the bias of $\hat T_n(x)$. 
We will show that the condition $h_n = o(h_n^*)$ 
is essentially sufficient for the deterministic term to be of negligible order; the heart of the matter
lies in the stochastic term. 
To derive its  asymptotic distribution, we appeal to a variant of the ``coefficient freezing'' method
which is commonly used to derive a priori estimates for elliptic PDE. Specifically, we will argue that, 
in a neighborhood of the point $T_0(x)$, 
the solution to equation~\eqref{eq:main_pde} is well-approximated by the solution $v_n$ to the equation
\begin{equation}
\label{eq:coeff_freezing_pde}
-q(T_0(x)) \div(\nabla v_n(G_x)) = \hat q_n-q_{h_n}, \quad \text{over } \bbT^d,
\end{equation}
where $G_x$ denotes the first-order Taylor approximation
of $\nabla\varphi_0^*$ at the point $T_0(x)$. This approximation is useful because, unlike $\nabla u_n$, 
the map $\nabla v_n$ can be expressed as the evaluation of a  {\it convolution operator}
 at the smoothed empirical process. Indeed, we will see that there exists a map 
$\Theta:\bbT^d  \to \bbR^d$
for which the following representation holds:
\begin{align}\label{eq:vn_is_convolution}
\nabla v_n(x) = \int_{\bbT^d} \Theta(y-T_0(x)) d(\hat Q_n-Q_{h_n})(y).
\end{align}
The map $\Theta$ is the gradient of the periodic Green's function associated to the differential operator appearing
on the left-hand side of equation~\eqref{eq:coeff_freezing_pde}, which can be derived in closed form when working over the torus. 
By associativity of convolutions, we can further write
$$\nabla v_n(x) = \big[(\Theta \star K_{h_n}) \star (Q_n-Q)\big](T_0(x)).$$
The map in the above display is manifestly a kernel density estimator with respect to the vector-valued ``kernel'' $\Theta \star K_{h_n}$.
Its limiting distribution can be derived using elementary means, and ultimately describes the limiting distribution of $\hat T_n(x)$.

Though our main interest is in pointwise limit laws, 
as a byproduct of these results, we will also derive  
nonasymptotic pointwise rates of convergence for the estimator $\hat T_n$. 
A great deal of recent work has analyzed $L^2$ rates of  estimation 
for optimal transport maps~\citep{hutter2021,  deb2021, manole2021, pooladian2021, ghosal2022, gunsilius2022,divol2022,pooladian2023}, 
and qualitative uniform convergence results have been studied in the works of~\cite{chernozhukov2017, panaretos2019a, hallin2021, delara2021, ghosal2022,segers2022},
but our work is perhaps the first to provide near-optimal rates for pointwise estimation
  (albeit under the strong assumption that the underlying domain is the flat torus).

\subsection{Outline}
The remainder of this manuscript is organized as follows. 
After summarizing notational conventions, and providing a brief summary of the regularity theory of optimal transport maps in the following two subsections, we
state our main results in Section~\ref{sec:main_results}. 
In Section~\ref{sec:linearization}, we 
formalize the functional Taylor expansion~\eqref{eq:formal_expansion}.
We analyze the stochastic and deterministic terms from equation~\eqref{eq:nabla_un_decomp} in Section~\ref{sec:bias_variance_bounds}.
In Section~\ref{sec:pf_main_results}, we combine these elements to prove our main results.
In Section~\ref{sec:discussion}, we discuss extensions of our results to the regime $d=2$.
In Appendix~\ref{app:function_spaces}, we summarize elementary definitions and properties
about the function spaces used throughout this manuscript, while in Appendix~\ref{app:pde}, we 
provide a self-contained summary of some
elementary properties of elliptic differential equations subject to periodic boundary conditions. 
In Appendix~\ref{app:kde}, we derive convergence rates for the kernel density
estimator under various H\"older and Sobolev norms.
In Appendices~\ref{app:additional_proofs_2}--\ref{app:additional_discussion}, we provide proofs of technical results
deferred from Sections~\ref{sec:main_results}--\ref{sec:discussion}, and finally,  
we state several technical lemmas used throughout the manuscript in Appendix~\ref{app:additional_technical_lemmas}.

\subsection{Notation}\label{sec:notation}
We write $a\vee b = \max\{a,b\}$ and $a\wedge b = \min\{a,b\}$, and
$a_+ = a \vee 0$ for all $a,b \in \bbR$. If $a \geq 0$, $\lfloor a \rfloor$ and $\lceil a \rceil $ denote
the respective floor and ceiling of $a$. 
Given vectors $x,y \in \bbR^d$, we write $\|x\|$ and $\langle x,y\rangle$ to denote their 
Euclidean norm and inner product. Similarly, for two matrices $A,B \in \bbR^d$, $\|A\|$ denotes the Frobenius
norm of $A$, and $\langle A, B\rangle=\tr(A^\top B)$ denotes the corresponding Frobenius inner product.
For $1 \leq r \leq \infty$, we denote by $L^r(\bbT^d)$ the standard Lebesgue spaces with respect to the 
uniform probability law
$\calL$ on $\bbT^d$. Given $s \in \bbR$, $\alpha > 0$, and $r > 1$, we denote by 
$\calC^\alpha(\bbT^d)$  the periodic H\"older spaces (which are commonly also
denoted $\calC^{\lfloor \alpha\rfloor,\alpha-\lfloor \alpha\rfloor}(\bbT^d)$), and by $H^{s,r}(\bbT^d)$ 
the periodic Sobolev spaces, both of which are defined explicitly in Appendix~\ref{app:function_spaces}
for completeness. We use the nonstandard notation of appending a subscript ``$0$'' to these spaces to
indicate their restriction to mean zero maps, and we emphasize that this notation does not indicate a boundary condition. 
Thus, we write
$$L_0^r(\bbT^d) = \left\{ u\in L^r(\bbT^d): \int_{\bbT^d} u d\calL = 0\right\},$$
and for any $s > 0$, we write 
$\calC_0^s(\bbT^d) = \calC^s(\bbT^d) \cap L^1_0(\bbT^d)$ and  $H_0^{s,r}(\bbT^d) = H^{s,r}(\bbT^d) \cap L_0^1(\bbT^d)$. 
Furthermore, we let $\calC_0^\infty(\bbT^d) = \bigcap_{j\geq 1} \calC_0^j(\bbT^d)$, and
\begin{align*}
\calC_+^s(\bbT^d) &= \Big\{ f \in \calC^s(\bbT^d): f > 0~\text{over } \bbT^d\Big\}.
\end{align*}
When $r=2$, we abbreviate the spaces $H^{s,2}(\bbT^d)$ and $H_0^{s,2}(\bbT^d)$ by 
$H^s(\bbT^d)$ and $H^s_0(\bbT^d)$ respectively. 
Given a map $T=(T_1,\dots,T_d):\bbT^d \to \bbT^d$, we will write by abuse of notation
$$\|T\|_{\calC^s(\bbT^d)} := \sum_{i=1}^d \big\| T_i \big\|_{\calC^s(\bbT^d)}, \quad 
\|T\|_{L^r(\bbT^d)} :=  \sum_{i=1}^d \big\| T_i \big\|_{L^r(\bbT^d)},$$
for any $s > 0$, $1 \leq r \leq \infty$. 
Given a map $f \in L^1(\bbR^d)$, we denote its Fourier transform by $\calF[f](\xi) = \int_{\bbR^d} f(x)e^{-2\pi i \langle x,\xi\rangle}dx$
for any $\xi \in \bbR^d$. If instead $f \in L^2(\bbT^d)$, we continue to denote by $\calF[f](\xi)$ the Fourier
coefficients of $f$, now restricted to  $\xi \in \bbZ^d$. We also write $\bbZ_*^d = \bbZ^d\setminus\{0\}$.
See Appendix~\ref{app:function_spaces} for further details.
Given a map $f$ lying in the Schwartz class on $\bbR^d$, denoted $\calS(\bbR^d)$, 
we denote the homogeneous Sobolev seminorms of order $s \in \bbR$ by
$$\|f\|_{\dot H^s(\bbR^d)} = \big\| \|\cdot\|^s \calF[f]\big\|_{L^2(\bbR^d)}.$$
Given a twice differentiable map $f:\bbR^d \to \bbR$, the gradient of $f$ is denoted $\nabla f$, its Hessian is denoted
$\nabla^2 f$, and its Laplacian is denoted $\Delta f = \sum_{i=1}^d \partial^2 f / \partial x_i^2$. The divergence of a 
differentiable vector field $F=(F_1,\dots,F_d):\bbR^d \to \bbR^d$ is denoted $\div(F)=\sum_{i=1}^d \partial F_i/\partial x_i$. 

Given two real numbers $a,b > 0$, we write $a \lesssim b$ if there exists a universal
constant $C > 0$---not depending on $a$ and $b$ but possibly depending
on quantities which are either clear from context or 
specified explicitly---such that $a \leq C b$. We write $a\asymp b$ if $a \lesssim b\lesssim a$. 
When we wish to emphasize the dependence of $C$ on a particular quantity $g$, we may write $\lesssim_g$
or $\asymp_g$. Furthermore, given a sequence of nonnegative real numbers $(\alpha_n)_{n\geq 1}$, we write
$n^{-a} \ll \alpha_n \ll n^{-b}$ if there exist constants $c > 0$
and $b < c' < a$ such that $\alpha_n=c\cdot n^{-c'}$.

Most constants in this manuscript will depend on quantities such as  $\|p\|_{\calC^s(\bbT^d)}$ and $ \|p^{-1}\|_{L^\infty(\bbT^d)}$.
We therefore introduce the following abbreviation: for any $s>0$, $k\geq 1$, $f_1, \dots, f_k \in \calC^s_+(\bbT^d)$, 
\begin{equation}
\label{eq:omega}
\omega_s(f_1, f_2, \dots, f_k)  := \sum_{j=1}^k \big(\|f_j\|_{\calC^s(\bbT^d)}  + \|f_j^{-1}\|_{L^\infty(\bbT^d)}\big).
\end{equation}
Given a convex function $\varphi:\bbR^d \to \bbR$, we denote by
$$\varphi^*(y) = \sup_{x \in \bbR^d} \big\{ \langle x,y\rangle - \varphi(x)\big\}, \quad \text{for all } y \in \bbR^d,$$
the Legendre-Fenchel transform of $\varphi$. 

\subsection{Background on the Periodic Optimal Transport Problem}\label{sec:background}
Recall that the flat torus $\bbT^d$ is the set of equivalence classes 
of the form $[x]=\{x+\xi:\xi\in \bbZ^d\}$, for all $x \in [0,1)^d$. 
We abuse notation by typically writing $x$ in place of $[x]$. 
$\bbT^d$ is endowed with the standard metric
$$\|x-y\|_{\bbT^d} = \min\{\norm{x-y+\xi}: \xi \in \bbZ^d\},
\quad x,y \in \bbT^d,$$
where we emphasize, of course, that $\|\cdot\|_{\bbT^d}$ does not define a norm, and
that the notation $\|x-y\|_{\bbT^d}$ is not meant to be taken at face value. We abbreviate $\|x-0\|_{\bbT^d}$
by $\|x\|_{\bbT^d}$.
Let $\calP(\bbT^d)$ be the set of Borel probability measures on $\bbT^d$, which we identify
with the set of $\bbZ^d$-periodic Borel measures $P$ on $\bbR^d$ such that $P([0,1)^d) = 1$,
and let $\calPac(\bbT^d)$ be the subset of measures in $\calP(\bbT^d)$ which are absolutely continuous with respect
to the Lebesgue measure on $\bbR^d$. 
 A function $f:\bbT^d \to \bbR$ is understood to 
be a function on $\bbR^d$ which is $\bbZ^d$-periodic, and we write $T:\bbT^d \to \bbT^d$
when $T$ is a map from $\bbR^d$ to $\bbR^d$ such that $[T(x)] = [T(y)]$ whenever $[x]=[y]$.
Any such map is uniquely determined by the value it takes on the unit hypercube (or any of its $\bbZ^d$-translates), and
it will therefore be convenient to fix the notation
\begin{equation}
\calQ = [0,1]^d.
\end{equation}
The following result can be deduced from~\cite{cordero-erausquin1999}, together with the interior regularity theory
for optimal transport maps developed by~\cite{caffarelli1992a}. 
We also refer, for instance, to~\citet[Section 2]{ambrosio2012}, for further details on how these results are adapted to the periodic setting. 
\begin{theorem}\label{thm:caffarelli}
Let $P,Q \in \calPac(\bbT^d)$ admit respective densities $p,q$, and assume there exists $\gamma > 0$
such that $\gamma^{-1} \leq p,q \leq \gamma$ over $\bbT^d$. Then, there exists 
a unique convex function $\varphi_0: \bbR^d \to \bbR$ such that $\int_{\calQ} \varphi_0d\calL=0$,
 and such that the following properties hold.
\begin{thmlist}
\item \label{thm:caffarelli_weak_reg} There exist constants $C,\epsilon > 0$ depending only on $d,\gamma$ such that
$$\|\varphi_0\|_{\calC^{1+\epsilon}(\calQ)} \leq C.$$
\item  The map $\varphi_0 - \|\cdot\|^2/2$ is $\bbZ^d$-periodic, and it holds that
$$T_0(x+\xi) = T_0(x) + \xi,\quad \text{for all } x\in \bbR^d, ~ \xi \in\bbZ^d.$$ 
In particular, $T_0$ defines a mapping from $\bbT^d$ into itself.
\item  $T_0 = \nabla\varphi_0$ is the unique continuous optimal transport map
pushing $P$ forward onto~$Q$.  Furthermore,  $T_0$ is invertible, and its inverse $T_0^{-1} = \nabla\varphi_0^*$
is the unique continuous optimal transport map pushing forward $Q$ onto $P$. \\[-0.1in]
\end{thmlist}
If, in addition, we assume that $p,q \in \calC^\epsilon(\bbT^d)$ for some $\epsilon > 0$, then
the following hold.
\begin{enumerate}
\item[(iv)] There exists  $\lambda > 0$ depending only on $\omega_\epsilon(p,q),d,\epsilon$ such that 
$$\|\varphi_0\|_{\calC^{2+\epsilon}(\calQ)} \leq \lambda,\quad\text{and,}\quad \nabla^2\varphi_0 \succeq  I_d/\lambda~~\text{over } \bbR^d.$$
\item[(v)] The map $\varphi_0$ is a classical solution to the Monge-Amp\`ere equation
$$\det(\nabla^2\varphi_0) = \frac{p}{q(\nabla\varphi_0) },\quad \text{over } \bbR^d.$$
\end{enumerate} 
\end{theorem}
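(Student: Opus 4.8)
The plan is to combine the existence and uniqueness theory for optimal transport on the torus due to \cite{cordero-erausquin1999} with the interior regularity theory of \cite{caffarelli1992a}, upgrading the local estimates to global ones by exploiting the periodic structure. First I would lift the problem to $\bbR^d$: a measure in $\calPac(\bbT^d)$ is identified with its $\bbZ^d$-periodic extension, and the quadratic optimal transport cost on $\bbT^d$ is the periodization of $\|x-y\|^2$. By \cite{cordero-erausquin1999}, there is a convex function $\varphi_0:\bbR^d\to\bbR$, unique up to an additive constant, such that $\varphi_0-\|\cdot\|^2/2$ is $\bbZ^d$-periodic and $\nabla\varphi_0$ pushes $P$ forward to $Q$; normalizing $\int_\calQ\varphi_0\,d\calL=0$ selects the unique representative. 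The periodicity of $\varphi_0-\|\cdot\|^2/2$ immediately gives $T_0(x+\xi)=\nabla\varphi_0(x+\xi)=\nabla\varphi_0(x)+\xi=T_0(x)+\xi$, so $T_0$ descends to a map $\bbT^d\to\bbT^d$, which is part (ii). Uniqueness of the continuous optimal map in (iii) follows from Brenier--McCann uniqueness together with the continuity of $\nabla\varphi_0$ established below, and the identity $T_0^{-1}=\nabla\varphi_0^*$ is the standard convex-analytic fact that the gradient of a strictly convex $\calC^1$ function is a homeomorphism with inverse $\nabla\varphi_0^*$, combined with the fact that $\nabla\varphi_0^*$ transports $Q$ back to $P$.

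Next, for part (i) I would invoke Caffarelli's $\calC^{1,\epsilon}_{\mathrm{loc}}$ regularity for Brenier potentials between densities bounded above and below: under $\gamma^{-1}\le p,q\le\gamma$, $\varphi_0$ is $\calC^{1,\epsilon}$ on compact subsets of $\bbR^d$, with constants depending only on $d,\gamma$ and the set. The key point is to turn this into a global bound on $\calQ$ with constants depending only on $d,\gamma$: because $\varphi_0-\|\cdot\|^2/2$ is $\bbZ^d$-periodic, the local estimate on a ball of fixed radius centered at each point of $\calQ$ is translation-equivariant, and a finite cover of $\calQ$ yields $\|\varphi_0\|_{\calC^{1+\epsilon}(\calQ)}\le C(d,\gamma)$. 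For part (iv) I would argue identically using Caffarelli's higher Schauder estimates: when in addition $p,q\in\calC^\epsilon(\bbT^d)$, $\varphi_0\in\calC^{2+\epsilon}_{\mathrm{loc}}$, and periodicity plus a covering argument give $\|\varphi_0\|_{\calC^{2+\epsilon}(\calQ)}\le\lambda$ with $\lambda$ depending only on $\omega_\epsilon(p,q),d,\gamma,\epsilon$. The lower Hessian bound then follows from the Monge--Amp\`ere equation: once $\varphi_0\in\calC^2$, the change-of-variables (Jacobian) identity for the pushforward gives $\det(\nabla^2\varphi_0)=p/q(\nabla\varphi_0)\ge\gamma^{-2}$ pointwise, which is part (v); combined with the upper bound $\nabla^2\varphi_0\preceq\lambda I_d$, each eigenvalue of $\nabla^2\varphi_0(x)$ is at least $\gamma^{-2}/\lambda^{d-1}$, so enlarging $\lambda$ yields $\nabla^2\varphi_0\succeq I_d/\lambda$.

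The main obstacle is the passage from local to global regularity with the correct dependence of constants: Caffarelli's estimates are a priori interior estimates whose constants depend on the domain geometry and on the sections of $\varphi_0$, and one must check that, under the periodic normalization from (ii), these are controlled uniformly. This is where the periodic adaptation of the regularity theory described in \citet[Section 2]{ambrosio2012} does the heavy lifting; I would follow their localization scheme, using $\bbZ^d$-translation equivariance of $\varphi_0-\|\cdot\|^2/2$ to reduce to a fixed compact neighborhood of $\calQ$, and a compactness argument in the space of such potentials to make the constants depend only on $\gamma$ (respectively $\omega_\epsilon(p,q)$) and $d,\epsilon$, rather than on $\varphi_0$ itself.
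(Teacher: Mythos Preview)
Your proposal is correct and matches the paper's approach. The paper does not prove this theorem but rather states it as a consequence of \cite{cordero-erausquin1999} combined with Caffarelli's interior regularity theory \cite{caffarelli1992a}, and refers to \citet[Section~2]{ambrosio2012} for the periodic adaptation---exactly the ingredients you identify, including the use of $\bbZ^d$-translation equivariance of $\varphi_0-\|\cdot\|^2/2$ to pass from local to global estimates with uniform constants.
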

In particular, Theorem~\ref{thm:caffarelli_weak_reg}
implies that the estimator $\hat T_n$ is well-defined, and can be evaluated pointwise. 

\section{Main Results}
\label{sec:main_results}
In this section, we state our main results regarding the asymptotic behaviour of the Brenier map $\hat T_n$.
We begin by stating our key technical result, namely a quantitative linearization estimate for the Monge-Amp\`ere
equation. 
For the remainder of the manuscript, we fix once and for all a real number $s > 2$
such that $s\not\in \bbN$,
which will typically represent the H\"older smoothness exponent of the densities $p$ and $q$.

\subsection{Linearization of the Monge-Amp\`ere Equation} 
Define a constant $\beta$ such that
\begin{equation}\label{eq:beta}
0 < \beta < \min\{1, s-2\}.
\end{equation}
Given $p,q \in \calC^{2+\beta}_+(\bbT^d)$, it follows from Caffarelli's regularity theory
(Theorem~\ref{thm:caffarelli}) that
$\varphi_0,\varphi_0^* \in \calC^{4+\beta}(\bbT^d)$. In particular, we may then define the  operator
$$Lu = -\div(q\nabla u(\nabla\varphi_0^*)),$$
for all $u \in \calC^2(\bbT^d)$. 
As we shall see in Section~\ref{sec:linearization}, it can be deduced from standard 
elliptic regularity theory that $L$
 is a bijection of $\calC^{2+\beta}_0(\bbT^d)$ onto $\calC_0^\beta (\bbT^d)$, with inverse denoted $L^{-1}$.

Let $\hat Q \in \calP_{\mathrm{ac}}(\bbT^d)$ be any
distribution with density $\hat q$ over $\bbT^d$, and assume that $\hat q\in \calC_+^{2+\beta}(\bbT^d)$. 
Let $\hat\varphi$ be the unique  Brenier
potential lying in $\calC^{2}(\bbR^d)$ whose gradient pushes forward $P$ onto $\hat Q$, and which
satisfies $\int_{\calQ} \hat\varphi d\calL = 0$.  
The following result shows that, in a very strong sense, the deviations
$\hat\varphi-\varphi_0$ are well-approximated by the solution to a linear partial differential equation, 
 whenever $\hat q$ is in a H\"older
neighborhood of $q$. 
\begin{theorem}
\label{thm:linearization}
Let $d \geq 1$, and assume $p,q,\hat q \in \calC_+^{2+\beta}(\bbT^d)$. Then, there exists 
a constant $C = C(\omega_{2+\beta}(p,q,\hat q),d,\beta) > 0$ such that
\begin{align}\label{eq:linearization}
\big\| (\hat\varphi - \varphi_0) - L^{-1}[\hat q - q] \big\|_{\calC^{2+\beta}(\bbT^d)} \leq C \|\hat q - q \|_{\calC^{1+\beta}(\bbT^d)}\|\hat q - q \|_{\calC^{\beta}(\bbT^d)}.
\end{align}
\end{theorem}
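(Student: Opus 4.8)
The plan is to reduce everything to the solvability and boundedness of the operator $L$, which the excerpt already grants. Throughout I write $w := \hat\varphi - \varphi_0$; since both potentials are normalized to have mean zero over $\calQ$, $w$ lies in $\calC^{2+\beta}_0(\bbT^d)$, and the goal is to bound $w - L^{-1}[\hat q - q]$ in $\calC^{2+\beta}(\bbT^d)$. First I would record, via Caffarelli's theory (Theorem~\ref{thm:caffarelli}), that $\varphi_0,\varphi_0^*,\hat\varphi \in \calC^{4+\beta}(\bbT^d)$ with norms and uniform–convexity constants controlled by $\omega_{2+\beta}(p,q,\hat q)$, and that both satisfy their Monge–Amp\`ere equations classically, so that $\det(\nabla^2\hat\varphi)\,\hat q(\nabla\hat\varphi) = p = \det(\nabla^2\varphi_0)\,q(\nabla\varphi_0)$ on $\bbT^d$. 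The first genuine step would be a crude stability estimate, $\|w\|_{\calC^{2+\beta}(\bbT^d)} \lesssim \|\hat q - q\|_{\calC^{\beta}(\bbT^d)}$. To obtain it, subtract the two Monge–Amp\`ere equations and write the determinant difference in mean–value form,
\[
\det(\nabla^2\hat\varphi) - \det(\nabla^2\varphi_0) = \int_0^1 \big\langle \mathrm{cof}(\nabla^2\varphi_t), \nabla^2 w\big\rangle \, dt,\qquad \varphi_t = (1-t)\varphi_0 + t\hat\varphi;
\]
since each $\varphi_t$ is uniformly convex with bounded $\calC^{2+\beta}$ norm, this exhibits $w$ as the solution of a uniformly elliptic, divergence–form equation with $\calC^\beta$ coefficients whose right-hand side has $\calC^\beta$ norm $\lesssim \|\hat q - q\|_{\calC^\beta}$. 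Interior Schauder estimates then give $\|w\|_{\calC^{2+\beta}} \lesssim \|\hat q - q\|_{\calC^\beta} + \|w\|_{\calC^0}$, and the leftover $\|w\|_{\calC^0}$ is removed either by a maximum–principle $L^\infty$ bound (the operator has no zeroth–order term and its right-hand side is mean-zero after a change of variables) or by combining the $L^2$ stability $\|\nabla w\|_{L^2(\bbT^d)}\lesssim \|\hat q - q\|_{H^{-1}(\bbT^d)}$ with Poincar\'e, interpolation, and Young's inequality.

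With the crude bound in hand, I would expand to second order around $\varphi_0$. Adding and subtracting $\det(\nabla^2\hat\varphi)\,q(\nabla\hat\varphi)$ gives
\[
0 \;=\; \det(\nabla^2\hat\varphi)\,(\hat q - q)(\nabla\hat\varphi) \;+\; \big[F(\hat\varphi) - F(\varphi_0)\big],\qquad F(\psi) := \det(\nabla^2\psi)\,q(\nabla\psi).
\]
Since the determinant is a degree-$d$ polynomial in the Hessian entries, Taylor expansion of $F$ to first order in $w$ yields $F(\hat\varphi) - F(\varphi_0) = \mathcal L_0 w + (\text{quadratic remainder})$, where $\mathcal L_0 u := \langle \mathrm{cof}(\nabla^2\varphi_0)\,q(\nabla\varphi_0), \nabla^2 u\rangle + \det(\nabla^2\varphi_0)\langle \nabla q(\nabla\varphi_0), \nabla u\rangle$ is the linearized Monge–Amp\`ere operator at $\varphi_0$, and likewise the first term produces $\tfrac{p}{q(\nabla\varphi_0)}(\hat q - q)(\nabla\varphi_0)$ plus error. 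By the crude bound and the Banach–algebra/composition properties of $\calC^\beta(\bbT^d)$ (stable under composition with the uniformly bounded maps $\nabla\varphi_0,\nabla\hat\varphi$), each error term carries two factors vanishing as $\hat q\to q$ and is bounded in $\calC^\beta$ by $\|\hat q - q\|_{\calC^{1+\beta}}\|\hat q - q\|_{\calC^\beta}$; the single term forcing the $\calC^{1+\beta}$ (rather than $\calC^\beta$) norm of $\hat q-q$ is the composition difference $(\hat q - q)(\nabla\hat\varphi) - (\hat q - q)(\nabla\varphi_0)$, which is $\lesssim \|\hat q - q\|_{\calC^{1+\beta}}\|\nabla w\|_{\calC^\beta}$. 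The upshot is
\[
\mathcal L_0 w \;=\; -\frac{p}{q(\nabla\varphi_0)}\,(\hat q - q)(\nabla\varphi_0) + \mathcal R, \qquad \|\mathcal R\|_{\calC^\beta(\bbT^d)} \lesssim \|\hat q - q\|_{\calC^{1+\beta}(\bbT^d)}\,\|\hat q - q\|_{\calC^\beta(\bbT^d)}.
\]

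It remains to identify $\mathcal L_0$ with $L$. Using the null-Lagrangian identity $\div(\mathrm{cof}(\nabla^2\varphi_0)V) = \langle \mathrm{cof}(\nabla^2\varphi_0), \nabla V\rangle$, the relation $\mathrm{cof}(A)A = (\det A)I$, and the change of variables $y = \nabla\varphi_0(x)$, a direct computation gives the pointwise identity $(\mathcal L_0 u)(x) = -\det(\nabla^2\varphi_0(x))\,(Lu)(\nabla\varphi_0(x))$ for $u\in\calC^2(\bbT^d)$. Since $\det(\nabla^2\varphi_0) = p/q(\nabla\varphi_0)$, substituting $x = \nabla\varphi_0^*(y)$ into the displayed expansion converts it to $(Lw)(y) = (\hat q - q)(y) + \tilde{\mathcal R}(y)$ with $\tilde{\mathcal R} := -\big(q/p(\nabla\varphi_0^*)\big)\,(\mathcal R\circ\nabla\varphi_0^*)$, still of size $\lesssim \|\hat q - q\|_{\calC^{1+\beta}}\|\hat q - q\|_{\calC^\beta}$ in $\calC^\beta$. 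As $Lw$ and $L\,L^{-1}[\hat q - q] = \hat q - q$ both lie in $\calC^\beta_0(\bbT^d)$, so does $\tilde{\mathcal R}$, and applying the bounded inverse $L^{-1}\colon\calC^\beta_0(\bbT^d)\to\calC^{2+\beta}_0(\bbT^d)$ gives $w - L^{-1}[\hat q - q] = L^{-1}\tilde{\mathcal R}$, which is the claimed estimate.

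The hard part is the crude stability estimate of the first step: it must hold with the sharp linear dependence on $\|\hat q - q\|_{\calC^\beta}$, since any fractional loss there would spoil the quadratic form of the final bound, and this forces the Schauder-plus-maximum-principle route rather than a soft compactness argument. The second-order expansion is bookkeeping-heavy but routine, and the identification of $\mathcal L_0$ with $L$ is classical once the Jacobian weights $\det(\nabla^2\varphi_0)$ are tracked carefully.
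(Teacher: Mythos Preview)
Your proposal is correct and matches the paper's proof: the crude stability step is the paper's Proposition~\ref{prop:potential_stability} (mean-value form of the determinant, Schauder, interpolation, then $H^1$ stability plus Sobolev to kill $\|w\|_{L^\infty}$), and your second-order expansion together with the identification $\mathcal L_0 = -E = -\det(\nabla^2\varphi_0)\cdot(L\,\cdot)\circ\nabla\varphi_0$ is precisely the content of Lemmas~\ref{lem:relation_E_L}, \ref{lem:inverse_L_to_E}, \ref{lem:frechet}, \ref{lem:comparing_psin_psi}. One small correction to your crude-stability paragraph: the right-hand side of the mean-value elliptic equation still contains $\hat q(\nabla\hat\varphi)$, so its $\calC^\beta$ norm picks up a $\|\nabla w\|_{\calC^\beta}$ term in addition to $\|\hat q - q\|_{\calC^\beta}$; this is absorbed by interpolating $\|w\|_{\calC^{1+\beta}} \le \epsilon\|w\|_{\calC^{2+\beta}} + C(\epsilon)\|w\|_{L^\infty}$ before passing to $L^2$ stability, exactly as in the paper.
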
 
Theorem~\ref{thm:linearization} is proved in Section~\ref{sec:linearization}.
In particular, this result implies that the approximation
\begin{equation}
\label{eq:heuristic_approximations}
  \nabla\hat\varphi - \nabla\varphi_0 \approx \nabla L^{-1}[\hat q-q]
  \end{equation}
holds up to an error which decays in uniform norm on the order
$\|\hat q - q \|_{\calC^{1+\beta}(\bbT^d)}\|\hat q - q \|_{\calC^{\beta}(\bbT^d)}$.
We believe it should be possible to replace the latter quantity  by $\|\hat q - q\|_{\calC^\beta(\bbT^d)}^2$, 
but our current statement is sufficiently sharp for the applications which
we have in mind. 
In Appendix~\ref{app:kde}, we state a convergence rate for the kernel
density estimator under H\"older norms, which will allow us to bound this approximation error
 when $\hat q$ is taken
to be the random density $\hat q_n$.

Our proof of Theorem~\ref{thm:linearization} relies on the following   H\"older
stability bound for Brenier potentials, which may be of independent interest:
under the same conditions as Theorem~\ref{thm:linearization}, we show in Proposition~\ref{prop:potential_stability}
below that
\begin{equation}
\|\hat \varphi - \varphi_0\|_{\calC^{2+\beta}(\bbT^d)} \lesssim \|\hat q - q \|_{\calC^\beta(\bbT^d)}.
\end{equation}
Equation~\eqref{eq:thm_potential_stability_C2} is closely related to a qualitative Sobolev stability result 
for the Monge-Amp\`ere equation proven by~\cite{dephilippis2013}. We emphasize that their assumptions
are  weaker than ours, and in particular do not imply that the Brenier potentials are twice differentiable, 
which makes their analysis significantly more involved.
Building upon their result, \cite{gunsilius2022} derived
a qualitative stability result for Brenier potentials
under the $\calC^{2+\beta}$ norm, but we 
are not aware of any quantiative bounds akin to the one above.

\begin{remark}[Interpretation of the Operator $L$]
\label{rem:operator}
We have not seen the general operator~$L$ appear in previous literature, 
thus we provide a heuristic interpretation here.
We will use some notions from the differential characterization of the optimal transport problem; see~\cite{ambrosio2008} for formal definitions. 
With the same notation as Theorem~\ref{thm:linearization}, let $u = \hat\varphi - \varphi_0$, 
and define 
$$S_t = \mathrm{Id} + t \nabla u \circ \nabla\varphi_0^*,\quad 0 \leq t \leq 1.$$
$S_t$ is a  generalized geodesic joining the densities $q$ and $\hat q$; in particular, 
the curve of densities $q_t = {S_t}_\# q$ satisfies $q_0=q$ and $q_1=\hat q$. 
Given a random variable $Y \sim q$, the induced curve of random variables $Y_t = S_t(Y)$
solves the following velocity equation
$$\partial_t Y_t = v_t(Y_t),\quad \text{with } v_t =  \nabla u \circ \nabla\varphi_0^* \circ S_t^{-1}.$$
These facts imply that the sequence $(q_t,v_t)_{0\leq t\leq 1}$ solves the continuity equation, 
$$\partial_t q_t = - \div(q_t v_t),\quad 0 \leq t \leq 1, $$
and hence, upon integrating with respect to time, 
$$\hat q - q = - \int_0^1 \div(q_tv_t)dt.$$
Now, as $\hat q$ approaches $q$, it is natural to expect that $S_t$ is well-approximated by
the identity map, so that, in turn, the vector field $q_t v_t$ is well-approximated by the time-independent vector
field $q\cdot \nabla u\circ \nabla\varphi_0^*$. Under this approximation, one heuristically  obtains
$$\hat q - q \approx -\div(q\nabla u \circ \nabla\varphi_0^*) = L[\hat\varphi-\varphi_0].$$
Theorem~\ref{thm:linearization} rigorously quantifies the approximation error in the above display,
but using a distinct proof technique.
\end{remark}

\subsection{Pointwise Convergence Rate}
Before presenting a central limit theorem for the estimator $\hat T_n$, 
it will be fruitful to state a bound on its pointwise
convergence rate. 
We begin with some notation. Let $h_n^* = n^{-1/  {(d+2s)}}$. For all  $x \in \bbT^d$
and $h_n > 0$,
set
$$T_{h_n}(x) = \bbE[\hat T_n(x)],\quad \text{and}\quad q_{h_n}(x) = \bbE[\hat q_n(x)].$$
We will require the following
assumption on the kernel $K$, for some $\alpha > 0$. 
\begin{enumerate}[leftmargin=4cm,listparindent=-\leftmargin,label=\textbf{Condition K($\alpha$).}]   
\item  \label{assm:kernel}
$K \in \calC_c^\infty((0,1)^d) $ is an even kernel which satisfies
\begin{equation*}
\sup_{\xi \in \bbR^d\setminus\{0\}} \frac{|\calF[ K](\xi) - 1|}{\|\xi\|^{\alpha}} < \infty.
\end{equation*}
\end{enumerate} 
Condition~\Kernel{$\alpha$} implies  that  the Fourier transform of $K$ 
is real,  that $K$ integrates to unity,
and that its moments up to order $\lfloor \alpha-1\rfloor$ vanish~\citep{gine2016}. These properties
are essential for obtaining the sharp convergence rate of the kernel density estimator when 
the smoothness  of the underlying densities is large.  

\begin{proposition}
\label{prop:pointwise_rate_map}
Let $d \geq 3$, and $p,q\in \calC_+^s(\bbT^d)$. Assume that the kernel $K$ satisfies condition~\Kernel{$s+1$}, and that for some $c > 0$, 
\begin{align}
\label{assm:bandwidth_rate_map}
n^{-\frac 1 {d+s+2}} \ll h_n \ll n^{-\frac 1 {d+4(s-1)}}.
\end{align}
Then, for any $\epsilon >0$,
there exists a constant $C = C(\omega_s(p,q),K,s, \epsilon,d) > 0$ such that 
\begin{equation}
\label{eq:pointwise_rate_map}
\big\| T_{h_n} - T_0\big\|_{L^\infty(\bbT^d)}  \leq C h_n^{s+1-\epsilon},\quad\text{and}\quad
\sup_{x \in \bbT^d} \bbE\big\|\hat T_n(x) - T_{h_n}(x)\big\|
\leq \frac{C }{\sqrt{nh_n^{d-2}}}.
\end{equation} 
\end{proposition}
The proof of Proposition~\ref{prop:pointwise_rate_map} appears in Section~\ref{sec:pf_main_results}. 
Condition~\eqref{assm:bandwidth_rate_map} on the bandwidth $h_n$ is never vacuous
due to the assumption that $s > 2$. In particular, the bandwidth $h_n\asymp h_n^*$ satisfies this condition, 
and for this choice, the two
quantities in equation~\eqref{eq:pointwise_rate_map} are essentially on the same order. In this case,
 Proposition~\ref{prop:pointwise_rate_map} implies
\begin{align}
\label{eq:pointwise_rate_hnstar}
\sup_{x\in \bbT^d}\bbE \big\| \hat T_n(x) - T_0(x)\big\| \lesssim n^{-\frac{s+1-\epsilon}{d+2s}}.
\end{align}
This result shows that the $L^2(\bbT^d)$ convergence
rate of $\hat T_n$, stated in equation~\eqref{eq:map_rate_past}, in fact holds in a
pointwise sense, at the price of the exponent $\epsilon$, which can be made arbitrarily small. 
Furthermore, by a simple modification of Theorem~6 of~\cite{hutter2021}, it can be 
seen that, in an information-theoretic sense, no
other estimator can achieve  a rate faster
than  $n^{-(s+1)/(2s+d)}$  for estimating $T_0(x)$
at a point $x \in \bbT^d$, uniformly over all measures $P,Q$
 satisfying the conditions of Proposition~\ref{prop:pointwise_rate_map}.
In this sense, $\hat T_n$ is  a  minimax optimal estimator, again, up to the arbitrarily small exponent~$\epsilon$. 
Though we conjecture that this exponent is superfluous, 
it cannot be easily removed with our current proof technique.
Its presence is related to the fact that 
the Poisson equation $\Delta u = f$ on $\bbT^d$  is not necessarily solvable in the classical sense for $f \in L_0^\infty(\bbT^d)$, 
but admits a solution $u \in \calC^{2+\epsilon}(\bbT^d)$ whenever $f \in \calC_0^\epsilon(\bbT^d)$.

Proposition~\ref{prop:pointwise_rate_map} is comparable to  
pointwise bounds for density estimation. It is well-known that, under weaker conditions than those 
of Proposition~\ref{prop:pointwise_rate_map}, 
it holds for any $d \geq 1$ that
$$\|q_{h_n} - q\|_{L^\infty(\bbT^d)} \lesssim h_n^s, \quad \text{and}\quad \sup_{x\in\bbT^d}\bbE |\hat q_n(x) - q_{h_n} (x) |\lesssim \frac 1 {\sqrt{nh_n^d}}$$
(cf.~\cite{gine2016}).
Both of these bounds are slower than those of Proposition~\ref{prop:pointwise_rate_map} by a factor of $h_n^{-1}$,
which is a reflection of the fact that optimal transport maps typically enjoy one degree of smoothness more than
the corresponding densities. This can be inferred heuristically from equation~\eqref{eq:heuristic_approximations}.
We nevertheless note that the optimal order of the bandwidth~$h_n$ which minimizes the sum of the above two terms
is $h_n^*$, as in the case of Proposition~\ref{prop:pointwise_rate_map}.

Proposition~\ref{prop:pointwise_rate_map} suggests that if the sequence of random variables
$$\sqrt{nh_n^{d-2}}(\hat T_n(x)-T_0(x))$$ 
were to admit a non-degenerate
limiting distribution, then $h_n$ would have be taken
of lower order than the optimal bandwidth $h_n^*$, a condition often referred to as {\it undersmoothing}. 
We derive limit laws under this condition, next.

\subsection{Pointwise Central Limit Theorem}

Our main result is the following. 
\begin{theorem}
\label{thm:clt_map}
Let $d \geq 3$, and $p,q \in \calC_+^s(\bbT^d)$. 
Assume that condition~\Kernel{$s+1$} holds, and 
\begin{equation}
\label{assm:bandwidth_map_clt}
 n^{-\frac 1 {d+4}} \ll h_n \ll n^{-\frac 1 {d+ 2s}}.
\end{equation}
Then, for all $x \in \bbT^d$, 
$$ \sqrt{nh_n^{d-2}}\big(\hat T_n(x) - T_0(x)\big) \overset{w}{\longrightarrow} N(0, \Sigma(x)),\quad \text{as} ~~ n \to \infty,$$
where, for all $x \in \bbT^d$,  $\Sigma(x)$ is the positive definite
 matrix with finite entries given by
\begin{equation}\label{eq:Sigma}
\Sigma(x) = \frac{1}{p(x)} \int_{\bbR^d} \xi\xi^\top  \left(\frac{ \calF[K](\calM(x)\xi)}{2\pi \langle \calM(x)\xi,\xi\rangle}\right)^2     d\xi,\quad \text{with } \calM(x) = \nabla^2\varphi_0^*(\nabla\varphi_0(x)).
\end{equation}
\end{theorem}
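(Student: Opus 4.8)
The plan is to combine the quantitative linearization of the Monge-Amp\`ere equation (Theorem~\ref{thm:linearization}) with a direct analysis of the solution operator $L^{-1}$ applied to the smoothed empirical process, following the ``coefficient freezing'' heuristic sketched in the introduction. First I would invoke Theorem~\ref{thm:linearization} with $\hat q = \hat q_n$ on the high-probability event that $\hat q_n \in \calC_+^{2+\beta}(\bbT^d)$ with norms uniformly bounded in $n$ (this event has probability tending to one by the kernel density estimator rates of Appendix~\ref{app:kde}), yielding
\begin{equation*}
\hat T_n(x) - T_0(x) = \nabla L^{-1}[\hat q_n - q](x) + R_n(x), \qquad \|R_n\|_{L^\infty(\bbT^d)} \lesssim \|\hat q_n - q\|_{\calC^{1+\beta}(\bbT^d)}\|\hat q_n - q\|_{\calC^\beta(\bbT^d)}.
\end{equation*}
Under the undersmoothing condition~\eqref{assm:bandwidth_map_clt}, the KDE rates give $\|\hat q_n - q\|_{\calC^{1+\beta}} \lesssim_{\mathbb P} (nh_n^{d+2+2\beta})^{-1/2} + h_n^{s-1-\beta}$ and similarly one degree lower for the $\calC^\beta$ norm, and a direct check shows the product is $o((nh_n^{d-2})^{-1/2})$ precisely when $a < 1/(d+4)$; hence $\sqrt{nh_n^{d-2}}\,R_n(x) \to 0$ in probability and it suffices to find the limit of $\sqrt{nh_n^{d-2}}\,\nabla L^{-1}[\hat q_n - q](x)$.

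Next I would split $\hat q_n - q = (\hat q_n - q_{h_n}) + (q_{h_n} - q)$ as in~\eqref{eq:nabla_un_decomp}. The deterministic (bias) term $\nabla L^{-1}[q_{h_n}-q](x)$ is controlled by the $\calC^\beta$ rate of the KDE bias, $\|q_{h_n}-q\|_{\calC^\beta} \lesssim h_n^{s-\beta}$, together with the fact that $L^{-1}$ gains two derivatives (Schauder estimates for the periodic elliptic operator, Appendix~\ref{app:pde}); this is $o((nh_n^{d-2})^{-1/2})$ exactly when $a > 1/(d+2s)$, which is the left endpoint in~\eqref{assm:bandwidth_map_clt}. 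For the stochastic term, I would carry out the coefficient-freezing comparison: let $v_n$ solve~\eqref{eq:coeff_freezing_pde} with the operator frozen at $T_0(x)$ and the affine change of variables $G_x$. The difference $\nabla L^{-1}[\hat q_n - q_{h_n}](x) - \nabla v_n(x)$ should be bounded by a Schauder-type estimate measuring the modulus of continuity of the coefficients of $L$ near $T_0(x)$ times $\|\hat q_n - q_{h_n}\|$ in a suitable (likely negative H\"older or $L^2$) norm, and shown to be of smaller order than $(nh_n^{d-2})^{-1/2}$. Then, using the explicit periodic fundamental solution, I would establish the convolution representation~\eqref{eq:vn_is_convolution}--\eqref{eq:vn_is_convolution} so that
\begin{equation*}
\sqrt{nh_n^{d-2}}\,\nabla v_n(x) = \sqrt{nh_n^{d-2}}\,\big[(\Theta \star K_{h_n}) \star (Q_n - Q)\big](T_0(x)),
\end{equation*}
which is a vector-valued kernel density estimator at the point $T_0(x)$. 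A Lyapunov/Lindeberg CLT for triangular arrays of i.i.d. sums then applies: one computes that the rescaled kernel $\Theta \star K_{h_n}$, evaluated and squared and integrated against the density $p$ (since $T_0{}_\# P = Q$ means the relevant weighting at $T_0(x)$ involves $p(x)$ via the change of variables with Jacobian $\calM(x) = \nabla^2\varphi_0^*(\nabla\varphi_0(x))$), produces exactly the variance $\Sigma(x)$ in~\eqref{eq:Sigma}; the higher moments are checked to satisfy the Lindeberg condition because $nh_n^d \to \infty$. Positive definiteness and finiteness of $\Sigma(x)$ follow from the fact that $\calF[K]$ is real, bounded, and decays faster than any polynomial (Condition~\Kernel{$s+1$}), making the integrand in~\eqref{eq:Sigma} integrable, while $\xi\xi^\top$ integrated against a positive radially-controlled weight is strictly positive definite.

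The main obstacle I expect is the coefficient-freezing step: quantifying how well the solution of the true equation $Lu_n = \hat q_n - q_{h_n}$ is locally approximated by the frozen-coefficient solution $v_n$, \emph{at a single point} and with an error genuinely smaller than the fluctuation scale $(nh_n^{d-2})^{-1/2}$. Standard Schauder theory gives global norm comparisons, but here one needs a localized statement exploiting that the stochastic right-hand side $\hat q_n - q_{h_n}$ concentrates (in negative Sobolev norm) at scale $h_n$ and that the kernel $\Theta$ decays, so the relevant contribution comes from a shrinking neighborhood of $T_0(x)$ where the coefficients of $L$ are nearly constant. Making this rigorous will require careful use of the regularity $\varphi_0^* \in \calC^{4+\beta}(\bbT^d)$ (Theorem~\ref{thm:caffarelli}) to control the second-order Taylor remainder of $\nabla\varphi_0^*$, combined with decay estimates on the periodic fundamental solution $\Theta$ and its interaction with the mollifier $K_{h_n}$. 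A secondary technical point is handling the passage between the torus and $\bbR^d$ in the fundamental-solution computation and verifying that the periodic corrections to $\Theta$ contribute negligibly to the variance in the $h_n \to 0$ limit, so that the integral in~\eqref{eq:Sigma} is genuinely over $\bbR^d$ rather than a lattice sum.
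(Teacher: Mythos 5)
Your proposal follows the paper's overall architecture (linearize via Theorem~\ref{thm:linearization}, split into bias and stochastic terms, freeze coefficients, apply a triangular-array CLT), but your treatment of the bias term has a genuine gap that would shrink the admissible bandwidth range, and for $2<s\leq 3$ would yield an empty range.

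You propose controlling $\nabla L^{-1}[q_{h_n}-q]$ by the $\calC^\beta$ KDE bias rate $\|q_{h_n}-q\|_{\calC^\beta}\lesssim h_n^{s-\beta}$ together with the Schauder gain of two derivatives, which gives $\|\nabla L^{-1}[q_{h_n}-q]\|_{L^\infty}\lesssim h_n^{s-\beta}$, and you assert this is $o((nh_n^{d-2})^{-1/2})$ precisely when $a>1/(d+2s)$. A direct computation shows this is wrong: $h_n^{s-\beta}=o((nh_n^{d-2})^{-1/2})$ requires $a>1/(d+2s-2-2\beta)$, which even as $\beta\downarrow 0$ is strictly larger than $1/(d+2s)$; worse, for $2<s\leq 3$ the resulting interval $(1/(d+2s-2),1/(d+4))$ is empty. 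The bound that the theorem actually needs is $\|\nabla L^{-1}[q_{h_n}-q]\|_{L^\infty}\lesssim h_n^{s+1-\epsilon}$, and the extra order of $h_n$ comes from controlling the bias in a \emph{negative} Sobolev norm rather than $\calC^\beta$: the KDE bias satisfies $\|q_{h_n}-q\|_{H^{-1}}\lesssim h_n^{s+1}$, and the paper's Lemma~\ref{lem:bias_bound_map} converts this into an $L^\infty$ estimate on the gradient via a chain combining a gradient estimate (Lemma~\ref{lem:a_priori_gradient}), a De Giorgi--Nash--Moser bound (Lemma~\ref{lem:a_priori_moser}), self-adjointness of $E$ to pass to $H^{-2}$, and a fractional Sobolev embedding $H^{1+2\beta,r}\hookrightarrow \calC^{1+\beta}$. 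A purely H\"older-based Schauder estimate cannot see this gain.

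On the coefficient-freezing step you are closer to the mark: you correctly identify the obstacle (a global Schauder estimate does not yield a small-enough error, and the localization has to come from the concentration of $\hat q_n - q_{h_n}$ at scale $h_n$ together with decay of the Green's function). The paper's Lemma~\ref{lem:variance_bound_map} implements exactly this, but at the level of the covariance matrix of a single summand $\nabla L^{-1}[\widebar K_{h_n}(Y-\cdot)-q_{h_n}](x)$ rather than of the full process: it decomposes into three contributions ($\calV_1,\calV_2,\calV_3$) using the periodic fundamental solution $\Gamma_{E_0}$ and its pointwise decay $\|\nabla_x\Gamma_{E_0}(y,x)\|\lesssim\|x-y\|_{\bbT^d}^{1-d}$, together with pointwise regularity estimates on the solution (Lemma~\ref{lem:regularity_un}) and the ball-integral estimate of Lemma~\ref{lem:ball_integral}, showing the freezing error is $O(h_n^{4-d-2\beta})$ while the main term is $\Theta(h_n^{2-d})$. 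Because the summands are i.i.d., this single-summand analysis is equivalent to the pathwise $L^2(\Omega)$ comparison you sketch, but the tools required are fundamental-solution estimates, not Schauder theory.
The remainder of your proposal (convolution representation via the periodic Green's function, change of variables producing the Jacobian $\calM(x)$ and the factor $1/p(x)$, Lyapunov/Lindeberg verification, and positive definiteness from the small-frequency behavior of $\calF[K]$) matches the paper's proof.
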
  
Theorem~\ref{thm:clt_map} shows that the estimator $\hat T_n(x)$ obeys a central limit theorem
centered at its population counterpart $T_0(x)$, when the bandwidth $h_n$ lies in the range~\eqref{assm:bandwidth_map_clt}. 
We emphasize again that this range is never empty under the assumption $s > 2$. The upper bound of 
equation~\eqref{assm:bandwidth_map_clt} implies the undersmoothing condition $h_n = o(h_n^*)$, while the lower
bound is needed  in order for the error of our linearization of $\hat T_n$ to be of sufficiently low order.

To gain some intuition for the limiting covariance matrix $\Sigma(x)$, 
it is once again fruitful to compare our result to 
 density estimation. Under the   conditions of~Theorem~\ref{thm:clt_map}, 
it is a simple observation that for all $y \in \bbT^d$, 
$$\sqrt{nh_n^d} \big(\hat q_n(y) - q(y)\big) \overset{w}{\longrightarrow} 
N\left(0, q(y) \|K\|_{L^2(\bbR^d)}^2\right).$$
Thus, the limiting variance of $\hat q_n(y)$ is on the same scale as 
the~$L^2(\bbR^d)$ norm of the kernel~$K$. In contrast, 
the limiting covariance of the estimator $\hat T_n(x)$ satisfies
$$\text{tr}(\Sigma(x)) \asymp \frac 1 {p(x)} \int_{\bbR^d} \left(\frac{\calF[K](\xi)}{\|\xi\|}\right)^2d\xi
= \frac {\|K\|_{\dot H^{-1}(\bbR^d)}^2} {p(x)},$$
and is thus on the same scale as the 
first-order negative Sobolev seminorm of the kernel~$K$. 
This should not be surprising in view of the equivalence~\eqref{eq:stability_bound_past}.
Furthermore, the following can be said about the off-diagonal entries
of $\Sigma(x)$. 
\begin{lemma}
\label{cor:radial}
Assume the same conditions as Theorem~\ref{thm:clt_map}. Assume further
that $K$ is radial, and  that $P=Q$. 
 Then, the covariance matrix $\Sigma(x)$ defined in Theorem~\ref{thm:clt_map}
 is diagonal.  
\end{lemma}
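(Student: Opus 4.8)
The plan is to exploit the symmetry that arises when $P=Q$. In that case the Brenier map is the identity, $T_0 = \mathrm{Id}$, so $\nabla\varphi_0(x) = x$, $\varphi_0^*(y) = \|y\|^2/2$, and hence $\nabla^2\varphi_0^*(\nabla\varphi_0(x)) = I_d$ for every $x$; that is, $\calM(x) = I_d$. Plugging this into the formula~\eqref{eq:Sigma}, the covariance collapses to
$$
\Sigma(x) = \frac{1}{p(x)} \int_{\bbR^d} \xi\xi^\top \left(\frac{\calF[K](\xi)}{2\pi \|\xi\|^2}\right)^2 d\xi.
$$
So the claim reduces to showing that the matrix-valued integral $\int_{\bbR^d} \xi\xi^\top g(\|\xi\|)^2\, d\xi$ is diagonal, where $g(\|\xi\|) = \calF[K](\xi)/(2\pi\|\xi\|^2)$ is a radial function — here using the radiality of $K$, which makes $\calF[K]$ radial as well.

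The key step is then an elementary symmetry argument for the off-diagonal entries. Fix $i\neq j$. The $(i,j)$ entry of the integral is $\int_{\bbR^d} \xi_i\xi_j\, \rho(\|\xi\|)\, d\xi$ for the radial weight $\rho(\|\xi\|) = g(\|\xi\|)^2$. Consider the change of variables $\xi \mapsto \xi'$ which flips the sign of the $i$-th coordinate, $\xi_i' = -\xi_i$ and $\xi_k' = \xi_k$ for $k\neq i$. This is a measure-preserving involution of $\bbR^d$, it leaves $\|\xi\|$ (hence $\rho(\|\xi\|)$) invariant, and it sends $\xi_i\xi_j$ to $-\xi_i\xi_j$. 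Therefore the integrand is odd with respect to this involution, and the integral vanishes. Since this holds for every pair $i \neq j$, the matrix $\Sigma(x)$ has no off-diagonal entries, i.e., it is diagonal.

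The only remaining points to check are integrability — that $\int_{\bbR^d} |\xi_i\xi_j|\, g(\|\xi\|)^2\, d\xi < \infty$, so that the symmetry argument is valid and not merely a formal cancellation of divergent quantities — and the radiality of $\calF[K]$. The former follows from the same estimates already used in Theorem~\ref{thm:clt_map} to establish that $\Sigma(x)$ has finite entries: condition~\Kernel{$s+1$} with $s>2$ forces $\calF[K]$ to decay fast enough at infinity and to behave like $1 + O(\|\xi\|^{s+1})$ near the origin, so that $g(\|\xi\|)^2 \lesssim \|\xi\|^{-4}$ for large $\|\xi\|$ and $g(\|\xi\|)^2 \lesssim \|\xi\|^{-4+2(s+1)}$ — i.e.\ integrable against $\xi_i\xi_j$ — near the origin, using $d\ge 3$. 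The latter is the standard fact that the Fourier transform intertwines rotations, so a rotation-invariant kernel has a rotation-invariant transform. I do not expect any genuine obstacle here; the substance of the lemma is entirely the observation $\calM(x) = I_d$ when $P=Q$, after which the diagonality is a one-line parity argument. If one wanted the slightly stronger conclusion that $\Sigma(x) = c(x) I_d$ is a scalar multiple of the identity, one would additionally use invariance under coordinate permutations, but the statement as given only asks for diagonality.
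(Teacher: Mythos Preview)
Your proof is correct and follows the same reduction as the paper: the assumption $P=Q$ forces $\calM(x)=I_d$, after which the radiality of $K$ (hence of $\calF[K]$) makes the integrand in $\Sigma(x)$ radial apart from the factor $\xi\xi^\top$. The only difference is in how the vanishing of the off-diagonal entries is carried out: the paper passes to spherical coordinates and checks that an explicit angular integral $\int_0^\pi \cos\theta_2\,\sin^{d-3}\theta_2\,d\theta_2$ vanishes, whereas your coordinate-reflection parity argument reaches the same conclusion more directly and with less bookkeeping. One minor slip in your integrability sketch: near the origin $\calF[K](\xi)\to 1$, so the weight behaves like $\|\xi\|^{-4}$ rather than $\|\xi\|^{-4+2(s+1)}$ as you wrote; but $|\xi_i\xi_j|\cdot\|\xi\|^{-4}\lesssim\|\xi\|^{-2}$ is still locally integrable for $d\ge 3$, so the symmetry argument is justified.
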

Lemma~\ref{cor:radial} shows that, for radial kernels, the estimator $\hat T_n(x)$ has 
asymptotically independent entries when $P = Q$. This property typically fails to hold
when $P \neq Q$, however.
\subsection{Failure of Weak Convergence}
\label{sec:uniform_asymptotics} 
 
We have shown that the sequence $\hat T_n-T_0$ enjoys a pointwise central limit theorem 
under suitable conditions. We next state a negative result, showing that, under identical conditions,
the process $\hat T_n-T_0$
does not converge weakly in $L^2(\bbT^d)$ to a non-degenerate limit, when $d \geq 3$. 
\begin{theorem}
\label{thm:uniform_impossible}
Assume the same conditions as Theorem~\ref{thm:clt_map}.
Let $(\alpha_n)_{n\geq 1}$ be a positive sequence, and define the process
$$ \bbG_n = \alpha_n(\hat\varphi_n - \varphi_0),\quad n=1,2,\dots$$
viewed as a random element in $H_0^1(\bbT^d)$.  
Then, the following hold. 
\begin{enumerate}
\item[(i)]  If $\alpha_n = o(\sqrt{nh_n^{d-2}})$, then $\bbG_n$ converges weakly to 0 in $H_0^1(\bbT^d)$. 
\item[(ii)] If $\alpha_n \gtrsim \sqrt{nh_n^{d-2}}$, then $\bbG_n$ does not converge weakly in $H_0^1(\bbT^d)$. 
\end{enumerate}
\end{theorem}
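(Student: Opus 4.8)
The plan is to reduce everything, via the linearization estimate of Theorem~\ref{thm:linearization}, to the analysis of the linear map $w_n := L^{-1}[\hat q_n - q]$, so that $\hat\varphi_n - \varphi_0 = w_n + \rho_n$ with $\|\rho_n\|_{\calC^{2+\beta}(\bbT^d)}$ controlled by $\|\hat q_n - q\|_{\calC^{1+\beta}(\bbT^d)}\|\hat q_n - q\|_{\calC^\beta(\bbT^d)}$ on the high-probability event where the regularity hypotheses hold; the kernel-density rates of Appendix~\ref{app:kde} should make this remainder $o_P(1/\sqrt{nh_n^{d-2}})$ in $\calC^{2+\beta}$, hence in $H^1_0(\bbT^d)$, under the bandwidth range~\eqref{assm:bandwidth_map_clt}. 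So it suffices to prove both claims with $\bbG_n$ replaced by $\alpha_n w_n$. Next I would split $w_n = L^{-1}[\hat q_n - q_{h_n}] + L^{-1}[q_{h_n}-q]$ as in~\eqref{eq:nabla_un_decomp}; the deterministic part has $\|L^{-1}[q_{h_n}-q]\|_{H^1_0(\bbT^d)} \lesssim \|q_{h_n}-q\|_{H^{-1}(\bbT^d)} \lesssim h_n^{s+1}$, which under~\eqref{assm:bandwidth_map_clt} is $o(1/\sqrt{nh_n^{d-2}})$, so it is negligible after multiplying by $\alpha_n$ in case (i) and can only help in case (ii). Thus the whole problem is about the $H^1_0$ behaviour of the centered stochastic term $Z_n := L^{-1}[\hat q_n - q_{h_n}]$, and the key quantity is $\|Z_n\|_{H^1_0(\bbT^d)}^2 \asymp \|\hat q_n - q_{h_n}\|_{H^{-1}(\bbT^d)}^2$ by the equivalence~\eqref{eq:stability_bound_past} (equivalently, by $L$ being an isomorphism $H^1_0 \to H^{-1}$).

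For part (i): since $L: H^1_0(\bbT^d)\to H^{-1}(\bbT^d)$ is a bounded isomorphism, $\|\alpha_n Z_n\|_{H^1_0} \lesssim \alpha_n \|\hat q_n - q_{h_n}\|_{H^{-1}(\bbT^d)}$, and a direct variance computation (expand in Fourier, use that $\hat q_n$ is a smoothed empirical process) gives $\bbE\|\hat q_n - q_{h_n}\|_{H^{-1}(\bbT^d)}^2 \asymp \frac{1}{n}\sum_{k\in\bbZ^d_*}\frac{|\calF[K](h_n k)|^2}{\|k\|^2} \asymp \frac{1}{nh_n^{d-2}}$ (the sum is dominated by $|k|\lesssim 1/h_n$, giving $h_n^{2}\cdot h_n^{-d} = h_n^{2-d}$). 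Hence $\bbE\|\alpha_n Z_n\|_{H^1_0}^2 \asymp \alpha_n^2/(nh_n^{d-2}) \to 0$ when $\alpha_n = o(\sqrt{nh_n^{d-2}})$, and convergence in $L^2$-mean of the norm gives weak convergence to $0$ in $H^1_0(\bbT^d)$. For part (ii), the strategy is to exhibit a single linear functional $\ell \in H^{-1}(\bbT^d)' = H^1(\bbT^d)$ along which $\langle \bbG_n, \ell\rangle$ fails to converge weakly. The natural choice, in light of Theorem~\ref{thm:clt_map}, is a direction that isolates a fixed Fourier mode, e.g.\ test against $\ell = $ (a smooth function picking out the $H^1$-pairing with a low-frequency trigonometric polynomial), or better, against the evaluation-type functional used to produce $\hat T_n(x)$ itself. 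Concretely: if $\alpha_n \gtrsim \sqrt{nh_n^{d-2}}$ then along a subsequence $\alpha_n/\sqrt{nh_n^{d-2}} \to c \in (0,\infty]$; by Theorem~\ref{thm:clt_map} (and its proof, which already establishes the CLT for $\sqrt{nh_n^{d-2}}(\hat T_n(x)-T_0(x)) = \sqrt{nh_n^{d-2}}\,\nabla\bbG_n(x)/\alpha_n$-type quantities), the pushforward of $\bbG_n$ under a continuous linear functional on $H^1_0(\bbT^d)$ that recovers a component of $\nabla(\cdot)(x)$ converges (along this subsequence) to $N(0, c^2 \Sigma_{ii}(x))$ with $\Sigma_{ii}(x)>0$; if $c=\infty$ the same functional has variance blowing up. Either way one produces two subsequences along which the laws of this one-dimensional functional of $\bbG_n$ have different limits (or non-tight), contradicting weak convergence of $\bbG_n$ in $H^1_0(\bbT^d)$ by the continuous mapping theorem. (Cleanly: weak convergence of $\bbG_n$ would force $\langle\bbG_n,\ell\rangle$ to converge weakly for every $\ell$; picking $\ell$ to realize the component $e_i^\top\nabla(\cdot)(x)$ — which is bounded on $H^1_0$ after composing with a mollification, or one argues directly using the near-CLT for $\nabla w_n(x)$ — yields a non-tight or inconsistent family.)

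The main obstacle I anticipate is part (ii): $H^1_0(\bbT^d)$ is infinite-dimensional, and pointwise evaluation of the gradient is \emph{not} a bounded linear functional on $H^1_0(\bbT^d)$ when $d\geq 3$, so one cannot literally quote the pointwise CLT of Theorem~\ref{thm:clt_map} as a statement about a continuous functional of $\bbG_n$. The fix is to first truncate to a fixed finite set of low Fourier modes: write $\Pi_N \bbG_n$ for the projection onto modes $\|k\|\leq N$, note $\Pi_N: H^1_0 \to H^1_0$ is bounded and finite-rank (so $\bbG_n \rightsquigarrow \bbG$ in $H^1_0$ would force $\Pi_N\bbG_n \rightsquigarrow \Pi_N\bbG$), and then show that the finitely many Fourier coefficients $\{\alpha_n \calF[Z_n](k)\}_{\|k\|\leq N}$ — which are explicit normalized sums of i.i.d.\ bounded random vectors after applying $L^{-1}$, up to the coefficient-freezing error controlled exactly as in the proof of Theorem~\ref{thm:clt_map} — have variances of order $\alpha_n^2/(nh_n^{d-2})$ that diverge (if $c=\infty$) or converge to a nonzero limit with the normalization $\sqrt{nh_n^{d-2}}$ (if $c\in(0,\infty)$). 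Divergence of a coordinate variance immediately breaks tightness of $\Pi_N\bbG_n$ in the finite-dimensional space $\Pi_N H^1_0$; in the borderline case $\alpha_n \asymp \sqrt{nh_n^{d-2}}$ one instead extracts two subsequences with $\alpha_n/\sqrt{nh_n^{d-2}}$ tending to distinct constants (possible since $\alpha_n \gtrsim \sqrt{nh_n^{d-2}}$ does not force convergence of the ratio) along which $\Pi_N\bbG_n$ has distinct Gaussian limits, again contradicting weak convergence. The bookkeeping — verifying the coefficient-freezing approximation survives at the level of individual Fourier modes, and that the remainder $\rho_n$ and the bias term remain negligible after multiplication by $\alpha_n$ in all these regimes — is routine given the tools already assembled, but it is where the care is needed.
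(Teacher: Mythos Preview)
Your approach to part~(i) is correct and matches the paper: reduce to $Z_n = L^{-1}[\hat q_n - q_{h_n}]$ via linearization and bias control, then use $\bbE\|Z_n\|_{H^1_0}^2 \asymp \bbE\|\hat q_n - q_{h_n}\|_{H^{-1}}^2 \asymp 1/(nh_n^{d-2})$.

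Part~(ii), however, has a genuine gap: your variance claim for the fixed Fourier modes is wrong, and the error inverts the entire mechanism. For a \emph{fixed} $k\in\bbZ^d_*$, the coefficient $\calF[\hat q_n - q_{h_n}](k) = \frac{1}{n}\sum_i \big(\calF[K](h_n k)e^{-2\pi i\langle k,Y_i\rangle} - \bbE[\cdots]\big)$ has variance of order $|\calF[K](h_n k)|^2/n \asymp 1/n$, since $h_n k\to 0$ and $\calF[K](0)=1$. The factor $h_n^{2-d}$ in $\bbE\|Z_n\|_{H^1_0}^2$ arises only from summing over the $\sim h_n^{-d}$ modes with $\|k\|\lesssim h_n^{-1}$; no individual mode carries it. Consequently, for $\alpha_n\asymp\sqrt{nh_n^{d-2}}$, each coordinate of $\Pi_N\bbG_n$ has variance $\asymp h_n^{d-2}\to 0$, so $\Pi_N\bbG_n\to 0$ in probability --- exactly the opposite of the divergence you assert. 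Your subsequence-with-different-ratio fallback also fails: the statement must cover $\alpha_n = \sqrt{nh_n^{d-2}}$ identically, where the ratio is constant.

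The paper's argument exploits precisely this discrepancy rather than fighting it. It shows (Proposition~\ref{prop:clt_projections}) that for \emph{every} fixed $v\in H^1_0(\bbT^d)$, the projection $\langle\hat\varphi_n-\varphi_0,v\rangle_A$ is $O_p(n^{-1/2})$ plus lower-order terms --- the parametric rate. Hence if $\bbG_n$ converged weakly to some tight $\bbG$, testing against an orthonormal basis of $H^1_0$ (eigenfunctions of $E$) would force $\bbG=0$. But Proposition~\ref{prop:L2_rate_map} gives $\bbE\|\bbG_n\|_{H^1_0}^2\gtrsim 1$ and $\bbE\|\bbG_n\|_{H^1_0}^3\lesssim 1$, so $\{\|\bbG_n\|_{H^1_0}^2\}$ is uniformly integrable; weak convergence to $0$ would then force $\bbE\|\bbG_n\|_{H^1_0}^2\to 0$, a contradiction. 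The case $\alpha_n/\sqrt{nh_n^{d-2}}\to\infty$ reduces to this one. So the missing idea is that projections along fixed directions decay \emph{faster} than the norm, and the contradiction comes from uniform integrability, not from blow-up of any finite-dimensional marginal.
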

We prove Theorem~\ref{thm:uniform_impossible} in Section~\ref{sec:pf_uniform_asymptotics},
by showing that the $H_0^1(\bbT^d)$ projection of $\bbG_n$  along any fixed direction  
typically vanishes at a  significantly faster rate than the convergence rate of $\bbG_n$ 
under the $H_0^1(\bbT^d)$ norm. To establish this result, we require the same conditions on the bandwidth
$h_n$ as in Theorem~\ref{thm:clt_map}. We do not rule out the possibility that $\bbG_n$ could converge
weakly when $h_n$ falls outside of this range, and in particular when $h_n = o(n^{-1/(d+4)})$.

\section{Quantitative Linearization of the Monge-Amp\`ere Equation}
\label{sec:linearization}
The aim of this section is to prove Theorem~\ref{thm:linearization} and Proposition~\ref{prop:potential_stability}. 
We begin by stating several important properties of the operator $L$. 
\subsection{The Differential Operator $L$}
Let $p,q \in \calC_+^{2+\beta}(\bbT^d)$, where the constant $\beta$ was fixed in equation~\eqref{eq:beta}, and define the operator
$$L:H_0^2(\bbT^d) \to L_0^2(\bbT^d), \quad Lu= -\div(q\nabla u(\nabla\varphi_0^*)).$$
A simple calculation reveals that the $L^2(\bbT^d)$ adjoint of $L$ is given by
$$L^*:H_0^2(\bbT^d) \to L_0^2(\bbT^d),\quad L^* v = -\div(p \nabla v(\nabla\varphi_0)),$$
which implies, in particular, that $L$ is self-adjoint if and only if $P=Q$. 
It will be convenient to note that $L$ is 
closely-related to a distinct operator $E$, which in turn is self-adjoint even when $P \neq Q$. 
This operator is defined by
$$E:H_0^2(\bbT^d) \to L_0^2(\bbT^d), \quad Eu = -\div(A \nabla u),$$
where we fix once and for all the matrix-valued map
$$A(x) = p(x) \nabla^2\varphi_0^*(\nabla\varphi_0(x)),\quad x \in \bbT^d.$$
The relation between $L$ and $E$ is described next.
\begin{lemma}
\label{lem:relation_E_L}
Let $p,q \in \calC_+^{2+\beta}(\bbT^d)$. 
Then, for any $u \in \calC_0^2(\bbT^d)$, it holds that
\begin{equation}
\label{eq:relation_E_L_1}
Eu = L[u](\nabla\varphi_0) \det(\nabla^2\varphi_0),\quad\text{and}\quad 
 Lu = E[u](\nabla\varphi_0^*) \det(\nabla^2\varphi_0^*).
 \end{equation}
Furthermore, 
\begin{equation}
\label{eq:relation_E_L_2}
E u
 =  - \det(\nabla^2\varphi_0) \Big\{ q(\nabla\varphi_0) \langle ( \nabla^2\varphi_0)^{-1}, \nabla^2 u\rangle + \langle \nabla q(\nabla\varphi_0), \nabla u\rangle\Big\}.
 \end{equation} 

\end{lemma}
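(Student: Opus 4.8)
The plan is to prove Lemma~\ref{lem:relation_E_L} by a direct pointwise computation, viewing it as a change-of-variables identity for divergence-form operators. All maps in sight are smooth enough for the manipulations to be carried out classically: under the standing assumption $p,q\in\calC^{2+\beta}_+(\bbT^d)$, Theorem~\ref{thm:caffarelli} gives $\varphi_0,\varphi_0^*\in\calC^{4+\beta}(\bbT^d)$, so $\nabla^2\varphi_0$ and its cofactor matrix are $\calC^{2+\beta}$ and differentiable, and for $u\in\calC_0^2(\bbT^d)$ every term below is at least continuous; since both sides of \eqref{eq:relation_E_L_1}--\eqref{eq:relation_E_L_2} are continuous on $\bbT^d$, it suffices to verify each identity at an arbitrary fixed point. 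The preliminary step is to rewrite the coefficient matrix $A$. Differentiating the identity $\nabla\varphi_0^*(\nabla\varphi_0(x))=x$, which is valid by Theorem~\ref{thm:caffarelli}(iii), yields $\nabla^2\varphi_0^*(\nabla\varphi_0(x))\,\nabla^2\varphi_0(x)=I_d$, hence $A(x)=p(x)(\nabla^2\varphi_0(x))^{-1}$. Combining this with the Monge--Amp\`ere equation $p(x)=q(\nabla\varphi_0(x))\det(\nabla^2\varphi_0(x))$ from Theorem~\ref{thm:caffarelli}(v), and using that $\nabla^2\varphi_0$ is symmetric so that $\det(M)M^{-1}=\mathrm{cof}(M)$ is symmetric, gives
\[
A(x)=q(\nabla\varphi_0(x))\,\mathrm{cof}\!\big(\nabla^2\varphi_0(x)\big).
\]

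Next I establish \eqref{eq:relation_E_L_2} by expanding $Eu=-\div(A\nabla u)$ with the product rule; writing $C:=\mathrm{cof}(\nabla^2\varphi_0)$ and $q_0:=q(\nabla\varphi_0)$,
\[
Eu=-\sum_{i,j}\partial_{x_i}\big(q_0\,C_{ij}\,\partial_{x_j}u\big)
=-\sum_{i,j}\Big[(\partial_{x_i}q_0)\,C_{ij}\,\partial_{x_j}u+q_0\,(\partial_{x_i}C_{ij})\,\partial_{x_j}u+q_0\,C_{ij}\,\partial_{x_ix_j}u\Big].
\]
The middle group of terms vanishes because the cofactor matrix of a Hessian is row-divergence-free, i.e.\ the Piola identity $\sum_i\partial_{x_i}C_{ij}=0$ for each $j$, which holds pointwise here since $\varphi_0\in\calC^3$. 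For the first group, the chain rule gives $\partial_{x_i}q_0=\sum_k\partial_k q(\nabla\varphi_0)(\nabla^2\varphi_0)_{ik}$, and then $\sum_i(\nabla^2\varphi_0)_{ik}C_{ij}=\det(\nabla^2\varphi_0)\,\delta_{kj}$ by the algebraic identity $M\,\mathrm{cof}(M)=\det(M)I_d$ for symmetric $M$, so the first group equals $\det(\nabla^2\varphi_0)\langle\nabla q(\nabla\varphi_0),\nabla u\rangle$. For the third group, $\sum_{i,j}C_{ij}\partial_{x_ix_j}u=\langle C,\nabla^2u\rangle=\det(\nabla^2\varphi_0)\langle(\nabla^2\varphi_0)^{-1},\nabla^2u\rangle$. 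Assembling the three computations yields exactly \eqref{eq:relation_E_L_2}.

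Finally, for \eqref{eq:relation_E_L_1} I compute $Lu$ directly. Expanding $Lu(y)=-\div_y\big(q(y)\,\nabla u(\nabla\varphi_0^*(y))\big)$ with the product and chain rules, and using the symmetry of $\nabla^2\varphi_0^*$, gives
\[
Lu(y)=-\big\langle\nabla q(y),\nabla u(\nabla\varphi_0^*(y))\big\rangle-q(y)\big\langle\nabla^2\varphi_0^*(y),\nabla^2u(\nabla\varphi_0^*(y))\big\rangle.
\]
Evaluating at $y=\nabla\varphi_0(x)$, using $\nabla\varphi_0^*(\nabla\varphi_0(x))=x$ and $\nabla^2\varphi_0^*(\nabla\varphi_0(x))=(\nabla^2\varphi_0(x))^{-1}$ from the first step, and multiplying through by $\det(\nabla^2\varphi_0(x))$, produces precisely the right-hand side of \eqref{eq:relation_E_L_2}; hence $Eu(x)=Lu(\nabla\varphi_0(x))\det(\nabla^2\varphi_0(x))$, which is the first identity in \eqref{eq:relation_E_L_1}. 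The second identity follows by substituting $x=\nabla\varphi_0^*(z)$ and using $\det(\nabla^2\varphi_0(\nabla\varphi_0^*(z)))=1/\det(\nabla^2\varphi_0^*(z))$, itself obtained by differentiating $\nabla\varphi_0(\nabla\varphi_0^*(z))=z$. The only genuinely delicate points are bookkeeping ones: tracking transposes, which is painless because $\nabla^2\varphi_0$ and $\nabla^2\varphi_0^*$ are symmetric, and invoking the Piola identity for cofactors of Hessians, which I expect to be the main technical ingredient; it is classical, and the $\calC^{4+\beta}$ regularity granted by Caffarelli's theorem makes its pointwise use legitimate.
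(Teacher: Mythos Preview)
Your proof is correct but follows a different route from the paper. The paper establishes \eqref{eq:relation_E_L_1} first, via a weak (duality) argument: for test functions $v$, one integrates by parts to write $\langle Eu,v\rangle_{L^2}=\int \nabla v^\top A\nabla u$, then performs the change of variables $x\mapsto\nabla\varphi_0^*(y)$ and integrates by parts back to recognize the result as $\langle L[u](\nabla\varphi_0)\det(\nabla^2\varphi_0),v\rangle_{L^2}$. Equation~\eqref{eq:relation_E_L_2} is then deduced from \eqref{eq:relation_E_L_1} by directly expanding $Lu$ and substituting $y=\nabla\varphi_0(x)$---exactly the computation you also perform. You instead reverse the order: you prove \eqref{eq:relation_E_L_2} first by a pointwise expansion of $-\div(A\nabla u)$, using the key observation $A=q(\nabla\varphi_0)\,\mathrm{cof}(\nabla^2\varphi_0)$ together with the Piola identity for the cofactor of a Hessian, and then obtain \eqref{eq:relation_E_L_1} by matching against the same direct expansion of $Lu$. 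Both approaches are short; the paper's weak-form argument has the aesthetic advantage of making the change-of-variables structure transparent and not needing to name the Piola identity, while your approach is entirely pointwise and makes explicit the algebraic mechanism (the divergence-free cofactor) that underlies the identity. Given the ample regularity $\varphi_0\in\calC^{4+\beta}$ available here, there is no analytic advantage to either.
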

\begin{proof}[Proof of  Lemma~\ref{lem:relation_E_L}]
For any  $u \in H_0^2(\bbT^d)$ and any test function $v \in \calC_0^\infty(\bbT^d)$, 
\begin{align*}
\langle Eu,v\rangle_{L^2(\bbT^d)} 
 = \int_{\bbT^d} \nabla v^\top \nabla^2\varphi_0^*(\nabla\varphi_0) \nabla u dP 
 &= \int_{\bbT^d} \nabla v (\nabla\varphi_0^*)^\top \nabla^2\varphi_0^* \nabla u(\nabla\varphi_0^*)dQ,
\end{align*}
where the final inequality follows by a change of variable. Deduce that 
 \begin{align*}
\langle Eu,v\rangle_{L^2(\bbT^d)}
= \int_{\bbT^d} \nabla [v (\nabla\varphi_0^*)]^\top \nabla u(\nabla\varphi_0^*)dQ   
 &= \int_{\bbT^d} v (\nabla\varphi_0^*) Lu,
 \end{align*}
 where we again integrated by parts. The above is equivalent to 
 $$\langle Eu,v\rangle_{L^2(\bbT^d)} = \langle v, L[u](\nabla\varphi_0)\det(\nabla^2\varphi_0)\rangle_{L^2(\bbT^d)},$$
 which implies the first claim of equation~\eqref{eq:relation_E_L_1}. The second claim follows analogously. 
To deduce equation~\eqref{eq:relation_E_L_2}, note that we may expand $L$ as 
 $$Lu
  = -q \langle \nabla^2\varphi_0^*, \nabla^2 u(\nabla\varphi_0^*)\rangle - \langle \nabla q, \nabla u(\nabla\varphi_0^*)\rangle,$$
  thus
   $$L[u](\nabla\varphi_0) = -q(\nabla\varphi_0) \langle \nabla^2\varphi_0^*(\nabla\varphi_0), \nabla^2 u\rangle - \langle \nabla q(\nabla\varphi_0), \nabla u\rangle.$$
Since $\nabla\varphi_0^*$ is the inverse
of $\nabla\varphi_0$, it holds that $(\nabla^2\varphi_0)^{-1} = \nabla^2\varphi_0^*(\nabla\varphi_0)$,
and claim~\eqref{eq:relation_E_L_2} now follows from claim~\eqref{eq:relation_E_L_1}.
%
\end{proof} 
With Lemma~\ref{lem:relation_E_L} in place, the properties of the operator $L$
can be deduced from the standard theory of elliptic PDEs
subject to periodic boundary conditions, which we summarize
in Appendix~\ref{app:pde}.
Indeed, under the smoothness assumption $p,q \in \calC_+^{2+\beta}(\bbT^d)$, 
it follows from Theorem~\ref{thm:caffarelli} that the eigenvalues of $\nabla^2\varphi_0^*$, and hence of the matrix $A$,
are uniformly bounded
from below over $\bbT^d$ by positive constants depending on $\omega_{2+\beta}(p,q)$.
The operator $E$ is therefore uniformly elliptic. Furthermore, 
the entries of $A$ lie in $\calC^{1+\beta}(\bbT^d)$, 
thus it follows from Lemma~\ref{lem:isomorphism} in Appendix~\ref{app:pde} that the mapping 
$E: H_0^2(\bbT^d) \to L_0^2(\bbT^d)$ is a bijection, 
whose restriction to $\calC_0^{2+\beta}(\bbT^d)$ is a bijection onto 
$\calC_0^\beta(\bbT^d)$. 
Additionally, the following norm equivalences hold:
\begin{align}
\label{eq:norm_equivalence_E}
\|Eu\|_{L^2(\bbT^d)} \asymp \|u\|_{H^2(\bbT^d)}, \quad \|Eu\|_{\calC^{2+\beta}(\bbT^d)} \asymp \|u\|_{\calC^\beta(\bbT^d)},
\end{align}
where the implicit constants depend only on $\omega_{2+\beta}(p,q), d, \beta$. From here, we may deduce the following. 
\begin{lemma}
\label{lem:inverse_L_to_E}
Let $p,q \in \calC_+^{2+\beta}(\bbT^d)$. Then, the mapping $L:H_0^2(\bbT^d) \to L_0^2(\bbT^d)$
is a bijection, whose restriction to $\calC_0^{2+\beta}(\bbT^d)$ is a bijection onto
$\calC_0^\beta(\bbT^d)$, and satisfies the norm equivalences
$$\|Lu\|_{L^2(\bbT^d)} \asymp \|u\|_{H^2(\bbT^d)},\quad 
\|Lu\|_{\calC^\beta(\bbT^d)} \asymp \|u\|_{\calC^{2+\beta}(\bbT^d)}.$$
Furthermore, for all $f \in \calC_0^\beta(\bbT^d)$, it holds
$$E^{-1}f = L^{-1}[f(\nabla\varphi_0^*)\det(\nabla^2\varphi_0^*)],\quad\text{and}\quad L^{-1}f = E^{-1}[f(\nabla\varphi_0)\det(\nabla^2\varphi_0)].$$
\end{lemma}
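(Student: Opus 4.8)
The plan is to deduce every assertion about $L$ from the corresponding assertion about $E$ (already established above from the periodic elliptic theory of Appendix~\ref{app:pde}) by conjugating with the change-of-variables maps furnished by Lemma~\ref{lem:relation_E_L}. The structural input is that, under $p,q\in\calC_+^{2+\beta}(\bbT^d)$, Theorem~\ref{thm:caffarelli} gives $\varphi_0,\varphi_0^*\in\calC^{4+\beta}(\bbT^d)$ with $\nabla^2\varphi_0,\nabla^2\varphi_0^*\succeq I_d/\lambda$ on $\bbT^d$; hence $\nabla\varphi_0$ and $\nabla\varphi_0^*$ are mutually inverse $\calC^{3+\beta}$ diffeomorphisms of $\bbT^d$, and the Jacobian factors $\det(\nabla^2\varphi_0)$, $\det(\nabla^2\varphi_0^*)$ lie in $\calC^{2+\beta}(\bbT^d)$ and are bounded above and below by positive constants depending only on $\omega_{2+\beta}(p,q),d,\beta$.

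First I would isolate the operator $\Psi$ defined by $\Psi h=h(\nabla\varphi_0^*)\det(\nabla^2\varphi_0^*)$ and verify that it is a bounded linear bijection of $L_0^2(\bbT^d)$ onto itself and of $\calC_0^\beta(\bbT^d)$ onto itself, with bounded inverse $\Psi^{-1}h=h(\nabla\varphi_0)\det(\nabla^2\varphi_0)$ in both cases. Boundedness on $L^2$ and on $\calC^\beta$ holds because precomposition with a $\calC^{3+\beta}$ diffeomorphism is bounded on each of these spaces (for $\calC^\beta$ one uses $\beta<1$, so composing a $\calC^\beta$ function with a $\calC^1$ map stays Hölder) and because multiplication by the bounded $\calC^{2+\beta}$ function $\det(\nabla^2\varphi_0^*)$ is bounded on $L^2$ and on the Banach algebra $\calC^\beta(\bbT^d)$; that $\Psi$ and $\Psi^{-1}$ are inverse to one another is the chain-rule identity $\det(\nabla^2\varphi_0^*(y))=1/\det(\nabla^2\varphi_0(\nabla\varphi_0^*(y)))$; and that $\Psi$ preserves the mean-zero normalization is the change of variables $\int_{\bbT^d}h(\nabla\varphi_0^*(y))\det(\nabla^2\varphi_0^*(y))\,dy=\int_{\bbT^d}h(z)\,dz$, valid since $\nabla^2\varphi_0^*$ is positive definite. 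The analogous statements hold verbatim for $\Psi^{-1}$.

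Next I would observe that Lemma~\ref{lem:relation_E_L} is precisely the identity $Lu=\Psi(Eu)$ for all $u\in\calC_0^2(\bbT^d)$; since $\calC_0^2(\bbT^d)$ is dense in $H_0^2(\bbT^d)$ and the maps $E\colon H_0^2\to L_0^2$ and $\Psi\colon L_0^2\to L_0^2$ are bounded, this extends to the operator identity $L=\Psi\circ E$ on all of $H_0^2(\bbT^d)$. Composing bijections then yields at once that $L\colon H_0^2(\bbT^d)\to L_0^2(\bbT^d)$ is a bijection; restricting to $\calC_0^{2+\beta}(\bbT^d)$, on which $E$ is a bijection onto $\calC_0^\beta(\bbT^d)$ and $\Psi$ a bijection of $\calC_0^\beta(\bbT^d)$ onto itself, shows that $L$ restricts to a bijection of $\calC_0^{2+\beta}(\bbT^d)$ onto $\calC_0^\beta(\bbT^d)$. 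The two norm equivalences follow in the same manner from the boundedness of $\Psi$ and $\Psi^{-1}$ on $L_0^2$ and on $\calC_0^\beta$ together with the norm equivalences for $E$ in~\eqref{eq:norm_equivalence_E}, i.e. $\|Lu\|_{L^2}=\|\Psi(Eu)\|_{L^2}\asymp\|Eu\|_{L^2}\asymp\|u\|_{H^2}$ and $\|Lu\|_{\calC^\beta}=\|\Psi(Eu)\|_{\calC^\beta}\asymp\|Eu\|_{\calC^\beta}\asymp\|u\|_{\calC^{2+\beta}}$, with implicit constants depending only on $\omega_{2+\beta}(p,q),d,\beta$. Finally, the inversion formulas $L^{-1}f=E^{-1}[\Psi f]=E^{-1}[f(\nabla\varphi_0)\det(\nabla^2\varphi_0)]$ and $E^{-1}f=L^{-1}[\Psi^{-1}f]=L^{-1}[f(\nabla\varphi_0^*)\det(\nabla^2\varphi_0^*)]$ are read directly off $L=\Psi\circ E$, noting that the previous paragraph guarantees $\Psi f,\Psi^{-1}f\in\calC_0^\beta(\bbT^d)$ whenever $f\in\calC_0^\beta(\bbT^d)$, so that the right-hand sides are well-defined.

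I do not anticipate a genuine obstacle: the whole argument is bookkeeping around Lemma~\ref{lem:relation_E_L} and the already-established mapping properties of $E$. The only points needing a modicum of care are the verification that the change-of-variables operators $\Psi,\Psi^{-1}$ genuinely preserve both the Hölder class $\calC^\beta$ (using $\beta<1$ and that $\calC^\beta(\bbT^d)$ is closed under multiplication) and the mean-zero normalization, and the routine density step promoting the pointwise identity $Lu=\Psi(Eu)$ from $\calC_0^2(\bbT^d)$ to $H_0^2(\bbT^d)$.
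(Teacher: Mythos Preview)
Your proposal is correct and is precisely the argument the paper intends: it states the lemma immediately after establishing the mapping properties of $E$ with the phrase ``From here, we may deduce the following,'' leaving the conjugation-by-$\Psi$ bookkeeping via Lemma~\ref{lem:relation_E_L} to the reader. One small slip: in your final paragraph the labels $\Psi$ and $\Psi^{-1}$ are interchanged (since $L=\Psi\circ E$ gives $L^{-1}=E^{-1}\circ\Psi^{-1}$, not $E^{-1}\circ\Psi$), though the explicit formulas you write down are the correct ones.
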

By reasoning as in   Appendix~\ref{app:pde}, 
it can be seen that for any $f \in L_0^2(\bbT^d)$, 
  the unique  map $u \in H_0^2(\bbT^d)$
which solves the equation $Lu=f$ over $\bbT^d$ satisfies the identity
\begin{equation}\label{eq:weak_sol_L}
\langle u,v\rangle_A = \langle f, v(\nabla\varphi_0^*)\rangle_{L^2(\bbT^d)}, \quad \text{for all } v \in H_0^1(\bbT^d),
\end{equation}
where we define the bilinear form
$$\langle u,v\rangle_A := \int_{\bbT^d} \langle A\nabla u,\nabla v\rangle d\calL
 = \int_{\bbT^d} \langle \nabla\varphi_0^*(\nabla\varphi_0) \nabla u,\nabla v\rangle dP,\quad 
\text{for all } u,v \in H_0^1(\bbT^d).$$
Using again the assumption that $p,q \in \calC_+^{2+\beta}(\bbT^d)$, and hence
that $A$ has its eigenvalues bounded from above and below by positive constants over $\bbT^d$,
the above bilinear form  defines an inner product on $H_0^1(\bbT^d)$, 
which is equivalent to the standard inner product $\langle \cdot,\cdot\rangle_{H^1(\bbT^d)}$. 
The characterization~\eqref{eq:weak_sol_L} implies the following simple norm equivalence which will be used
repeatedly, and which we prove for
completeness in Appendix~\ref{app:pf_dual_norm_equiv}. 
\begin{lemma}
\label{lem:dual_norm_equiv}
Let $p,q \in \calC_+^{2+\beta}(\bbT^d)$. Then, for all $u \in H_0^2(\bbT^d)$, 
$$\|Lu\|_{H^{-1}(\bbT^d)} \asymp \|u\|_{H^{1}(\bbT^d)},$$
where the implicit constants depend only on $\omega_{2+\beta}(p,q),d,\beta$.
\end{lemma}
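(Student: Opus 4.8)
The plan is to prove both inequalities by duality, starting from the weak characterization~\eqref{eq:weak_sol_L} of the solution operator and exploiting the fact, recorded above, that the bilinear form $\langle\cdot,\cdot\rangle_A$ is an inner product on $H_0^1(\bbT^d)$ whose induced norm, which I denote $\|\cdot\|_A$, is equivalent to $\|\cdot\|_{H^1(\bbT^d)}$, with constants depending only on $\omega_{2+\beta}(p,q),d,\beta$. Two elementary facts will be used repeatedly. First, since $\varphi_0,\varphi_0^*\in\calC^{4+\beta}(\bbT^d)$ by Theorem~\ref{thm:caffarelli}, and $\nabla^2\varphi_0$ has eigenvalues bounded above and below over $\bbT^d$ by positive constants depending on $\omega_{2+\beta}(p,q)$, the map $\nabla\varphi_0$ is a $\calC^{3+\beta}$ diffeomorphism of $\bbT^d$ with inverse $\nabla\varphi_0^*$; by the chain rule and a change of variables one has $\|w\circ\nabla\varphi_0\|_{\dot{H}^1(\bbT^d)}\asymp\|w\|_{\dot{H}^1(\bbT^d)}\asymp\|w\circ\nabla\varphi_0^*\|_{\dot{H}^1(\bbT^d)}$ for all $w\in H^1(\bbT^d)$, and the same holds for the full $H^1(\bbT^d)$ norm after subtracting means, by the Poincar\'e inequality. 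Second, since every $f\in L_0^2(\bbT^d)$ is mean zero, $\langle f,v\rangle_{L^2(\bbT^d)}=\langle f,v-c\rangle_{L^2(\bbT^d)}$ for every constant $c$, so that $\|f\|_{H^{-1}(\bbT^d)}\asymp\sup\{\langle f,v\rangle_{L^2(\bbT^d)}:v\in H_0^1(\bbT^d),\ \|v\|_{H^1(\bbT^d)}\le 1\}$.

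\emph{Lower bound.} Fix $u\in H_0^2(\bbT^d)$ and set $f=Lu\in L_0^2(\bbT^d)$. Taking $v=u$ in~\eqref{eq:weak_sol_L} yields $\|u\|_A^2=\langle f,u\circ\nabla\varphi_0^*\rangle_{L^2(\bbT^d)}$. Writing $w=u\circ\nabla\varphi_0^*-\int_{\bbT^d}(u\circ\nabla\varphi_0^*)\,d\calL\in H_0^1(\bbT^d)$, the remarks above and the mean-zero property of $f$ give
\begin{align*}
\|u\|_A^2=\langle f,w\rangle_{L^2(\bbT^d)}\le\|f\|_{H^{-1}(\bbT^d)}\|w\|_{H^1(\bbT^d)}\lesssim\|f\|_{H^{-1}(\bbT^d)}\|u\|_{H^1(\bbT^d)}.
\end{align*}
Since $\|u\|_A\asymp\|u\|_{H^1(\bbT^d)}$, dividing through gives $\|u\|_{H^1(\bbT^d)}\lesssim\|Lu\|_{H^{-1}(\bbT^d)}$.

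\emph{Upper bound.} Fix $v\in H_0^1(\bbT^d)$. Because $\nabla\varphi_0^*$ is the inverse of $\nabla\varphi_0$, the function $w=v\circ\nabla\varphi_0-\int_{\bbT^d}(v\circ\nabla\varphi_0)\,d\calL$ lies in $H_0^1(\bbT^d)$ and satisfies $w\circ\nabla\varphi_0^*=v-c$ for a constant $c$. Applying~\eqref{eq:weak_sol_L} with this $w$, Cauchy--Schwarz for $\langle\cdot,\cdot\rangle_A$, and the norm equivalence $\|\cdot\|_A\asymp\|\cdot\|_{H^1(\bbT^d)}$, and using that $Lu$ is mean zero, we obtain
\begin{align*}
\langle Lu,v\rangle_{L^2(\bbT^d)}=\langle Lu,w\circ\nabla\varphi_0^*\rangle_{L^2(\bbT^d)}=\langle u,w\rangle_A\le\|u\|_A\|w\|_A\lesssim\|u\|_{H^1(\bbT^d)}\|v\|_{H^1(\bbT^d)},
\end{align*}
where the last step uses $\|w\|_{H^1(\bbT^d)}\lesssim\|v\circ\nabla\varphi_0\|_{\dot{H}^1(\bbT^d)}\lesssim\|v\|_{H^1(\bbT^d)}$. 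Taking the supremum over $\|v\|_{H^1(\bbT^d)}\le 1$ and invoking the characterization of $\|\cdot\|_{H^{-1}(\bbT^d)}$ above yields $\|Lu\|_{H^{-1}(\bbT^d)}\lesssim\|u\|_{H^1(\bbT^d)}$, which completes the proof.

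The argument is essentially a duality plus change-of-variables computation, so I do not expect any serious obstacle; the only point requiring genuine care is transferring both the mean-zero constraint and the $H^1$-norm equivalence through the diffeomorphisms $\nabla\varphi_0$ and $\nabla\varphi_0^*$ — equivalently, verifying the composition estimate $\|w\circ\nabla\varphi_0^{\pm}\|_{H^1(\bbT^d)}\asymp\|w\|_{H^1(\bbT^d)}$ on mean-zero functions — which is precisely where the regularity $\varphi_0,\varphi_0^*\in\calC^{4+\beta}(\bbT^d)$ and the two-sided bounds on the eigenvalues of $\nabla^2\varphi_0$ are used.
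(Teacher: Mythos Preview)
Your proof is correct and follows essentially the same approach as the paper's: both directions are obtained by combining the weak characterization~\eqref{eq:weak_sol_L}, the equivalence $\|\cdot\|_A\asymp\|\cdot\|_{H^1(\bbT^d)}$, the duality description of $H^{-1}(\bbT^d)$, and the composition estimate $\|w\circ\nabla\varphi_0^{\pm}\|_{H^1(\bbT^d)}\asymp\|w\|_{H^1(\bbT^d)}$ (which the paper packages as Lemma~\ref{lem:sobolev_comp}). Your explicit handling of the mean-zero constraint by subtracting averages is a slightly more careful presentation of the same step.
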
 
With these properties in place, we turn to the proof of Theorem~\ref{thm:linearization}. 
 
\subsection{Linearization of Monge-Amp\`ere: Proof of Theorem~\ref{thm:linearization}}
Recall that we define $\calQ=[0,1]^d$. Throughout the proof, let $C = C(\omega_{2+\beta}(p,q,\hat q),d,\beta) > 0$
denote a constant whose value is permitted to change from line to line.
Under the assumption $p,q,\hat q \in \calC_+^{2+\beta}(\bbT^d)$, it follows from Theorem~\ref{thm:caffarelli} that,
\begin{equation}\label{eq:linearization_prop1}
\|\hat\varphi\|_{\calC^{2+\beta}(\calQ)} \vee \|\varphi_0 \|_{\calC^{2+\beta}(\calQ)} \leq C
\end{equation}
and
\begin{equation}\label{eq:linearization_prop2}
\nabla^2 \hat \varphi \succeq I_d/C, \quad \nabla^2 \varphi_0 \succeq I_d/C,\quad\text{over } \calQ.
\end{equation}
With the above regularity estimates, we may define the operators
$$\Psi: \calC_0^{2+\beta}(\calQ) \to \calC_0^{\beta}(\calQ),\quad
\Psi[\varphi] = p-\det(\nabla^2\varphi)q(\nabla\varphi),$$
and
$$\widehat \Psi:\calC_0^{2+\beta}(\calQ)\to \calC_0^{\beta}(\calQ),\quad
\widehat \Psi[\varphi] = p-\det(\nabla^2\varphi) \hat q(\nabla\varphi).$$
As shown in the following Lemma, these operators are Fr\'echet differentiable, with 
derivatives that are closely related to the operator $E$.
\begin{lemma}
 \label{lem:frechet}
Let $p,q,\hat q \in \calC^{2+\beta}_+(\bbT^d)$. Then, the maps $\Psi$ and $\widehat \Psi$
are Fr\'echet differentiable at any strongly convex function $\varphi \in \calC_0^{2+\beta}(\calQ)$, with 
Fr\'echet derivatives respectively given for all $u \in \calC_0^{2+\beta}(\calQ)$ by 
$$\Psi_{\varphi}'u = - \det(\nabla^2\varphi)\Big[q(\nabla\varphi)\langle (\nabla^2\varphi)^{-1}, \nabla^2 u\rangle +	\langle \nabla u, \nabla q(\nabla \varphi)\rangle\Big],$$
and,
$$\widehat\Psi_{\varphi}'u  =  -
\det(\nabla^2\varphi)\Big[\hat q(\nabla\varphi)\langle (\nabla^2\varphi)^{-1}, \nabla^2 u\rangle +	\langle \nabla u, \nabla \hat q(\nabla \varphi)\rangle\Big].$$
Furthermore, there exists a constant $C = C(\omega_{2+\beta}(p,q,\hat q),\beta) > 0$ such that 
for all $u \in \calC_0^{2+\beta}(\calQ)$,
$$\Big\| \Psi[\varphi+u] - \Psi[u] - \Psi_{\varphi}'[u]\Big\|_{\calC^\beta(\calQ)}\leq C \|u\|_{\calC^{2+\beta}(\calQ)}^2.$$
The above display also continues to hold with $\Psi_\varphi$ replaced by $\hat\Psi_\varphi$. 
\end{lemma}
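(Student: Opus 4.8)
The plan is to compute the Fréchet derivative of $\Psi$ (the argument for $\widehat\Psi$ being verbatim the same with $q$ replaced by $\hat q$) by expanding each of the two factors $\det(\nabla^2\varphi)$ and $q(\nabla\varphi)$ to first order, and then to control the resulting second-order remainder using the H\"older algebra structure together with the strong convexity of $\varphi$. First I would record the classical expansions. For the determinant, $\det(\nabla^2\varphi + \nabla^2 u) = \det(\nabla^2\varphi)\big(1 + \langle (\nabla^2\varphi)^{-1}, \nabla^2 u\rangle + r_1\big)$, where $r_1$ is a polynomial in the entries of $(\nabla^2\varphi)^{-1}\nabla^2 u$ of degree between $2$ and $d$; since $\varphi$ is strongly convex with $\calC^{2+\beta}(\calQ)$ bounds (available by hypothesis, as $\varphi$ ranges over strongly convex functions with controlled norm), the entries of $(\nabla^2\varphi)^{-1}$ lie in $\calC^\beta(\calQ)$ with a bound depending only on $\omega_{2+\beta}(p,q,\hat q)$ and $\beta$, so $\|r_1\|_{\calC^\beta(\calQ)} \lesssim \|u\|_{\calC^{2+\beta}(\calQ)}^2$ by the Banach algebra property of $\calC^\beta(\calQ)$ (Appendix~\ref{app:function_spaces}), provided $\|u\|_{\calC^{2+\beta}(\calQ)}$ is bounded. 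For the second factor, a first-order Taylor expansion of $q$ about $\nabla\varphi$ gives $q(\nabla\varphi + \nabla u) = q(\nabla\varphi) + \langle \nabla q(\nabla\varphi), \nabla u\rangle + r_2$, with $\|r_2\|_{\calC^\beta(\calQ)}\lesssim \|\nabla^2 q\|_{L^\infty} \|\nabla u\|_{\calC^\beta(\calQ)}^2 \lesssim \|u\|_{\calC^{2+\beta}(\calQ)}^2$, using $q\in\calC^{2+\beta}$ and again the algebra property (one must also check the composition $x\mapsto \nabla q(\nabla\varphi(x) + \nabla u(x))$ is controlled in $\calC^\beta$, which follows from the H\"older-composition estimate since $\nabla\varphi + \nabla u$ is a bounded $\calC^\beta$ map and $\nabla^2 q\in\calC^\beta$).

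Next I would multiply the two expansions and collect terms by order. The zeroth-order term reproduces $\Psi[\varphi] = \det(\nabla^2\varphi)q(\nabla\varphi) - p$; the first-order term is exactly
$$
\Psi_\varphi' u = \det(\nabla^2\varphi)\Big[ q(\nabla\varphi)\langle (\nabla^2\varphi)^{-1}, \nabla^2 u\rangle + \langle \nabla q(\nabla\varphi), \nabla u\rangle\Big]
$$
(note: the sign in the displayed statement comes from the convention used for $E$ in Lemma~\ref{lem:relation_E_L}, and I would make sure the two conventions are reconciled—the lemma as stated carries an overall minus, which matches $\Psi_\varphi' = -E$ under $P$-pushforward bookkeeping, so I would simply track signs carefully); everything else—the product of $r_1$ with the zeroth/first order pieces, the product of the first-order determinant term with $r_2$, and $r_1 r_2$—is a remainder. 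Each such term is a product of a $\calC^\beta(\calQ)$-bounded factor (coefficients built from $\varphi$, $q$) with a quantity that is $O(\|u\|_{\calC^{2+\beta}(\calQ)}^2)$ or higher in $\calC^\beta(\calQ)$-norm, so the algebra property closes the estimate: $\|\Psi[\varphi+u] - \Psi[\varphi] - \Psi_\varphi' u\|_{\calC^\beta(\calQ)} \le C\|u\|_{\calC^{2+\beta}(\calQ)}^2$ whenever $\|u\|_{\calC^{2+\beta}(\calQ)}$ stays below a fixed threshold (needed so that $\nabla^2\varphi + \nabla^2 u$ remains invertible and the Taylor expansions converge). Fréchet differentiability then follows formally from this quadratic remainder bound together with the observation that $\Psi_\varphi'$ is a bounded linear operator $\calC_0^{2+\beta}(\calQ)\to\calC_0^\beta(\calQ)$ (its coefficients are $\calC^\beta$ and it is second order, so boundedness is immediate; that it maps into mean-zero maps is automatic since $\Psi$ does).

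The main obstacle I anticipate is purely bookkeeping rather than conceptual: ensuring all the composition and product estimates are uniform in $\varphi$ over the relevant class of strongly convex potentials, and in particular that the constant depends only on $\omega_{2+\beta}(p,q,\hat q)$ and $\beta$ as claimed. Concretely one must (i) invoke a H\"older estimate for $f\circ g$ when $f\in\calC^{1+\beta}$ and $g\in\calC^{1+\beta}$—here $f = \nabla q$ or its components, $g = \nabla\varphi$ or $\nabla\varphi + \nabla u$—with a constant depending only on the $\calC^{1+\beta}$ norms, and (ii) control $(\nabla^2\varphi + \nabla^2 u)^{-1}$ in $\calC^\beta$ via the strong convexity lower bound $\nabla^2\varphi \succeq I_d/C$ (from equation~\eqref{eq:linearization_prop2}), which survives the perturbation by $\nabla^2 u$ once $\|u\|_{\calC^{2+\beta}}$ is small enough, and then apply the Neumann-series / algebra bound for matrix inversion in $\calC^\beta$. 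These are all standard facts about H\"older spaces collected in Appendix~\ref{app:function_spaces}, so the proof reduces to assembling them in the right order; I would state the composition and inversion lemmas as cited facts and keep the computation at the level of ``expand, collect by order, bound the remainder.''
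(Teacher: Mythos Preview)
Your proposal is correct and follows essentially the same route as the paper: expand the two factors $\det(\nabla^2\varphi)$ and $q(\nabla\varphi)$ to first order, then bound the quadratic remainder in $\calC^\beta$ using the H\"older algebra and composition estimates. The paper organizes the same computation via an add-and-subtract decomposition into two pieces (one isolating the Taylor remainder of $q$, one isolating that of the determinant) and packages the $\calC^\beta$ control of a second-order Taylor remainder into a standalone lemma (Lemma~\ref{lem:holder_expansion}), together with a citation to Figalli for the determinant identity; your direct product expansion is equivalent, and your observation about tracking the overall sign is well taken.
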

The proof of Lemma~\ref{lem:frechet} is elementary, and appears in Appendix~\ref{app:pf_lem_frechet}.  
%
%
Notice that
$\hat\Psi [\hat\varphi] = \Psi[\varphi_0] = 0$, and thus
\begin{align}
\label{eq:z_step}
\widehat\Psi[\hat\varphi] - \widehat \Psi[\varphi_0]
 &= \Psi[\varphi_0]-\widehat\Psi[\varphi_0] 
 =  (\hat q(\nabla\varphi_0)-q(\nabla\varphi_0))\det(\nabla^2\varphi_0).
\end{align} 
Thus, applying Lemma~\ref{lem:frechet} under properties~\eqref{eq:linearization_prop1}--\eqref{eq:linearization_prop2}, we deduce that
\begin{align}
\label{eq:prior_psin_psi}
\left\|\widehat \Psi_{\varphi_0}'[\hat\varphi - \varphi_0] - (\hat q(\nabla\varphi_0)-q(\nabla\varphi_0))\det(\nabla^2\varphi_0)\right\|_{\calC^{\beta}(\calQ)} 
  \leq C\|\hat \varphi - \varphi_0\|_{\calC^{2+\beta}(\calQ)}^2.
\end{align}
The following Lemma will allow us to replace $\widehat \Psi_{\varphi_0}$ by 
$\Psi_{\varphi_0}$ in the above display
\begin{lemma}
\label{lem:comparing_psin_psi}
Assume the same conditions as Theorem~\ref{thm:linearization}.
Then,   
there exist a constant $C =C(\omega_{2+\beta}(p,q,\hat q),d,\beta) > 0$ such that
for all $u \in \calC^{2+\beta}_0(\bbT^d)$, 
\begin{align*} 
\big\|\widehat\Psi_{\varphi_0}'[u] -  \Psi_{\varphi_0}'[u]\big\|_{\calC^\beta(\bbT^d)}  
&\leq C \Big(\|u\|_{\src} \|\hat q-q\|_{\calC^{\beta}(\bbT^d)}+
\|u\|_{\calC^{1+\beta}(\bbT^d)} \|\hat q-q\|_{\calC^{1+\beta}(\bbT^d)}\Big). 
\end{align*}
\end{lemma}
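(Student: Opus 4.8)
The plan is to start from the closed-form expressions for the two Fréchet derivatives furnished by Lemma~\ref{lem:frechet} (specialized to $\varphi=\varphi_0$), subtract them, and estimate the resulting expression directly in $\calC^\beta(\bbT^d)$ by repeated use of the Banach-algebra and composition properties of periodic Hölder spaces recorded in Appendix~\ref{app:function_spaces}. Subtracting the formula for $\Psi_{\varphi_0}'u$ from that for $\widehat\Psi_{\varphi_0}'u$, every term not carrying a factor of $q$ or $\hat q$ cancels, and one is left with the identity
\[
\widehat\Psi_{\varphi_0}'[u] - \Psi_{\varphi_0}'[u]
= -\det(\nabla^2\varphi_0)\Big[\, (\hat q - q)(\nabla\varphi_0)\,\big\langle (\nabla^2\varphi_0)^{-1}, \nabla^2 u\big\rangle
 + \big\langle \nabla u,\, \big(\nabla(\hat q - q)\big)(\nabla\varphi_0)\big\rangle\,\Big].
\]
It therefore suffices to bound the $\calC^\beta(\bbT^d)$ norm of each of the two bracketed summands, weighted by $\det(\nabla^2\varphi_0)$; the two-term structure of the claimed bound is already visible here, the first summand being controlled by $\|\hat q - q\|_{\calC^\beta}$ and the second by $\|\hat q - q\|_{\calC^{1+\beta}}$.

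Next I would assemble the Hölder-calculus ingredients. Since $p,q \in \calC_+^{2+\beta}(\bbT^d)$, Theorem~\ref{thm:caffarelli} gives $\varphi_0 \in \calC^{2+\beta}(\bbT^d)$ with norm controlled by $\omega_{2+\beta}(p,q),d,\beta$ and $\nabla^2\varphi_0 \succeq I_d/\lambda$; hence $\det(\nabla^2\varphi_0)\in\calC^\beta(\bbT^d)$ (a polynomial in the entries of $\nabla^2\varphi_0\in\calC^\beta$) and $(\nabla^2\varphi_0)^{-1}\in\calC^\beta(\bbT^d)$ (rational functions of those entries with determinant bounded away from zero, so one composes with a smooth function), both with $\calC^\beta$ norms depending only on $\omega_{2+\beta}(p,q),d,\beta$. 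I would then invoke the composition estimate for Hölder maps: if $g\in\calC^\beta(\bbT^d)$ and $\Phi$ is Lipschitz and $\bbZ^d$-equivariant, then $g\circ\Phi\in\calC^\beta(\bbT^d)$ with $\|g\circ\Phi\|_{\calC^\beta(\bbT^d)}\lesssim (1+\mathrm{Lip}(\Phi))^\beta\|g\|_{\calC^\beta(\bbT^d)}$. Taking $\Phi=\nabla\varphi_0$ (Lipschitz with constant $\lesssim\|\varphi_0\|_{\calC^2}$) gives $\|(\hat q-q)(\nabla\varphi_0)\|_{\calC^\beta(\bbT^d)}\lesssim\|\hat q-q\|_{\calC^\beta(\bbT^d)}$ and, applied componentwise to $\nabla(\hat q-q)$, $\|(\nabla(\hat q-q))(\nabla\varphi_0)\|_{\calC^\beta(\bbT^d)}\lesssim\|\hat q-q\|_{\calC^{1+\beta}(\bbT^d)}$.

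It then remains to multiply these out using the algebra property $\|fg\|_{\calC^\beta}\lesssim\|f\|_{\calC^\beta}\|g\|_{\calC^\beta}$ together with $\|\nabla^2u\|_{\calC^\beta(\bbT^d)}\lesssim\|u\|_{\calC^{2+\beta}(\bbT^d)}$ and $\|\nabla u\|_{\calC^\beta(\bbT^d)}\lesssim\|u\|_{\calC^{1+\beta}(\bbT^d)}$: the first summand is then bounded by $C\|\hat q-q\|_{\calC^\beta(\bbT^d)}\|u\|_{\calC^{2+\beta}(\bbT^d)}$ (the norm written $\|u\|_{\src}$ in the statement), the second by $C\|\hat q-q\|_{\calC^{1+\beta}(\bbT^d)}\|u\|_{\calC^{1+\beta}(\bbT^d)}$, and adding the two yields the claim (all constants depending only on $\omega_{2+\beta}(p,q,\hat q),d,\beta$). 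The step I would regard as the crux — mild as it is — is the composition bookkeeping: one must ensure that the factor $(\hat q-q)\circ\nabla\varphi_0$ in the first summand is estimated using only the $\calC^\beta$ norm of $\hat q-q$, rather than its $\calC^{1+\beta}$ norm, since it is precisely this asymmetry between the two summands that produces the two different norms of $\hat q-q$ on the right-hand side and makes the estimate sharp enough for the proof of Theorem~\ref{thm:linearization}. Everything else is routine periodic Hölder calculus, carried out on $\calQ$ and transferred to $\bbT^d$ by periodicity.
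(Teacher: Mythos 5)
Your proposal is correct and follows essentially the same route as the paper: subtract the two Fréchet-derivative formulas from Lemma~\ref{lem:frechet}, so that everything except the $q$- and $\hat q$-dependent pieces cancels, then apply the Hölder product and composition estimates (Lemmas~\ref{lem:holder_products} and~\ref{lem:holder_comp}) to the two remaining summands to obtain the two-term bound with the $\calC^\beta$/$\calC^{1+\beta}$ asymmetry you correctly identify as the crux. (You also correctly restore the $(\nabla^2\varphi_0)^{-1}$ that the paper's displayed identity drops by typo, and read the statement's $\widehat\Psi'-\widehat\Psi'$ as the intended $\widehat\Psi'-\Psi'$.)
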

We defer the proof to Appendix~\ref{app:lem_comparing_psin_psi}.
Implicit in the above assertion is the fact that the map $\widehat\Psi_{\varphi_0}'[u] -  \Psi_{\varphi_0}'[u]$ 
is $\bbZ^d$-periodic, which can be seen by direct inspection using Theorem~\ref{thm:caffarelli}. 
The latter result also implies that the map $u=\hat\varphi - \varphi_0$ is $\bbZ^d$-periodic, 
and thus lies in $\calC_0^{2+\beta}(\bbT^d)$ since $\int_{\bbT^d}(\hat \varphi-\varphi_0)d\calL = 
\int_\calQ (\hat \varphi-\varphi_0)d\calL = 0$.  
Returning to equation~\eqref{eq:prior_psin_psi} and applying
 Lemma~\ref{lem:comparing_psin_psi}, we deduce that
\begin{align}\label{eq:pf_linearization_step_three}
\nonumber\Big\|   \Psi_{\varphi_0}'[\hat\varphi &- \varphi_0] -(\hat q(\nabla\varphi_0)-q(\nabla\varphi_0))\det(\nabla^2\varphi_0)\Big\|_{\calC^{\beta}(\bbT^d)}  \\ 
  &\leq C\Big(\|\hat\varphi - \varphi_0\|_{\src}^2+\|\hat q-q\|_{\calC^{\beta}(\bbT^d)}\|\hat\varphi - \varphi_0\|_{\src}\\ \nonumber 
&  \hspace{2in}+
  \|\hat\varphi-\varphi_0\|_{\calC^{1+\beta}(\bbT^d)}\|\hat q - q\|_{\calC^{1+\beta}(\bbT^d)}\Big).
\end{align}
We bound the right-hand side using the following stability bound, whose proof appears in the following subsection.
\begin{proposition}
\label{prop:potential_stability}
Assume the same conditions as Theorem~\ref{thm:linearization}.
Then, there exists a constant $c = c(\omega_{2+\beta}(p,q,\hat q),d,\beta) > 0$ such that
\begin{equation}
\label{eq:thm_potential_stability_C2}
\|\hat \varphi - \varphi_0\|_{\calC^{2+\beta}(\bbT^d)} \leq c \|\hat q - q \|_{\calC^\beta(\bbT^d)}.
\end{equation}
\end{proposition}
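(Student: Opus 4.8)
The plan is to prove \eqref{eq:thm_potential_stability_C2} by iterating the quantitative linearization that is established above independently of the present statement, using a qualitative stability estimate only to launch the iteration. Throughout, write $u=\hat\varphi-\varphi_0$; by Theorem~\ref{thm:caffarelli} and the assumption $p,q,\hat q\in\calC_+^{2+\beta}(\bbT^d)$, we have $u\in\calC_0^{2+\beta}(\bbT^d)$ with $\|u\|_{\calC^{2+\beta}(\bbT^d)}\le 2\lambda$ and $\nabla^2\varphi_0,\nabla^2\hat\varphi\succeq I_d/\lambda$, where $\lambda=\lambda(\omega_{2+\beta}(p,q,\hat q),d,\beta)$. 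The key observation is that inequality~\eqref{eq:pf_linearization_step_three}, together with the chain of estimates leading to it (equation~\eqref{eq:prior_psin_psi} and Lemmas~\ref{lem:frechet}--\ref{lem:comparing_psin_psi}), is derived without ever invoking Proposition~\ref{prop:potential_stability}, and may therefore be used here freely. Moreover, comparing the formula for $\Psi_{\varphi_0}'$ in Lemma~\ref{lem:frechet} with the identity~\eqref{eq:relation_E_L_2}, one sees that $\Psi_{\varphi_0}'=E$, so by the invertibility of $E$ and the norm equivalence~\eqref{eq:norm_equivalence_E} we have $\|u\|_{\calC^{2+\beta}(\bbT^d)}\lesssim\|Eu\|_{\calC^{\beta}(\bbT^d)}$, with implicit constant depending only on $\omega_{2+\beta}(p,q,\hat q),d,\beta$.

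Substituting $\Psi_{\varphi_0}'=E$ into~\eqref{eq:pf_linearization_step_three} and bounding $\|(\hat q(\nabla\varphi_0)-q(\nabla\varphi_0))\det(\nabla^2\varphi_0)\|_{\calC^{\beta}(\bbT^d)}\lesssim\|\hat q-q\|_{\calC^{\beta}(\bbT^d)}$ by the elementary H\"older product and composition estimates (using $\nabla\varphi_0\in\calC^{1+\beta}$, $\det\nabla^2\varphi_0\in\calC^{\beta}$, both with norms controlled by $\omega_{2+\beta}(p,q,\hat q)$), there is a constant $C=C(\omega_{2+\beta}(p,q,\hat q),d,\beta)$ with
\[
\|u\|_{\calC^{2+\beta}} \le C\|\hat q-q\|_{\calC^{\beta}} + C\|u\|_{\calC^{2+\beta}}^{2} + C\|\hat q-q\|_{\calC^{\beta}}\|u\|_{\calC^{2+\beta}} + C\|u\|_{\calC^{1+\beta}}\|\hat q-q\|_{\calC^{1+\beta}}.
\]
Interpolating $\calC^{1+\beta}$ between $\calC^{\beta}$ and $\calC^{2+\beta}$, and using $\|\hat q-q\|_{\calC^{2+\beta}}\le 2\lambda$ together with $\|u\|_{\calC^{1+\beta}}\le\|u\|_{\calC^{2+\beta}}$, the last term is $\lesssim\|u\|_{\calC^{2+\beta}}\|\hat q-q\|_{\calC^{\beta}}^{1/2}$, so after renaming constants there is $C'=C'(\omega_{2+\beta}(p,q,\hat q),d,\beta)$ with
\[
\|u\|_{\calC^{2+\beta}}\Bigl(1-C'\|u\|_{\calC^{2+\beta}}-C'\|\hat q-q\|_{\calC^{\beta}}-C'\|\hat q-q\|_{\calC^{\beta}}^{1/2}\Bigr)\le C'\|\hat q-q\|_{\calC^{\beta}}.
\]
Consequently, whenever $\|u\|_{\calC^{2+\beta}}\le(6C')^{-1}$ and $\|\hat q-q\|_{\calC^{\beta}}\le\delta_0:=\min\{(6C')^{-1},(6C')^{-2}\}$, the bracketed factor exceeds $1/2$ and we obtain the desired bound $\|u\|_{\calC^{2+\beta}}\le 2C'\|\hat q-q\|_{\calC^{\beta}}$.

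The \textbf{main obstacle} is to secure the smallness hypothesis $\|u\|_{\calC^{2+\beta}}\le(6C')^{-1}$; this is precisely a \emph{qualitative} $\calC^{2+\beta}$-stability statement, namely that for each $\eta>0$ there exists $\delta_1=\delta_1(\omega_{2+\beta}(p,q,\hat q),d,\beta,\eta)>0$ such that $\|\hat q-q\|_{\calC^{\beta}}\le\delta_1$ implies $\|\hat\varphi-\varphi_0\|_{\calC^{2+\beta}}\le\eta$. I would import this from \cite{gunsilius2022}, which upgrades the strong Sobolev stability of \cite{dephilippis2013} to the $\calC^{2+\beta}$ scale under Caffarelli's hypotheses; a self-contained variant proceeds by contradiction and compactness --- if the statement failed along data with $\omega_{2+\beta}$ uniformly bounded, the uniform bounds of Theorem~\ref{thm:caffarelli} and the compact embedding $\calC^{2+\beta}\hookrightarrow\calC^{2+\beta'}$ (any $\beta'<\beta$) would extract subsequential limits of $\varphi_0$ and $\hat\varphi$ that both solve the Monge-Amp\`ere equation for the common limiting data, are strongly convex, are normalized to have mean zero on $\calQ$, and are periodic modulo $\|\cdot\|^2/2$, so that uniqueness in Theorem~\ref{thm:caffarelli} would force them to coincide --- though this variant yields smallness only in $\calC^{2+\beta'}$ and would therefore require an additional interpolation to absorb the quadratic remainder. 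Granting the qualitative estimate, choose $\eta=(6C')^{-1}$ and set $\delta_*=\min\{\delta_0,\delta_1\}$, which depends only on $\omega_{2+\beta}(p,q,\hat q),d,\beta$: for $\|\hat q-q\|_{\calC^{\beta}}\le\delta_*$ the bootstrap of the previous paragraph applies, while for $\|\hat q-q\|_{\calC^{\beta}}>\delta_*$ the crude a priori bound $\|u\|_{\calC^{2+\beta}}\le 2\lambda\le(2\lambda/\delta_*)\|\hat q-q\|_{\calC^{\beta}}$ suffices. Taking $c=\max\{2C',2\lambda/\delta_*\}$ yields~\eqref{eq:thm_potential_stability_C2}.
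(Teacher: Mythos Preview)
Your argument is essentially correct but takes a genuinely different route from the paper. You bootstrap the stability bound from the linearization inequality~\eqref{eq:pf_linearization_step_three} itself, using an imported qualitative $\calC^{2+\beta}$-stability result to initialize; the paper instead gives a direct, fully quantitative proof that does not rely on any external qualitative input. Concretely, the paper writes the difference of the two Monge--Amp\`ere equations as a \emph{linear} uniformly elliptic equation $\langle \calA,\nabla^2(\hat\varphi-\varphi_0)\rangle=q(\nabla\varphi_0)-\hat q(\nabla\hat\varphi)$, where $\calA$ is obtained by integrating the derivative of $\det$ along the segment between $\nabla^2\varphi_0$ and $\nabla^2\hat\varphi$ (the Caffarelli trick from~\cite{caffarelli1995}). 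Interior Schauder estimates then give $\|\hat\varphi-\varphi_0\|_{\calC^{2+\beta}}\lesssim\|\hat\varphi-\varphi_0\|_{L^\infty}+\|\hat q(\nabla\hat\varphi)-q(\nabla\varphi_0)\|_{\calC^\beta}$; the second term is bounded by $\|\hat\varphi-\varphi_0\|_{\calC^{1+\beta}}+\|\hat q-q\|_{\calC^\beta}$, the $\calC^{1+\beta}$ piece is absorbed via the interpolation inequality of Lemma~\ref{lem:holder_interpolation}, and the residual $\|\hat\varphi-\varphi_0\|_{L^\infty}$ is closed by the Sobolev inequality together with the $H^{-1}$ stability bound of Lemma~\ref{lem:L2_stab}, which gives $\|\hat\varphi-\varphi_0\|_{L^\infty}\lesssim\|\hat q-q\|_{H^{-1}}\lesssim\|\hat q-q\|_{\calC^\beta}$ directly.

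The trade-off: your approach reuses machinery already in place and is conceptually economical once one accepts the qualitative input, but the latter is a genuine cost---the compactness variant you sketch only gives smallness in $\calC^{2+\beta'}$ for $\beta'<\beta$ (as you note), and citing~\cite{gunsilius2022} requires checking that the result there is stated in the periodic setting with the correct uniform dependence on $\omega_{2+\beta}$. The paper's argument avoids this entirely: no smallness hypothesis is needed because the absorption happens via interpolation and the $L^2$ stability, both of which are quantitative from the outset, yielding a cleaner constant with transparent dependence.
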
 
Applying Proposition~\ref{prop:potential_stability} to equation~\eqref{eq:pf_linearization_step_three}, and using
the fact that $\Psi'_{\varphi_0} = E$ by Lemma~\ref{lem:relation_E_L}, we arrive at
\begin{align*}
\left\|E[\hat\varphi-\varphi_0]  - \det(\nabla^2\varphi_0)(\hat q(\nabla\varphi_0) - q(\nabla\varphi_0)) \right\|_{\calC^\beta(\bbT^d)}  
 \leq C\|\hat q - q\|_{\calC^{\beta}(\bbT^d)}\|\hat q-q\|_{\calC^{1+\beta}(\bbT^d)}.
\end{align*}
Now, recall  that the restriction of $E$ 
to $\calC^{2+\beta}_0(\bbT^d)$ is a bijection onto $\calC^\beta_0(\bbT^d)$, with bounded inverse $E^{-1}$. 
Furthermore, the function $\det(\nabla^2\varphi_0)(\hat q (\nabla\varphi_0)- q(\nabla\varphi_0))$
is easily seen to have mean zero, and thus lies in $\calC_0^\beta(\bbT^d)$
by assumption. It follows that
\begin{align*}
\left\|\hat\varphi-\varphi_0 - E^{-1}\left[\det(\nabla^2\varphi_0)(\hat q(\nabla\varphi_0) - q(\nabla\varphi_0))\right]\right\|_{\calC^{2+\beta}(\bbT^d)} \leq  C \|\hat q - q\|_{\calC^{\beta}(\bbT^d)} \|\hat q-q\|_{\calC^{1+\beta}(\bbT^d)}.
\end{align*}
Recalling Lemma~\ref{lem:inverse_L_to_E}, we   deduce
\begin{align}
\left\| (\hat\varphi-\varphi_0)  - L^{-1}\left[ \hat q - q  \right]\right\|_{\calC^{2+\beta}(\bbT^d)}   
 \leq C\|\hat q - q\|_{\calC^{\beta}(\bbT^d)} \|\hat q-q\|_{\calC^{1+\beta}(\bbT^d)},
\end{align}
thus proving the claim.\qed 

\subsection{H\"older Stability Bound: Proof of Proposition~\ref{prop:potential_stability}}
We will  again apply Lemma~\ref{lem:frechet}. Reasoning as in equation~\eqref{eq:z_step}, 
and applying the mean value theorem, we have
$$\big(\hat q(\nabla\varphi_0) - q(\nabla\varphi_0)\big) \det(\nabla^2\varphi_0)
 = \hat\Psi[\hat\varphi] - \hat\Psi[\varphi_0]
 = \hat\Psi_{\tilde\varphi}'[\hat\varphi -\varphi_0],$$
for some map $\tilde \varphi$ of the form $\tilde\varphi = (1-t)\varphi_0+t \hat\varphi$ 
for $t \in [0,1]$.
In particular, it follows that 
\begin{equation}\label{eq:MA_is_linear}
\langle \calA, \nabla^2 (\hat\varphi-\varphi_0)\rangle + \langle b, \nabla (\hat\varphi-\varphi_0)\rangle 
= \hat q(\nabla\varphi_0) - q(\nabla\varphi_0),
\end{equation}
where 
\begin{align*}
\calA = -\frac{\det(\nabla^2\tilde\varphi)}{\det(\nabla^2\varphi_0)} \hat q(\nabla\tilde\varphi) (\nabla^2\tilde\varphi)^{-1},
\quad \text{and}
\quad b = -\frac{\det(\nabla^2\tilde\varphi)}{\det(\nabla^2\varphi_0)}  \nabla \hat q(\nabla\tilde\varphi).
\end{align*}
Notice once again that $\hat\varphi - \varphi_0$ is
$\bbZ^d$-periodic by Theorem~\ref{thm:caffarelli}, and likewise, the entries of $\calA$ 
and the right-hand side of~\eqref{eq:MA_is_linear} are $\bbZ^d$-periodic. Thus, 
the equality~\eqref{eq:MA_is_linear} defines a second-order elliptic equation over $\bbT^d$.
From equation~\eqref{eq:linearization_prop2}, and the boundedness and positivity of the densities
$p,q,\hat q$, we have  $\calA  \succeq  I_d/C$ over $\bbT^d$. 
Therefore, the operator $\langle \calA, \nabla^2(\cdot)\rangle+\langle b,\nabla(\cdot)\rangle$ is uniformly elliptic. Furthermore, using
property~\eqref{eq:linearization_prop1} and the regularity of the densities, the map $\calA$ satisfies the conditions
of the  interior Schauder estimates and the De Giorgi-Nash-Moser estimates (cf. Lemmas~\ref{lem:a_priori}--\ref{lem:a_priori_moser}).
We deduce,
\begin{align*}
\|\hat\varphi-\varphi_0\|_{\calC^{2+\beta}(\bbT^d)} 
 &\lesssim
\|\hat\varphi - \varphi_0\|_{L^2(\bbT^d)} + \left\| \hat q(\nabla\varphi_0) - q(\nabla\varphi_0)\right\|_{\calC^\beta(\bbT^d)} \\
 &\lesssim  
\|\hat\varphi - \varphi_0\|_{L^2(\bbT^d)} + \left\| \hat q  - q \right\|_{\calC^\beta(\bbT^d)}
\end{align*}
where the final inequality follows from Lemma~\ref{lem:holder_comp}, 
using the fact that  $\nabla\varphi_0$
is a $\calC^{1+\beta}(\bbT^d)$-diffeomorphism.
To conclude, we use the stability bound of Lemma~\ref{lem:L2_stab} to obtain
$$\|\hat\varphi - \varphi_0\|_{H^1(\bbT^d)} \lesssim \|\hat q - q\|_{H^{-1}(\bbT^d)}.$$
Combining the preceding two displays leads to the claim.\qed 

\begin{remark}[Different Source Measures]\label{rem:two_sample}
By a simple extension of the preceding proofs, it is also 
possible to derive a linearization bound under variations of both
the source and target measures. 
Concretely, it can be shown that for any densities $p,q,\hat p,\hat q \in \calC_+^{2+\beta}(\bbT^d)$, if 
 $\widebar \varphi$ denotes the unique continuously differentiable convex function,
with mean zero over $\calQ$,  whose gradient pushes forward
$\hat p$ onto $\hat q$, then, one has
\begin{align*}
\Big\| (\widebar \varphi-\varphi_0) &- E^{-1}[p-\hat p] - 
L^{-1}[\hat q-q]\Big\|_{\calC^{2+\beta}(\bbT^d)}\\ 
&\lesssim \|\hat p-p\|_{\calC^{1+\beta}(\bbT^d)} \|\hat p- p\|_{\calC^\beta(\bbT^d)}
+ \|\hat q-q\|_{\calC^{1+\beta}(\bbT^d)} \|\hat q- q\|_{\calC^\beta(\bbT^d)}.
\end{align*}
Such a bound may be used to derive 
the pointwise limiting distribution 
of the optimal transport map pushing forward
a kernel density estimator onto another, which would yield a two-sample analogue
of Theorem~\ref{thm:clt_map}. We omit this extension in the interest of brevity.
\end{remark}

\section{Bias and Variance Bounds}
\label{sec:bias_variance_bounds}
We now return our attention  to the estimator $\hat T_n = \nabla\hat\varphi_n$ 
defined in Section~\ref{sec:introduction}. 
Theorem~\ref{thm:linearization}
suggests that for any given $x \in \bbT^d$, the sequence $(\hat T_n - T_0)(x)$ is well-approximated
by  
\begin{align}\label{eq:bias_variance}
\nabla L^{-1}[\hat q_n-q] (x)
 &= \nabla L^{-1}[\hat q_n - q_{h_n}](x) + \nabla L^{-1}[q_{h_n} - q](x). 
\end{align}
As discussed in Section~\ref{sec:introduction},   the  stochastic  term on the right-hand side of the above display 
will characterize the fluctuations of $\hat T_n(x)$ around its mean, 
while the deterministic term above will characterizes the bias of $\hat T_n(x)$. We analyze  these two quantities next.

By linearity of $L$, the stochastic term  can be decomposed as
\begin{align}
\label{eq:stochastic_term_summation}
\nabla L^{-1}[\hat q_n - q_{h_n}](x)
  = \frac 1 n \sum_{i=1}^n \nabla L^{-1} \big[ \widebar K_{h_n}(Y_i-\cdot) - q_{h_n}\big](x),
\end{align}
where $\widebar K_{h_n}$ denotes the $\bbZ^d$-periodization of $K_{h_n}$, as defined in Appendix~\ref{app:kde}. 
The following result describes the limiting covariance of the summands in the above display.
\begin{lemma} 
\label{lem:variance_bound_map}
Let $p,q \in \calC_+^{2+\beta}(\bbT^d)$ for some $\beta \in (0,1)$, and let the kernel $K$ satisfy condition~\Kernel{$\gamma$} for some $\gamma > 0$. 
Let $Y \sim Q$. 
Then, there exist constants $C,\epsilon > 0$
depending only on $\omega_{2+\beta}(p,q), K, d,\beta$ such that
for any $x \in \bbT^d$, 
$$\left\| h_n^{d-2} \Cov\Big\{ \nabla L^{-1}\big[ \widebar K_{h_n}(Y-\cdot) - q_{h_n}\big](x)\Big\}  - \Sigma(x)\right\| 
\leq C h_n^{\epsilon},$$
where $\Sigma(x)$ is the positive definite matrix defined in equation~\eqref{eq:Sigma}.
\end{lemma}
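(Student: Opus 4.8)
The plan is to compare $\nabla L^{-1}$ at the point $x$ with its frozen-coefficient analogue, for which the quantity in question is an explicit convolution whose covariance can be computed by Fourier analysis. Fix $x\in\bbT^d$, abbreviate $q_\star=q(T_0(x))$ and $\calM=\calM(x)=\nabla^2\varphi_0^*(\nabla\varphi_0(x))$, and set $f_Y=\widebar K_{h_n}(Y-\cdot)-q_{h_n}$, so that the summand in~\eqref{eq:stochastic_term_summation} is $\nabla L^{-1}[f_Y](x)$ and $\bbE_{Y\sim Q}[f_Y]=0$ pointwise. Let $v_Y$ be the solution of the frozen equation~\eqref{eq:coeff_freezing_pde} with $\hat q_n$ replaced by the single-atom estimator $\widebar K_{h_n}(Y-\cdot)$; by the computation leading to~\eqref{eq:vn_is_convolution},
$$\nabla v_Y(x)=\Phi_{h_n}(Y-T_0(x))-\bbE_{Y'\sim Q}\big[\Phi_{h_n}(Y'-T_0(x))\big],\qquad \Phi_{h_n}:=\Theta\star K_{h_n},$$
where $\Theta$ is the map of~\eqref{eq:vn_is_convolution}. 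It is odd, satisfies $\|\Theta(z)\|\lesssim\|z\|_{\bbT^d}^{1-d}$ (being the gradient of a second-order fundamental solution, $d\geq3$), and — as follows from the closed form of the fundamental solution of the constant-coefficient operator obtained from~\eqref{eq:coeff_freezing_pde} after the affine substitution $w=G_x(z)$ — its Fourier coefficients are $\calF[\Theta](0)=0$ and, for $\xi\in\bbZ^d\setminus\{0\}$,
$$\calF[\Theta](\xi)=\frac{-i\,\calM^{-1}\xi}{2\pi\,q_\star\,\langle\calM^{-1}\xi,\xi\rangle}.$$

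The main step — and the only nontrivial one — is the freezing estimate $\bbE_{Y\sim Q}\big\|\nabla L^{-1}[f_Y](x)-\nabla v_Y(x)\big\|^2\leq C\,h_n^{2-d+\epsilon}$ for some $\epsilon=\epsilon(\omega_{2+\beta}(p,q),K,d,\beta)>0$. Using Lemma~\ref{lem:inverse_L_to_E} I would rewrite $\nabla L^{-1}[f_Y](x)=\nabla E^{-1}[g_Y](x)$ with $g_Y=(f_Y\circ\nabla\varphi_0)\det(\nabla^2\varphi_0)$ and $E=-\div(A\nabla\,\cdot\,)$, $A=p\,\nabla^2\varphi_0^*(\nabla\varphi_0)$, then split the expectation according to whether $Y$ lies in a fixed ball $B(T_0(x),\delta)$. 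Outside the ball the singular part of $g_Y$ vanishes on a fixed neighborhood of $x$, so the interior Schauder and energy estimates of Appendix~\ref{app:pde} (Lemmas~\ref{lem:a_priori_schauder},~\ref{lem:dual_norm_equiv}) bound both $\nabla E^{-1}[g_Y](x)$ and $\nabla v_Y(x)$ by $O(1)$ in $L^2(Q)$, hence by $o(h_n^{2-d})$. Inside the ball — further distinguishing $\|Y-T_0(x)\|_{\bbT^d}\leq Ch_n$ from $Ch_n<\|Y-T_0(x)\|_{\bbT^d}\leq\delta$ — the discrepancy between $L$ and its frozen version is controlled by the H\"older modulus of $A$ and by the Taylor remainder $\nabla\varphi_0^*-G_x=O(\|\cdot-T_0(x)\|^2)$ (valid since $\varphi_0^*\in\calC^{4+\beta}$ by Theorem~\ref{thm:caffarelli}); feeding this into the elliptic estimates and using that the solution associated to a source at distance $r:=\|Y-T_0(x)\|_{\bbT^d}$ from $x$ has size $r^{1-d}\wedge h_n^{1-d}$, one gains a factor $r^{\beta}\wedge h_n^{\beta}$. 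Integrating the squared bound against $dQ$ (whose density is bounded), via $\int_{Ch_n}^{\delta}r^{2\beta+1-d}\,dr\lesssim h_n^{2\beta+2-d}$ and the $\{r\leq Ch_n\}$-contribution $\lesssim h_n^{d}\cdot h_n^{2(\beta+1-d)}=h_n^{2\beta+2-d}$, yields the claim. This step is the heart of the matter; everything after it is bookkeeping. A byproduct (obtained in the last step) is $\bbE\|\nabla v_Y(x)\|^2\asymp h_n^{2-d}$, so Cauchy--Schwarz gives $\|\Cov\{\nabla L^{-1}[f_Y](x)\}-\Cov\{\nabla v_Y(x)\}\|\leq Ch_n^{2-d+\epsilon/2}$, and it remains to analyze $\Cov\{\nabla v_Y(x)\}$.

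Since $\bbE_Y[\nabla v_Y(x)]=0$, with $z=y-T_0(x)$ one has $\Cov\{\nabla v_Y(x)\}=\int_{\bbT^d}\Phi_{h_n}(z)\Phi_{h_n}(z)^\top q(z+T_0(x))\,d\calL(z)-\big(\bbE_Y[\Phi_{h_n}(Y-T_0(x))]\big)^{\otimes2}$. Because $\int_{\bbT^d}\Phi_{h_n}\,d\calL=\calF[\Phi_{h_n}](0)=\calF[\Theta](0)\calF[K](0)=0$, the mean term equals $\big(\int\Phi_{h_n}(z)(q(z+T_0(x))-q_\star)\,d\calL\big)^{\otimes2}=O\big((\int\|z\|_{\bbT^d}\|\Phi_{h_n}(z)\|\,d\calL)^2\big)=O(1)=o(h_n^{2-d})$, using $\|\Phi_{h_n}(z)\|\lesssim\|z\|_{\bbT^d}^{1-d}\wedge h_n^{1-d}$ and $q\in\calC^1$; and replacing $q(z+T_0(x))$ by $q_\star$ in the first term costs $O(\int\|z\|_{\bbT^d}\|\Phi_{h_n}(z)\|^2\,d\calL)=O(h_n^{3-d}+h_n^{2-d}h_n\log(1/h_n))=O(h_n^{2-d+\epsilon})$. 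Hence $\Cov\{\nabla v_Y(x)\}=q_\star\int_{\bbT^d}\Phi_{h_n}\Phi_{h_n}^\top\,d\calL+O(h_n^{2-d+\epsilon})$.

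Finally, by Parseval on $\bbT^d$ and $\calF[\widebar K_{h_n}](\xi)=\calF[K](h_n\xi)\in\bbR$ (Condition~\Kernel{$\gamma$}),
$$\int_{\bbT^d}\Phi_{h_n}\Phi_{h_n}^\top\,d\calL=\frac{1}{4\pi^2 q_\star^2}\sum_{\xi\in\bbZ^d\setminus\{0\}}\frac{(\calM^{-1}\xi)(\calM^{-1}\xi)^\top}{\langle\calM^{-1}\xi,\xi\rangle^2}\,\calF[K](h_n\xi)^2.$$
The summand is a function homogeneous of degree $-2$ in $\xi$ times the rapidly decreasing factor $\calF[K](h_n\xi)^2$; the substitution $\eta=h_n\xi$ exhibits $h_n^{d-2}$ times this sum as the Riemann sum $\tfrac{1}{4\pi^2 q_\star^2}h_n^d\sum_{\eta\in h_n\bbZ^d\setminus\{0\}}F_0(\eta)\calF[K](\eta)^2$, with $F_0(\eta)=(\calM^{-1}\eta)(\calM^{-1}\eta)^\top/\langle\calM^{-1}\eta,\eta\rangle^2$, of an integrand that is integrable on $\bbR^d$ (as $d\geq3$) and smooth and rapidly decaying away from the origin; controlling the Riemann-sum error separately near and away from the origin (say splitting at $\|\eta\|=h_n^{1/2}$), it converges to $\tfrac{1}{4\pi^2 q_\star^2}\int_{\bbR^d}F_0(\eta)\calF[K](\eta)^2\,d\eta$ up to $O(h_n^\epsilon)$. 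Therefore
$$h_n^{d-2}\Cov\{\nabla v_Y(x)\}=\frac{1}{4\pi^2 q_\star}\int_{\bbR^d}\frac{(\calM^{-1}\eta)(\calM^{-1}\eta)^\top}{\langle\calM^{-1}\eta,\eta\rangle^2}\,\calF[K](\eta)^2\,d\eta+O(h_n^\epsilon).$$
Substituting $\eta=\calM\xi$ (so $\calM^{-1}\eta=\xi$, $\langle\calM^{-1}\eta,\eta\rangle=\langle\calM\xi,\xi\rangle$, $d\eta=\det(\calM)\,d\xi$) rewrites the right-hand side as $\tfrac{\det(\calM)}{q_\star}\int_{\bbR^d}\xi\xi^\top\big(\tfrac{\calF[K](\calM\xi)}{2\pi\langle\calM\xi,\xi\rangle}\big)^2 d\xi$; and since $\calM=(\nabla^2\varphi_0(x))^{-1}$ and the Monge--Amp\`ere equation of Theorem~\ref{thm:caffarelli} gives $\det(\nabla^2\varphi_0(x))=p(x)/q(T_0(x))$, we get $\det(\calM)/q_\star=1/p(x)$, so the right-hand side is precisely $\Sigma(x)$ of~\eqref{eq:Sigma}. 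Combining with the error bounds above proves the lemma; positive definiteness and finiteness of $\Sigma(x)$ are immediate from~\eqref{eq:Sigma}, as $\calF[K]$ is Schwartz with $\calF[K](0)=1$ and $d\geq3$.
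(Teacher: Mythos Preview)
Your overall strategy---freeze the coefficients of $L$ at $x$, reduce to a constant-coefficient operator whose inverse is a convolution, and compute the limiting covariance via Parseval and a Riemann-sum argument---is exactly the paper's approach, and your Fourier computation identifying $\Sigma(x)$ (including the change of variable $\eta=\calM\xi$ and the use of the Monge--Amp\`ere equation to obtain $\det(\calM)/q_\star=1/p(x)$) is correct and matches the paper's Step~2. The paper organizes the freezing slightly differently: it passes first to the self-adjoint operator $E$ via Lemma~\ref{lem:inverse_L_to_E}, then freezes in two stages, introducing intermediate quantities $V_n$ (freeze the operator $E\to E_0$) and $W_n$ (additionally linearize $\nabla\varphi_0\to S_x$), and bounds the three pieces $\calV_1=\Cov[W_n(x)]$, $\calV_2=\bbE\|V_n-W_n\|^2$, $\calV_3=\bbE\|U_n-V_n\|^2$ separately.

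The place where your proposal is under-specified is precisely the step you flag as the heart of the matter. Your heuristic ``the solution at distance $r$ has size $r^{1-d}\wedge h_n^{1-d}$, and one gains a factor $r^\beta\wedge h_n^\beta$'' glosses over what is actually needed: to bound $\nabla E_0^{-1}[(E_0-E)u_n](x)$ one must control $\nabla^2 u_n(y)$ \emph{pointwise in $y$}, not merely the size of $\nabla u_n$ at $x$. The paper obtains this via a localized Schauder estimate (its Lemma~\ref{lem:regularity_un}): $\|\nabla^2 u_n(y)\|\lesssim h_n^{-(2+\beta)}(h_n\vee\|y-X_0\|)^{2-d}$, where the extra $h_n^{-(2+\beta)}$ is an unavoidable Schauder loss from working at scale $h_n$. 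The gain then comes from $\|A_0-A(y)\|\lesssim\|x-y\|$ (linear, using $A\in\calC^1$), not from a H\"older modulus $r^\beta$; the $\beta$ that appears in the paper's final rate $h_n^{4-d-2\beta}$ is the Schauder \emph{loss}, not a gain. Your claimed exponent $h_n^{2-d+2\beta}$ is therefore not what actually comes out, though either exponent suffices for the lemma. In short, the route is right, but the freezing estimate requires the pointwise Hessian bound and the careful integration against the fundamental solution (the paper's Lemmas~\ref{lem:ball_integral} and~\ref{lem:regularity_un}) that you have not supplied.
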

Lemma~\ref{lem:variance_bound_map} implies that the 
 covariance matrix of any given term in the summation~\eqref{eq:stochastic_term_summation} has norm 
diverging at the pointwise rate $h_n^{2-d}$, which  coincides with the corresponding $L^2(\bbT^d)$ rate of divergence.
Indeed, by Lemma~\ref{lem:dual_norm_equiv}, it holds that
$$\bbE\big\| \nabla L^{-1}\big[ \widebar K_{h_n}(Y-\cdot) - q_{h_n}\big]\big\|_{L^2(\bbT^d)}^2 
\asymp \bbE  \big\| \widebar K_{h_n}(Y-\cdot) - q_{h_n}\big\|_{H^{-1}(\bbT^d)}^2\asymp h_n^{2-d}, $$
where the final order assessment can be deduced as in the proof of Proposition~\ref{prop:kde_negative_sobolev}.
The proof of Lemma~\ref{lem:variance_bound_map} turns out to require a  more involved argument, 
and appears in Section~\ref{sec:pf_variance_bound_map}. 
%
%
%

Let us now provide a bound on the deterministic term. 
\begin{lemma}
\label{lem:bias_bound_map}
Let $p,q \in \calC_+^{s}(\bbT^d)$. Assume that $K$ satisfies
condition~\Kernel{$s+1$}. Then, for all $\epsilon > 0$, there exists $C = C(\omega_{s}(p,q),   K, \epsilon,d,s) > 0$
such that
$$\big\| \nabla L^{-1} [q_{h_n}-q]\big\|_{L^\infty(\bbT^d)}  \leq C h_n^{s+1-\epsilon}.
$$\end{lemma}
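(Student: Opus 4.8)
The plan is to bound the deterministic term $\nabla L^{-1}[q_{h_n}-q]$ in $L^\infty$ by exploiting the elliptic regularity of $L$ together with the bias bound for the kernel density estimator. The key observation is that $L^{-1}$ maps $\calC_0^\beta(\bbT^d)$ boundedly into $\calC_0^{2+\beta}(\bbT^d)$ (Lemma~\ref{lem:inverse_L_to_E}), so that $\|\nabla L^{-1}[f]\|_{L^\infty(\bbT^d)} \lesssim \|L^{-1}[f]\|_{\calC^{2+\beta}(\bbT^d)} \lesssim \|f\|_{\calC^\beta(\bbT^d)}$ for any $f \in \calC_0^\beta(\bbT^d)$; but this naive estimate applied with $f = q_{h_n}-q$ would only give $h_n^{s-\beta}$ via a $\calC^\beta$-bound on the kernel density bias, which loses the crucial extra degree of smoothness. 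To recover the $h_n^{s+1}$ rate one must instead integrate by parts once: since $L^{-1}[q_{h_n}-q]$ solves $L u = q_{h_n}-q$, and the bias $q_{h_n}-q$ can be written in divergence form up to higher-order terms, I would peel off one derivative.

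More precisely, first I would invoke a structural fact about the kernel density bias under condition~\Kernel{$s+1$}: the bias $q_{h_n}-q$ admits a representation as $q_{h_n}-q = \mathrm{div}(W_{h_n}) + r_{h_n}$, where $\|W_{h_n}\|_{\calC^\beta(\bbT^d)} \lesssim h_n^{s+1-\epsilon}$ and $\|r_{h_n}\|_{\calC^\beta(\bbT^d)} \lesssim h_n^{s+1-\epsilon}$ (or more directly, I would phrase this in terms of the negative Sobolev / Hölder-type bias bounds for the KDE collected in Appendix~\ref{app:kde}, which should give $\|q_{h_n}-q\|_{\calC^{-1+\beta}(\bbT^d)} \lesssim h_n^{s+1-\epsilon}$ using the vanishing moments of $K$ up to order $\lfloor s\rfloor$). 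Indeed, for $y$ smooth the bias is $\int K(z)(q(x-h_n z) - q(x))dz$, and a Taylor expansion with the moment conditions makes the leading surviving term of order $h_n^{s}$ times a derivative of $q$ of order $\lfloor s\rfloor+1 \geq 1$; pulling that derivative out as a divergence (or absorbing it into a $\calC^{-1+\beta}$ norm) produces the extra factor $h_n$, up to the loss $\epsilon$ coming from the fact that $s\notin\bbN$ and the endpoint failure of the Poisson equation noted after Proposition~\ref{prop:pointwise_rate_map}.

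Then I would combine this with a gradient-level mapping property of $L^{-1}$: the composition $\nabla \circ L^{-1}$ should map $\calC_0^{-1+\beta}(\bbT^d)$ boundedly into $\calC^{\beta}(\bbT^d) \hookrightarrow L^\infty(\bbT^d)$, with operator norm depending only on $\omega_{2+\beta}(p,q),d,\beta$. This follows by the same coefficient-freezing / Schauder machinery already used in the proof of Proposition~\ref{prop:potential_stability} and Lemma~\ref{lem:inverse_L_to_E} — one solves $Lu = \mathrm{div}(W)$ in the weak sense, writes it via the bilinear form $\langle \cdot,\cdot\rangle_A$, and applies interior Schauder estimates to get $\|u\|_{\calC^{1+\beta}(\bbT^d)} \lesssim \|W\|_{\calC^\beta(\bbT^d)}$, hence $\|\nabla u\|_{L^\infty} \lesssim \|W\|_{\calC^\beta(\bbT^d)}$ — together with the lower-order bound $\|\nabla L^{-1}[r]\|_{L^\infty} \lesssim \|r\|_{\calC^\beta}$ for the remainder term. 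Chaining these gives
\begin{equation*}
\big\| \nabla L^{-1}[q_{h_n}-q]\big\|_{L^\infty(\bbT^d)} \lesssim \|q_{h_n}-q\|_{\calC^{-1+\beta}(\bbT^d)} \lesssim h_n^{s+1-\epsilon},
\end{equation*}
which is the claim.

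The main obstacle I anticipate is making the ``one degree of extra smoothness'' rigorous at the level of Hölder rather than Sobolev norms: the Poisson-type equation on $\bbT^d$ is not solvable in $\calC^2$ for merely $L^\infty$ right-hand sides, so one genuinely needs the right-hand side in a negative Hölder space and must pay the $\epsilon$ loss; and one must verify that $L^{-1}$ (not merely $\Delta^{-1}$) enjoys the analogous gain-of-one-derivative mapping property $\calC^{-1+\beta}_0 \to \calC^{1+\beta}$, which requires that the coefficient matrix $A$ be in $\calC^{\beta}$ (guaranteed here since $A \in \calC^{1+\beta}$ by Caffarelli's theorem) and a careful invocation of the interior Schauder estimate of Lemma~\ref{lem:a_priori_schauder} in divergence form. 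Everything else — the Taylor expansion of the KDE bias using the moment conditions from~\Kernel{$s+1$}, and tracking the constants' dependence on $\omega_s(p,q),K,\epsilon,d,s$ — is routine and can be deferred to the appendix on kernel density estimation.
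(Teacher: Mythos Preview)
Your two-step strategy --- gain one derivative by placing the bias $q_{h_n}-q$ in a negative-order space, then invoke a $\calC^{-1+\beta}\to\calC^{1+\beta}$ mapping property for $L^{-1}$ --- is exactly the paper's, but the paper executes it through negative \emph{Sobolev} rather than negative H\"older norms. Concretely, after converting $L^{-1}$ to the self-adjoint $E^{-1}$ via Lemma~\ref{lem:inverse_L_to_E} (a step you skip, but which is needed since $L$ is not a standard elliptic operator --- it involves the composition with $\nabla\varphi_0^*$), the paper chains the gradient estimate of Lemma~\ref{lem:a_priori_gradient} with a De Giorgi--Nash--Moser bound (Lemma~\ref{lem:a_priori_moser}) to absorb the stray $\|E^{-1}g_n\|_{L^\infty}$ term you gloss over, then a Sobolev embedding $H^{1+2\beta,r}\hookrightarrow\calC^\beta$ for $r>2d/\beta$, reducing everything to $\|q_{h_n}-q\|_{H^{2\beta-1,r}(\bbT^d)}\lesssim h_n^{s+1-2\beta}$, which is Proposition~\ref{prop:kde_negative_sobolev}. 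Your H\"older-space phrasing could be made to work, but the most natural verification of $\|W_{h_n}\|_{\calC^\beta}\lesssim h_n^{s+1-\epsilon}$ (take $W_{h_n}=\nabla(-\Delta)^{-1}(q_{h_n}-q)$) passes through exactly this Sobolev embedding anyway, so the two routes collapse into one.

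One inaccuracy worth flagging: your Taylor-expansion justification does not produce the structure you describe. Since $q\in\calC^s$ with $s\notin\bbN$, derivatives of order $\lfloor s\rfloor+1$ need not exist; the remainder after expanding to order $\lfloor s\rfloor$ and killing the polynomial terms via the moment conditions is $O(h_n^s)$ times a H\"older difference of $D^{\lfloor s\rfloor}q$, not a total derivative, and merely writing that as a divergence does not buy an extra factor of $h_n$. The genuine gain of one order comes from measuring in a norm one derivative below $L^\infty$, which is cleanest via the Fourier multiplier $(\calF[K](h_n\xi)-1)/\|h_n\xi\|^{s+1}$ furnished by condition~\Kernel{$s+1$} --- precisely the mechanism in Proposition~\ref{prop:kde_negative_sobolev}, and the route your parenthetical ``or more directly\ldots\ negative Sobolev'' already points to.
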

The proof of Lemma~\ref{lem:bias_bound_map} appears in Section~\ref{sec:pf_bias_bound_map}, and is a consequence of existing
gradient estimates for uniformly elliptic PDEs. 
Once again, up to the arbitrarily small constant $\epsilon > 0$, the rate $h_n^{s+1-\epsilon}$ in the above display 
coincides with the corresponding $L^2(\bbT^d)$ rate of decay:  one has, 
by Lemmas~\ref{lem:dual_norm_equiv} and~\ref{lem:equiv_H1_norm}, 
$$\big\| \nabla L^{-1} [q_{h_n}-q]\big\|_{L^2(\bbT^d)} \asymp 
  \| q_{h_n}-q\|_{H^{-1}(\bbT^d)} \asymp h_n^{s+1},
$$
where the final order assessment again follows from Proposition~\ref{prop:kde_negative_sobolev}, 
under the assumption~\Kernel{$s+1$}. 
\subsection{Proof of Lemma~\ref{lem:variance_bound_map}}
\label{sec:pf_variance_bound_map}
Throughout the proof, the implicit constants in the symbols $\lesssim$ and $\asymp$ 
depend only on $\omega_{2+\beta}(p,q),K,d,\beta$.
In particular, such constants do not depend on the variable $x$. 
By Theorem~\ref{thm:caffarelli}, there exists $\lambda > 1$   such that $\|\varphi_0\|_{\calC^3(\bbT^d)} \leq \lambda$
and $\nabla^2\varphi_0^* \succeq \lambda^{-1} I_d$, and we fix this value of $\lambda$ throughout the proof. 
Fix $x \in \bbT^d$ throughout what follows, and abbreviate $X_0 = \nabla\varphi_0^*(Y)$.
We may assume without loss of generality
that the representative of $X_0$ in $\bbT^d$ is chosen such that 
\begin{align}\label{eq:representative_of_X0}
\|X_0-x\|_{\bbT^d} = \|X_0 - x\|.
\end{align}
Next, we abbreviate 
$$ \Sigma_n(x) = \Cov\big(\nabla L^{-1}[\widebar K_{h_n}(Y-\cdot) - q_{h_n}](x)\big).$$
Since $q_{h_n}$ is deterministic, and $L$ is linear, we may rewrite $\Sigma_n(x)$ as
$$\Sigma_n(x) = \Cov\big(\nabla L^{-1}[\widebar K_{h_n}^o(Y-\cdot)](x)\big),\quad \text{with } \widebar K_{h_n}^o =\widebar  K_{h_n}-1.$$
Deduce from Lemma~\ref{lem:inverse_L_to_E} that
\begin{align}
\label{eq:key_lemma_step1}
\Sigma_n (x) = 
  \Cov \big(\nabla E^{-1}\left[\det(\nabla^2\varphi_0) \widebar K_{h_n}^o(Y-\nabla\varphi_0(\cdot))\right](x)\big).
\end{align}
We will derive the limit of this quantity by approximating the operator $E$ with a 
constant-coefficient differential operator, and showing that the remainder of 
this approximation is of low order. We proceed in several steps.
The proofs of all intermediary results which follow are relegated to Appendix~\ref{app:extra_proofs_variance}. 
\\

\noindent {\bf Step 1: Reduction to a constant-coefficient operator.}
Using Lemma~\ref{lem:relation_E_L}, we may write for any $u \in H_0^2(\bbT^d)$, 
$$E u = -\langle  A, \nabla^2 u\rangle - \langle b, \nabla u \rangle,$$
where 
$$A =p\nabla^2\varphi_0^*(\nabla\varphi_0),\quad\text{and}\quad b =   
 \det(\nabla^2\varphi_0)\nabla q(\nabla\varphi_0).$$
Let $E_0$ be defined as the leading term in $E$ with coefficients ``frozen'' at 
the point $x$, namely:
$$E_0u(y) = -\langle A_0, \nabla^2 u(y)\rangle = -\div(A_0 \nabla^2u(y)),\quad y\in \bbT^d$$
where $A_0 = A(x)$ is a constant matrix. 
Furthermore, abbreviate $\calB= \nabla^2\varphi_0$ and $\calB_0 = \nabla^2\varphi_0(x)$. 
Define the linear map
$$S_x(y) = \nabla\varphi_0(x) + \nabla^2\varphi_0(x)(y-x),\quad y \in \bbT^d,$$
and set
\begin{alignat*}{2}
u_n &=  E^{-1}\big[\det(\calB)\widebar K_{h_n}^o(Y-\nabla\varphi_0(\cdot))\big],
\quad U_n &&= \nabla u_n,\\
  v_n &=  E_0^{-1}\big[\det(\calB)\widebar K_{h_n}^o(Y-\nabla\varphi_0(\cdot))\big],
  \quad V_n &&= \nabla v_n,\\
 w_n &=  E_0^{-1}\big[\det(\calB_0)\widebar K_{h_n}^o(Y-S_x(\cdot))\big],
 \quad W_n&&=\nabla w_n,
 \end{alignat*}
 where we emphasize that each of the functions appearing in square brackets in the above display
 has mean zero, so that the inverses are well-defined.
%
We have the decomposition
\begin{align}
\label{eq:covariance_decomp}
\nonumber
\Sigma_n(x)
 &= \Cov[U_n(x)] \\
\nonumber 
 &= \Cov[W_n(x)] + \Cov[(V_n-W_n)(x)]  + \Cov[(U_n-V_n)(x)]   \\ &+ 2\Cov[W_n(x), (U_n-W_n)(x)] 
   + 2\Cov[(V_n-W_n)(x), (U_n-V_n)(x)],
  \end{align}
where for two random vectors $A$ and $B$, we denote by $\Cov(A,B) = \bbE[(A-\bbE[A])(B-\bbE[B])^\top]$
the cross-covariance between $A$ and $B$.
Fix
$$\calV_1 =   \Cov[W_n(x)], \quad 
  \calV_2 = \bbE\|(V_n-W_n)(x)\|^2 , \quad 
  \calV_3 = \bbE\|(U_n-V_n)(x)\|^2.$$
We will show that for some $\epsilon > 0$, 
\begin{equation}
\|h_n^{d-2} \calV_1 - \Sigma(x)\|\lesssim h_n^\epsilon,
\end{equation}
and that 
$\calV_2 \vee \calV_3 \lesssim h_n^{2-d+2\beta}$.
Using the Cauchy-Schwarz
inequality, it will then 
immediately follow that the three cross-covariances in equation~\eqref{eq:covariance_decomp} are of order at most 
$h_n^{2-d-\beta}$,
and the claim will then follow.
  We analyze each of the terms 
  $\calV_1, \calV_2, \calV_3$ in turn. 
\\

\noindent {\bf Step 2: The limit of term $\calV_1$.} 
Write
$$S_{Y,x}(y) = Y - S_x(y) = Y- \nabla\varphi_0(x) - \calB_0(y-x).$$
For this step only, we will require the additional constant-coefficient operator
$$G_0: H_0^2(\bbT^d) \to L_0^2(\bbT^d), \quad G_0u(y) = -\langle \calB_0^2 A_0 , \nabla^2 u(y)\rangle = -\langle \calC_0, \nabla^2 u(y)\rangle,$$
where $\calC_0 =p(x)\nabla^2\varphi_0(x)$. 
The results of Appendix~\ref{app:pde} imply that the restriction of $G_0$ to $\calC^{2+\beta}_0(\bbT^d)$
is a bijection 
onto $\calC_0^\beta(\bbT^d)$. 
This operator is motivated by the following simple observation.
\begin{lemma}
\label{lem:operator_G0}
For all $f \in \calC_0^{\beta}(\bbT^d)$ and $y \in \bbT^d$, it holds that
$$E_0^{-1}[f(S_{Y,x}(\cdot))](y) =  G_0^{-1}[f](S_{Y,x}(y)).$$
\end{lemma}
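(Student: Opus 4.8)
The statement is a change-of-variables identity for constant-coefficient operators, and I would prove it directly. Write $S_{Y,x}(y) = Y - \nabla\varphi_0(x) - \calB_0(y-x)$ as an affine map with constant Jacobian $-\calB_0$, where $\calB_0 = \nabla^2\varphi_0(x)$ is symmetric positive definite. The one thing to notice is algebraic: by the identity $(\nabla^2\varphi_0)^{-1} = \nabla^2\varphi_0^*(\nabla\varphi_0)$ recorded in the proof of Lemma~\ref{lem:relation_E_L}, we have $A_0 = p(x)\nabla^2\varphi_0^*(\nabla\varphi_0(x)) = p(x)\calB_0^{-1}$, so $A_0$ commutes with $\calB_0$, and therefore $\calB_0 A_0\calB_0 = \calB_0^2 A_0 = \calC_0$.

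Given this, the plan is: for any $w\in\calC^2(\bbT^d)$, apply the chain rule to $w\circ S_{Y,x}$ to obtain $\nabla^2(w\circ S_{Y,x})(y) = \calB_0\,(\nabla^2 w)(S_{Y,x}(y))\,\calB_0$ (using $\calB_0^\top=\calB_0$), and then compute
\[
E_0[w\circ S_{Y,x}](y) = -\big\langle A_0,\ \calB_0(\nabla^2 w)(S_{Y,x}(y))\calB_0\big\rangle = -\big\langle\calB_0 A_0\calB_0,\ (\nabla^2 w)(S_{Y,x}(y))\big\rangle = -\big\langle\calC_0,\ (\nabla^2 w)(S_{Y,x}(y))\big\rangle = (G_0 w)(S_{Y,x}(y)),
\]
where the second equality uses cyclicity of the trace together with the symmetry of $A_0$ and $\calB_0$, and the third is the commutation identity above. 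In short, $E_0[w\circ S_{Y,x}] = (G_0 w)\circ S_{Y,x}$ for every $w$.

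Taking $w = G_0^{-1}[f]$ for $f\in\calC_0^\beta(\bbT^d)$ then gives $E_0\big[(G_0^{-1}[f])\circ S_{Y,x}\big] = f(S_{Y,x}(\cdot))$, and applying $E_0^{-1}$ yields the claimed identity, provided one checks that $(G_0^{-1}[f])\circ S_{Y,x}$ is precisely the representative that $E_0^{-1}$ selects, i.e.\ that it lies in the (periodized, mean-zero) solution space on which $E_0$ is a bijection with kernel the constants (Appendix~\ref{app:pde}). This is where the only bookkeeping lies: the linear part $\calB_0$ of $S_{Y,x}$ need not preserve the lattice $\bbZ^d$, so one must invoke the conventions fixed in Step~1 for the objects $\widebar K_{h_n}^o(Y - S_x(\cdot))$ rather than reasoning naively on $\bbT^d$. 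Modulo this normalization check, the lemma is immediate; I do not expect a substantive obstacle, since the entire content is the affine change of variables together with the observation $A_0\calB_0 = \calB_0 A_0$.
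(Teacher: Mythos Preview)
Your proposal is correct and essentially identical to the paper's proof: both reduce to the chain-rule identity for constant-coefficient operators under the affine map $S_{Y,x}$, using that $A_0 = p(x)\calB_0^{-1}$ commutes with $\calB_0$. The only cosmetic difference is direction---you compute $E_0[w\circ S_{Y,x}] = (G_0 w)\circ S_{Y,x}$ and then specialize $w=G_0^{-1}[f]$, whereas the paper computes $G_0[u\circ S_{Y,x}^{-1}] = (E_0 u)\circ S_{Y,x}^{-1}$ with $u=E_0^{-1}[f\circ S_{Y,x}]$; your explicit flagging of the periodicity/normalization caveat is, if anything, more careful than the paper's treatment.
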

The proof appears in Appendix~\ref{app:pf_lem_operator_G0}.
 Deduce from Lemma~\ref{lem:operator_G0} that for any $f \in \calC^\beta_0(\bbT^d)$ and $y \in \bbT^d$, 
$$\nabla_y E_0^{-1}[f(S_{Y,x}(\cdot))] (y) = \nabla_y G_0^{-1}[f](S_{Y,x}(y))
 = -\calB_0 \nabla G_0^{-1}[f](S_{Y,x}(y)),$$
so that
\begin{align*}
W_n(x) 
 = \nabla E_0^{-1}\big[\det(\calB_0)\widebar K_{h_n}^o(S_{Y,x}(\cdot))\big](x) 
 = -\calB_0 {\det}(\calB_0)\nabla  G_0^{-1}\big[\widebar K_{h_n}^o\big](Y- \nabla\varphi_0(x)),
\end{align*}         
whence,
\begin{align}
\label{eq:pf_var_V1_first_reduction}
\calV_1 &=  {\det}^2(\calB_0) \calB_0 \Cov\big\{\nabla G_0^{-1}\big[\widebar K_{h_n}^o\big](Y- \nabla\varphi_0(x))\big\} \calB_0^\top.
\end{align} 
The following Lemma shows that
the covariance in the above display can be taken with respect to the uniform law, without substantial loss. 
\begin{lemma}
\label{lem:Q_to_uniform_reduction}
Let $U\sim \calL$ be a uniform random variable on $\bbT^d$. Then, for some $\epsilon > 0$,
$$\Big\| \calV_1 - q(\nabla\varphi_0(x)) {\det}^2(\calB_0)\calB_0 \Cov\big\{\nabla G_0^{-1}\big[\widebar K_{h_n}^o\big](U)\big\}  
\calB_0^\top\Big\|
\lesssim h_n^{2+\epsilon-d}.$$
\end{lemma}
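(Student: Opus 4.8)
The plan is to use the identity~\eqref{eq:pf_var_V1_first_reduction} to strip off the fixed matrix factors and reduce everything to a comparison of covariances of the \emph{same} deterministic vector field $\Phi_n:=\nabla G_0^{-1}[\widebar K_{h_n}^o]$ evaluated at a $Q$-distributed, resp.\ a uniform, argument. Writing $c=\nabla\varphi_0(x)$, the left‑hand side of the lemma equals $\det^2(\calB_0)\,\calB_0\big(\Cov\{\Phi_n(Y-c)\}-q(c)\Cov\{\Phi_n(U)\}\big)\calB_0^\top$, where by Theorem~\ref{thm:caffarelli} the quantities $\det(\calB_0)=\det(\nabla^2\varphi_0(x))$ and $\|\calB_0\|$ are bounded above and below by constants depending only on $\omega_{2+\beta}(p,q),d,\beta$; moreover $G_0$ depends on $x$ only through the uniformly elliptic matrix $\calC_0=p(x)\nabla^2\varphi_0(x)$, whose spectrum is controlled by the same quantities, so all constants below are $x$‑uniform. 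Hence it suffices to prove $\big\|\Cov\{\Phi_n(Y-c)\}-q(c)\Cov\{\Phi_n(U)\}\big\|\lesssim h_n^{2+\epsilon-d}$ for some $\epsilon\in(0,1)$.

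For the algebraic step, note that $\Phi_n$ is the gradient of a $\bbZ^d$‑periodic function, so $\int_{\bbT^d}\Phi_n=0$; thus $\bbE[\Phi_n(U)]=0$, $\Cov\{\Phi_n(U)\}=\int_{\bbT^d}\Phi_n\Phi_n^\top$, and since $Y-c$ has density $y\mapsto q(y+c)$ on $\bbT^d$, subtracting and using $\int_{\bbT^d}\Phi_n=0$ a second time to absorb the constant $q(c)$ gives the exact decomposition
\begin{equation*}
\Cov\{\Phi_n(Y-c)\}-q(c)\Cov\{\Phi_n(U)\}=M_n-m_nm_n^\top,\qquad
M_n=\int_{\bbT^d}\Phi_n\Phi_n^\top\big(q(\cdot+c)-q(c)\big),\quad m_n=\int_{\bbT^d}\Phi_n\big(q(\cdot+c)-q(c)\big).
\end{equation*}

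The key input is a pointwise bound on $\Phi_n$. Let $\Gamma_0$ be the mean‑zero Green's function of the constant‑coefficient operator $G_0$ on $\bbT^d$ (Appendix~\ref{app:pde}); then $G_0^{-1}[\widebar K_{h_n}^o]=\Gamma_0\star\widebar K_{h_n}$, so $\Phi_n=(\nabla\Gamma_0)\star\widebar K_{h_n}$. Using the standard estimate $|\nabla\Gamma_0(u)|\lesssim\|u\|_{\bbT^d}^{1-d}$ near the origin (with a constant depending only on the spectrum of $\calC_0$), together with $\widebar K_{h_n}$ being supported within distance $\lesssim h_n$ of the origin, $\int_{\bbT^d}|\widebar K_{h_n}|\lesssim1$ and $\|\widebar K_{h_n}\|_{L^\infty(\bbT^d)}\lesssim h_n^{-d}$, splitting the convolution according to whether $\|y\|_{\bbT^d}\lesssim h_n$ or $\|y\|_{\bbT^d}\gtrsim h_n$ yields $|\Phi_n(y)|\lesssim\big(\|y\|_{\bbT^d}+h_n\big)^{1-d}$ for all $y\in\bbT^d$ (this is exactly the convolution structure of~\eqref{eq:vn_is_convolution} with kernel $\Theta\star K_{h_n}$).

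To conclude, since $q\in\calC^1(\bbT^d)$ is Lipschitz, $|q(y+c)-q(c)|\lesssim\|y\|_{\bbT^d}$, so by the previous step and integration in radial shells $\|M_n\|\lesssim\int_0^{1}(r+h_n)^{2-2d}r^{d}\,dr\lesssim h_n^{3-d}+\mathbf{1}\{d=3\}\,|\log h_n|$ and $\|m_n\|\lesssim\int_0^{1}(r+h_n)^{1-d}r^{d}\,dr\lesssim 1$; since $h_n\downarrow0$, both $h_n^{3-d}$ (or $|\log h_n|$ when $d=3$) and $1$ are $\lesssim h_n^{2+\epsilon-d}$ for any $\epsilon\in(0,1)$, so taking e.g.\ $\epsilon=1/2$ and conjugating by the bounded matrices $\det^2(\calB_0)\calB_0(\,\cdot\,)\calB_0^\top$ proves the claim. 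The one genuinely nontrivial ingredient is the pointwise bound $|\Phi_n(y)|\lesssim(\|y\|_{\bbT^d}+h_n)^{1-d}$ — i.e.\ sharp control of the gradient of the mollified fundamental solution of $G_0$ near its singularity, resting on the closed‑form description of that fundamental solution on the torus — while the remaining steps are routine integration and linear algebra; if Appendix~\ref{app:pde} already records such an estimate for $\nabla\Gamma_0\star\widebar K_{h_n}$ (equivalently for $\Theta\star K_{h_n}$), the proof reduces to the algebraic and integration steps alone.
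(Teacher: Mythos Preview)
Your proof is correct and follows the same overall strategy as the paper: both reduce via~\eqref{eq:pf_var_V1_first_reduction} to bounding $\|\Cov\{\Phi_n(Y-c)\}-q(c)\Cov\{\Phi_n(U)\}\|$ with $\Phi_n=\nabla G_0^{-1}[\widebar K_{h_n}^o]$ and $c=\nabla\varphi_0(x)$, use $\bbE[\Phi_n(U)]=0$, exploit Lipschitz continuity of $q$ to gain a factor $\|y\|_{\bbT^d}$, and control $\Phi_n$ through the fundamental solution of the constant-coefficient operator $G_0$.

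The execution differs. The paper writes $\Phi_n(y)=\int\nabla_y\Gamma_{G_0}(z,y)\widebar K_{h_n}^o(z)\,dz$, expands the square, and is led to the triple integral $h_n^{-2d}\iiint\|y-z\|_{\bbT^d}^{1-d}\|y-w\|_{\bbT^d}^{1-d}\|y\|_{\bbT^d}\,dz\,dw\,dy$ over $(z,w)\in B(0,h_n)^2$, which it splits at scale $h_n^{1/a}$ in $y$ and bounds via Lemma~\ref{lem:miranda} and Lemma~\ref{lem:simple_ball_integral}. You instead first distill a clean pointwise estimate $\|\Phi_n(y)\|\lesssim(\|y\|_{\bbT^d}+h_n)^{1-d}$ (which follows from $|\nabla\Gamma_{G_0}|\lesssim\|\cdot\|_{\bbT^d}^{1-d}$ in Proposition~\ref{prop:fundamental_solution} exactly by the near/far splitting you describe, together with Lemma~\ref{lem:simple_ball_integral}), and then integrate in radial shells. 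This is more transparent and yields the slightly sharper exponent $3-d$ (with a logarithm when $d=3$) for $M_n$ in place of the paper's $5/2-d$--type bound, though of course either suffices for the stated $h_n^{2+\epsilon-d}$. Your handling of the mean term $m_n$ is also somewhat cleaner: the paper bounds it via Jensen on the second-moment integral, whereas you bound $\|m_n\|\lesssim 1$ directly, which is all that is needed since $d\geq 3$.
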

The proof appears in Appendix~\ref{app:pf_lem_Q_to_uniform_reduction}. 
Our aim is now to study the covariance appearing in the above display.
Recall that $G_0   =- \langle \calC_0,\nabla^2 (\cdot)\rangle$.
%
Since $\calC_0$ is a constant matrix, a simple derivation shows
$$G_0^{-1}[\widebar K_{h_n}^o] = \sum_{\xi \in \bbZ_*^d} \frac{\calF[\widebar K_{h_n}](\xi)}{(2\pi)^2\langle \calC_0\xi,\xi\rangle} 
e^{2\pi i \langle\xi,\cdot\rangle}.$$
Since $K\in \calC_c^\infty(\bbR^d)$, 
the above Fourier series converges absolutely and uniformly
for any given $n$~\citep{stein1971}, and we may differentiate it term-by-term to obtain
$$\nabla G_0^{-1}[\widebar K_{h_n}^o] = \sum_{\xi \in \bbZ_*^d} \frac{i\xi \calF[\widebar K_{h_n}](\xi)}{ 2\pi\langle \calC_0\xi,\xi\rangle} 
e^{2\pi i \langle \xi,\cdot\rangle} = 
\sum_{\xi \in \bbZ_*^d} \frac{i\xi \calF[K](h_n\xi)}{ 2\pi  \langle \calC_0\xi,\xi\rangle} e^{2\pi i \langle\xi,\cdot\rangle},$$
where we used the Poisson summation formula in the final equality (cf. equation~\eqref{eq:poisson_summation}). 
Again, the above series converges uniformly, and we thus have
$$\bbE\big[ \nabla G_0^{-1}[\widebar K_{h_n}^o] (U)\big] =
\sum_{\xi \in \bbZ_*^d} \frac{i\xi \calF[K](h_n\xi)}{ 2\pi  \langle \calC_0\xi,\xi\rangle} \int_{\bbT^d} e^{2\pi i \langle\xi,y\rangle}dy=0.$$
Deduce  that 
\begin{align}\label{eq:pf_var_V2_second_reduction}
\nonumber 
\Cov&\big\{\nabla G_0^{-1}\big[\widebar K_{h_n}^o\big](U)\big\} \\
\nonumber 
 &= \bbE\left\{\left( \sum_{\xi\in \bbZ_*^d}\frac{i\xi \calF[K](h_n\xi)}{2\pi\langle \calC_0\xi,\xi\rangle} e^{2\pi i \langle\xi,U\rangle}\right)
               \left( \sum_{\zeta\in \bbZ_*^d}\frac{-i\zeta \calF[K](h_n\zeta)}{2\pi \langle \calC_0\zeta,\zeta\rangle} e^{-2\pi i \langle\zeta,U\rangle}\right)^\top  \right\}\\ 
\nonumber               
 &= \bbE\left\{  \sum_{\xi,\zeta\in \bbZ_*^d} \frac{\xi\zeta^\top \calF[K](h_n\xi)\calF[K](h_n\zeta)}{(2\pi)^2 \langle \calC_0\xi,\xi\rangle\langle \calC_0\zeta,\zeta\rangle} e^{2\pi i \langle\xi-\zeta,U\rangle}  \right\}\\
 &=  \sum_{\xi \in \bbZ_*^d} \xi\xi^\top \left(\frac{ \calF[K](h_n\xi)}{ 2\pi  \langle \calC_0\xi,\xi\rangle } \right)^2.
\end{align}  
%
Up to suitable scaling, the expression~\eqref{eq:pf_var_V2_second_reduction} is as a matrix-valued 
improper Riemann sum, whose limit is described next. 
\begin{lemma}
\label{lem:riemann_sum_approx}
There exist constants $c,\epsilon > 0$ depending only on $d$ and $K$ such that
$$\left\| h_n^{d-2} \Cov \big\{\nabla G_0^{-1}\big[\widebar K_{h_n}^o\big](U)\big\}  - \int_{\bbR^d} \xi\xi^\top 
\left(\frac{ \calF[K](\xi)}{ 2\pi  \langle \calC_0\xi,\xi\rangle } \right)^2d\xi\right\|\leq c h_n^\epsilon.$$
\end{lemma}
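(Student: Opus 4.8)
The argument is a quantitative Riemann-sum estimate. Recall from the display immediately preceding the statement that
$$\Cov\big\{\nabla G_0^{-1}[\widebar K_{h_n}^o](U)\big\} = \sum_{\xi\in\bbZ_*^d}\xi\xi^\top\left(\frac{\calF[K](h_n\xi)}{2\pi\langle\calC_0\xi,\xi\rangle}\right)^2.$$
Substituting $\eta = h_n\xi$ in each summand converts the prefactor $h_n^{d-2}$ into a volume element: writing
$$g(\eta) := \eta\eta^\top\left(\frac{\calF[K](\eta)}{2\pi\langle\calC_0\eta,\eta\rangle}\right)^2,$$
one finds $h_n^{d-2}\,\Cov\{\nabla G_0^{-1}[\widebar K_{h_n}^o](U)\} = h_n^d\sum_{\xi\in\bbZ_*^d}g(h_n\xi)$, which is exactly a mesh-$h_n$ Riemann sum for $\int_{\bbR^d}g$; the lemma is the assertion that this Riemann sum converges at a polynomial rate. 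Since $\calC_0 = p(x)\nabla^2\varphi_0(x)\succ 0$ by Theorem~\ref{thm:caffarelli}, one has $\langle\calC_0\eta,\eta\rangle\asymp\|\eta\|^2$; and since $K\in\calC_c^\infty$, the Fourier transform $\calF[K]$ is a Schwartz function with $\calF[K](0)=\int_{\bbR^d}K = 1$. Elementary estimates then give $|g(\eta)|\lesssim\|\eta\|^{-2}$ and $\|\nabla g(\eta)\|\lesssim\|\eta\|^{-3}$ for $0<\|\eta\|\le 1$, and $|g(\eta)| + \|\nabla g(\eta)\|\lesssim_M\|\eta\|^{-M}$ for $\|\eta\|\ge 1$ and every $M$; in particular $g\in L^1(\bbR^d)$ precisely because $d\ge 3$, so the target integral is finite.

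The plan is to tile $\bbR^d$ by the half-open cubes $I_\xi := h_n\xi + [0,h_n)^d$, $\xi\in\bbZ^d$, and to write the Riemann-sum error as
$$h_n^d\sum_{\xi\in\bbZ_*^d}g(h_n\xi) - \int_{\bbR^d}g = -\int_{I_0}g + \sum_{\xi\in\bbZ_*^d}\int_{I_\xi}\big(g(h_n\xi) - g(\eta)\big)\,d\eta.$$
The term $\int_{I_0}g$ is controlled by $\int_{\|\eta\|\lesssim h_n}\|\eta\|^{-2}d\eta\lesssim h_n^{d-2}$, since $I_0$ lies in a ball of radius $\sqrt{d}\,h_n$ about the origin. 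For the sum I would fix a mesoscopic scale $\delta_n = h_n^{1/2}$ and split the indices into the near-origin block $\{\xi\neq 0:\|h_n\xi\|\le\delta_n\}$ and its complement. On the near-origin block, where $\nabla g$ fails to be integrable, I would avoid the mean value theorem and instead bound $h_n^d\sum|g(h_n\xi)|$ and $\sum\int_{I_\xi}|g|$ separately by $\int_{\|\eta\|\lesssim\delta_n}\|\eta\|^{-2}d\eta\asymp\delta_n^{d-2}$; it is here that $d\ge 3$ is essential, as it makes the lattice sum $\sum_{1\le\|\xi\|\le\delta_n/h_n}\|\xi\|^{-2}\asymp(\delta_n/h_n)^{d-2}$ balance against the $h_n^{d-2}$ prefactor, so this block contributes $O(\delta_n^{d-2}) = O(h_n^{(d-2)/2})$. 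On the complement, since $\delta_n\gg h_n$ every cube $I_\xi$ stays at distance $\gtrsim\delta_n$ from the origin, so the mean value theorem gives $|g(h_n\xi) - g(\eta)|\lesssim h_n\sup_{I_\xi}\|\nabla g\|$; using the bound $\|\nabla g(\eta)\|\lesssim_M\|\eta\|^{-3}(1+\|\eta\|)^{-M}$ and comparing the resulting lattice sum with $\int_{\|\eta\|\gtrsim\delta_n}\|\eta\|^{-3}(1+\|\eta\|)^{-M}d\eta$ (the integrand is radially decreasing and the cubes avoid the origin) bounds this block by $h_n\big(1 + \log(1/\delta_n)\big)\lesssim h_n\log(1/h_n)$, the logarithm appearing only when $d=3$. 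Collecting the three contributions, the error is $\lesssim h_n^{d-2} + h_n^{(d-2)/2} + h_n\log(1/h_n)$, which is $\le c\,h_n^\epsilon$ for any $\epsilon\le\tfrac12\min\{d-2,1\}$ once $h_n$ is small enough, proving the lemma.

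The Schwartz-decay estimates on $g$ and $\nabla g$, and the comparisons of lattice sums to integrals of radially monotone functions, are routine. The one genuinely delicate point — and the only place the standing assumption $d\ge 3$ is used — is the handling of the $\|\eta\|^{-2}$ singularity of $g$ at the origin: one must verify that the near-origin partial Riemann sum is not merely $o(1)$ but $O(\delta_n^{d-2})$, and calibrate the cutoff $\delta_n$ as a genuine power of $h_n$ so that this contribution stays polynomially small while the mean value estimate on the complement does not deteriorate. I would regard this calibration around the singularity as the main obstacle; the remainder is bookkeeping.
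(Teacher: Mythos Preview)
Your proposal is correct and follows essentially the same strategy as the paper's proof: both introduce a mesoscopic cutoff scale (your $\delta_n = h_n^{1/2}$, the paper's $\epsilon \asymp h_n^{1/(1+a)}$), bound the near-origin contribution crudely using the $\|\eta\|^{-2}$ behavior of the summand, apply a Lipschitz/mean-value estimate on the complement, and then optimize over the cutoff. The only organizational difference is that the paper also truncates at scale $\epsilon^{-1}$ at infinity and handles the tail separately, whereas you absorb the tail decay directly into the gradient bound via Schwartz estimates; your version yields somewhat cleaner explicit exponents, but the substance is the same.
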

The proof appears in Appendix~\ref{app:pf_lem_riemann_sum_approx}. 
By Lemmas~\ref{lem:Q_to_uniform_reduction}--\ref{lem:riemann_sum_approx}, after possibly decreasing the value of $\epsilon$, we deduce that
$$\big\| h_n^{d-2} \calV_1 - \widetilde \Sigma(x)\big\| \lesssim h_n^\epsilon,$$
where
$$\widetilde \Sigma(x) = q(\nabla\varphi_0(x)) {\det}^2(\calB_0)\calB_0 \left(\int_{\bbR^d} \xi\xi^\top 
\left(\frac{ \calF[K](\xi)}{ 2\pi  \langle \calC_0\xi,\xi\rangle } \right)^2d\xi\right)\calB_0^\top.$$
To complete our analysis of term $\calV_1$, it thus remains to show that  $\Sigma(x) = \widetilde \Sigma(x)$. Recalling the definition of $\calC_0$, we have
\begin{align*}
\widetilde \Sigma(x)
 &= \frac{q(\nabla\varphi_0(x)){\det}^2(\calB_0)}{p^2(x)} \int_{\bbR^d} (\calB_0\xi)(\calB_0\xi)^\top  \left(\frac{ \calF[K](\xi)}{2\pi \langle \calB_0 \xi,\xi\rangle}\right)^2    d\xi.
\end{align*}
Applying the change-of-variable $\zeta = \calB_0 \xi$, we arrive at 
\begin{align*}
\widetilde\Sigma(x)
 &= \frac{q(\nabla\varphi_0(x)){\det}(\calB_0)}{p^2(x)} \int_{\bbR^d} \zeta\zeta^\top  \left(\frac{ \calF[K](\calB_0^{-1}\zeta)}{2\pi \langle  \zeta, \calB_0^{-1}\zeta\rangle}\right)^2     d\zeta \\
 &= \frac{1}{p(x)} \int_{\bbR^d} \zeta\zeta^\top  \left(\frac{ \calF[K](\calB_0^{-1}\zeta)}{2\pi \langle  \zeta, \calB_0^{-1}\zeta\rangle}\right)^2     d\zeta = \Sigma(x),
\end{align*}
where we used the Monge-Amp\`ere equation $\det(\nabla^2\varphi_0) = p/q(\nabla\varphi_0)$.
We thus conclude that
$$\|h_n^{d-2} \calV_1 - \Sigma(x)\| \lesssim h_n^\epsilon.$$
Finally, let us also argue that $\Sigma(x)$ is positive definite. We have, for all $v \in \bbR^d$, 
\begin{align*}
v^\top \Sigma(x)v
 &= \frac{1}{p(x)} \int_{\bbR^d} (v^\top\zeta)^2 \left(\frac{ \calF[K](\calB_0^{-1}\zeta)}{2\pi \langle  \zeta, \calB_0^{-1}\zeta\rangle}\right)^2     d\zeta \\
 &\geq c   \int_{\bbR^d} (v^\top\zeta)^2 \|\zeta\|^{-4}\left(  \calF[K](\calB_0^{-1}\zeta) \right)^2     d\zeta,
\end{align*}
for a constant $c > 0$ not depending on $v$, where we used the fact that $\calB_0$ has all eigenvalues lying in the 
positive interval
$[\lambda^{-1},\lambda]$. By assumption~\Kernel{$\gamma$}, there exists $\kappa > 0$ such that
$$|\calF[K](\calB_0^{-1}\xi) - 1| \leq \kappa\|\calB_0^{-1}\xi\|^\gamma \leq \kappa\lambda^\gamma \|\xi\|^\gamma,$$
thus, letting $S$ be the ball $\{\xi: \kappa\lambda^\gamma \|\xi\|^\gamma \leq 1/2\}$, we have 
\begin{align*}
v^\top \Sigma(x)v
  \geq c   \int_{S} (v^\top\zeta)^2 \|\zeta\|^{-4}\left( 1 - \kappa \|\calB_0^{-1}\zeta\| \right)^2     d\zeta 
 \geq \frac c 4  \int_S (v^\top\zeta)^2 \|\zeta\|^{-4}     d\zeta  > 0,
\end{align*}
which implies that $\Sigma(x)$ is positive definite. 
This concludes Step 2 of the proof.
\\

\noindent {\bf Step 3: Bounding term $\calV_2$.} Our aim is now to bound 
\begin{align*}
\calV_2 &= \bbE \big\|\nabla E_0^{-1}\big[\det(\calB) \widebar K_{h_n}^o(Y-\nabla\varphi_0(\cdot))-  \det(\calB_0) 
\widebar K_{h_n}^o(Y-S_x(\cdot)) \big](x)\big\|^2.
\end{align*}
By Lemma~\ref{lem:fundamental_solution}, the operator $E_0$ admits
a periodic Green's function $\Gamma_{E_0}(y,x)$, which is continuously
differentiable with respect to $x$ 
away from the diagonal $x=y$, and whose gradient satisfies 
$\|\nabla_x \Gamma_{E_0}(y,x)\| \lesssim \|x-y\|_{\bbT^d}^{1-d}$. 
We may therefore write
\begin{align*}
\calV_2 
 \lesssim \calV_{2,1} + \calV_{2,2},
\end{align*}
where
\begin{align*}
\calV_{2,1} &= 
\bbE\left[\left(\int_{\bbT^d}
\|x-y\|_{\bbT^d}^{1-d}
\det(\calB_0) \big|\widebar K_{h_n}^o(Y-\nabla\varphi_0(y)) - \widebar K_{h_n}^o(Y-S_x(y))\big| dy \right)^2\right], \\
\calV_{2,2} &= 
\bbE\left[\left(\int_{\bbT^d}
\|x-y\|_{\bbT^d}^{1-d}
\big| \widebar K_{h_n}^o(Y-\nabla\varphi_0(y)) |\cdot | \det(\calB) - \det(\calB_0)| dy \right)^2\right].
\end{align*}
%
%
To bound $\calV_{2,1}$, we make use of the following observation.
\begin{lemma}
\label{lem:var_bound_V2_tech}
It holds that\footnote{By abuse of notation, we say the support
of a periodic function is contained in a set $B \subseteq [0,1]^d$ if it is contained
in the $\bbZ^d$-translates of $B$.}
\begin{align*}
\supp \big(\widebar K_{h_n}(Y-\nabla\varphi_0(\cdot))\big) &\subseteq B(X_0, \lambda h_n), \quad \text{and}\\
\supp \big(\widebar K_{h_n}(Y-S_x(\cdot))\big) &\subseteq B(S_x^{-1}(Y), \lambda h_n).
\end{align*} 
\end{lemma}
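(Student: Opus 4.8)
The plan is to prove both inclusions by propagating the compact support of $K$ through the inverses of the maps $\nabla\varphi_0$ and $S_x$; throughout, $\lambda$ denotes a constant that we freely enlarge to depend also on $K$ and $d$. Two preliminary facts set things up. First, since $K\in\calC_c^\infty((0,1)^d)$ its support is a compact subset of the open unit cube, so $\supp K_{h_n}\subseteq B(0,\lambda h_n)$; moreover, once $h_n<1$ the translates $\{\supp K_{h_n}+k:k\in\bbZ^d\}$ are pairwise disjoint, whence $\supp\widebar K_{h_n}\subseteq\bigcup_{k\in\bbZ^d}(B(0,\lambda h_n)+k)$. Second, by Theorem~\ref{thm:caffarelli} we may take $\lambda$ so that $\lambda^{-1}I_d\preceq\nabla^2\varphi_0\preceq\lambda I_d$ and $\lambda^{-1}I_d\preceq\nabla^2\varphi_0^*\preceq\lambda I_d$ on $\bbT^d$; in particular $\nabla\varphi_0:\bbR^d\to\bbR^d$ is a bijection with $\lambda$-Lipschitz inverse $\nabla\varphi_0^*$, which commutes with integer translation ($\nabla\varphi_0^*(z+k)=\nabla\varphi_0^*(z)+k$ for $k\in\bbZ^d$), and $S_x$ is the affine bijection with inverse $S_x^{-1}(z)=x+\calM(x)(z-\nabla\varphi_0(x))$, where $\calM(x)=(\nabla^2\varphi_0(x))^{-1}$ satisfies $\|\calM(x)\|_{\mathrm{op}}\le\lambda$, so $S_x^{-1}$ is $\lambda$-Lipschitz as well.

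For the first inclusion, suppose $\widebar K_{h_n}(Y-\nabla\varphi_0(y))\ne 0$ for some $y\in\bbR^d$. Then $Y-\nabla\varphi_0(y)-k\in\supp K_{h_n}$ for some $k\in\bbZ^d$, so $\|\nabla\varphi_0(y)-(Y-k)\|\le\lambda h_n$. Applying $\nabla\varphi_0^*$ and using that it inverts $\nabla\varphi_0$, is $\lambda$-Lipschitz, and commutes with integer translation,
\[
\|y-(\nabla\varphi_0^*(Y)-k)\|=\big\|\nabla\varphi_0^*(\nabla\varphi_0(y))-\nabla\varphi_0^*(Y-k)\big\|\le\lambda\,\|\nabla\varphi_0(y)-(Y-k)\|\le\lambda^2 h_n,
\]
so, after once more enlarging $\lambda$, the point $y$ lies in the $\bbZ^d$-translate $B(\nabla\varphi_0^*(Y),\lambda h_n)-k$ of the asserted ball, which is exactly the first claim under the periodic-support convention.

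For the second inclusion the same chain gives: if $\widebar K_{h_n}(Y-S_x(y))\ne 0$ then $\|S_x(y)-(Y-k)\|\le\lambda h_n$ for some $k\in\bbZ^d$, hence $\|y-S_x^{-1}(Y-k)\|\le\lambda h_n$ with $S_x^{-1}(Y-k)=S_x^{-1}(Y)-\calM(x)k$. The point requiring care is that $S_x$ does not descend to a self-map of $\bbT^d$: the translates $\calM(x)k$ are not lattice vectors, so the support is, a priori, a union of $\lambda h_n$-balls about the translates of $S_x^{-1}(Y)$ along the sublattice $\calM(x)\bbZ^d$ rather than along $\bbZ^d$. Since this function only ever enters through integrals over the fundamental domain $\calQ=[0,1]^d$, and $S_x(\calQ)$ is a parallelepiped of diameter at most $\lambda\sqrt d$, only $k$ in a finite set of cardinality bounded by $C(d,\lambda)$ can contribute a nonempty ball; thus, for $h_n$ small, the support on $\calQ$ is contained in a union of at most $C(d,\lambda)$ balls of radius $\lambda h_n$, one of which is $B(S_x^{-1}(Y),\lambda h_n)$ and the rest are obtained by the $\calM(x)$-translates, reducing to the single ball $B(S_x^{-1}(Y),\lambda h_n)$ once $S_x^{-1}(Y)$ is chosen to be the representative in $\calQ$. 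I expect this bookkeeping---reconciling the failure of $S_x$ to respect the lattice with the periodic-support notation---to be the only delicate point; it is combinatorial rather than analytic, and the bounded multiplicity is harmless in the subsequent bound on $\calV_2$, where the lemma serves only to localize the integrands to $O(1)$ balls of radius $O(h_n)$.
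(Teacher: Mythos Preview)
Your argument is essentially the paper's: both use that $\supp K_{h_n}$ sits in a ball of radius $O(h_n)$ and then pull back through the $\lambda$-Lipschitz inverse ($\nabla\varphi_0^*$ or $S_x^{-1}$). The paper's proof is four lines and, for the second inclusion, simply says ``the same argument'' without addressing the periodization issue you raise; your explicit tracking of the $\bbZ^d$-translates for the first inclusion is precisely what the footnote convention abbreviates.

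For the second inclusion you are right that $S_x$ does not descend to $\bbT^d$, and your conclusion that the support on any fundamental domain is a union of at most $C(d,\lambda)$ balls of radius $\lambda h_n$ centered at $S_x^{-1}(Y)-\calM(x)k$ is correct and is all that the application to $\calV_2$ requires. However, your last step---that these balls ``reduce to the single ball $B(S_x^{-1}(Y),\lambda h_n)$ once $S_x^{-1}(Y)$ is chosen to be the representative in $\calQ$''---is not right: the vectors $\calM(x)k$ for $k\in\bbZ^d\setminus\{0\}$ are generically not in $\bbZ^d$, so no choice of representative identifies the extra balls with lattice translates of the main one, and several of them can genuinely meet a single fundamental domain. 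Drop that sentence and stop at bounded multiplicity; Lemma~\ref{lem:ball_integral} applies equally well to each center $S_x^{-1}(Y)-\calM(x)k$ (still a random variable with bounded density), so the estimate for $\calV_2$ goes through unchanged up to a constant factor.
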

The proof of Lemma~\ref{lem:var_bound_V2_tech} appears in Section~\ref{sec:pf_lem_var_bound_V2_tech}. 
Let $$B_0 = B(X_0, \lambda h_n) \cup B(S_x^{-1}(Y), \lambda h_n).$$ 
By Lemma~\ref{lem:var_bound_V2_tech},  we have
\begin{align*}
\calV_{2,1}
  &\lesssim \bbE \left[\left(\int_{B_0} \|x-y\|_{\bbT^d}^{1-d} \big|\widebar K_{h_n}(Y-\nabla\varphi_0(y))-\widebar K_{h_n}(Y-S_x(y))\big| dy \right)^2 \right].
\end{align*}
%
%
Now, and define the random variable
$$ \chi = \begin{cases}
0, & d \leq 6 \text{ and }  \|x-X_0\|_{\bbT^d} \wedge \|x-S_x^{-1}(Y)\|_{\bbT^d} \geq \alpha_n\\ 
1, & \mathrm{otherwise}
\end{cases},$$
where, given a fixed scalar $\delta \in (0,1)$, we set $\alpha_n = 2\lambda^3 h_n^{1-\delta}$.
We have the decomposition  $\calV_{2,1}  \lesssim \calV_{2,1,1} + \calV_{2,1,2}+ \calV_{2,1,3}$, 
where
\begin{align*}
\calV_{2,1,1} &= \bbE \left[\left(\int_{B_0} \|x-y\|_{\bbT^d}^{1-d} \big|\widebar K_{h_n}(Y-\nabla\varphi_0(y))-\widebar K_{h_n}(Y-S_x(y))\big| dy \right)^2 \chi\right], \\
\calV_{2,1,2} &= \bbE \left[\left(\int_{B_0} \|x-y\|_{\bbT^d}^{1-d} \big|\widebar K_{h_n}(Y-\nabla\varphi_0(y))\big| dy \right)^2  (1-\chi) \right], \\ 
\calV_{2,1,3} &= \bbE \left[\left(\int_{B_0} \|x-y\|_{\bbT^d}^{1-d} \big|\widebar K_{h_n}(Y-S_x(y))\big| dy \right)^2  (1-\chi)\right].
\end{align*}
 Let us begin by bounding the first term. 
Since $\|\widebar K_{h_n}\|_{\calC^1(\bbT^d)} \lesssim h_n^{-(d+1)}$, we have
\begin{align*}
\begin{multlined}
\big|\widebar K_{h_n}(Y-\nabla\varphi_0(y))-\widebar K_{h_n}(Y-S_x(y))\big| \\ 
\lesssim h_n^{-(d+1)} \|\nabla\varphi_0(y) - \nabla\varphi_0(x) - \nabla^2\varphi_0(x)(y-x)\|_{\bbT^d}
 \lesssim h_n^{-(d+1)} \|x-y\|_{\bbT^d}^2,
 \end{multlined}
  \end{align*}
  thus we obtain 
\begin{align}
\label{eq:pf_var_bound_step_V2}
\calV_{2,1,1}
 &\lesssim 
  h_n^{-2-2d} \bbE \left[ \left(\int_{B_0} \|x-y\|_{\bbT^d}^{3-d}  dy \right)^2 \chi  \right]. 
\end{align}
We now make use of the following Lemma.
\begin{lemma}
\label{lem:ball_integral}
Let $Z$ be a random variable taking values in $\bbT^d$, admitting a density with respect
to $\calL$ which is bounded from above by some $\gamma > 0$ over $\bbT^d$. 
Let $t > 0$. 
Then, there exists $C =C(\gamma,t,d) > 0$ such that for any $\epsilon \in (0,1/2)$
and any $y \in \bbT^d$, 
$$\bbE\left[\left(\int_{B(Z, \epsilon)} \|y-z\|_{\bbT^d}^{t-d}dz\right)^2\right] \leq C \begin{cases}
\epsilon^{d+2t}, & t < d/2 \\ 
\epsilon^{2t},   & d/2 \leq t < d \\ 
\epsilon^{2d},   & d\leq t.
\end{cases}$$
\end{lemma}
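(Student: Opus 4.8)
The plan is to fix the point appearing in the integrand — by translation invariance of $\bbT^d$ we may as well take it to be $0$ — abbreviate $g(z)=\|z\|_{\bbT^d}^{t-d}$, and for $w\in\bbT^d$ set $I(w)=\int_{B(w,\epsilon)}g$. Since $t-d\in(-d,0)$ we have $g\in L^1(\bbT^d)$, so $I(w)<\infty$ for every $w$. Because the density of $Z$ is bounded above by $\gamma$, the first reduction is
$$\bbE[I(Z)^2]\le\gamma\int_{\bbT^d}I(w)^2\,dw,$$
so it suffices to prove the purely deterministic estimate $\int_{\bbT^d}I(w)^2\,dw\lesssim\epsilon^{d+2t}$.

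Next I would record two pointwise bounds for $I(w)$, in terms of $R:=\|w\|_{\bbT^d}$. First, a bound valid for \emph{all} $w$: the superlevel sets of $g$ are the balls $B(0,r)$, and volumes of torus balls depend only on the radius, so $|B(w,\epsilon)\cap B(0,r)|\le|B(0,\min(\epsilon,r))|=|B(0,\epsilon)\cap B(0,r)|$ for every $r$; feeding this into the layer-cake formula gives $I(w)\le\int_{B(0,\epsilon)}g$, and since $\epsilon<1/2$ the ball $B(0,\epsilon)$ is Euclidean, so polar coordinates give $\int_{B(0,\epsilon)}g=c_d\,\epsilon^{t}$. Second, a decay bound: if $R\ge 2\epsilon$ then every $z\in B(w,\epsilon)$ satisfies $\|z\|_{\bbT^d}\ge R-\epsilon\ge R/2$ by the triangle inequality on $\bbT^d$, whence $g(z)\le 2^{d-t}R^{t-d}$ and, using $\mathrm{vol}(B(w,\epsilon))\le\mathrm{vol}(B(0,\epsilon))\lesssim\epsilon^d$,
$$I(w)\le c_d'\,\epsilon^{d}\,R^{t-d}.$$

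Then I would split $\int_{\bbT^d}I(w)^2\,dw$ over $\{R\le 2\epsilon\}$ and $\{R>2\epsilon\}$. On the first region the uniform bound gives a contribution $\lesssim\epsilon^{2t}\cdot\mathrm{vol}(B(0,2\epsilon))\lesssim\epsilon^{d+2t}$. On the second region the decay bound gives a contribution $\lesssim\epsilon^{2d}\int_{\{R>2\epsilon\}}R^{2(t-d)}\,dw$, and it remains to check that this last integral is $\lesssim\epsilon^{2t-d}$. I would split it once more at $R=1/2$: the part over $\{R>1/2\}$ has bounded integrand and lives on a set of measure at most $1$, so it is $O(1)$; the part over $\{2\epsilon<R\le 1/2\}$ equals, in polar coordinates, $c_d\int_{2\epsilon}^{1/2}r^{2t-d-1}\,dr$, and since the exponent satisfies $2t-d-1<-1$ this is controlled by its lower endpoint, giving $\lesssim\epsilon^{2t-d}$; moreover $\epsilon<1/2$ and $2t-d<0$ force $\epsilon^{2t-d}\gtrsim 1$, which absorbs the $O(1)$ term. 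Summing the two regions yields $\int_{\bbT^d}I(w)^2\,dw\lesssim\epsilon^{d+2t}$, and hence the lemma, with $C$ depending only on $d,t,\gamma$.

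The step I expect to be the crux is the far-field estimate on $\{R>2\epsilon\}$: the uniform bound $I(w)\lesssim\epsilon^t$ by itself integrates to only $\lesssim\epsilon^{2t}$, which is far from the target, so one genuinely needs the spatial decay $I(w)\lesssim\epsilon^d\|w\|_{\bbT^d}^{t-d}$ to extract the extra volume factor $\epsilon^d$, and this decay is only useful because $\int R^{2(t-d)}\,dw$ concentrates near $R\asymp\epsilon$ — which is exactly where the relation between $t$ and $d$ enters. The secondary care needed is purely the torus geometry: balls of radius near $1/2$ are no longer Euclidean, which is handled by peeling off $\{R>1/2\}$ and estimating it trivially. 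Everything else is elementary polar-coordinate bookkeeping.
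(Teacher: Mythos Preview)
Your approach differs from the paper's. The paper expands the square into a double integral, uses Fubini to integrate out the center variable first, and then invokes a composition-of-Riesz-potentials estimate (Lemma~\ref{lem:miranda}) to reduce matters to bounding $\int_{B(0,\epsilon)}\int_{B(0,\epsilon)}\|z-w\|_{\bbT^d}^{2t-d}\,dz\,dw$, which it finishes by a case split on the sign of $2t-d$. Your route---pointwise control of $I(w)$ via a rearrangement bound and the triangle inequality, followed by a near/far split in $w$---is more elementary and avoids the Riesz-composition lemma entirely.

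There is, however, a gap in your far-field step. You assert that ``the exponent satisfies $2t-d-1<-1$'', equivalently $2t<d$, and then that $\epsilon^{2t-d}\gtrsim 1$; but the lemma's hypothesis is only $t\in(1,d)$, which equally permits $2t\ge d$. When $2t>d$, the radial integral $\int_{2\epsilon}^{1/2}r^{2t-d-1}\,dr$ is dominated by its \emph{upper} endpoint and is $O(1)$ rather than $O(\epsilon^{2t-d})$, so your absorption of the $O(1)$ outer-shell contribution into $\epsilon^{2t-d}$ also fails. In that regime your decomposition only delivers a far-field contribution of order $\epsilon^{2d}$, which is larger than the claimed $\epsilon^{d+2t}$. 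The paper treats the case $d\le 2t$ by a separate direct argument after the Miranda reduction; you would need to revisit this regime in your framework as well.
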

The proof appears in Section~\ref{app:pf_lem_ball_integral}.
Notice that the random variables $\nabla\varphi_0^*(Y)$ and $S_x^{-1}(Y)$ 
both have probability laws which are absolutely continuous
with respect to $\calL$, with uniformly upper bounded densities. 
Returning to equation~\eqref{eq:pf_var_bound_step_V2}, we deduce from Lemma~\ref{lem:ball_integral} that
when $d \geq 7$, 
$$\calV_{2,1,1}   \lesssim h_n^{-2-2d} h_n^{d+6} \asymp h_n^{4-d}.$$
On the other hand, when $d \leq 6$, we have by Lemma~\ref{lem:simple_ball_integral} that
\begin{align} 
\calV_{2,1,1}
 &\lesssim 
  h_n^{4-2d} \bbE[\chi]  \lesssim
   h^{4-d-\delta d}.
\end{align}
Let us now bound term $\calV_{2,1,2}$, which is only nonzero when $d \leq 6$. 
By a change of variable, it holds that 
\begin{align*}
\calV_{2,1,2}  &\lesssim \bbE  \left[\left(\int_{B(0, \lambda^2h_n)} \|x-\nabla\varphi_0^*(z-Y)\|_{\bbT^d}^{1-d} \big|\widebar K_{h_n}(z)\big|dz\right)^2  (1-\chi)\right].
\end{align*}
Now, over the event $\chi=0$, we have for all $z \in B(0, \lambda^2h_n)$, 
\begin{align*}
\|x-\nabla\varphi_0^*(z-Y)\|_{\bbT^d}
 &\geq \|x-\nabla\varphi_0^*(Y)\|_{\bbT^d} - \|\nabla\varphi_0^*(Y) -  \nabla\varphi_0^*(z-Y)\|_{\bbT^d} \\ 
 &\geq \|x-\nabla\varphi_0^*(Y)\|_{\bbT^d} - \lambda^3 h_n 
 \gtrsim \|x-X_0\|,  
\end{align*}
where we used equation~\eqref{eq:representative_of_X0}. 
It follows that 
\begin{align*}
\calV_{2,1,2} 
 &\lesssim \bbE  \left[\left(\int_{B(0, \lambda^2h_n)} \|x-X_0\|^{1-d} \big|\widebar K_{h_n}(z)\big|dz\right)^2 (1-\chi)\right] \\
 &\lesssim \bbE  \left[\left( \|x-X_0\|^{1-d} \right)^2 (1-\chi)\right]  
 \lesssim  \int_{\calQ_x\setminus B(x,\alpha_n)} \|x-z\|^{2-2d} dz 
\asymp 
 h_n^{(1-\delta)(2-d)}.
 \end{align*}
A similar bound holds for term $\calV_{2,1,3}$. Choosing $\delta$ small enough, we have thus shown 
\begin{equation}
\label{eq:bound_on_V21}
\calV_{2,1} \lesssim h_n^{2-d+2\beta}.
\end{equation}
Next, we bound term $\calV_{2,2}$. Reasoning similarly as before, we obtain 
\begin{align*}
\calV_{2,2} &\lesssim \bbE\left[\left(\int_{\bbT^d} \|x-y\|_{\bbT^d}^{1-d}
 \big| \widebar K_{h_n}^o(Y-\nabla\varphi_0(y))\big|  \big| \det(\calB(y)) - \det(\calB_0)\big|dy \right)^2\right]\\ 
 &\lesssim  \bbE\left[\left(\int_{B(X_0,\lambda h_n)} \|x-y\|_{\bbT^d}^{2-d}  \big| \widebar K_{h_n}^o(Y-\nabla\varphi_0(y))\big|   dy \right)^2\right] \lesssim  \calV_{2,2,1} + \calV_{2,2,2},
\end{align*}
where we again use the decomposition
\begin{align*}
\calV_{2,2,1} &= \bbE\left[\left(\int_{B(X_0,\lambda h_n)} \|x-y\|_{\bbT^d}^{2-d}  \big| \widebar K_{h_n}^o(Y-\nabla\varphi_0(y))\big|   dy \right)^2\chi \right],\\
\calV_{2,2,2} &= \bbE\left[\left(\int_{B(X_0,\lambda h_n)} \|x-y\|_{\bbT^d}^{2-d}  \big| \widebar K_{h_n}^o(Y-\nabla\varphi_0(y))\big|   dy \right)^2(1-\chi)\right]. 
\end{align*}
We clearly have $\calV_{2,2,2} \lesssim \calV_{2,1,2}$, thus it remains to bound term $\calV_{2,2,1}$.
In the regime $d \geq 5$, we may use Lemma~\ref{lem:ball_integral}
 to obtain
$$\calV_{2,2,1} \lesssim h^{-2d} \bbE\left[\left(\int_{B(X_0,\lambda h_n)} \|x-y\|_{\bbT^d}^{2-d}     dy \right)^2 \right]
\lesssim h_n^{4-d}.$$
On the other hand, when $3 \leq d \leq 4$, we obtain by Lemma~\ref{lem:simple_ball_integral} that
\begin{align}
\nonumber 
 \calV_{2,2,1} 
  &\lesssim 
h^{-2d}\bbE\left[\left(\int_{B(X_0,\lambda h_n)} \|x-y\|_{\bbT^d}^{2-d} 
               dy \right)^2 \chi \right]  
 \lesssim h_n^{-2d + 4} \bbE[\chi] \lesssim 
               h_n^{4-d-\delta d}.
\end{align}
Altogether, taking $\delta$ small enough, we thus obtain 
\begin{equation}
\label{eq:bound_on_V2}
\calV_{2} \lesssim  h_n^{2-d+2\beta}.
\end{equation}
This concludes Step 3 of the proof.
\\

\noindent {\bf Step 4: Bounding term $\calV_3$.}
We now wish to show that $\calV_3 \lesssim h_n^{4-d-2\beta}\vee 1$. 
Notice first that for all $y\in \bbT^d$, 
\begin{align*}
0
 &= E[u_n](y) - E_0[v_n](y) \\ 
 &= -\langle A(y), \nabla^2 u_n (y)\rangle + \langle A_0, \nabla^2 v_n(y) \rangle  -\langle b(y), \nabla u_n(y)\rangle \\
 &= \langle A_0-A(y) , \nabla^2 u_n(y)\rangle - \langle A_0, \nabla^2(u_n-v_n)(y)\rangle - \langle b(y), \nabla u_n(y)\rangle,
\end{align*}
which implies that
$$E_0[u_n-v_n] =  \langle A_0 -  A,\nabla^2 u_n\rangle + \langle b, \nabla u_n(y)\rangle =:f_n,$$
and hence, 
\begin{align*}
(U_n-V_n)(x) 
 &=\nabla E_0^{-1} [f_n\big](x) = \int_{\bbT^d} \nabla_x \Gamma_{E_0}(y,x) f_n(y)dy.
\end{align*} 
It will thus suffice to bound the expected squared norm of the right-hand side. 
Using again the fact that $\|\nabla_x\Gamma_{E_0}(y,x)\| \lesssim \|x-y\|_{\bbT^d}^{1-d}$, we have
\begin{align*}
\|(U_n-V_n)(x) \|
 &= \left\| \int_{\bbT^d} \nabla_x \Gamma_{E_0}(y,x) f_n(y)dy\right\| \\ 
 &\lesssim \int_{\bbT^d} \|x-y\|_{\bbT^d}^{1-d} \big[\| A_0 - A(y)\|\| \nabla^2 u_n(y)\| +\|b(y)\|\|\nabla u_n(y)\|\big]dy \\
 &\lesssim \int_{\bbT^d} \|x-y\|_{\bbT^d}^{1-d} \big[\|x-y\|_{\bbT^d}\| \nabla^2 u_n(y)\| +\|\nabla u_n(y)\|\big]dy, 
\end{align*}
where we used the fact that $ A$ has entries in $\calC^1(\bbT^d)$. We decompose the right-hand side of the above display into the terms
\begin{align*}
\calI_1  &= \int_{\bbT^d} \|x-y\|_{\bbT^d}^{2-d} \| \nabla^2 u_n(y)\| dy,\quad  
\calI_2 = \int_{\bbT^d} \|x-y\|_{\bbT^d}^{1-d} \|\nabla u_n(y)\|dy.
\end{align*}
We have 
$$\calV_3 \lesssim \bbE[\calI_1^2] + \bbE [\calI_2^2],$$
 and we now bound the latter two
terms in turn.
To bound $\bbE[\calI_1^2]$,
we will make use of the following result concerning the regularity of $u_n$. 
\begin{lemma}
\label{lem:regularity_un}
There exists a constant $C_1 = C_1(\omega_{2+\beta}(p,q),\beta,d) > 0$  
such that , 
$$\|\nabla^2 u_n(y)\| \leq C_1 \left( h_n^{-(d+\beta)} \wedge \|y-X_0\|_{\bbT^d}^{-d}\right),
\quad \text{for all } y \in \bbT^d.$$
%
\end{lemma} 
The proof appears in Section~\ref{app:pf_lem_regularity_un}.  
We deduce, 
\begin{align*}
\begin{multlined}
\bbE\left[\left(\int_{B(X_0,C_2 h_n)} \|x-y\|_{\bbT^d}^{2-d} \|\nabla^2 u_n(y)\|dy\right)^2\right] \\
\lesssim h_n^{-2(d+\beta)}\bbE\left[\left(\int_{B(X_0,C_2 h_n)} \|x-y\|_{\bbT^d}^{2-d} dy\right)^2\right] 
\lesssim h_n^{4-d-2\beta},
\end{multlined}
\end{align*}
where the final inequality follows from Lemma~\ref{lem:ball_integral}.  
Write  $\calQ_z = z + [-1/2,1/2]^d$ for all $z \in \bbR^d$. We then have, 
\begin{align}
\label{eq:pf_V3_step_part1}
\bbE[\calI_1^2]
 &\lesssim h_n^{4-d-2\beta} +  \bbE\left[\left(\int_{\calQ_{X_0} \setminus B(X_0,C_2 h_n)} \|x-y\|_{\bbT^d}^{2-d} \|\nabla^2 u_n(y)\|dy\right)^2\right].
\end{align}
Thus, by Lemma~\ref{lem:regularity_un},
\begin{align}
\label{eq:pf_V3_step_part1}
\bbE[\calI_1^2 ]
 &\lesssim   h_n^{4-d-2\beta} +  \bbE\left[\left(\int_{\calQ_0 \setminus B(0,C_2 h_n)} \|x-X_0-y\|_{\bbT^d}^{2-d} \|y\|_{\bbT^d}^{-d}
  dy\right)^2\right].
\end{align}
Let $A_n$ be the event $\|X_0 - x\|_{\bbT^d} \leq h_n/2$. 
Notice that for all $y \in \bbT^d$ such that $\|y\|_{\bbT^d} > h_n$,
it holds
\begin{equation}
\label{eq:pf_var_bound_repr}
\|x-X_0-y\|_{\bbT^d} \geq  \|y\|_{\bbT^d} -\|x-X_0\|_{\bbT^d} \geq \|y\|_{\bbT^d}/2, 
\end{equation}
over the event $A_n$.
Therefore, we have, 
\begin{align*}
\bbE&\left[\left(\int_{\calQ_0\setminus B(C_2 h_n)} \|x-X_0-y\|_{\bbT^d}^{2-d} \|y\|_{\bbT^d}^{-d}
 dy\right)^2 I( A_n)\right]  \\
 &\qquad \lesssim \bbE\left[\left(\int_{\calQ_0\setminus B(0,C_2 h_n)} \|y\|^{2-2d}
  dy\right)^2 I( A_n)\right]
  \lesssim h_n^{d}  \left(\int_{h_n}^{\sqrt d} r^{1-d}   dr\right)^2  
 \lesssim  h_n^{4-d}
\end{align*}
On the other hand, by Lemma~\ref{lem:miranda}, 
\begin{align*}
\bbE&\left[\left(\int_{\calQ_0\setminus B(0,C_2h_n)} \|x-X_0-y\|_{\bbT^d}^{2-d} \|y\|_{\bbT^d}^{-d} dy\right)^2 I( A_n^c)\right]  \\ 
 &\lesssim h_n^{-2\beta} \bbE\left[\left(\int_{\bbT^d} \|x-X_0-y\|_{\bbT^d}^{2-d} \|y\|_{\bbT^d}^{\beta-d} dy\right)^2 I( A_n^c)\right]  \\ 
 &\lesssim h_n^{-2\beta} \bbE\left[\left(\|x-X_0\|^{2+\beta-d}\right)^2 I( A_n^c)\right] \\
 &=  h_n^{-2\beta} \int_{\calQ_x\setminus B(x, h_n/2)} \|x-z\|^{4+2\beta-2d}dQ(z) 
 \asymp h_n^{4-d-2\beta}\vee 1.
  \end{align*}
Returning to equation~\eqref{eq:pf_V3_step_part1}, we have thus shown
\begin{align*}
\bbE[\calI_1^2]
\lesssim  
 h_n^{4-d -2\beta}\vee 1. 
\end{align*}
One can bound $\bbE[\calI_2^2]$ using a similar argument, as shown in the following Lemma. 
\begin{lemma}
\label{lem:bounding_term_I2}
It holds that $\bbE [\calI_2^2] \lesssim h_n^{4-d-2\beta}$. 
\end{lemma}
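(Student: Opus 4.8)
The plan is to transcribe, essentially line for line, the argument that establishes $\bbE\|\calI_1\|^2 \lesssim h_n^{4-d-2\beta}$ in Step 4, replacing $\nabla^2 u_n$ by $\nabla u_n$ and the singular kernel $\|x-y\|_{\bbT^d}^{2-d}$ by $\|x-y\|_{\bbT^d}^{1-d}$. The only genuinely new ingredient is a pointwise regularity estimate for $\nabla u_n$ playing the role that Lemma~\ref{lem:regularity_un} plays for $\nabla^2 u_n$. I would obtain this by re-running the proof of Lemma~\ref{lem:regularity_un} — decomposing the right-hand side of $Eu_n = \det(\nabla^2\varphi_0)\widebar K_{h_n}^o(Y-\nabla\varphi_0(\cdot))$ into a part concentrated on a ball of radius $\lesssim h_n$ about $X_0 = \nabla\varphi_0^*(Y)$ and a bounded remainder, rescaling the equation by $h_n$ near $X_0$ to apply interior Schauder estimates, and using the decay $\|\nabla_x^k \Gamma_{E_0}(z,x)\| \lesssim \|x-z\|_{\bbT^d}^{2-d-k}$ of the fundamental solution together with the cancellation $\int_{\bbT^d}\widebar K_{h_n}^o(Y-\nabla\varphi_0(\cdot))\,d\calL = 0$ — but reading off the bound for first rather than second derivatives, which should yield
\begin{equation}\label{eq:nabla_un_reg}
\|\nabla u_n(y)\| \;\lesssim\; h_n^{-(1+\beta)}\big(h_n \vee \|y - X_0\|_{\bbT^d}\big)^{2-d}, \qquad y \in \bbT^d ,
\end{equation}
with implicit constant depending only on $\omega_{2+\beta}(p,q), K, d, \beta$.

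Granting \eqref{eq:nabla_un_reg}, I would split $\calI_2 = \calI_2^{(1)} + \calI_2^{(2)}$ with $\calI_2^{(1)}$ the integral over $B(X_0,h_n)$ and $\calI_2^{(2)}$ the integral over $\calQ_{X_0}\setminus B(X_0,h_n)$. On the ball, \eqref{eq:nabla_un_reg} gives $\|\nabla u_n\| \lesssim h_n^{1-d-\beta}$, so $\calI_2^{(1)} \lesssim h_n^{1-d-\beta}\int_{B(X_0,h_n)}\|x-y\|_{\bbT^d}^{1-d}\,dy$; taking squared expectations and bounding $\bbE[(\int_{B(X_0,h_n)}\|x-y\|_{\bbT^d}^{1-d}\,dy)^2] \lesssim h_n^{d+2}$ — by the $t=1$ endpoint of Lemma~\ref{lem:ball_integral}, which still holds when $d\geq 3$, or equivalently by conditioning on whether $x\in B(X_0,3h_n)$ exactly as in Step 4 — gives $\bbE\|\calI_2^{(1)}\|^2 \lesssim h_n^{4-d-2\beta}$. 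On the far region, \eqref{eq:nabla_un_reg} gives $\|\nabla u_n(y)\| \lesssim h_n^{-(1+\beta)}\|y-X_0\|_{\bbT^d}^{2-d}$, and after the change of variable $y\mapsto y+X_0$ one is reduced to estimating the squared expectation of $h_n^{-(1+\beta)}\int_{\calQ_0\setminus B(0,h_n)}\|x-X_0-y\|_{\bbT^d}^{1-d}\|y\|_{\bbT^d}^{2-d}\,dy$; I would condition on the event $A_n = \{\|x-X_0\|_{\bbT^d}\leq h_n/2\}$ as in Step 4 — using $\|x-X_0-y\|_{\bbT^d}\geq \|y\|_{\bbT^d}/2$ for $\|y\|_{\bbT^d}>h_n$ on $A_n$, and the convolution estimate of Lemma~\ref{lem:miranda} on $A_n^c$ — to arrive at $\bbE\|\calI_2^{(2)}\|^2 \lesssim h_n^{4-d-2\beta}$. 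Adding the two contributions proves the lemma.

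The main obstacle is the far-region term $\calI_2^{(2)}$: because $\|x-y\|_{\bbT^d}^{1-d}$ is more singular than the kernel $\|x-y\|_{\bbT^d}^{2-d}$ that appears in $\calI_1$, the estimate on the event $A_n^c$ is tighter than in Step 4, and one must be careful to use the sharpest decay of $\nabla u_n$ away from $X_0$ that the proof of Lemma~\ref{lem:regularity_un} provides — i.e., to exploit the mean-zero structure of $\widebar K_{h_n}^o(Y-\nabla\varphi_0(\cdot))$ — since a cruder, unlocalized bound on $\nabla u_n$ is not strong enough in low dimensions. Everything else is routine manipulation with the elementary integral inequalities of Lemmas~\ref{lem:ball_integral}, \ref{lem:miranda}, and \ref{lem:var_bound_V2_tech}.
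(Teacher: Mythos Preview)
Your near/far splitting and the conditioning on $A_n$ mirror the paper's argument exactly; the difference is in the gradient estimate you feed into it. The paper does \emph{not} re-run Lemma~\ref{lem:regularity_un} via rescaled Schauder. Instead it proves directly from the fundamental-solution representation $\nabla u_n(y)=\int_{\bbT^d}\nabla_y\Gamma_E(z,y)\,\det(\calB)\,\widebar K_{h_n}^o(Y-\nabla\varphi_0(z))\,dz$, together with $\|\nabla_y\Gamma_E(z,y)\|\lesssim\|z-y\|_{\bbT^d}^{1-d}$ and the support information of Lemma~\ref{lem:var_bound_V2_tech}, the sharper bound
\[
\|\nabla u_n(y)\|\;\lesssim\;\big(h_n\vee\|X_0-y\|_{\bbT^d}\big)^{1-d}
\]
(Lemma~\ref{lem:regularity_un_grad}). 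There is no $h_n^{-(1+\beta)}$ prefactor, and the far-field decay is $\|X_0-y\|^{1-d}$ rather than your $\|X_0-y\|^{2-d}$.

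This is not a cosmetic difference: your estimate \eqref{eq:nabla_un_reg} is too weak for the $A_n^c$ step to close in low dimension. With \eqref{eq:nabla_un_reg}, the far-region integrand is $h_n^{-(1+\beta)}\|x-X_0-y\|_{\bbT^d}^{1-d}\|y\|_{\bbT^d}^{2-d}$, and Lemma~\ref{lem:miranda} (with $\alpha_1=1$, $\alpha_2=2$) gives at best $h_n^{-(1+\beta)}\|x-X_0\|_{\bbT^d}^{3-d}$. Squaring and integrating over $A_n^c$ yields a bound of order $h_n^{-2-2\beta}$ whenever $d\le 5$ (the expectation $\bbE[\|x-X_0\|^{6-2d}I(A_n^c)]$ saturates to $O(1)$), which exceeds the target $h_n^{4-d-2\beta}$ for every $d<6$; for $d=3,4$ it is not even $o(h_n^{2-d})$, so the variance bound of Lemma~\ref{lem:variance_bound_map} would fail. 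The paper's estimate avoids this because the symmetric kernel $\|x-X_0-y\|_{\bbT^d}^{1-d}\|y\|_{\bbT^d}^{1-d}$ carries no stray negative power of $h_n$, and its Riesz convolution is $\|x-X_0\|_{\bbT^d}^{2-d}$. Replace \eqref{eq:nabla_un_reg} by the one-line fundamental-solution bound above and your remaining bookkeeping goes through as written.
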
 
The proof of Lemma~\ref{lem:bounding_term_I2} is deferred to Appendix~\ref{app:pf_lem_bounding_term_I2}. 
Combining these facts, we have shown that $\calV_3 \lesssim h_n^{4-d-2\beta}$, and the claim follows.\qed

\subsection{Proof of Lemma~\ref{lem:bias_bound_map}}
\label{sec:pf_bias_bound_map}
Let $\beta$ be chosen as in equation~\eqref{eq:beta}, and assume $\beta<\epsilon/2$. 
By Lemma~\ref{lem:inverse_L_to_E}, 
\begin{align*}
\big\|\nabla L^{-1}[q_{h_n}-q]\big\|_{L^\infty(\bbT^d)} 
= \big\|\nabla E^{-1} g_n\big\|_{L^\infty(\bbT^d)},
\end{align*}
where  $g_n = \det(\nabla^2\varphi_0) (q_{h_n}-q)\circ(\nabla\varphi_0)$ 
has mean zero over $\bbT^d$. Let $u_n$ be the unique mean-zero solution 
to the Poisson equation $-\Delta u_n= g_n$ over $\bbT^d$. 
By Lemmas~\ref{lem:a_priori_gradient}--\ref{lem:a_priori_moser}, we  have
$$\big\|\nabla E^{-1} g_n \big\|_{L^\infty(\bbT^d)}
 \lesssim \|E^{-1} g_n \|_{L^2(\bbT^d)} + \|\nabla u_n\|_{\calC^\beta(\bbT^d)}.$$
 Now, using the fact that $E^{-1}$ is self-adjoint, we have 
\begin{align*}
\|E^{-1} g_n \|_{L^2(\bbT^d)}^2
 &= \langle E^{-1} g_n, E^{-1} g_n\rangle_{L^2(\bbT^d)} \\
 &\leq  \langle E^{-2} g_n,  g_n\rangle_{L^2(\bbT^d)} \\
 &\leq  \|E^{-2} g_n \|_{H^2(\bbT^d)}\| g_n\|_{H^{-2}(\bbT^d)} \asymp \|E^{-1} g_n \|_{L^2(\bbT^d)}\| g_n\|_{H^{-2}(\bbT^d)},
 \end{align*}
where the final bound follows from the norm equivalence in equation~\eqref{eq:norm_equivalence_E}. Deduce that
$\|E^{-1} g_n \|_{L^2(\bbT^d)}\lesssim \| g_n\|_{H^{-2}(\bbT^d)}$. Furthermore, 
using a 
Sobolev embedding for periodic spaces (e.g. Corollary 3.5.5 of \cite{schmeisser1987}),  
we have for any fixed $r > 2d/\beta$,
$$\|\nabla u_n\|_{\calC^\beta(\bbT^d)} 
 \lesssim \|u_n\|_{H^{1+2\beta,r}(\bbT^d)} = \|g_n\|_{H^{2\beta-1,r}(\bbT^d)},$$
thus we arrive at
$$\big\|\nabla E^{-1} g_n \big\|_{L^\infty(\bbT^d)}
 \lesssim   \|g_n\|_{H^{2\beta-1,r}(\bbT^d)}.$$
To bound the right-hand side, let $r'$ be the H\"older conjugate exponent of $r$.
By Lemma~\ref{lem:sobolev_dual_norm}, 
\begin{align*}
\|g_n\|_{H^{2\beta-1,r}(\bbT^d)}
 &\asymp \sup_{\substack{v \in H^{1-2\beta, r'}(\bbT^d) \\ \|v\|_{H^{1-2\beta, r'}(\bbT^d)} = 1}} \int v \det(\nabla^2\varphi_0) (q_{h_n}-q)\circ(\nabla\varphi_0) \\ 
 &= \sup_{\substack{v \in H^{1-2\beta, r'}(\bbT^d) \\ \|v\|_{H^{1-2\beta, r'}(\bbT^d)} = 1}} \int v(\nabla\varphi_0^*)(q_{h_n}-q) 
 \lesssim \sup_{\substack{w \in H^{1-2\beta, r'}(\bbT^d) \\ \|w\|_{H^{1-2\beta, r'}(\bbT^d)} = 1}} \int w\cdot ( q_{h_n}-q),
 \end{align*}
 where we used the fact that the map $w = v\circ \nabla\varphi_0^*$ satisfies 
 $\|w\|_{H^{1-2\beta,r'}(\bbT^d)} \lesssim \|v\|_{H^{1-2\beta,r'}(\bbT^d)}$ under the regularity
 conditions we have placed on $\varphi_0$, by Lemma~\ref{lem:sobolev_comp}.
It follows that
$$\|g_n\|_{H^{2\beta-1,r}(\bbT^d)} \lesssim \|q_{h_n}-q\|_{H^{2\beta-1,r}(\bbT^d)}\lesssim h_n^{s+1-2\beta},$$
where we used Proposition~\ref{prop:kde_negative_sobolev} and the assumed properties of $K$. 
The claim follows.\qed

\section{Proofs of Main Results}
\label{sec:pf_main_results}

In the following  subsections, we respectively prove  
Proposition~\ref{prop:pointwise_rate_map}, Theorem~\ref{thm:clt_map},  
  and Theorem~\ref{thm:uniform_impossible}.  
The key ingredients are  the linearization bound of Theorem~\ref{thm:linearization}, 
the variance bound of Lemma~\ref{lem:variance_bound_map}, and
the bias bounds of Lemma~\ref{lem:bias_bound_map}.

Let $b \geq 1$ be a constant whose value will be determined below. 
Let $c = c(\omega_s(p,q),K,b,\beta,s,d) > 0$ be a constant whose value is permitted to vary from line to line. 
By Lemmas~\ref{lem:kde_as}--\ref{lem:fitted_caffarelli},
and by the assumption  $p,q \in \calC^s_+(\bbT^d)$, together with the fact that $s > 2$, 
it holds  for any
choice of the free parameter $\beta$ satisfying
equation~\eqref{eq:beta} that, 
with probability at least $1 - (c/n^b)$,
$$\|\hat q_n\|_{\calC^{2+\beta}(\bbT^d)} \leq c, \quad \text{and}\quad
\hat q_n \geq 1/c,~~ \text{over } \bbT^d,$$
and hence, by Theorem~\ref{thm:caffarelli}, 
$$\|\hat \varphi_n\|_{\calC^{2+\beta}(\bbT^d)}\leq c, \quad \text{and}\quad\nabla^2 \hat\varphi_n \succeq I_d/c, ~~\text{over } \bbT^d.$$
Over the above high-probability event,  $\hat T_n$ is the unique continuous optimal transport
map pushing forward $P$ onto $\hat Q_n$. We may therefore apply Theorem~\ref{thm:linearization} to the fitted
potential $\hat\varphi_n$ over this event: With probability at least $1 - (c/n^b)$,
\begin{align*}
\left\| (\hat\varphi_n - \varphi_0) - L^{-1}[ \hat q_n - q ]\right\|_{\calC^{2+\beta}(\bbT^d)}  
 &\leq c \|\hat q_n-q\|_{\calC^{\beta}(\bbT^d)}\|\hat q_n - q\|_{\calC^{1+\beta}(\bbT^d)}.
\end{align*} 
Proposition~\ref{prop:kde_holder} implies
that with probability at least $1-(c/n^{b})$,
$$\|\hat q_n-q\|_{\calC^{1+\beta}(\bbT^d)} \lesssim h_n^{s-1-\beta} + \sqrt{\frac {\log n} {nh_n^{d+2+2\beta}}}
\lesssim h_n^{s-1-\beta} + \sqrt{\frac {1} {nh_n^{d+2+4\beta}}},$$
and
$$\|\hat q_n-q\|_{\calC^{\beta}(\bbT^d)} \lesssim h_n^{s-\beta} + \sqrt{\frac {1} {nh_n^{d+4\beta}}}.$$
Combining these facts, we deduce that there exists an event
$A_n$ with probability content at least $1-(c/n^{b})$ such that, over $A_n$,  
\begin{align*}
&\left\| (\hat\varphi_n - \varphi_0) - L^{-1}\big[ \hat q_n - q \big]\right\|_{\calC^{2+\beta}(\bbT^d)}  \\  
 &~~~~ {\lesssim} \left(h_n^{s -2\beta} {+} \sqrt{\frac{1}{nh_n^{d +4\beta}}}\right)
            \left(h_n^{s-1-\beta} {+} \sqrt{\frac {1} {nh_n^{d+2+4\beta}}}\right)    
 \asymp h_n^{2s-1-3\beta} + \frac 1 {nh_n^{d+1+4\beta}}  =:\delta_n.
\end{align*}
Since the operator $L^{-1}$ is linear, we may write 
\begin{align*}
L^{-1}\left[\hat q_n - q_{h_n} \right](x) 
 =  \frac 1 n \sum_{i=1}^n Z_{n,i}(x),\quad\text{where } ~~Z_{n,i}(x) 
 := L^{-1}\Big[\widebar K_{h_n}(X_i-\cdot) - q_{h_n} \Big](x).
\end{align*}
We thus have, over the event $A_n$, 
\begin{align}
\sup_{x\in \bbT^d} \left\|\nabla L^{-1}[q_{h_n}-q](x) + \frac 1 n \sum_{i=1}^n \nabla Z_{n,i}(x)
  - (\hat T_n- T_0)(x) \right\| \lesssim \delta_n.
\label{eq:linearization_implication}
\end{align}
With this bound in place, we now turn to the proofs of the various results, in turn. 

\subsection{Proof of Proposition~\ref{prop:pointwise_rate_map}} 
By assumption, we have $h_n \asymp n^{-a}$ with $  (d+4(s-1))^{-1} < a < (d+s+2 )^{-1}$.
Let $x \in \bbT^d$. To bound the bias of $\hat T_n(x)$, use the decomposition
\begin{align*}
\big\|\bbE[\hat T_n(x) - T_0(x)]\big\|
 &\leq \big\| \bbE[(\hat T_n(x) - T_0(x))I_{A_n^c}]\big\| + 
 \big\| \bbE[(\hat T_n(x) - T_0(x))I_{A_n}]\big\|.
 \end{align*}
 For the first term, recall that the definition of $\hat T_n$ implies that $\|\hat T_n(x)-T_0(x)\|$ is 
 uniformly bounded by a constant independent of $x$, and thus  
 $$\big\| \bbE[(\hat T_n(x) - T_0(x))I_{A_n^c}]\big\| \lesssim \bbP(A_n^\cp) \lesssim n^{-b}.$$
For the second term, it follows from equation~\eqref{eq:linearization_implication} that
\begin{align*} 
\big\| \bbE[(\hat T_n(x) - T_0(x))I_{A_n}]\big\|
\leq \left\| \bbE\left[\left(\nabla L^{-1}[q_{h_n} - q](x) +
   (1/n) \textstyle\sum_{i=1}^n \nabla Z_{n,i}(x)\right)I_{A_n}\right]\right\|
  + \delta_n. 
\end{align*}
Now, we make use of the following.
\begin{lemma}\label{lem:expectation_zero}
It holds that
$$\sup_{x \in \bbT^d}\left\|\bbE [ \left((1/n) \textstyle\sum_{i=1}^n \nabla Z_{n,i}(x)\right)I_{A_n}] \right\| \lesssim n^{a(d+\beta) -b}.$$
 \end{lemma}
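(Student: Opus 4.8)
The plan is to exploit the fact that each summand $\nabla Z_{n,i}(x)$ is \emph{exactly} centered, so that the only contribution to $\bbE[(\,\cdot\,)I_{A_n}]$ comes from the low-probability event $A_n^c$. First I would observe that, writing $Y$ for a generic variable with law $Q$, linearity of $L^{-1}$ together with the defining identity $\bbE[\widebar K_{h_n}(Y-\cdot)] = q_{h_n}$ gives $\bbE[Z_{n,i}(x)] = L^{-1}\big[\bbE[\widebar K_{h_n}(Y-\cdot)] - q_{h_n}\big](x) = L^{-1}[0](x) = 0$, and likewise $\bbE[\nabla Z_{n,i}(x)] = 0$ for every $i$ and every $x\in\bbT^d$. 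The interchange of expectation with the operators $L^{-1}$ and $\nabla$ is justified by Lemma~\ref{lem:inverse_L_to_E}: $\nabla L^{-1}$ maps $\calC_0^\beta(\bbT^d)$ boundedly into $\calC^{1+\beta}(\bbT^d)\hookrightarrow L^\infty(\bbT^d)$, and $\widebar K_{h_n}(Y-\cdot)-q_{h_n}$ is, for fixed $n$, uniformly bounded in $\calC_0^\beta(\bbT^d)$, so the relevant integrals are Bochner integrals. Consequently $\bbE\big[(1/n)\sum_{i=1}^n \nabla Z_{n,i}(x)\big]=0$, whence
\[
\bbE\Big[\Big((1/n)\textstyle\sum_{i=1}^n \nabla Z_{n,i}(x)\Big)I_{A_n}\Big] = -\,\bbE\Big[\Big((1/n)\textstyle\sum_{i=1}^n \nabla Z_{n,i}(x)\Big)I_{A_n^c}\Big].
\]

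Next I would bound the right-hand side using a crude deterministic estimate on $\|\nabla Z_{n,i}(x)\|$ combined with $\bbP(A_n^c)\lesssim n^{-b}$. By the norm equivalence $\|L^{-1}f\|_{\calC^{2+\beta}(\bbT^d)}\lesssim\|f\|_{\calC^\beta(\bbT^d)}$ of Lemma~\ref{lem:inverse_L_to_E} and the inclusion $\calC^{2+\beta}(\bbT^d)\hookrightarrow\calC^{1+\beta}(\bbT^d)$,
\[
\|\nabla Z_{n,i}(x)\| \le \|\nabla Z_{n,i}\|_{L^\infty(\bbT^d)} \le \|Z_{n,i}\|_{\calC^{2+\beta}(\bbT^d)} \lesssim \big\|\widebar K_{h_n}(Y_i-\cdot)-q_{h_n}\big\|_{\calC^\beta(\bbT^d)}
\]
uniformly in $x$. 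The standard scaling of mollifiers (as used in Appendix~\ref{app:kde}), obtained by interpolating between $\|\widebar K_{h_n}\|_{L^\infty(\bbT^d)}\lesssim h_n^{-d}$ and $\|\nabla\widebar K_{h_n}\|_{L^\infty(\bbT^d)}\lesssim h_n^{-d-1}$, gives $\|\widebar K_{h_n}(Y_i-\cdot)\|_{\calC^\beta(\bbT^d)}\lesssim h_n^{-d-\beta}$, while $\|q_{h_n}\|_{\calC^\beta(\bbT^d)}\lesssim\|q\|_{\calC^\beta(\bbT^d)}$ is bounded uniformly in $n$; hence $\|\nabla Z_{n,i}(x)\|\lesssim h_n^{-d-\beta}$ deterministically, uniformly in $i$, $x$, and $n$, and the same bound holds for the average over $i$.

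Combining the two steps,
\[
\sup_{x\in\bbT^d}\Big\|\bbE\Big[\Big((1/n)\textstyle\sum_{i=1}^n \nabla Z_{n,i}(x)\Big)I_{A_n}\Big]\Big\|
\le \sup_{x\in\bbT^d}\bbE\Big[\big\|(1/n)\textstyle\sum_{i=1}^n \nabla Z_{n,i}(x)\big\|\,I_{A_n^c}\Big]
\lesssim h_n^{-(d+\beta)}\,\bbP(A_n^c) \lesssim h_n^{-(d+\beta)}\,n^{-b},
\]
and since $h_n\asymp n^{-a}$ the right-hand side is $\lesssim n^{a(d+\beta)-b}$, as claimed. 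The argument is short; the only mild points requiring care are the justification of the expectation--operator interchange and the mollifier Hölder-norm scaling, and neither is a genuine obstacle.
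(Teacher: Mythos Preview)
Your proposal is correct and follows essentially the same approach as the paper: show each $\nabla Z_{n,i}(x)$ is centered, rewrite the expectation on $A_n$ as minus the expectation on $A_n^c$, and control the latter by the deterministic bound $\|\nabla Z_{n,i}(x)\|\lesssim h_n^{-(d+\beta)}$ from the norm equivalence of Lemma~\ref{lem:inverse_L_to_E}, combined with $\bbP(A_n^c)\lesssim n^{-b}$. The only cosmetic difference is that the paper justifies $\bbE[\nabla Z_{n,i}(x)]=0$ via the fundamental-solution representation of $L^{-1}$ (Proposition~\ref{prop:fundamental_solution}) and Fubini--Tonelli, whereas you invoke a Bochner integral argument; both are sound.
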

The proof of Lemma~\ref{lem:expectation_zero} appears in Appendix~\ref{app:pf_lem_expectation_zero}.  
Altogether, we arrive at 
\begin{align*}
\big\|\bbE[\hat T_n(x) - T_0(x)]\big\|
  &\leq \big\|\nabla L^{-1}[q_{h_n} - q](x)\big\|+\delta_n + n^{a(d+\beta) -b} \\ 
  &\lesssim_\epsilon h_n^{s+1-\epsilon} + \frac 1 {nh_n^{d+1+4\beta}} + h_n^{2s-1-3\beta} + n^{a(d+\beta) -b},
\end{align*}
using Lemma~\ref{lem:bias_bound_map}.
Since $h_n\asymp n^{-a}$, we may choose $b$ large enough to make the final term of low order.
Furthermore, the second term is of lower order than the first if $\beta$ is chosen sufficiently small
in terms of $\epsilon$, due to the condition that $a < (d+s+2)^{-1}$. Likewise, after possibly
decreasing $\beta$ further, the third term is of lower order than the first
due to the condition $s > 2$. We thus have
\begin{align*}
\big\|\bbE[\hat T_n(x) - T_0(x)]\big\| &\lesssim_\epsilon h_n^{s+1-\epsilon},
\end{align*}
thus proving the desired bias bound. Reasoning similarly, we have the following
bound on the fluctuations,
\begin{align*}
\big\|&\Cov\big[ \hat T_n(x)\big]\big\|\\ 
 &\lesssim \left\|\Cov\left[\frac 1 n \sum_{i=1}^n \nabla  Z_{n,i}(x)\right]\right\| + 
           \left\|\Cov\left[\hat T_n(x) - \frac 1 n \sum_{i=1}^n \nabla Z_{n,i}(x)\right]\right\| \\
 &= \frac 1 n \left\|\Cov\left[\nabla Z_{n,1}(x)\right] \right\| +
  \left\|\Cov\left[ \hat T_n(x) - T_0(x) - \frac 1 n \sum_{i=1}^n \nabla Z_{n,i}(x)- \nabla L^{-1}[q_{h_n} - q](x)\right]\right\| \\
 &\lesssim\frac 1 {nh_n^{d-2}} + \left(\frac 1 {nh_n^{d+1+4\beta}} + h_n^{2s-1-3\beta}\right)^2  + n^{-b} 
 \end{align*}
where we used Lemma~\ref{lem:variance_bound_map} and equation~\eqref{eq:linearization_implication}.
Under the conditions $(d+4(s-1))^{-1}<a < (d+s+2)^{-1}$, the first term dominates, and the claim follows.\qed 

\subsection{Proof of Theorem~\ref{thm:clt_map}}
From equation~\eqref{eq:linearization_implication} and Lemma~\ref{lem:bias_bound_map},  
we obtain that for any fixed $x \in \bbT^d$ and $\epsilon > 0$,  
$$\hat T_n(x) - T_0(x) = \frac 1 n \sum_{i=1}^n \nabla Z_{n,i}(x)
  + O_p(h_n^{s+1-\epsilon}+\delta_n),$$
  where the symbols $O_p$ and $o_p$ are to be understood coordinate-wise. 
Let $\Sigma_n(x) = \Cov[\nabla Z_{n,1}(x)]/n$. We have 
$\|\Sigma_n(x)\| \asymp h_n^{2-d}/n$ 
by Lemma~\ref{lem:variance_bound_map}, 
thus
\begin{align*}
\Sigma_n^{-1/2}(x) &\big( \hat T_n(x) - T_0(x)\big) \\
 &= \frac {\Sigma_n^{-1/2}(x)} {n} \sum_{i=1}^n \nabla Z_{n,i}(x)  + O_p\left(\sqrt {n h_n^{ d- 2}} \left(h_n^{s+1-\epsilon}+h_n^{2s-1-3\beta} + \frac 1 {nh_n^{d+1+4\beta}}\right)\right).
 \end{align*}
Choosing $\epsilon$ and $\beta$  sufficiently small in terms of $a$, 
we deduce from condition~\eqref{assm:bandwidth_map_clt}
 that 
\begin{equation}
\label{eq:pf_clt_potential_step}
\Sigma_n^{-1/2}(x)  \big( \hat T_n(x) - T_0(x)\big) 
= \frac {\Sigma_n^{-1/2}(x) } {n}\sum_{i=1}^n \nabla Z_{n,i}(x)  + 
o_p(1).
\end{equation}
We will show that the sample average appearing on the right-hand side of the above display is asymptotically 
Gaussian, by appealing to Lyapunov's central limit theorem (Lemma~\ref{lem:lyapunov}). 
We will make use of the following Lemma. 
\begin{lemma}
\label{lem:lyapunov_technical_map}
For any $\delta > 0$, it holds that
$$\bbE \big\|\nabla Z_{n,1} (x)\big\|^{2+\delta} \lesssim h_n^{-\delta(d+\beta)+2-d}.$$
\end{lemma}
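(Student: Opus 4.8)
The plan is to bound the $(2+\delta)$-th moment by interpolating between a crude \emph{deterministic} $L^\infty$ bound on $\nabla Z_{n,1}$ and the \emph{sharp} second-moment estimate already supplied by Lemma~\ref{lem:variance_bound_map}: schematically, $\bbE\|\nabla Z_{n,1}(x)\|^{2+\delta}\le \|\nabla Z_{n,1}\|_{L^\infty(\bbT^d)}^{\delta}\,\bbE\|\nabla Z_{n,1}(x)\|^{2}$.

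First I would record the deterministic bound. Write $Z_{n,1}=L^{-1}[\widebar K_{h_n}(X_1-\cdot)-q_{h_n}]$. For $n$ large, $\widebar K_{h_n}(X_1-\cdot)$ is a translate of the $\bbZ^d$-periodization of $K_{h_n}=K(\cdot/h_n)/h_n^d$, whose support shrinks inside the unit cube; the scaling of $K\in\calC_c^\infty((0,1)^d)$ gives $\|\widebar K_{h_n}(X_1-\cdot)\|_{\calC^\beta(\bbT^d)}\lesssim h_n^{-d}+h_n^{-(d+\beta)}\asymp h_n^{-(d+\beta)}$, with constant independent of $X_1$ and $x$. Since $q_{h_n}=q\star\widebar K_{h_n}$, one has $\|q_{h_n}\|_{\calC^\beta(\bbT^d)}\lesssim\|q\|_{\calC^\beta(\bbT^d)}\lesssim 1$ (using $s>2>\beta$), and both functions integrate to one over $\bbT^d$, so their difference lies in $\calC_0^\beta(\bbT^d)$. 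Then the norm equivalence of Lemma~\ref{lem:inverse_L_to_E} ($\|u\|_{\calC^{2+\beta}(\bbT^d)}\asymp\|Lu\|_{\calC^\beta(\bbT^d)}$) yields
$$\|Z_{n,1}\|_{\calC^{2+\beta}(\bbT^d)}\lesssim\big\|\widebar K_{h_n}(X_1-\cdot)-q_{h_n}\big\|_{\calC^\beta(\bbT^d)}\lesssim h_n^{-(d+\beta)},$$
so that $\|\nabla Z_{n,1}\|_{L^\infty(\bbT^d)}\le\|Z_{n,1}\|_{\calC^{2+\beta}(\bbT^d)}\lesssim h_n^{-(d+\beta)}$ almost surely.

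Next I would invoke the second-moment bound. Because $X_1\sim Q$ and $\bbE[\widebar K_{h_n}(X_1-\cdot)]=q_{h_n}$, linearity of $L^{-1}$ gives $\bbE[\nabla Z_{n,1}(x)]=\nabla L^{-1}[q_{h_n}-q_{h_n}](x)=0$, whence $\bbE\|\nabla Z_{n,1}(x)\|^{2}=\tr\Cov[\nabla Z_{n,1}(x)]\lesssim h_n^{2-d}$ uniformly in $x$, by Lemma~\ref{lem:variance_bound_map}. Combining the two estimates,
$$\bbE\big\|\nabla Z_{n,1}(x)\big\|^{2+\delta}\le\big\|\nabla Z_{n,1}\big\|_{L^\infty(\bbT^d)}^{\delta}\,\bbE\big\|\nabla Z_{n,1}(x)\big\|^{2}\lesssim h_n^{-\delta(d+\beta)}\,h_n^{2-d}=h_n^{-\delta(d+\beta)+2-d},$$
which is the claim.

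There is no serious obstacle here; the proof is a short assembly of already-established facts. The only point requiring a little care is the first step — verifying that the $\calC^\beta$ seminorm of the rescaled, periodized, translated mollifier blows up at rate $h_n^{-(d+\beta)}$ uniformly in $X_1$, and that periodization does not contribute (which it does not, since the support of $K_{h_n}$ eventually lies strictly inside the unit cube). One could alternatively run the argument through the fundamental solution of $E$ (as in the proof of Lemma~\ref{lem:variance_bound_map}), integrating $\|x-y\|_{\bbT^d}^{1-d}|f_n(y)|$ against a density with bounded mass on a ball of radius $O(h_n)$, but the interpolation route above is the most economical.
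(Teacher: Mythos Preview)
Your proof is correct and follows essentially the same route as the paper: a deterministic $\calC^{2+\beta}$ bound on $Z_{n,1}$ via the norm equivalence of Lemma~\ref{lem:inverse_L_to_E} (giving $\|\nabla Z_{n,1}\|_{L^\infty}\lesssim h_n^{-(d+\beta)}$), combined with the second-moment bound from Lemma~\ref{lem:variance_bound_map}, then the interpolation $\bbE\|\cdot\|^{2+\delta}\le\|\cdot\|_{L^\infty}^\delta\,\bbE\|\cdot\|^2$. Your write-up is in fact slightly more careful than the paper's in justifying the $\calC^\beta$ scaling of the periodized kernel and the vanishing of $\bbE[\nabla Z_{n,1}(x)]$.
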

The proof of Lemma~\ref{lem:lyapunov_technical_map} is simple, and is deferred to Appendix~\ref{app:pf_lem_lyapunov_technical_map}.  
From Lemmas~\ref{lem:variance_bound_map} and~\ref{lem:lyapunov_technical_map}, we have
\begin{align*}
\frac{\sum_{i=1}^n \bbE\|\nabla Z_{n,i}(x)\|^{2+\delta}}{\left( \lmin [\sum_{i=1}^n \nabla Z_{n,i}(x)]\right)^{\frac{2+\delta}{2}}}
 &\lesssim \frac{n h_n^{-\delta(d+\beta)+2-d}}{n^{\frac\delta 2 + 1} h_n^{-(d-2)(2+\delta)/2}(1+o(1))}
  \lesssim \left( n h_n^{d+2+2\beta}\right)^{-\frac\delta 2}. 
\end{align*}
Since $\beta$ can be taken to be an arbitrarily small constant, 
the right-hand side of the above display vanishes under condition~\eqref{assm:bandwidth_map_clt}
on $h_n$. Lyapunov's condition is thus satisfied,  
so we deduce from Lemma~\ref{lem:lyapunov}  that 
$$\frac{\Sigma_n^{-1/2}(x)}{n}\sum_{i=1}^n \nabla Z_{n,i}(x) \overset{w}{\longrightarrow} N(0,I_d).$$
By Lemma~\ref{lem:variance_bound_map}, we have 
$\|nh_n^{d-2}\Sigma_n(x) - \Sigma(x)\| \to 0$, 
and since $\Sigma(x)$ is positive definite, it follows that 
$\|(nh_n^{d-2}\Sigma_n(x))^{-1/2} - \Sigma^{-1/2}(x)\| \to 0$.
The claim follows from here.\qed

\subsection{Proof of Theorem~\ref{thm:uniform_impossible}}
\label{sec:pf_uniform_asymptotics}
The proof is inspired by~\cite{nishiyama2011} and \cite{stupfler2014}. 
We make use of the following two propositions,  
which may be of independent interest. 
The first provides upper bounds on  projections of $\hat\varphi_n-\varphi_0$
under the inner product $\langle\cdot,\cdot\rangle_A$ defined in Section~\ref{sec:linearization}. 
\begin{proposition}
\label{prop:clt_projections}
Suppose $p,q \in \calC^s_+(\bbT^d)$. Assume that $K$ satisfies condition~\Kernel{$s+1$}. 
Then, for any given $\epsilon > 0$ and   $v \in H_0^1(\bbT^d)$, 
$$\langle \hat\varphi_n - \varphi_0, v\rangle_A =  O_p\left( n^{-1/2} + h_n^{s+1}+ 
\frac 1 {nh_n^{d+\epsilon}} \right).$$
\end{proposition}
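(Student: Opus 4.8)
The plan is to combine the linearization of Theorem~\ref{thm:linearization} with a weak (duality) argument that exploits the fact that $v$ is held fixed. We work on the high-probability event $A_n$ from Section~\ref{sec:pf_main_results}, on which $\hat q_n\in\calC_+^{2+\beta}(\bbT^d)$, $\hat\varphi_n$ is the Brenier potential pushing $P$ onto $\hat Q_n$, and Theorem~\ref{thm:linearization} applies; since $\bbP(A_n^{\cp})\lesssim n^{-b}$ with $b$ as large as we wish, and $|\langle\hat\varphi_n-\varphi_0,v\rangle_A|$ is bounded on $A_n^{\cp}$ by a deterministic multiple of $\|v\|_{H^1(\bbT^d)}$, that event contributes only $O_p(n^{-b})$. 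Writing $w_n=(\hat\varphi_n-\varphi_0)-L^{-1}[\hat q_n-q]$ (the term $L^{-1}[\hat q_n-q]$ is unambiguous, $\hat q_n-q$ being a mean-zero $\calC^\infty$ function), it thus suffices to bound
$$\langle L^{-1}[\hat q_n-q],v\rangle_A \quad\text{and}\quad \langle w_n,v\rangle_A .$$

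For the first term, the weak characterization~\eqref{eq:weak_sol_L} of $L$ gives $\langle L^{-1}[\hat q_n-q],v\rangle_A=\langle\hat q_n-q,\,v\circ\nabla\varphi_0^*\rangle_{L^2(\bbT^d)}$; since $\int_{\bbT^d}(\hat q_n-q)\,d\calL=0$ we may replace $v\circ\nabla\varphi_0^*$ by its mean-zero part $\psi\in H_0^1(\bbT^d)$, whose norm is a constant depending only on $v$ and $\varphi_0$ (using that $\nabla\varphi_0^*$ is a $\calC^{3+\beta}$-diffeomorphism, so composition with it is bounded on $H^1(\bbT^d)$). Splitting $\hat q_n-q=(\hat q_n-q_{h_n})+(q_{h_n}-q)$, the deterministic part obeys $|\langle q_{h_n}-q,\psi\rangle_{L^2}|\le\|q_{h_n}-q\|_{H^{-1}(\bbT^d)}\|\psi\|_{H^1(\bbT^d)}\lesssim h_n^{s+1}$ by Proposition~\ref{prop:kde_negative_sobolev} under condition~\Kernel{$s+1$}; and, since $\widebar K_{h_n}$ is even, $\langle\widebar K_{h_n}(Y_i-\cdot),\psi\rangle_{L^2}=(\psi\star\widebar K_{h_n})(Y_i)$, so that
$$\langle\hat q_n-q_{h_n},\psi\rangle_{L^2}=\frac1n\sum_{i=1}^n\Big[(\psi\star\widebar K_{h_n})(Y_i)-\bbE\big[(\psi\star\widebar K_{h_n})(Y_1)\big]\Big]$$
is a centered i.i.d.\ average of variance $\lesssim\tfrac1n\|\psi\star\widebar K_{h_n}\|_{L^2(\bbT^d)}^2\le\tfrac1n\|\psi\|_{L^2(\bbT^d)}^2\|\widebar K_{h_n}\|_{L^1(\bbT^d)}^2\lesssim n^{-1}$, hence $O_p(n^{-1/2})$. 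The first term is therefore $O_p(n^{-1/2}+h_n^{s+1})$.

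For the error term, the identity $\langle w_n,v\rangle_A=\langle Ew_n,v\rangle_{L^2(\bbT^d)}$ (from the proof of Lemma~\ref{lem:relation_E_L}) reduces matters to bounding $\|Ew_n\|_{H^{-1}(\bbT^d)}$. Retracing the proof of Theorem~\ref{thm:linearization} (Lemmas~\ref{lem:frechet}, \ref{lem:comparing_psin_psi}), one sees that $Ew_n=E(\hat\varphi_n-\varphi_0)-\det(\nabla^2\varphi_0)(\hat q_n-q)(\nabla\varphi_0)$ is, up to harmless smooth factors and signs, a sum of: (a)~a Taylor remainder $R$ with $\|R\|_{\calC^\beta(\bbT^d)}\lesssim\|\hat\varphi_n-\varphi_0\|_{\calC^{2+\beta}(\bbT^d)}^2$; (b)~the term $\det(\nabla^2\varphi_0)(\hat q_n-q)(\nabla\varphi_0)\langle(\nabla^2\varphi_0)^{-1},\nabla^2(\hat\varphi_n-\varphi_0)\rangle$; and (c)~the term $\det(\nabla^2\varphi_0)\langle\nabla(\hat\varphi_n-\varphi_0),(\nabla(\hat q_n-q))(\nabla\varphi_0)\rangle$. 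By Proposition~\ref{prop:potential_stability}, $\|\hat\varphi_n-\varphi_0\|_{\calC^{2+\beta}(\bbT^d)}\lesssim\|\hat q_n-q\|_{\calC^\beta(\bbT^d)}$, so terms (a) and (b) are bounded in $L^2(\bbT^d)$, hence in $H^{-1}(\bbT^d)$, by $\|\hat q_n-q\|_{\calC^\beta(\bbT^d)}^2$ (for (b) also using $\|\hat q_n-q\|_{L^2(\bbT^d)}\le\|\hat q_n-q\|_{\calC^\beta(\bbT^d)}$). Term (c) carries a \emph{derivative} of $\hat q_n-q$, for which a crude $L^2\times L^2$ bound gives only the rate $(nh_n^{d+1})^{-1}$, too large by a factor $h_n^{1-\epsilon}$; instead, testing against $\eta\in H_0^1(\bbT^d)$ with $\|\eta\|_{H^1(\bbT^d)}=1$, one changes variables $y=\nabla\varphi_0(x)$ (whereupon the Jacobian cancels the factor $\det(\nabla^2\varphi_0)$) and integrates by parts to transfer the gradient off $\hat q_n-q$, obtaining $|\langle(\mathrm c),\eta\rangle_{L^2}|\le\|\div\Xi\|_{L^2(\bbT^d)}\|\hat q_n-q\|_{L^2(\bbT^d)}$ with $\Xi=(\eta\circ\nabla\varphi_0^*)\cdot(\nabla(\hat\varphi_n-\varphi_0))\circ\nabla\varphi_0^*\in H^1(\bbT^d)$ and $\|\div\Xi\|_{L^2(\bbT^d)}\lesssim\|\eta\|_{H^1(\bbT^d)}\|\hat\varphi_n-\varphi_0\|_{\calC^{2+\beta}(\bbT^d)}\lesssim\|\hat q_n-q\|_{\calC^\beta(\bbT^d)}$. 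Hence $\|Ew_n\|_{H^{-1}(\bbT^d)}\lesssim\|\hat q_n-q\|_{\calC^\beta(\bbT^d)}^2$, and Proposition~\ref{prop:kde_holder} gives, choosing $\beta$ small in terms of $\epsilon$ and absorbing the logarithmic factor into a small power of $h_n^{-1}$, $\|Ew_n\|_{H^{-1}(\bbT^d)}=O_p\big(h_n^{2(s-\beta)}+(nh_n^{d+2\beta})^{-1}\big)=O_p\big(h_n^{s+1}+(nh_n^{d+\epsilon})^{-1}\big)$, using $s>2$.

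Summing the two estimates yields $\langle\hat\varphi_n-\varphi_0,v\rangle_A=O_p\big(n^{-1/2}+h_n^{s+1}+(nh_n^{d+\epsilon})^{-1}\big)$, as claimed. The step needing genuine care — and the reason the $\calC^{2+\beta}$ error bound of Theorem~\ref{thm:linearization} is too lossy to be invoked as a black box here — is the integration by parts on term (c): it is exactly this maneuver that replaces $\|\hat q_n-q\|_{\calC^{1+\beta}(\bbT^d)}$ by $\|\hat q_n-q\|_{L^2(\bbT^d)}$ in the final product, and so recovers the near-optimal exponent $d+\epsilon$ in place of $d+1$.
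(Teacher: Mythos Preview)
Your proof is correct, and for the leading term $\langle L^{-1}[\hat q_n-q],v\rangle_A$ it follows exactly the paper's route: weak formulation~\eqref{eq:weak_sol_L}, then split into the empirical-process piece $O_p(n^{-1/2})$ and the deterministic piece $O(h_n^{s+1})$.

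Where you diverge is in the handling of the remainder $\langle w_n,v\rangle_A$. The paper simply invokes the black-box $L^\infty$ bound from equation~\eqref{eq:linearization_implication} (which in turn rests on the $\calC^{2+\beta}$ estimate of Theorem~\ref{thm:linearization}), obtaining
\[
\langle w_n,v\rangle_A = O_p\!\Big(h_n^{2s-1-3\beta}+\tfrac{1}{nh_n^{d+1+4\beta}}\Big),
\]
so the paper's own proof yields the exponent $d+1+4\beta$, not $d+\epsilon$; this is also the exponent that reappears when the proposition is applied in the proof of Lemma~\ref{lem:weak_limit_rootn}. You instead open up the proof of Theorem~\ref{thm:linearization}, write $Ew_n$ as the sum of the Taylor remainder, the term $(\hat q_n-q)\langle(\nabla^2\varphi_0)^{-1},\nabla^2(\hat\varphi_n-\varphi_0)\rangle$, and the term carrying $\nabla(\hat q_n-q)$, and then test against $\eta\in H_0^1$ and integrate by parts in the last term to trade $\|\hat q_n-q\|_{\calC^{1+\beta}}$ for $\|\hat q_n-q\|_{L^2}$. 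This genuinely buys a full power of $h_n$: you obtain $\|Ew_n\|_{H^{-1}}\lesssim\|\hat q_n-q\|_{\calC^\beta}^2$, whence the stated exponent $d+\epsilon$. Your integration-by-parts step is sound (on $A_n$ the factor $[\nabla(\hat\varphi_n-\varphi_0)]\circ\nabla\varphi_0^*$ lies in $\calC^{1+\beta}$, so the product with $\eta\circ\nabla\varphi_0^*\in H^1$ is in $H^1$ and the weak Leibniz rule applies). In short: same decomposition, but your treatment of the remainder is sharper than the paper's and is what is actually needed to match the exponent in the statement.
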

The proof appears in Appendix~\ref{app:pf_bandwidth_projection_clt}.
Next, we state a bound on the $L^2(\bbT^d)$ convergence rate of $\hat T_n$,
which is essentially already contained in~\cite{manole2021}.
Let $\widebar T_{h_n}$ be the unique continuous optimal transport
map pushing $P$ forward onto the probability law $Q_{h_n}$ with density $q_{h_n} = \bbE[\hat q_n(\cdot)]$. 
 \begin{proposition}
\label{prop:L2_rate_map}
 Under the same conditions as Proposition~\ref{prop:pointwise_rate_map},  
it holds that
\begin{align}
\label{eq:L2_rate_opt_map}
\bbE \|\hat T_n -  \widebar T_{h_n}\|_{L^2(\bbT^d)}^2 \asymp \frac{ h_n^{2-d}}{n}, ~~ \text{and}~~ 
\big\| \widebar  T_{h_n} - T_0 \big\|_{L^2(\bbT^d)} &\lesssim h_n^{s+1},
\end{align}
and for any $r > 2$, 
$$\bbE \|\hat T_n - \widebar T_{h_n}\|_{L^2(\bbT^d)}^r \lesssim n^{-\frac r 2} h_n^{r\left(1-\frac d 2\right)}.$$
\end{proposition}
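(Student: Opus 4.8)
The plan is to derive all three assertions from the behaviour of the kernel density estimator under the negative Sobolev norm $H^{-1}(\bbT^d)$, using the linearization recorded in equation~\eqref{eq:linearization_implication} together with the norm equivalences of Lemma~\ref{lem:dual_norm_equiv} and Lemma~\ref{lem:L2_stab}; this is the route implicit in the claim that the result is ``essentially already contained in \cite{manole2021}''. For the bias term: for $n$ large $q_{h_n}\in\calC_+^{2+\beta}(\bbT^d)$ with H\"older norm uniformly bounded (as $q_{h_n}=q\star_{\bbT^d}K_{h_n}\to q$ in H\"older norm), so $\widebar T_{h_n}=\nabla\widebar\varphi_{h_n}$ is well defined, and Lemma~\ref{lem:L2_stab} --- equivalently the stability bound~\eqref{eq:stability_bound_past} with $Q$ replaced by $Q_{h_n}$ --- gives
$$\|\widebar T_{h_n}-T_0\|_{L^2(\bbT^d)}\le\|\widebar\varphi_{h_n}-\varphi_0\|_{H^1(\bbT^d)}\lesssim\|q_{h_n}-q\|_{H^{-1}(\bbT^d)}\lesssim h_n^{s+1},$$
the last inequality coming from Proposition~\ref{prop:kde_negative_sobolev} under Condition~\Kernel{$s+1$}. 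This is the second assertion.

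For the second-moment equivalence I would apply Theorem~\ref{thm:linearization} to the deterministic density $q_{h_n}$, obtaining
$$\big\|\widebar T_{h_n}-T_0-\nabla L^{-1}[q_{h_n}-q]\big\|_{L^\infty(\bbT^d)}\lesssim\|q_{h_n}-q\|_{\calC^{1+\beta}(\bbT^d)}\|q_{h_n}-q\|_{\calC^\beta(\bbT^d)}\lesssim h_n^{2s-1-2\beta}$$
by the deterministic (bias) part of Proposition~\ref{prop:kde_holder}; subtracting this from~\eqref{eq:linearization_implication} and using $\frac1n\sum_i\nabla Z_{n,i}=\nabla L^{-1}[\hat q_n-q_{h_n}]$ yields, on the event $A_n$ of Section~\ref{sec:pf_main_results},
$$\big\|(\hat T_n-\widebar T_{h_n})-\nabla L^{-1}[\hat q_n-q_{h_n}]\big\|_{L^\infty(\bbT^d)}\lesssim\delta_n.$$
By Lemma~\ref{lem:dual_norm_equiv} and a Poincar\'e inequality, $\|\nabla L^{-1}[\hat q_n-q_{h_n}]\|_{L^2(\bbT^d)}\asymp\|\hat q_n-q_{h_n}\|_{H^{-1}(\bbT^d)}$, and since $\hat q_n-q_{h_n}=\frac1n\sum_{i=1}^n\big(\widebar K_{h_n}(Y_i-\cdot)-q_{h_n}\big)$ is an average of i.i.d.\ mean-zero summands in $H^{-1}(\bbT^d)$,
$$\bbE\|\hat q_n-q_{h_n}\|_{H^{-1}(\bbT^d)}^2=\frac1n\,\bbE\big\|\widebar K_{h_n}(Y-\cdot)-q_{h_n}\big\|_{H^{-1}(\bbT^d)}^2\asymp\frac{h_n^{2-d}}{n},$$
the last equivalence being the two-sided estimate recorded just after Lemma~\ref{lem:variance_bound_map}. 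Squaring the displayed $L^\infty$-bound, taking expectations, controlling the contribution of $A_n^c$ via boundedness of $\hat T_n$ and $\widebar T_{h_n}$, $\bbP(A_n^c)\lesssim n^{-b}$, and (for the lower bound) a crude $r=4$ moment bound on $\|\nabla L^{-1}[\hat q_n-q_{h_n}]\|_{L^2(\bbT^d)}$ applied through Cauchy--Schwarz, and finally checking $\delta_n^2=o(h_n^{2-d}/n)$ under $(d+4(s-1))^{-1}<a<(d+s+2)^{-1}$ with $\beta$ chosen small, gives both the $\lesssim$ and the $\gtrsim$ directions of the first assertion.

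For the higher moments I would invoke a Marcinkiewicz--Zygmund/Rosenthal inequality in the Hilbert space $H^{-1}(\bbT^d)$: the summands $\xi_i=\widebar K_{h_n}(Y_i-\cdot)-q_{h_n}$ satisfy $\|\xi_i\|_{H^{-1}(\bbT^d)}\lesssim h_n^{1-d/2}$ almost surely (since $\|\widebar K_{h_n}(y-\cdot)\|_{H^{-1}(\bbT^d)}\asymp h_n^{1-d/2}$ and $1-d/2<0$ for $d\ge3$) and $\bbE\|\xi_i\|_{H^{-1}(\bbT^d)}^2\asymp h_n^{2-d}$, so for $r>2$,
$$\bbE\|\hat q_n-q_{h_n}\|_{H^{-1}(\bbT^d)}^r\lesssim n^{-r/2}\big(h_n^{2-d}\big)^{r/2}+n^{1-r}\big(h_n^{2-d}\big)^{r/2}\lesssim\Big(\frac{h_n^{2-d}}{n}\Big)^{r/2},$$
where $r\ge2$ is used in the last step. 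Combining with the $L^\infty$-bound of the previous paragraph, raising to the $r$-th power, and absorbing $\delta_n^r$ and the $A_n^c$-term (negligible after taking $b=b(r)$ large) yields the third assertion.

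The main difficulty I anticipate is bookkeeping rather than conceptual: one must verify that the linearization remainder $\delta_n$ and the residual Taylor term $h_n^{2s-1-2\beta}$ are of strictly smaller order than $(h_n^{2-d}/n)^{1/2}$ throughout the admissible bandwidth window --- which pins down how small the free parameter $\beta$ must be taken --- and, for the lower bound in the first assertion, one must ensure that restricting to the high-probability event $A_n$ costs no constant factor, which is the one place a genuine (if elementary) extra moment estimate is required.
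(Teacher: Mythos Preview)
Your argument is correct, but you take a longer route than the paper for the fluctuation bounds. You lean on the linearization Theorem~\ref{thm:linearization} (via equation~\eqref{eq:linearization_implication}) to approximate $\hat T_n-\widebar T_{h_n}$ by $\nabla L^{-1}[\hat q_n-q_{h_n}]$ up to a remainder $\delta_n$, then invoke Lemma~\ref{lem:dual_norm_equiv} to pass to $\|\hat q_n-q_{h_n}\|_{H^{-1}(\bbT^d)}$, and finally check that $\delta_n^2=o(h_n^{2-d}/n)$ across the admissible bandwidth window. The paper instead applies Lemma~\ref{lem:L2_stab} \emph{directly} to the pair $(q_{h_n},\hat q_n)$: once one verifies (via Proposition~\ref{prop:kde_holder}) that $q_{h_n}\in\calC_+^\epsilon(\bbT^d)$ with uniformly bounded norm, that lemma gives the \emph{two-sided} pathwise equivalence $\|\hat T_n-\widebar T_{h_n}\|_{L^2(\bbT^d)}\asymp\|\hat q_n-q_{h_n}\|_{H^{-1}(\bbT^d)}$, and taking expectations immediately yields both the second-moment equivalence and the $r$-th moment bound from Proposition~\ref{prop:kde_negative_sobolev}. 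This bypasses the linearization remainder $\delta_n$ entirely, avoids the separate lower-bound bookkeeping (the $r=4$ moment plus Cauchy--Schwarz trick you flagged), and requires only $\calC_+^\epsilon$ regularity rather than the $\calC_+^{2+\beta}$ regularity that Theorem~\ref{thm:linearization} demands. Your Hilbert-space Rosenthal argument for the higher moments is fine, though the paper routes this through Proposition~\ref{prop:kde_negative_sobolev} (which already contains a Rosenthal step) together with the elementary embedding $\|\cdot\|_{H^{-1,2}(\bbT^d)}\le\|\cdot\|_{H^{-1,r}(\bbT^d)}$ for $r\ge2$.
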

The proof appears in Appendix~\ref{app:pf_L2_rate_map}.
With these two propositions in place, we turn to the proof. 
We will begin by proving the following. 
\begin{lemma}\label{lem:weak_limit_rootn}
Suppose there exists $\epsilon > 0$ such that 
$$\alpha_n\left(\frac 1 {\sqrt n} + {h_n^{s+1}} + \frac 1 {nh_n^{d+1+\epsilon}}\right)=o(1),$$
and that $\bbG_{n}$ converges weakly in $H_0^1(\bbT^d)$
to a tight random element $\bbG$ in $H_0^1(\bbT^d)$. Then, $\bbG = 0$. 
\end{lemma}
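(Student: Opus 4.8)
The plan is to exploit a discrepancy of rates: by Proposition~\ref{prop:clt_projections}, the \emph{fixed-direction projections} $\langle \hat\varphi_n - \varphi_0, v\rangle_A$ decay at a near-parametric rate, which under the hypothesis is of smaller order than $1/\alpha_n$; hence $\langle \bbG_n, v\rangle_A \to 0$ in probability for every fixed $v$, and this will force the putative weak limit $\bbG$ to be orthogonal to all of $H_0^1(\bbT^d)$, hence to vanish.

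In more detail, first recall from Section~\ref{sec:linearization} that $\langle\cdot,\cdot\rangle_A$ is an inner product on $H_0^1(\bbT^d)$ equivalent to the standard one; in particular, for each fixed $v \in H_0^1(\bbT^d)$ the functional $\Phi_v: u \mapsto \langle u,v\rangle_A$ is a bounded linear, hence continuous, map $H_0^1(\bbT^d) \to \bbR$. Since $\bbG_n \overset{w}{\longrightarrow} \bbG$ in $H_0^1(\bbT^d)$ by assumption, the continuous mapping theorem gives $\langle \bbG_n, v\rangle_A \overset{w}{\longrightarrow} \langle \bbG, v\rangle_A$. On the other hand, applying Proposition~\ref{prop:clt_projections} with its free parameter taken to be $1+\epsilon$ yields
\[
\langle \bbG_n, v\rangle_A = \alpha_n\,\langle \hat\varphi_n - \varphi_0, v\rangle_A = \alpha_n \cdot O_p\!\left(\frac{1}{\sqrt n} + h_n^{s+1} + \frac{1}{nh_n^{d+1+\epsilon}}\right) = o_p(1),
\]
where the final equality is exactly the hypothesis of the lemma. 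Because a sequence of real random variables cannot possess two distinct weak limits, we conclude that $\langle \bbG, v\rangle_A = 0$ almost surely, for every fixed $v \in H_0^1(\bbT^d)$.

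It remains to upgrade this to $\bbG = 0$ almost surely. Since $H_0^1(\bbT^d)$ is separable, fix a countable dense subset $\{v_k\}_{k\geq 1}$. Applying the previous step to each $v_k$ and using countable subadditivity, there is a single event $\Omega_0$ of probability one on which $\langle \bbG, v_k\rangle_A = 0$ for all $k$ simultaneously. On $\Omega_0$, by density of $\{v_k\}$ and continuity of $v \mapsto \langle \bbG, v\rangle_A$, one has $\langle \bbG, v\rangle_A = 0$ for every $v \in H_0^1(\bbT^d)$; taking $v = \bbG$ gives $\langle \bbG, \bbG\rangle_A = 0$, so $\bbG = 0$ almost surely.

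The substantive content is entirely contained in Proposition~\ref{prop:clt_projections}, which is assumed here; granting it, the argument is a routine uniqueness-of-weak-limits plus separability argument, and I expect no genuine obstacle in this lemma itself. The only points demanding any care are the bookkeeping that aligns the projection rate of Proposition~\ref{prop:clt_projections} with the hypothesis (a reindexing of $\epsilon$), and the selection of a single null set uniformly over a countable dense family when passing from ``every fixed projection vanishes'' to ``$\bbG = 0$''. The real difficulty was deferred to the proof of Proposition~\ref{prop:clt_projections}.
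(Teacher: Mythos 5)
Your argument is correct and follows essentially the same strategy as the paper: both proofs apply Proposition~\ref{prop:clt_projections} to show that the fixed-direction $\langle\cdot,\cdot\rangle_A$-projections of $\bbG_n$ are $o_p(1)$, then use continuity of the bounded linear functionals $\langle\cdot,v\rangle_A$ together with weak convergence of $\bbG_n$ to $\bbG$ to deduce that the corresponding projections of $\bbG$ vanish almost surely, and finally conclude $\bbG=0$. Where you diverge is in how you generate a sufficient family of test directions $v$: you invoke separability of $H_0^1(\bbT^d)$ and take an arbitrary countable dense set, whereas the paper constructs an explicit orthonormal eigenbasis $\{\eta_j\}$ of $(H_0^1(\bbT^d),\langle\cdot,\cdot\rangle_A)$ by appealing to the spectral theorem for the compact, self-adjoint, positive-definite operator $E^{-1}$. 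Your route avoids that spectral construction entirely---and since the eigenbasis is not reused later in the proof of Theorem~\ref{thm:uniform_impossible}, nothing is lost; your version is shorter and more elementary. You are also somewhat more scrupulous on the measure-theoretic side, explicitly collecting the exceptional null sets over the countable family, passing to the limit by continuity of the inner product, and taking $v=\bbG$ to close the argument; the paper elides this routine step. No gaps.
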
 
To prove Lemma~\ref{lem:weak_limit_rootn}, let us recall some standard facts about the spectral
properties of~$E$. Using the norm equivalence~\eqref{eq:norm_equivalence_E}, and the
fact that $H_0^1(\bbT^d)$ is compactly embedded in $L_0^2(\bbT^d)$ by the
Rellich-Kondrachov theorem, it can be seen that $E^{-1}$ is a compact operator when viewed
as a map from $L_0^2(\bbT^d)$ into itself. Furthermore, we have already noted that $E^{-1}$ is self-adjoint
and positive definite,
so the spectral theorem for compact and self-adjoint operators implies that 
$E^{-1}$ admits a discrete spectrum, which in turn also implies that $E$ has a discrete spectrum
$$0 < \lambda_1 \leq \lambda_2 \leq \dots$$
with a corresponding   sequence of eigenfunctions $\widebar \eta_1,\widebar \eta_2, \dots \in L_0^2(\bbT^d)$
satisfying $E\widebar \eta_j=\lambda_j\widebar\eta_j$ for all $j \geq 1$, and forming
an orthonormal basis of $L_0^2(\bbT^d)$. Furthermore, 
it follows from Lemma~\ref{lem:a_priori_schauder} that $\widebar \eta_j \in \calC^{2}_0(\bbT^d)$ for all $j \geq 1$. 

Now, define
$$\eta_j = \widebar \eta_j / \|\widebar \eta_j\|_A, \quad j\geq 1.$$
We claim that $\{\eta_j\}_{j\geq 1}$ forms an orthonormal basis of $H_0^1(\bbT^d)$, 
when the latter is endowed with the inner product $\langle\cdot,\cdot\rangle_A$. 
This system is clearly dense in $H_0^1(\bbT^d)$, since $\{\widebar \eta_j\}$ is dense in $L_0^2(\bbT^d)$.
It is also clearly normalized.
Furthermore, note that for all $j\neq k$,  
\begin{align*}
\langle \eta_j,\eta_k\rangle_A = \langle E \eta_j ,\eta_k\rangle_{L^2(\bbT^d)} = 
\lambda_j \left\langle \eta_j,\eta_k\right\rangle_{L^2(\bbT^d)} = 0.
\end{align*}
Thus, $\{\eta_j\}$ is indeed an orthonormal basis of $H_0^1(\bbT^d)$.
 
Since the elements of this basis belong to $H_0^1(\bbT^d)$, 
Proposition~\ref{prop:clt_projections} implies that for any fixed $j\geq 1$, we have 
$$\langle \bbG_n,\eta_j\rangle_A =
O_p\left(
\alpha_n\left(\frac 1 {\sqrt n} + h_n^{s+1} + \frac 1 {nh_n^{d+1+\epsilon}}\right)\right)
=o_p(1),$$
where the implicit constants depend, in particular, on $\epsilon,j$. 
On the other hand, by the weak convergence of $\bbG_n$ to $\bbG$ in the Hilbert space $H_0^1(\bbT^d)$, 
endowed with the inner product $\langle\cdot,\cdot\rangle_A$,
we have 
$$\langle \bbG_n, \eta_j\rangle_A \overset{w}{\longrightarrow} \langle \bbG, \eta_j\rangle_A,\quad
j=1,2,\dots$$
(see~\cite{vandervaart1996}, Theorem 1.8.4.).
The preceding two displays imply that $\langle \bbG, \eta_j\rangle_A = 0$, for any $j\geq 1$. 
 Since $\{\eta_j\}_{j\geq 1}$ forms a basis of $H_0^1(\bbT^d)$, it readily follows that $\bbG = 0$, thus proving the Lemma. 
 
Let us now show why the Lemma implies the claim. 
On the one hand, if $\alpha_n = o(\sqrt{nh_n^{d-2}})$, then we have  by Proposition~\ref{prop:L2_rate_map},
\begin{align*}
\bbE\|\bbG_n\|_{H^1(\bbT^d)} 
 &\asymp \alpha_n \bbE \|\hat T_n - T_0\|_{L^2(\bbT^d)} = o\left(\left(\sqrt{nh_n^{d-2}}\right) \left(\frac 1 {\sqrt{nh_n^{d-2}}} + h_n^{s+1}\right)\right).
\end{align*}
Under condition~\eqref{assm:bandwidth_map_clt} on $h_n$, we have $h_n = o(h_n^*)$ with $h_n^* =n^{-1/(d+2s)}$. 
We deduce that $\bbE\|\bbG_n\|_{H^1(\bbT^d)} = o(1)$,
thus $\bbG_n$ must converge weakly to 0 in $H_0^1(\bbT^d)$. 

On the other hand, if $\alpha_n \asymp  \sqrt{nh_n^{d-2}}$, then we have by Proposition~\ref{prop:L2_rate_map}, 
\begin{align}\label{eq:Gn_bded_from1}
\bbE \|\bbG_n\|_{H^1(\bbT^d)}^2
 &\gtrsim  \frac { \alpha_n^2} { nh_n^{d-2}}\gtrsim 1,
\end{align}
where we used again 
the fact that $h_n=o(h_n^*)$. 
Now, suppose by way of a contradiction that $\bbG_n$ converges
weakly to a tight random element $\bbG$ taking values in $H_0^1(\bbT^d)$.
Under condition~\eqref{assm:bandwidth_map_clt} on the bandwidth $h_n$, 
there exists $\epsilon > 0$ such that the sequence $\alpha_n \asymp \sqrt{nh_n^{d-2}}$
fulfills the assumption of Lemma~\ref{lem:weak_limit_rootn}, thus $\bbG$ must be zero. 
Now, for this choice of sequence $\alpha_n$, it follows from Proposition~\ref{prop:L2_rate_map} that
$\bbE\|\bbG_n\|_{H^1(\bbT^d)}^3$ is uniformly bounded, and thus that
the collection
$\{\|\bbG_n\|_{H^1(\bbT^d)}^2: n \geq 1\}$
is uniformly integrable. Since $\bbG_n$ converges weakly to zero, we must then
have \citep[Theorem 1.11.3]{vandervaart2023} 
that $\bbE\|\bbG_n\|_{H^1(\bbT^d)}^2 \to 0$, which contradicts 
equation~\eqref{eq:Gn_bded_from1}.
This proves that for $\alpha_n \asymp \sqrt{nh_n^{d-2}}$, the process $\bbG_n$ does not converge weakly. 

Finally, suppose $\alpha_n / \sqrt{nh_n^{d-2}} \to \infty$. If $\bbG_n$ were to converge weakly in $H_0^1(\bbT^d)$, 
then it is clear that the process $\sqrt{nh_n^{d-2}} \bbG_n$ would have to converge weakly to zero, which
contradicts what we have already shown. The claim follows.
\qed

\section{Discussion: Toward the Two-Dimensional Case}
\label{sec:discussion}
Our aim in this manuscript has been to derive pointwise central limit theorems 
for the optimal transport map pushing forward a fixed density onto a kernel density estimator. 
There are many natural questions left open by our results. We discuss  here 
one avenue of future work which we find particularly exciting: 
the case where 
the underlying measures are two-dimensional.

%
Although we assumed that $d \geq 3$ throughout our development, 
this condition was only used to derive the limiting covariance matrix
of the process $\sqrt{nh_n^{d-2}}\hat T_n(x)$ in Lemma~\ref{lem:variance_bound_map}. 
In particular, our linearization of the Monge-Amp\`ere equation continues to hold when $d < 3$, 
and it is therefore natural to expect that $\hat T_n$ continues
to satisfy a pointwise Gaussian limiting distribution in this case.
For one-dimensional measures, this fact can already be deduced 
from the work of~\cite{ponnoprat2023}, however the most  
challenging case $d=2$ remains open. 
The following is a first result in this direction, for
 the special case of uniform measures on the two-dimensional torus.
\begin{proposition} 
\label{prop:clt_map_2d} 
Assume that $P$ and $Q$ are equal to the uniform measure $\calL$ over $\bbT^2$.
Let $K \in \calC_c^\infty(\bbR^2)$
be a symmetric kernel that integrates to unity, and whose first
moment vanishes.  Assume further that
$h_n\downarrow 0$ is a decreasing sequence of nonnegative real numbers such that 
\begin{equation}
\label{assm:bandwidth_map_clt_2d}
h_n \gg n^{-\frac 1 6}   .
\end{equation}
Then, for any fixed $x \in \bbT^2$, it holds that
$$\sqrt{\frac{n}{\log(1/h_n)}} \big( \hat T_n(x) - x\big) \overset{w}{\longrightarrow} 
 N\left(0, \frac {I_2} {4\pi} \right),\quad \text{as } n \to \infty.$$
\end{proposition}

\begin{proof}[Proof Sketch]
Since $Q$ is uniform, it holds that $q_{h_n}(x) = \bbE[\hat q_n(x)] = q(x)$
for all $x \in \bbT^d$. Furthermore, we will prove that
for any $x \in \bbT^d$, 
$$\frac {\Cov_Q\big\{ \nabla L^{-1}[\widebar K_{h_n}(Y-\cdot)-1](x)\big\} } {\log(1/h_n)} \longrightarrow \frac {I_2}{4\pi},
\quad \text{as } n \to \infty.$$
From these two facts, and condition~\eqref{assm:bandwidth_map_clt_2d}, the claim can be deduced by following along similar
lines as the proof of Theorem~\ref{thm:clt_map}.

Notice that when $P=Q=\calL$, the operator $L$ is simply the negative Laplacian, and, 
using the fact that $K$ is smooth and compactly-supported, we have the closed form
$$\nabla L^{-1}[\widebar K_{h_n}(Y-\cdot)-1](x)
 = \sum_{\xi\in \bbZ_*^d} \frac{2\pi i \xi}{\|2\pi \xi\|^2} \calF[K](h_n\xi) e^{2\pi i \xi^\top (Y-x)},$$
so that, similarly as in the proof of Lemma~\ref{lem:variance_bound_map}, one has 
$$\Cov\big\{\nabla L^{-1}\big[\widebar K_{h_n}(Y-\cdot)-1\big](x)\big\}  =
\sum_{\xi\in \bbZ_*^d} \xi\xi^\top \left(\frac{\calF[K](h_n\xi)}{2\pi\|\xi\|^2}\right)^2 =:  A_n + B_n + C_n,$$
where,   
\begin{align*}
A_n   =\sum_{\substack{\xi \in \bbZ_*^d \\ \| h_n\xi\| < 1}}  \frac{\xi\xi^\top}{ 4\pi^2 \|\xi\|^4},
\end{align*}
\begin{align*}
B_n   = \sum_{\substack{\xi \in \bbZ_*^d \\ \|h_n\xi\| < 1}} \xi\xi^\top \frac{ |\calF[K](h_n\xi)|^2-1}{ 4\pi^2\|\xi\|^4}  ,\quad 
C_n   &=  \sum_{\substack{\xi \in \bbZ_*^d \\ \|h_n \xi\| \geq  1}} \xi\xi^\top \frac{|\calF[K](h_n\xi)|^2}{4\pi^2\|\xi\|^4}.
\end{align*}
 
On the one hand, using the fact that $K$ integrates to unity, has first moment equal to zero,
 and has Fourier transform lying in the Schwartz space, 
one has 
$$\big| \calF[K](h_n\xi) - 1\big| = \big| \calF[K](h_n\xi) - \calF[K](0) - (h_n\xi)^\top \nabla \calF[K](0) \big| \lesssim \|h_n\xi\|^2,$$
for all $\xi\in \bbZ^d$, from which it follows that
\begin{align*}
\|B_n\| \lesssim \sum_{\|h_n\xi\| < 1}  \frac{\big|\calF[K](h_n\xi)-1\big|}{\|\xi\|^2} 
\lesssim \sum_{\|h_n\xi\| < 1}  \frac{\|h_n\xi\|^2}{\|\xi\|^2} 
 = O(1),
 \end{align*}
 and, for any $\beta > 0$, 
\begin{align*}
\|C_n\| \lesssim 
\sum_{\|h_n\xi\| \geq  1}   \frac{|\calF[K](h_n\xi)|^2}{\|\xi\|^2} 
 &\lesssim h_n^{-2\beta} \sum_{\|h_n\xi\| \geq  1} \|\xi\|^{-2(\beta +1)} =O(1).
 \end{align*}   
%
On the other hand, by a simple Riemann summation argument (cf. Lemma~\ref{app:pf_lem_riemann_sum_approx_d2}
 in Appendix~\ref{app:additional_discussion} for a formal statement), 
it holds that for any fixed $c > 0$, 
\begin{align*}
A_n 
 = \int_{\xi:c \leq   \|\xi\| < 1/h_n} 
 \frac{\xi\xi^\top d\xi }{4\pi^2 \|\xi\|^4} + O(1) 
 = \frac 1 {4\pi} \log(1/h_n) I_2 + O(1).
\end{align*}
The claim follows from here.
\end{proof} 
It is fruitful to compare Proposition~\ref{prop:clt_map_2d} to Theorem~1 of~\cite{ambrosio2019b},
which asserts that
\begin{equation}
\label{eq:matching_asymp}
\lim_{n\to\infty} \frac{n}{\log n} \bbE W_2^2(Q_n,Q) = \frac 1 {4\pi},
\end{equation}
where $W_2$ denotes the 2-Wasserstein distance.
In contrast, Proposition~\ref{prop:clt_map_2d} heuristically suggests the approximation
$$\frac{n}{\log(1/h_n)} \bbE W_2^2(\hat Q_n,Q) \approx  
   \int \tr(I_2 / 4\pi) dP = \frac 1 {2\pi}. 
$$
If one were permitted to choose the bandwidth $h_n \asymp n^{-1/2}$, 
which is the natural scaling for which the kernel density estimator
becomes statistically indistinguishable from the empirical measure, then the above display would precisely reduce to
$$\frac{n}{\log n} \bbE W_2^2(\hat Q_n,Q) \approx  
   \frac 1 {4\pi}. 
$$
In this sense, the limit law of Proposition~\ref{prop:clt_map_2d}
is consistent with the results of~\cite{ambrosio2019}, though we emphasize that our result does not
rigorously allow for such small values of $h_n$. Indeed, for such values, the density $\hat q_n$ 
does not quantitatively lie in any H\"older space, and the map $\hat T_n$ may fail to be Lipschitz, in which case our arguments do not hold. It is nevertheless natural to ask whether 
condition~\eqref{assm:bandwidth_map_clt_2d} can be weakened to include the scaling $h_n\asymp n^{-1/2}$, 
and if so, whether a pointwise limit law can in fact be derived for the empirical optimal transport
map $T_n$ pushing forward $P$ onto $Q_n$.

Let us also emphasize that, in striking contrast to the case $d \geq 3$, the 
limiting distribution 
of Proposition~\ref{prop:clt_map_2d} does not depend on the choice of kernel $K$. 
A similar discrepancy arose in the predictions of~\cite{caracciolo2014} for the limiting
value of a quadratic optimal matching problem~\eqref{eq:matching_asymp}, later formalized by~\cite{ambrosio2019}.  
Indeed, these authors used a regularization argument to obtain the limit~\eqref{eq:matching_asymp}, 
which reduces precisely to kernel density estimation when working over the torus, 
but they argued that the error incurred by their regularization
would be of leading order when $d \geq 3$. 
This matches our observation that the limiting distribution of $\hat T_n(x)$
depends on $K$ only when $d \geq 3$.

Another natural question is to extend Proposition~\ref{prop:clt_map_2d} to generic measures 
$P$ and $Q$. 
Inspired by our results for the case $d \geq 3$,
one might try to use a constant-coefficient approximation---as described above equation~\eqref{eq:coeff_freezing_pde} and in the proof of Lemma~\ref{lem:variance_bound_map}---to reduce the calculation
of a general limit law to that of the uniform case. 
Unfortunately, we have found that the error incurred
by this constant-coefficient approximation is of leading order when the dimension is two. This suggests
that, unlike the case $d \geq 3$,  the two-dimensional limiting distribution of $\hat T_n(x)$  
cannot be derived by localizing the operator $L^{-1}$, and new ideas would need to be brought to bear.
We leave this as an exciting question for future work.

\appendix

\section{Periodic Function Spaces}
\label{app:function_spaces}

In this appendix, we provide explicit definitions   of the periodic function spaces used throughout
the manuscript, with similar conventions as~\cite{schmeisser1987} and~\cite{ruzhansky2009}. 

\subsection{Periodic H\"older Spaces}

Let $\Omega\subseteq \bbR^d$ be an open set.
For any function $f:\Omega \to \bbR$ which is differentiable 
up to order $k \geq 1$, 
and any multi-index $\gamma \in \bbN^d$, we write $|\gamma| = \sum_{i=1}^d \gamma_i$, and for all $|\gamma|\leq k$, 
$$D^\gamma f = \frac{\partial^{|\gamma|} f}{\partial x_1^{\gamma_1}\dots \partial x_d^{\gamma_d}}.$$
Given $\alpha > 0$, the H\"older space $\calC^\alpha(\Omega)$ is defined as the set
of functions on $\Omega$ which are continuously differentiable up to
order $\lfloor \alpha\rfloor$, and such that the H\"older norm  
$$\norm f_{\calC^\alpha(\Omega)} = \sum_{j=0}^{\lfloor\alpha\rfloor} \sup_{|\gamma| = j} \|D^\gamma f\|_{L^\infty(\Omega)}
 + \sum_{|\gamma|=\lfloor \alpha \rfloor} \sup_{\substack{x,y \in \Omega \\ x\neq y}} \frac{|D^\gamma f(x) - D^\gamma f(y)|}{\norm{x-y}^{\alpha-\lfloor \alpha\rfloor}}$$
is finite. 
We also write $\calC^\infty(\Omega) =\bigcap_{j\geq 1} \calC^j(\Omega)$,
and we denote by $\calC_c^\infty(\Omega)$ the set of maps in $\calC^\infty(\Omega)$ whose
support is compactly contained in $\Omega$.

For any $\alpha \geq 0$, $\calC^\alpha(\bbT^d)$  denotes
the set of $\bbZ^d$-periodic functions $f:\bbR^d \to \bbR$ such that $f \in \calC^\alpha(\bbR^d)$,
and in this case we write $\|f\|_{\calC^\alpha(\bbT^d)}$ instead of $\|f\|_{\calC^\alpha(\bbR^d)}$. 
Furthermore, we recall that for $0 < \alpha \leq \infty$,
$$\calC_0^\alpha(\bbT^d) = \left\{ f \in \calC^\alpha(\bbT^d): \int_{\bbT^d} f d\calL = 0\right\}.$$


\subsection{Riesz Potential Sobolev Spaces} 
Fix the collection of test functions $\calD(\bbT^d) = \calC^\infty(\bbT^d)$, endowed
with the standard test function topology, and let  $\calD_0(\bbT^d) = \calC_0^\infty(\bbT^d)$. 
The set of periodic distributions $\calD'(\bbT^d)$ is defined
as the set of continuous linear functionals on $\calC^\infty(\bbT^d)$, and we denote by $\langle\cdot,\cdot\rangle$
the induced duality pairing. $\calD'_0(\bbT^d)$ is similarly defined as the dual of $\calD_0(\bbT^d)$.
Furthermore, define the discrete Schwartz space $S(\bbZ^d)$ as the set of maps $\phi : \bbZ^d \to \bbR$
such that for any $k > 0$ there exists $C_k > 0$ such that
$$|\phi(\xi)| \leq C_k \|\xi\|^{-k}, \quad \text{for all } \xi \in \bbZ^d.$$
The set of tempered distributions on $\bbZ^d$ is denoted as  $\calS'(\bbZ^d)$, and is defined
as the set of continuous linear functionals from $\calS(\bbZ^d)$ to $\bbR$.
The Fourier transform defines a bijection $\calF:\calC^\infty(\bbT^d) \to S(\bbZ^d)$,
with inverse
$$\calF^{-1} a = \sum_{\xi\in\bbZ^d} a_\xi e^{2\pi i \langle \xi,\cdot\rangle},\quad\text{for all } a \in \calS(\bbZ_*^d)$$
which extends uniquely to a map $\calF:\calD'(\bbT^d) \to \calS'(\bbZ^d)$ via the action
$$\langle \calF u, \phi\rangle = \langle u, (\calF\phi)\circ \iota \rangle,$$
for any test function $\phi\in \calD(\bbT^d)$, where $\iota(x) =  -x$. 
We represent any periodic distribution $u \in \calD'(\bbT^d)$ by its formal power series
$$u = \sum_{\xi \in \bbZ^d} \calF u(\xi) e^{2\pi i \langle\xi,\cdot\rangle},$$ 
which coincides with the classical Fourier series of $u$ when it is sufficiently regular.

Define the Riesz kernel of order $s \in \bbR$ as the periodic distribution
$$I_s = \sum_{\xi\in \bbZ_*^d} \|\xi\|^{s} e^{2\pi i \langle \xi,\cdot\rangle},$$
which is in fact an $L^1(\bbT^d)$ function when $-d < s < 0$~\citep[Theorem 2.17]{stein1971},
and define the fractional Laplacian of order $s$ as the convolution operator
\begin{equation}
\label{eq:fractional_laplacian}
(-\Delta)^{s/2} u = I_s \star u = \calF^{-1}\big[ \|\cdot\|^{s} \calF u\big],
\end{equation}
for any periodic distribution $u \in \calD_0'(\bbT^d)$. 
We then define the inhomogeneous Sobolev space $H^{s,r}(\bbT^d)$, 
for all $s  \in \bbR$ and $1 < r < \infty$, 
as the set of periodic distributions $u\in \calD'(\bbT^d)$ for which the norm
$$\|u\|_{H^{s,r}(\bbT^d)} = \big\|(-\Delta)^{s/2} u\big\|_{L^r(\bbT^d)}$$
is finite. Likewise, the homogeneous Sobolev space $H_0^{s,r}(\bbT^d)$
is the set of periodic distributions $u \in \calD'_0(\bbT^d)$ such that the above norm is finite.
In the special case $r=2$, we omit the second superscript in the preceding definitions, and simply write $H^s(\bbT^d) := H^{s,2}(\bbT^d)$, $H_0^s(\bbT^d) := H_0^{s,2}(\bbT^d)$,
and $\|\cdot\|_{H^s(\bbT^d)} := \|\cdot\|_{H^{s,2}(\bbT^d)}$. 

Given $s\geq 0$ and $r > 1$, it follows from Theorem~3.5.6 of~\cite{schmeisser1987} that the 
$H^{-s,r}(\bbT^d)$ is isomorphic to the dual of the Banach space
$H^{s,r'}(\bbT^d)$, where $r'$ denotes the H\"older conjugate
of $r$. Combining this fact with a similar argument as in paragraph 3.13 of~\cite{adams2003}, one may deduce
the following. 
\begin{lemma}\label{lem:sobolev_dual_norm}
Let $r > 1$ and $s \geq 0$. Then, for all $u \in L_0^r(\bbT^d)$, 
$$\|u\|_{H^{-s,r}(\bbT^d)} \asymp \sup \left\{\langle u,v\rangle_{L^2(\bbT^d)}:
v\in H^{s,r'}(\bbT^d) , \|v\|_{H^{s,r'}(\bbT^d)} = 1\right\}.$$
\end{lemma}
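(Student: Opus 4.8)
The plan is to obtain the equivalence directly from the duality isomorphism $H^{-s,r}(\bbT^d)\cong\bigl(H^{s,r'}(\bbT^d)\bigr)'$ recorded in the paragraph preceding the statement (itself a consequence of Theorem~3.5.6 of~\cite{schmeisser1987}), followed by two elementary reductions. Since $s\geq 0$, the Fourier multiplier $\|\xi\|^{-s}$ is bounded on $\bbZ_*^d$, so $(-\Delta)^{-s/2}$ maps $L_0^r(\bbT^d)$ boundedly into $L_0^r(\bbT^d)$; hence $L_0^r(\bbT^d)\subseteq H^{-s,r}(\bbT^d)$ and both sides of the asserted identity are finite. The cited duality, in which a distribution $u\in\calD_0'(\bbT^d)$ is paired against $v\in\calC^\infty(\bbT^d)$ through $\langle u,v\rangle_{L^2(\bbT^d)}$ — unambiguously so here because $\langle u,1\rangle_{L^2(\bbT^d)}=0$ for $u\in L_0^r(\bbT^d)$ — gives, with constants depending only on $s,r,d$,
$$
\|u\|_{H^{-s,r}(\bbT^d)}\ \asymp\ \sup\Bigl\{\,|\langle u,v\rangle_{L^2(\bbT^d)}|\ :\ v\in H^{s,r'}(\bbT^d),\ \|v\|_{H^{s,r'}(\bbT^d)}\leq 1\,\Bigr\}
$$
for every $u\in L_0^r(\bbT^d)$.

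It then remains to bring the right-hand side to the precise form in the statement. First, a bounded linear functional attains its norm on the boundary of the unit ball, so the supremum over $\{\|v\|_{H^{s,r'}(\bbT^d)}\leq 1\}$ equals the supremum over $\{\|v\|_{H^{s,r'}(\bbT^d)}=1\}$. Second, all spaces involved consist of real-valued objects, so replacing $v$ by $-v$ (which leaves the norm unchanged) shows that on that sphere $\sup|\langle u,v\rangle_{L^2(\bbT^d)}|=\sup\langle u,v\rangle_{L^2(\bbT^d)}$; this removes the absolute value and yields exactly the claimed equivalence. If one prefers to invoke the duality only for the mean-zero test space $H_0^{s,r'}(\bbT^d)$, one further observes that for $u\in L_0^r(\bbT^d)$ and any $v\in H^{s,r'}(\bbT^d)$ one has $\langle u,v\rangle_{L^2(\bbT^d)}=\langle u,v-\int_{\bbT^d}v\,d\calL\rangle_{L^2(\bbT^d)}$, with $v-\int_{\bbT^d}v\,d\calL\in H_0^{s,r'}(\bbT^d)$ and $\|v-\int_{\bbT^d}v\,d\calL\|_{H^{s,r'}(\bbT^d)}\lesssim\|v\|_{H^{s,r'}(\bbT^d)}$, using $|\int_{\bbT^d}v\,d\calL|\leq\|v\|_{L^1(\bbT^d)}\lesssim\|v\|_{H^{s,r'}(\bbT^d)}$; this shows that the suprema over $H^{s,r'}(\bbT^d)$ and over $H_0^{s,r'}(\bbT^d)$ are comparable.

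This is the periodic analogue of the argument in~\citet[Sec.~3.13]{adams2003}. There is no genuine obstacle here: the only substantive ingredient is the Sobolev-space duality imported from~\cite{schmeisser1987}, and everything else is routine. The point that warrants a little attention is the consistent handling of mean-zero conventions, i.e.\ making sure that $H^{-s,r}(\bbT^d)$ — a space of mean-zero distributions — is paired against the appropriate subspace of test functions; this is exactly what the last reduction above records.
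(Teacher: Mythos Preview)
Your proposal is correct and follows essentially the same approach as the paper: the paper does not give a detailed proof but merely points to the duality $H^{-s,r}(\bbT^d)\cong (H^{s,r'}(\bbT^d))'$ from \cite{schmeisser1987} and to the argument in paragraph~3.13 of \cite{adams2003}, and your write-up fills in precisely those details (the passage from the unit ball to the unit sphere, the removal of the absolute value, and the mean-zero bookkeeping).
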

%

It will be convenient to note the following norm equivalence. 
\begin{lemma}\label{lem:equiv_H1_norm}
For all $r > 1$ and $u \in H^{1,r}(\bbT^d)$, 
$$\|u\|_{H^{1,r}(\bbT^d)} \asymp \|u\|_{L^r(\bbT^d)} + \|\nabla u\|_{L^r(\bbT^d)},$$
and for all $u \in H_0^{1,r}(\bbT^d)$, 
$$\|u\|_{H^{1,r}(\bbT^d)} \asymp  \|\nabla u\|_{L^r(\bbT^d)},$$
where the implicit constants in the preceding two displays depend only on $d,r$.
\end{lemma}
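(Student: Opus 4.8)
The plan is to reduce both equivalences to the $L^r(\bbT^d)$-boundedness of a short list of explicit Fourier multipliers, which will in turn follow from the periodic Mikhlin--H\"ormander multiplier theorem (see e.g.~\cite{ruzhansky2009}) --- or, equivalently, from the Euclidean Mikhlin--H\"ormander theorem together with transference of Fourier multipliers from $\bbR^d$ to the lattice $\bbZ^d$ (de Leeuw's theorem). Throughout write $\Lambda=(\mathrm{Id}-\Delta)^{1/2}$, the operator with symbol $m_\Lambda(\xi)=(1+4\pi^2\|\xi\|^2)^{1/2}$, and recall that $\|u\|_{H^{1,r}(\bbT^d)}\asymp\|\Lambda u\|_{L^r(\bbT^d)}$ for $1<r<\infty$, this being the defining norm of the inhomogeneous potential space up to normalization. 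The one structural point to keep in mind is that some of the symbols below --- the Riesz symbols $2\pi i\xi_j/\|\xi\|$ and the inverse-Laplacian symbols $2\pi i\xi_j/(4\pi^2\|\xi\|^2)$ --- are singular at the origin of $\bbR^d$, but on $\bbT^d$ they are evaluated only on $\bbZ^d\setminus\{0\}$, which stays at distance $\geq 1$ from $0$; multiplying such a symbol by $1-\chi$ for a fixed $\chi\in\calC_c^\infty(\bbR^d)$ with $\chi\equiv 1$ on $B(0,1/2)$ produces a $\calC^\infty(\bbR^d)$ function agreeing with it on all of $\bbZ^d\setminus\{0\}$ and satisfying the Mikhlin bounds $|D^\alpha m(\xi)|\lesssim(1+\|\xi\|)^{-|\alpha|}$ globally, so the corresponding periodic operator is bounded on $L^r(\bbT^d)$ for every $1<r<\infty$. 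All constants below depend only on $d,r$.

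For the first equivalence I would prove each direction by expressing one side as a bounded multiplier applied to the other. For ``$\gtrsim$'': from $u=\Lambda^{-1}(\Lambda u)$ and $\partial_j u=(\partial_j\Lambda^{-1})(\Lambda u)$ --- whose symbols $m_\Lambda^{-1}$ and $2\pi i\xi_j m_\Lambda^{-1}$ are globally bounded, smooth, and Mikhlin --- the operators $\Lambda^{-1}$ and $\partial_j\Lambda^{-1}$ are bounded on $L^r(\bbT^d)$, so $\|u\|_{L^r}+\|\nabla u\|_{L^r}\lesssim\|\Lambda u\|_{L^r}\asymp\|u\|_{H^{1,r}}$. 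For ``$\lesssim$'': using the exact identity $\Lambda=\Lambda^{-1}(\mathrm{Id}-\Delta)$, write $\Lambda u=\Lambda^{-1}u-\sum_{j=1}^d(\Lambda^{-1}\partial_j)(\partial_j u)$, where $\Lambda^{-1}$ and $\Lambda^{-1}\partial_j$ are bounded on $L^r(\bbT^d)$ by the same reasoning; hence $\|u\|_{H^{1,r}}\asymp\|\Lambda u\|_{L^r}\lesssim\|u\|_{L^r}+\sum_{j=1}^d\|\partial_j u\|_{L^r}\lesssim\|u\|_{L^r}+\|\nabla u\|_{L^r}$.

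For the second equivalence it only remains to upgrade the first on mean-zero functions, via a Poincar\'e--Wirtinger inequality on the torus. If $u\in H_0^{1,r}(\bbT^d)$ then $\calF u(0)=0$, so $u=(-\Delta)^{-1}(-\Delta u)=-\sum_{j=1}^d(\partial_j(-\Delta)^{-1})(\partial_j u)$, where $\partial_j(-\Delta)^{-1}$ has symbol $2\pi i\xi_j/(4\pi^2\|\xi\|^2)$ on $\bbZ^d\setminus\{0\}$; this is bounded there (by $(2\pi)^{-1}$, as $\|\xi\|\geq 1$) and extends, after the cutoff described above, to a Mikhlin multiplier on $\bbR^d$, so $\partial_j(-\Delta)^{-1}$ is bounded on $L^r(\bbT^d)$. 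Thus $\|u\|_{L^r}\lesssim\|\nabla u\|_{L^r}$, and combining with the first equivalence and the trivial bound $\|\nabla u\|_{L^r}\leq\|u\|_{L^r}+\|\nabla u\|_{L^r}\asymp\|u\|_{H^{1,r}}$ gives $\|u\|_{H^{1,r}(\bbT^d)}\asymp\|\nabla u\|_{L^r(\bbT^d)}$. The main --- indeed essentially the only non-routine --- obstacle is carrying out the passage from Euclidean to periodic multiplier bounds cleanly: one must observe that the homogeneous symbols are admissible as \emph{periodic} multipliers precisely because they are never evaluated at the origin, which is exactly what the smooth-cutoff/transference step encodes; alternatively, one may cite the periodic Mikhlin--H\"ormander theorem directly from~\cite{ruzhansky2009}, whose hypotheses on symbols $m$ on $\bbZ^d$ are met here since $|D^\alpha m(\xi)|\lesssim\|\xi\|^{-|\alpha|}$ holds on $\{\|\xi\|\geq 1\}$ for all the symbols involved.
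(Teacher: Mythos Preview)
Your argument is correct. The paper does not actually give a proof: it cites Theorem~3.5.4 of Schmeisser--Triebel for the first equivalence and invokes ``the periodic Poincar\'e inequality'' for the second, without further detail. Your self-contained derivation via explicit Mikhlin multipliers is precisely the mechanism underlying those references, so the approaches coincide in substance; you have simply unpacked what the citation contains.

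One caveat on conventions: the paper defines $\|u\|_{H^{s,r}(\bbT^d)}=\|(-\Delta)^{s/2}u\|_{L^r(\bbT^d)}$ with the \emph{homogeneous} symbol $\|\xi\|^s$ on $\bbZ^d\setminus\{0\}$, not the Bessel symbol $(1+4\pi^2\|\xi\|^2)^{1/2}$ you work with. Taken literally, that norm vanishes on constants, so the first equivalence in the lemma would fail there; presumably the paper intends a norm equivalent to the Bessel potential one. Your proof adapts without change: the ratio $(1+4\pi^2\|\xi\|^2)^{1/2}/(2\pi\|\xi\|)$ and its reciprocal are bounded Mikhlin multipliers on $\bbZ^d\setminus\{0\}$, so the two norms agree on mean-zero functions, and for general $u$ the zero Fourier mode is controlled directly by the $\|u\|_{L^r}$ term on the right-hand side.
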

The first assertion can be deduced from Theorem~3.5.4 of~\cite{schmeisser1987}, 
and the second is then a consequence  of the periodic Poincar\'e inequality. 

\section{Background on Elliptic PDE}
\label{app:pde}
We next summarize several
 results from the classical theory of uniformly elliptic partial differential equations,  which are used throughout our development. 
Given maps $\calA:\bbR^d \to \bbR^{d\times d}$ and $b: \bbR^d \to \bbR^d$, a
second-order differential operator of the form 
$$Mu = -\langle \calA, \nabla^2 u\rangle - \langle b, \nabla u\rangle$$
is said to be uniformly elliptic over a domain $\Omega \subseteq \bbR^d$ if it holds that
\begin{equation}
\label{eq:general_operator_condition_pd}
\calA(x) \succeq I_d/\lambda,\quad \text{over } \Omega,
\end{equation}
for some $\lambda > 0$. 
We will primarily be interested in operators $M$ for which the coefficients $\calA$ and $b$ have periodic entries,
and for which $u$ is subject to periodic boundary conditions. 
Nevertheless, we begin by stating in full generality several a priori regularity estimates for solutions
to equations of the form $Mu=f$. 
Some of the assumptions in the following statement are stronger than necessary, but sufficient for our purposes. 
\begin{lemma}
\label{lem:a_priori}
Let $\Omega$ be an open set with $\diam(\Omega) \leq D<\infty$, 
and let $\Omega_0 \subset\subset \Omega$ be an open set.
Let $\eta = \mathrm{dist}(\Omega_0, \partial\Omega)$. 
Let $f \in \calC^\beta(\widebar \Omega)$, and suppose $u \in \calC^{2+\beta}(\Omega)$
is a solution to the equation
$$Mu = f\quad \text{over } \Omega,$$
where we assume that  the coordinates of $\calA$ and $b$ satisfy
\begin{equation}
\label{eq:general_operator_condition_holder}
\max_{1 \leq i,j\leq d}\|\calA_{ij}\|_{\calC^\beta(\widebar \Omega)}\vee  \| b_i\|_{\calC^\beta(\widebar \Omega)} \leq 
\lambda  
\end{equation} 
and that equation~\eqref{eq:general_operator_condition_pd}
holds, for some $\lambda > 0$. Then, the following assertions hold.
\begin{thmlist}
\item  \label{lem:a_priori_schauder} (Schauder Estimate) There exists a constant $C = C(D,d, \lambda, \beta)$ such that 
$$\eta^2 \|u\|_{\calC^2(\Omega_0)} + \eta^{2+\beta}\|u\|_{\calC^{2+\beta}(\Omega_0)} \leq   C  \Big( \|u\|_{L^\infty(\Omega)} + \|f\|_{\calC^\beta(\Omega)}\Big).$$ 
\item \label{lem:a_priori_gradient} (Gradient Estimate)
Suppose further that $f$ takes the form $f=g+\div(G)$, where $g \in \calC^\beta(\Omega)$ and 
$G:\Omega \to \bbR^d$ is a vector field with entries in $\calC^{1+\beta}(\Omega)$.  
Then, 
 there exists a constant $C = C(D, d,\lambda, \beta, \eta) > 0$ such that
$$\|u\|_{\calC^{1+\beta}(\Omega_0)} \leq C \Big(\|u\|_{L^\infty(\Omega)}   + \|g\|_{L^\infty(\Omega)} + \|G\|_{\calC^\beta(\Omega)}\Big).$$
\end{thmlist}
\end{lemma}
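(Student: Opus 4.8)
The plan is to prove the two estimates separately, each by the classical two-step strategy: first establish the bound on a fixed unit ball, then rescale and cover. For (i), I would invoke the standard interior Schauder estimate for non-divergence operators (as in Gilbarg and Trudinger, Theorem 6.2, or Caffarelli–Cabr\'e): on $B_1$, any $u\in\calC^{2+\beta}(B_1)$ solving $Mu=f$ with $\calA\succeq I_d/\lambda$ and $\max_{ij}\|\calA_{ij}\|_{\calC^\beta(B_1)}\vee\|b_i\|_{\calC^\beta(B_1)}\le\lambda$ obeys $\|u\|_{\calC^{2+\beta}(B_{1/2})}\le C(d,\beta,\lambda)\,(\|u\|_{L^\infty(B_1)}+\|f\|_{\calC^\beta(B_1)})$. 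To recover the $\eta^{-(2+\beta)}$ scaling and the dependence on $D=\diam(\Omega)$, I would fix $x_0\in\Omega_0$ and a radius $\rho\le\eta$ and set $v(y)=u(x_0+\rho y)$ on $B_1$; then $v$ solves $-\langle\tilde\calA,\nabla^2 v\rangle-\langle\rho\tilde b,\nabla v\rangle=\rho^2\tilde f$ with $\tilde\calA(y)=\calA(x_0+\rho y)$, which has the same ellipticity constant and $[\tilde\calA]_{\calC^\beta}=\rho^\beta[\calA]_{\calC^\beta}\le\lambda$, while the rescaled drift and source have norms bounded in terms of $\lambda$ and $D^{2+\beta}\|f\|_{\calC^\beta(\Omega)}$ (using $d_x\le D$). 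Applying the fixed-ball estimate to $v$ and undoing the dilation gives $\|D^2u\|_{L^\infty(B_{\rho/2}(x_0))}\lesssim\rho^{-2}(\cdot)$ and $[D^2u]_{\calC^\beta(B_{\rho/2}(x_0))}\lesssim\rho^{-(2+\beta)}(\cdot)$; choosing $\rho\asymp\eta$ and covering $\Omega_0$ by finitely many such balls yields the claim. (Equivalently, one may cite the version of Theorem 6.2 stated in weighted interior norms and simply unwind the definitions.)

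For (ii), the target is to control $\|u\|_{\calC^{1+\beta}(\Omega_0)}$ using only $\|G\|_{\calC^\beta}$ and $\|g\|_{L^\infty}$ — one derivative less than one would get by feeding $f=g+\div G$ into (i) — so the argument must never differentiate $G$, hence must exploit the divergence form of the source (and, correspondingly, of the principal part; in our applications the operator is $Eu=-\div(A\nabla u)$ with $A\in\calC^{1+\beta}$, so $b=\div A\in\calC^\beta$ and the equation may be kept in divergence form). I would first handle the constant-coefficient model $M_0=-\langle\calA_0,\nabla^2\cdot\rangle=-\div(\calA_0\nabla\cdot)$, $\calA_0$ constant positive definite: localizing with a cutoff $\chi$, the function $w=\chi u$ solves $M_0w=\tilde g+\div\tilde G$ on $\bbR^d$ with $\tilde g,\tilde G$ compactly supported, $\|\tilde g\|_{L^\infty}\lesssim\|g\|_{L^\infty}+\|G\|_{L^\infty}+\|u\|_{L^\infty}$ and $\|\tilde G\|_{\calC^\beta}\lesssim\|G\|_{\calC^\beta}+\|u\|_{\calC^\beta}$ (the $\nabla u$ commutator term being absorbed into $\div(\cdot)$ because $\calA_0$ is constant). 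Writing $w=\Gamma_0\star(\tilde g+\div\tilde G)$ via the fundamental solution $\Gamma_0\sim\|x\|^{2-d}$ one gets $\nabla w=(\nabla\Gamma_0)\star\tilde g+(\nabla^2\Gamma_0)\!\cdot\!\star\tilde G$; since $\nabla\Gamma_0\sim\|x\|^{1-d}$ is a Riesz-potential kernel of order one, bounded from compactly supported $L^\infty$ into $\calC^{0,\beta}$ for $\beta<1$, and $\nabla^2\Gamma_0$ is a Calder\'on--Zygmund kernel, bounded on $\calC^{0,\beta}$, this yields $\|\nabla w\|_{\calC^\beta}\lesssim\|\tilde g\|_{L^\infty}+\|\tilde G\|_{\calC^\beta}$, and hence the model estimate $\|u\|_{\calC^{1+\beta}(B_{1/2})}\lesssim\|u\|_{L^\infty(B_1)}+\|g\|_{L^\infty(B_1)}+\|G\|_{\calC^\beta(B_1)}$ after absorbing the $\|u\|_{\calC^\beta}$ term by interpolation and a short iteration over shrinking balls. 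To pass to variable coefficients I would freeze $\calA$ at $x_0\in\Omega_0$ and, on balls $B_r(x_0)$, compare $u$ with the $M_0$-harmonic function $h$ agreeing with $u$ on $\partial B_r$: standard decay estimates for $\nabla h$ together with a Caccioppoli-type energy bound for $u-h$ (integrating the $(\calA-\calA_0)\nabla u$ error by parts onto the test function, legitimate in divergence form) give a Campanato-type iteration in which the error from varying $\calA$ carries the factor $[\calA]_{\calC^\beta}\,r^\beta$, summable across dyadic scales; the drift $\langle b,\nabla u\rangle$ is lower order and is absorbed by interpolation throughout. Rescaling and covering as in (i) then produces the stated bound with the claimed dependence of the constant.

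The main obstacle is part (ii): the whole point is to avoid $\|G\|_{\calC^{1+\beta}}$ and $\|\nabla^2 u\|_{L^\infty}$ on the right-hand side, which is precisely why one cannot just cite (i) and must instead set up the divergence-form comparison/Campanato argument and the constant-coefficient potential estimates above. Once that machinery is in place, the remaining work — tracking interpolation constants, the dependence on $D$ and $\eta$, and the periodic versus Euclidean bookkeeping — is routine and will be deferred.
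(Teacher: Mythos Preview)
Your sketch is reasonable in spirit, but the paper does not prove this lemma at all: it simply states that the bounds are standard and cites Corollary~6.3 (for the Schauder estimate) and Theorem~8.32 (for the $\calC^{1+\beta}$ gradient estimate for divergence-form equations with right-hand side $g+\div G$) of Gilbarg--Trudinger. What you have outlined is essentially the classical machinery underlying those two results---rescaling plus the unit-ball Schauder estimate for~(i), and the constant-coefficient potential estimate followed by a freezing/Campanato iteration for~(ii)---so your approach is not wrong, just far more detailed than what the paper needs. If you are writing this up, a one-line citation suffices; reproducing the proofs would be redundant with the literature.
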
 
The bounds of Lemma~\ref{lem:a_priori} are standard, and 
can be deduced from Corollary 6.3 and  Theorem 8.32 of~\cite{gilbarg2001}. 
We further make use of the following De Giorgi-Nash-Moser estimate, 
as stated in  Theorem 8.17 of~\cite{gilbarg2001}.
\begin{lemma} \label{lem:a_priori_moser}  
Let $\Omega$ be an open set with $\diam(\Omega) \leq D < \infty$. 
Assume the coefficients $\calA$ and $b$ satisfy 
conditions~\eqref{eq:general_operator_condition_pd}--\eqref{eq:general_operator_condition_holder}.
Given $r > d/2$, let $g \in L^r(\Omega)$ and let $G : \Omega \to \bbR^d$ be a vector field
with entries in $L^{2r}(\Omega)$. 
Suppose $u \in H_0^1(\Omega)$ is a weak solution to the equation 
$$Mu=g+\div(G),\quad \text{in } \Omega.$$
Then, there exists   $C = C(D,M,r,\rho,\lambda, \beta) > 0$  
such that for all $R > 0$, all balls $B_{2R}:= B(y, 2R)\subseteq \Omega$ and all $\rho > 1$, 
$$\|u\|_{L^\infty(B_R)} \leq C \left(R^{-d/\rho} \|u\|_{L^\rho(2B_R)} +  R^{2-\frac{d}{r}} \|g\|_{L^r(\Omega)}
+ R^{1 - \frac d {2r}}\|G\|_{L^{2r}(\Omega)}\right).$$ 
\end{lemma}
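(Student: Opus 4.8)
The plan is to recognize this as the classical interior local-boundedness (De Giorgi--Nash--Moser) estimate for weak solutions of divergence-form uniformly elliptic equations, and to deduce it from Theorem~8.17 of~\cite{gilbarg2001}. The first step is to read the equation $Mu=g+\div(G)$ in divergence form, with $\calA$ as the (bounded, uniformly elliptic) principal coefficient and $b$ as a bounded first-order coefficient: when $\calA$ is merely H\"older this divergence-form reading is the only sensible weak formulation against $H^1_0$ test functions, while when $\calA\in\calC^{1}$, as in every instance where this lemma is applied, it coincides with the written non-divergence form after absorbing $\div(\calA)$ into $b$. By~\eqref{eq:general_operator_condition_pd}--\eqref{eq:general_operator_condition_holder} one then has $\calA\succeq I_d/\lambda$ with entries bounded by $\lambda$, $b\in\calC^\beta(\Omega)\subseteq L^\infty(\Omega)$ with $\|b\|_{L^\infty(\Omega)}\lesssim\lambda$, and $g\in L^r(\Omega)$, $G\in L^{2r}(\Omega)$ with $r>d/2$ (so $2r>d$) --- exactly the structural and integrability hypotheses under which local boundedness is established in~\cite[Chapter~8]{gilbarg2001} (their exponent being $q=2r$).

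The second step is to invoke the subsolution estimate. Theorem~8.17 of~\cite{gilbarg2001} asserts that a weak subsolution $v$ of such an equation satisfies, for every ball $B_{2R}=B(y,2R)\subseteq\Omega$ and every $\rho>0$,
$$\sup_{B_R} v \;\lesssim\; R^{-d/\rho}\,\|v_+\|_{L^\rho(B_{2R})}\;+\;R^{2-\frac{d}{r}}\,\|g\|_{L^r(B_{2R})}\;+\;R^{1-\frac{d}{2r}}\,\|G\|_{L^{2r}(B_{2R})},$$
the implied constant depending only on $d$, $D$, $\rho$, $r$ and the ellipticity constant $\lambda$. Since $u\in H^1_0(\Omega)$ is a genuine weak solution, both $u$ and $-u$ are weak subsolutions --- replacing $(g,G)$ by $(-g,-G)$, which does not affect the right-hand side norms --- so applying the displayed bound to each and using $\|u_+\|_{L^\rho(B_{2R})}\vee\|u_-\|_{L^\rho(B_{2R})}\le\|u\|_{L^\rho(B_{2R})}$ dominates $\|u\|_{L^\infty(B_R)}=\max\{\sup_{B_R}u,\ \sup_{B_R}(-u)\}$ by the asserted quantity. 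This establishes the lemma.

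The only ingredient deserving comment is the explicit scaling in the radius $R$, i.e.\ the powers $R^{-d/\rho}$, $R^{2-d/r}$ and $R^{1-d/(2r)}$; these come from the routine change of variables $z\mapsto u(y+Rz)$, which carries $B_{2R}(y)$ to $B_2(0)$, preserves the ellipticity constant, leaves the coefficient bounds unchanged (one may freely take $R\le D$), and reduces matters to the scale-one estimate, whence the powers of $R$ are recovered by tracking the scaling of the $L^p$ norms and of the two forcing terms. Because~\cite{gilbarg2001} already perform both the scale-one Moser iteration and this rescaling, no real obstacle remains: the lemma is a direct specialization of their Theorem~8.17 once the operator is put in divergence form and conditions~\eqref{eq:general_operator_condition_pd}--\eqref{eq:general_operator_condition_holder} are rephrased as their structural hypotheses. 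Should a self-contained proof be preferred, the scale-one case would be obtained by testing the weak equation against truncated powers of $u$, applying the Sobolev inequality on $B_1$, and iterating the resulting integral inequalities over a nested sequence of balls, absorbing the lower-order and forcing terms at each step.
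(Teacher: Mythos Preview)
Your proposal is correct and matches the paper's approach exactly: the paper does not prove this lemma at all but simply states it as a direct quotation of Theorem~8.17 of \cite{gilbarg2001}, which is precisely what you invoke. Your additional remarks on reading the equation in divergence form, applying the subsolution bound to both $u$ and $-u$, and recovering the scaling in $R$ are accurate elaborations but go beyond what the paper itself provides.
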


With these preliminaries in place, let us  specialize our attention to the operators
which appear most prominently in our work: Suppose that $M$ takes the form
$$Mu = -\div(\calA\nabla u),$$
where we now assume that the matrix $\calA$ has $\bbZ^d$-periodic entries, and
we continue to assume that $\calA$ satisfies the uniform ellipticity and smoothness conditions
\begin{align}
\label{eq:periodic_uni_elliptic}
\calA(x) \succeq I_d/\lambda,\quad \text{over } \bbT^d \\ 
\label{eq:periodic_smooth_coeff}
\max_{1 \leq i,j\leq d} \|\calA_{ij}\|_{\calC^{1+\beta}(\bbT^d)} \leq \lambda.
\end{align}
By integration by parts, it is easy to see that $M$ is self-adjoint with respect to the $L^2(\bbT^d)$ inner product, 
and thus, that its range   contains only mean-zero functions. We may therefore view $M$ as an operator mapping
$H^2(\bbT^d)$ into $L_0^2(\bbT^d)$, and by convention, we will always restrict the domain of $M$ to $H_0^2(\bbT^d)$. 

Given $f \in L_0^2(\bbT^d)$, we will say that a map $u \in H_0^1(\bbT^d)$ is a weak solution to the PDE 
\begin{equation}
\label{eq:basic_torus_pde}
Mu=f, \quad \text{over } \bbT^d
\end{equation}
if for every $v \in H_0^1(\bbT^d)$, it holds that
$$ \langle f, v\rangle_{L^2(\bbT^d)}= \langle u,v\rangle_\calA := \int_{\bbT^d} \langle \calA \nabla u,\nabla v\rangle d\calL.$$
Under conditions~(\ref{eq:periodic_uni_elliptic}--\ref{eq:periodic_smooth_coeff}), 
it is straightforward to see that
$\langle \cdot,\cdot\rangle_\calA$ defines an inner product on $H_0^1(\bbT^d)$, which is equivalent to the standard
inner product $\langle \cdot,\cdot\rangle_{H^1(\bbT^d)}$. 
If a weak solution~$u$ admits a representative which lies in $\calC^2_0(\bbT^d)$, 
then it follows by integration by parts that
$$\langle f,v\rangle_{L^2(\bbT^d)} = \langle u,v\rangle_\calA = \langle Mu,v\rangle_{L^2(\bbT^d)},\quad \text{for all } v \in H_0^1(\bbT^d),$$
which implies that the equation $Mu=f$ is solved in the classical sense over $\bbT^d$. 
Thus, when they exist, classical solutions coincide with weak solutions.

Since $H_0^1(\bbT^d)$, endowed with the inner product $\langle \cdot,\cdot\rangle_\calA$, is a Hilbert
space, and since the linear functional $\langle f, \cdot\rangle_{L^2(\bbT^d)}:H_0^1(\bbT^d) \to \bbR$ is bounded,
it follows from the Riesz representation theorem that 
the PDE~\eqref{eq:basic_torus_pde} admits a unique weak solution $u \in H_0^1(\bbT^d)$ for any given $f \in L_0^2(\bbT^d)$. 
The following standard result, which can be deduced similarly as in~\cite{gilbarg2001}, shows that 
weak solutions in fact lie in $H_0^2(\bbT^d)$, and  are
classically differentiable if $f \in \calC_0^\beta(\bbT^d)$. 
\begin{lemma}\label{lem:isomorphism} 
Assume that conditions~\eqref{eq:periodic_uni_elliptic}--\eqref{eq:periodic_smooth_coeff} hold. 
Then, the operator $M$ is a bijection of $H_0^2(\bbT^d)$ onto $L_0^2(\bbT^d)$, and its restriction to
$\calC_0^{2+\beta}(\bbT^d)$ is a bijection onto $\calC_0^\beta(\bbT^d)$. Furthermore, it holds that
$$\|Mu\|_{L^2(\bbT^d)} \asymp \|u\|_{H^2(\bbT^d)},\quad \text{for all } u \in H_0^2(\bbT^d),$$
and
$$\|Mu\|_{\calC^{\beta}(\bbT^d)} \asymp \|u\|_{\calC^{2+\beta}(\bbT^d)},\quad\text{for all } u \in \calC_0^{2+\beta}(\bbT^d),$$
where the implicit constants depend only on $\lambda,\beta,d$.
\end{lemma}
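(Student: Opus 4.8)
The plan is to combine the variational construction of weak solutions with the a priori estimates recorded in Lemma~\ref{lem:a_priori}, globalizing the latter by exploiting the compactness of $\bbT^d$ together with periodicity.

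\emph{The $L^2$--$H^2$ bijection.} As already observed in the text, conditions~\eqref{eq:periodic_uni_elliptic}--\eqref{eq:periodic_smooth_coeff} make $\langle\cdot,\cdot\rangle_\calA$ an inner product on $H_0^1(\bbT^d)$ equivalent to $\langle\cdot,\cdot\rangle_{H^1(\bbT^d)}$, so by the Riesz representation theorem every $f \in L_0^2(\bbT^d)$ admits a unique weak solution $u \in H_0^1(\bbT^d)$, with $\|u\|_{H^1(\bbT^d)} \lesssim \|f\|_{L^2(\bbT^d)}$ upon testing against $v=u$. Since $\calA$ is Lipschitz, standard interior $H^2$-regularity for divergence-form equations (via the Nirenberg difference-quotient method, or Theorem~8.8 of~\cite{gilbarg2001}) shows $u \in H^2_{\mathrm{loc}}(\bbR^d)$; covering $\bbT^d$ by finitely many balls and using periodicity promotes this to $u \in H_0^2(\bbT^d)$ with $\|u\|_{H^2(\bbT^d)} \lesssim \|f\|_{L^2(\bbT^d)}$, and $Mu = f$ a.e. The reverse inequality $\|Mu\|_{L^2(\bbT^d)} \leq \|\calA\|_{\calC^1(\bbT^d)}\|u\|_{H^2(\bbT^d)}$ is trivial. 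Since $Mu=0$ forces $\langle u,u\rangle_\calA = 0$, hence $u$ constant, hence $u = 0$, the map $M$ is injective on $H_0^2(\bbT^d)$; and since its range lies in $L_0^2(\bbT^d)$ by integration by parts while we have just produced a preimage for every $f \in L_0^2(\bbT^d)$, it is a bijection onto $L_0^2(\bbT^d)$ with the asserted norm equivalence.

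\emph{The $\calC^\beta$--$\calC^{2+\beta}$ bijection.} Fix $f \in \calC_0^\beta(\bbT^d) \subseteq L_0^2(\bbT^d)$ and let $u \in H_0^2(\bbT^d)$ be the solution from the previous step. First, the De Giorgi--Nash--Moser bound of Lemma~\ref{lem:a_priori_moser} (with $g = f$, $G=0$, $\rho = 2$, any fixed $r \in (d/2,\infty)$), applied on a finite cover of $\bbT^d$, gives $\|u\|_{L^\infty(\bbT^d)} \lesssim \|u\|_{L^2(\bbT^d)} + \|f\|_{L^r(\bbT^d)} \lesssim \|f\|_{\calC^\beta(\bbT^d)}$. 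Next, writing $M$ in nondivergence form $Mu = -\langle \calA,\nabla^2 u\rangle - \langle \div\calA,\nabla u\rangle$, in which $\div\calA$ has $\calC^\beta(\bbT^d)$ entries by~\eqref{eq:periodic_smooth_coeff}, a standard elliptic bootstrap (the gradient estimate of Lemma~\ref{lem:a_priori_gradient} to reach $\calC^{1+\beta}$, then interior Schauder regularity for the resulting nondivergence equation with $\calC^\beta$ data, cf.\ Chapter~6 of~\cite{gilbarg2001}, globalized by periodicity) shows $u \in \calC_0^{2+\beta}(\bbT^d)$. The interior Schauder estimate of Lemma~\ref{lem:a_priori_schauder}, summed over a fixed finite cover $\{\Omega_0^{(i)} \subset\subset \Omega^{(i)}\}$ of $\bbT^d$ and combined with the $L^\infty$ bound above, then yields $\|u\|_{\calC^{2+\beta}(\bbT^d)} \lesssim \|u\|_{L^\infty(\bbT^d)} + \|f\|_{\calC^\beta(\bbT^d)} \lesssim \|f\|_{\calC^\beta(\bbT^d)}$. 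The reverse inequality $\|Mu\|_{\calC^\beta(\bbT^d)} \lesssim \|u\|_{\calC^{2+\beta}(\bbT^d)}$ follows from the Hölder product and composition rules (Lemmas~\ref{lem:holder_products}--\ref{lem:holder_comp}) and $\calA \in \calC^{1+\beta}(\bbT^d)$. Injectivity on $\calC_0^{2+\beta}(\bbT^d)$ is inherited from the $H_0^2$ statement, and surjectivity onto $\calC_0^\beta(\bbT^d)$ is exactly what has been shown, so $M$ restricts to the desired bijection with the stated norm equivalence.

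\emph{Main obstacle.} The one genuinely delicate point is the \emph{qualitative} statement that the weak solution is $\calC^{2+\beta}$, since the Schauder estimate of Lemma~\ref{lem:a_priori_schauder} presupposes that regularity; the bootstrap above is what removes the circularity. An essentially equivalent alternative is an approximation argument — mollify $\calA$ and $f$, solve the smooth problems, apply Lemma~\ref{lem:a_priori_schauder} uniformly along the cover, and pass to the limit by Arzelà--Ascoli — or the method of continuity joining $M$ to $-\Delta$ on $\bbT^d$, whose invertibility on $\calC_0^{2+\beta}$ is classical. Everything else — the Riesz step, the difference-quotient $H^2$ estimate, and the passage from interior to global bounds via a fixed finite cover and periodicity — is routine, and it is this last device that produces constants depending only on $d$, $\beta$, and the ellipticity/regularity constant $\lambda$ of $\calA$.
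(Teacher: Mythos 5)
Your treatment of the $L^2$--$H^2$ part agrees with the paper: Riesz representation, interior $H^2$ regularity for divergence-form weak solutions via difference quotients, and the $L^2$ bound from testing against $u$; that is exactly what the paper does. For the H\"older bijection, you take a bootstrap route whereas the paper uses the method of continuity applied to the family $M_s = (1-s)M + s(-\Delta)$, establishing the a priori $\calC^{2+\beta}$ estimate from Lemma~\ref{lem:a_priori_schauder} (combined with a De~Giorgi--Nash--Moser $L^\infty$ bound and a Sobolev step) uniformly in $s$, and then transferring surjectivity from $-\Delta$ to $M$.

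There is, however, a circularity in the bootstrap as you have written it. You invoke Lemma~\ref{lem:a_priori_gradient} to upgrade $u$ to $\calC^{1+\beta}$, but the hypothesis of Lemma~\ref{lem:a_priori} --- which covers both the Schauder and gradient sub-lemmas --- is that $u \in \calC^{2+\beta}(\Omega)$ already. So both of the paper's a priori estimates presuppose precisely the regularity the bootstrap is trying to produce, and your subsequent appeal to ``interior Schauder regularity ... cf.\ Chapter 6 of Gilbarg--Trudinger'' has the same issue: the Schauder theory in that chapter provides \emph{estimates} for classical solutions, not qualitative regularity for a weak one. The qualitative statement has to come from somewhere --- either the method of continuity (the paper's route, and GT Theorem~6.14's route), a mollify-solve-pass-to-the-limit argument, or the $W^{2,p}$ Calder\'on--Zygmund theory followed by Sobolev embedding before the nondivergence Schauder step. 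You name the first two as alternatives in your closing remark, which shows you have the right picture, but the main bootstrap chain as cited does not by itself ``remove the circularity'' as claimed. Within the paper's framework of pre-packaged a priori estimates, the method of continuity is the natural fix, precisely because it only ever applies Lemma~\ref{lem:a_priori_schauder} to functions that are \emph{assumed} to lie in $\calC_0^{2+\beta}(\bbT^d)$.
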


It will be convenient to further note that
the operator~$M$ admits a periodic Green's function, and hence that
solutions~$u$ of the equation $Mu=f$ over~$\bbT^d$ are integral operators applied to~$f$.
We say that a map $\Gamma:\bbT^d \times \bbT^d \to \bbR$ is a periodic Green's function for $M$
if for all $x,y \in \bbT^d$, $\Gamma(\cdot,x) \in L_0^1(\bbT^d)$, $\Gamma(y,\cdot) \in L_0^1(\bbT^d)$, 
and 
$$v(x) = \langle \Gamma(\cdot,x), Mv\rangle_{L^2(\bbT^d)},\quad \text{for all }  v \in \calC_0^\infty(\bbT^d).$$
As an example, it can be deduced from~\cite{stein1971}
 that $-\Delta$ admits a  periodic Green's function of the form
$$\Gamma(y,x) = \Gamma_0(y-x) + b(y,x),$$
where $b \in \calC^\infty(\calQ\times\calQ)$, and $\Gamma_0$ is the traditional
fundamental solution of the Laplace equation on $\bbR^d$, 
that is, 
$$\Gamma_0(y-x) =  
\frac 1 {d(2-d)\omega_d} \|x-y\|^{2-d} \quad (d \geq 3),$$
where $\omega_d$ is the volume of the unit ball in dimension $d$.
The following result shows more generally that the operator $M$ admits a periodic Green's function, which has
the same blow-up behaviour as~$\Gamma_0$. 
 \begin{lemma}
\label{lem:fundamental_solution}
Assume that conditions~\eqref{eq:periodic_uni_elliptic}--\eqref{eq:periodic_smooth_coeff} hold,
and let $d\geq 3$. 
Then, there exists a unique  fundamental solution $\Gamma$ for the operator $M$,
 satisfying
 $\Gamma(\cdot,x) \in \calC^1_{\mathrm{loc}}(\calQ \setminus \{x\})$ for all $x \in \calQ$, 
and for which there exists a constant $C = C(\lambda,\beta,d) > 0$ such that
\begin{align*}
|\Gamma(y,x)| \leq C \|x-y\|_{\bbT^d}^{2-d},\quad
\|\nabla \Gamma(y,x)\| \leq C \|x-y\|_{\bbT^d}^{1-d},
\end{align*} 
for all $x,y \in \bbT^d$, $x \neq y$.  Furthermore, it holds that $\Gamma(x,y) = \Gamma(y,x)$ for any
$x,y \in \bbT^d$.  Finally, for any $f \in \calC_0^\beta(\bbT^d)$ and $x \in \bbT^d$, it holds that
$$M^{-1} f(x) = \int_{\bbT^d} \Gamma(y,x) f(y)dy, \quad \nabla M^{-1} f(x) = \int_{\bbT^d} \nabla_y\Gamma(y,x) f(y)dy.$$
\end{lemma}

For uniformly elliptic operators over $\bbR^d$, analogues of Lemma~\ref{lem:fundamental_solution}  
are classical; see for instance~\cite{littman1963}, \cite{stampacchia1965}, and \cite{gruter1982}. 
For the periodic case stated above, this result was shown for instance by~\cite{josien2019}. 

\section{Convergence Rates of the Kernel Density Estimator}
\label{app:kde}
Recall that we define the kernel density estimator 
$$\hat q_n = K_{h_n} \star Q_n = \int_{\bbR^d} K_{h_n}(\cdot - y) dQ_n(y),$$
where, in the above display, $Q_n \in \calP(\bbT^d)$ is extended by $\bbZ^d$-periodicity to a Borel measure on $\bbR^d$. 
Under condition~\Kernel{$\alpha$}, we assume that $K \in \calC_c^\infty(0,1)^d$, and thus
it is easy to see that the periodization of $K_{h_n}$, denoted by
$$\widebar K_{h_n} = \sum_{\xi\in \bbZ^d} K_{h_n}(\cdot-\xi),$$
defines a function in $\calC^\infty(\bbT^d)$, with the property that for all $x \in \bbR^d$,
there exists $\xi \in \bbZ^d$ such that $K_{h_n} (x)= \widebar K_{h_n}(x-\xi)$. By the Poisson summation formula, it holds that
\begin{equation}
\label{eq:poisson_summation}
\calF[\widebar K_{h_n}](\xi) = \calF[ K_{h_n}](\xi)=\calF[ K](h_n\xi), \quad \text{for all } \xi \in \bbZ^d.
\end{equation}
%
%
Throughout what follows, we write for all $u \in L^1(\bbT^d)$, 
$$\calK_{h_n} u = \widebar K_{h_n} \star u - u.$$
The aim of this appendix is to derive the convergence rate of the kernel density estimator under
the H\"older norms $\calC^\gamma(\bbT^d)$ with $\gamma \geq  0$, and under the negative Sobolev norms
$H^{-\gamma,r}(\bbT^d)$, $r \geq 2$. We begin with the former. 


\subsection{Convergence Rate under H\"older Norms} 
When $\gamma \geq  0$ is an integer, the question of characterizing the convergence
rate of $\hat q_n$ under the $\calC^\gamma(\bbT^d)$ norm reduces to deriving the uniform convergence
rate of derivatives of the kernel density estimator, which is a classical topic~\citep{bhattacharya1967,silverman1978,gine2016}. Using an
elementary interpolation argument, we can extend these results to non-integer values of $\gamma$.
\begin{proposition}
\label{prop:kde_holder}
Let $0 \leq \gamma < s$, and assume $q \in \calC^s(\bbT^d)$. 
Let $K$ satisfy condition~\Kernel{$s$}. Then, there exist constants $C ,b > 0$ depending
on $\|q\|_{\calC^s(\bbT^d)}, \gamma,s$
such that for any $h_n \geq 0$, 
$$\bbE \|q_{h_n} - q\|_{\calC^\gamma(\bbT^d)} \leq C h_n^{s-\gamma},$$
and  such that for any $h_n \leq u \leq 1$, 
$$\bbP\Big( \|\hat q_n - q_{h_n}\|_{\calC^\gamma(\bbT^d)} \geq u\Big) \leq C h_n^{-b} \exp\big( - nu^2 h_n^{2\gamma+d}/C\big).$$
In particular, there exists a  constant $C_1 > 0$, depending in particular on $q$, such that, 
almost surely, for all large enough $n \geq 1$, 
$$\|\hat q_n - q_{h_n}\|_{\calC^\gamma(\bbT^d)} \leq C_1 \sqrt{\frac{\log (h_n^{-1})}{nh_n^{2\gamma+d}}}.$$
\end{proposition}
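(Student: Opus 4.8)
The plan is to deduce the statement from two classical facts about the kernel density estimator---one controlling the deterministic (bias) term $q_{h_n}-q$, one controlling the stochastic (fluctuation) term $\hat q_n-q_{h_n}$---combined with the multiplicative form of the H\"older interpolation inequality (Lemma~\ref{lem:holder_interpolation}). For integer $\gamma$ both bounds are already contained in \citep{bhattacharya1967,silverman1978,gine2016}; the only genuinely new point is the passage to non-integer $\gamma$, which is where interpolation enters. Throughout I put $m=\lfloor\gamma\rfloor$ and $\theta=\gamma-m\in[0,1)$.

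\textbf{Bias.} Since $q_{h_n}=\widebar K_{h_n}\star q$ is deterministic, the first bound reduces to $\|q_{h_n}-q\|_{\calC^\gamma(\bbT^d)}\lesssim h_n^{s-\gamma}$. I would first record the two endpoint estimates: (i) $\|q_{h_n}-q\|_{L^\infty(\bbT^d)}\lesssim h_n^s$, the classical KDE bias bound, which follows from $q\in\calC^s(\bbT^d)$, a Taylor expansion of $q$, and the vanishing-moment properties of $K$ guaranteed by Condition~\Kernel{$s$} (together with the evenness of $K$); and (ii) $\|q_{h_n}-q\|_{\calC^s(\bbT^d)}\lesssim\|q\|_{\calC^s(\bbT^d)}$, which holds because $\|\widebar K_{h_n}\star f\|_{\calC^s(\bbT^d)}\le\|\widebar K_{h_n}\|_{L^1(\bbT^d)}\|f\|_{\calC^s(\bbT^d)}$ and $\sup_n\|\widebar K_{h_n}\|_{L^1(\bbT^d)}\le\|K\|_{L^1(\bbR^d)}<\infty$. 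Interpolating (i) and (ii) between $\calC^0$ and $\calC^s$ then gives $\|q_{h_n}-q\|_{\calC^\gamma(\bbT^d)}\lesssim(h_n^s)^{1-\gamma/s}\lesssim h_n^{s-\gamma}$ for $0<\gamma<s$, and the case $\gamma=0$ is (i) itself; taking expectations is vacuous since the left-hand side is deterministic.

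\textbf{Fluctuations.} Write $\hat q_n-q_{h_n}=\frac1n\sum_{i=1}^n(\widebar K_{h_n}(Y_i-\cdot)-q_{h_n})$. For an integer $k\ge0$ one has $\|\hat q_n-q_{h_n}\|_{\calC^k(\bbT^d)}\asymp\max_{|\alpha|\le k}\sup_{x\in\bbT^d}|D^\alpha\hat q_n(x)-D^\alpha q_{h_n}(x)|$, and each $D^\alpha\hat q_n(x)$ is an average over the translation class $\{D^\alpha\widebar K_{h_n}(\cdot-x):x\in\bbT^d\}$, whose members are bounded by $O(h_n^{-(d+k)})$, have variances under $Q$ of order $O(h_n^{-(d+2k)})$ (using only $\|q\|_{L^\infty(\bbT^d)}<\infty$), and which enjoys polynomial uniform covering numbers. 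Talagrand's inequality together with this uniform-entropy control yields the classical estimate: for all $t>0$,
\begin{equation*}
\bbP\Big(\|\hat q_n-q_{h_n}\|_{\calC^k(\bbT^d)}\ge t\Big)\le C h_n^{-b}\exp\!\big(-n t^2 h_n^{d+2k}/C\big),
\end{equation*}
which is vacuous for $t$ below the scale $\sqrt{\log(h_n^{-1})/(nh_n^{d+2k})}$. To reach non-integer $\gamma$, I would apply H\"older interpolation to the smooth random function $\hat q_n-q_{h_n}$, obtaining $\|\hat q_n-q_{h_n}\|_{\calC^\gamma(\bbT^d)}\le C_0 A^{1-\theta}B^\theta$ with $A=\|\hat q_n-q_{h_n}\|_{\calC^m(\bbT^d)}$ and $B=\|\hat q_n-q_{h_n}\|_{\calC^{m+1}(\bbT^d)}$. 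Given $0<u\le1$, if $C_0 A^{1-\theta}B^\theta\ge u$ then $A\ge a:=(u/C_0)h_n^{\theta}$ or $B\ge b:=(u/C_0)h_n^{-(1-\theta)}$, since $a^{1-\theta}b^\theta=u/C_0$. Inserting these into the integer-order tail at levels $m$ and $m+1$ and using $2\theta+(d+2m)=d+2\gamma$ and $-2(1-\theta)+(d+2(m+1))=d+2\gamma$, both events have probability $\le C h_n^{-b}\exp(-n u^2 h_n^{d+2\gamma}/C)$, and a union bound yields the concentration inequality of the proposition.

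\textbf{Almost-sure bound, and the main obstacle.} For the last assertion I would set $u=u_n:=C_1\sqrt{\log(h_n^{-1})/(nh_n^{d+2\gamma})}$ in the concentration bound, which gives $\bbP(\|\hat q_n-q_{h_n}\|_{\calC^\gamma(\bbT^d)}>u_n)\le C h_n^{C_1^2/C-b}$; this is summable in $n$ once $C_1$ is large (in the relevant bandwidth regime, e.g.\ $h_n\asymp n^{-a}$), so Borel--Cantelli forces $\|\hat q_n-q_{h_n}\|_{\calC^\gamma(\bbT^d)}\le u_n$ for all large $n$ almost surely, and enlarging $C_1$ absorbs the finitely many remaining indices. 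Nothing here is genuinely hard: the substance is the two classical KDE facts plus interpolation. The step needing the most care is the interpolation passage for the \emph{concentration} statement---picking $a,b$ with $a^{1-\theta}b^\theta=u/C_0$ so that the two integer-level exponents $2\theta+d+2m$ and $-2(1-\theta)+d+2(m+1)$ both collapse exactly to $d+2\gamma$---and, to a lesser extent, invoking the correct uniform-entropy / Talagrand machinery for the translation classes $\{D^\alpha\widebar K_{h_n}(\cdot-x)\}$ so as to land the sharp logarithmic factor and the stated sub-Gaussian exponent.
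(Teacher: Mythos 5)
Your proposal follows essentially the same route as the paper: integer-order bias and fluctuation bounds are taken from the literature (the paper cites Lemma~32 of \cite{manole2021} and \cite{gine2002}), the passage to non-integer $\gamma$ is done via multiplicative H\"older interpolation, the concentration inequality at non-integer order follows from a union bound at the two neighboring integers with the same exponent $d+2\gamma$, and Borel--Cantelli gives the almost-sure statement. One small correction: Lemma~\ref{lem:holder_interpolation} is the \emph{additive} interpolation inequality, not the multiplicative one; the multiplicative form you need is what the paper takes from Lunardi (Corollary~1.7), and your weight $\theta=\gamma-\lfloor\gamma\rfloor$ there is the correct one (the paper's printed weight $\gamma/\lceil\gamma\rceil$ appears to be a typo), as your explicit choice of thresholds $a,b$ checks.
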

\begin{proof}[Proof of Proposition~\ref{prop:kde_holder}] 
Given $a,b\geq 0$ and a bounded linear operator $F:\calC^a(\bbT^d) \to \calC^b(\bbT^d)$, denote 
the norm of $F$ by
$$\|F\|_{\calC^a(\bbT^d)\to \calC^b(\bbT^d)} = \sup_{u\in \calC^a(\bbT^d)\setminus \{0\}} \frac{\|F u\|_{\calC^a(\bbT^d)}}{\|u\|_{\calC^b(\bbT^d)}}.$$
When $\gamma$ is an integer,  it is a standard fact that the following bound holds under condition~\Kernel{$s$}
(see for instance~\cite{gine2016}, page 402):
$$\|\calK_{h_n}\|_{\calC^s(\bbT^d)\to \calC^\gamma(\bbT^d)} \lesssim h_n^{s-\gamma}. $$
If instead $\gamma > 0$ is not an integer, let $\gamma_0 = \lfloor \gamma\rfloor$,
$\gamma_1 = \lceil \gamma\rceil$ and $\theta = (\gamma-\gamma_0)/(\gamma_1-\gamma_0)$.  The periodic H\"older space 
$\calC^\gamma(\bbT^d)$ is a real interpolation space of exponent $\theta$ between $\calC^{\gamma_0}(\bbT^d)$
and $\calC^{\gamma_1}(\bbT^d)$ (cf.~\cite{schmeisser1987}, page 173), whence it follows that 
\begin{align*}
\|\calK_{h_n}\|_{\calC^s(\bbT^d)\to \calC^\gamma(\bbT^d)}  
 &\lesssim 
\|\calK_{h_n}\|_{\calC^s(\bbT^d)\to \calC^{\gamma_0}(\bbT^d)}^{1-\theta} 
\|\calK_{h_n}\|_{\calC^s(\bbT^d)\to \calC^{\gamma_1}(\bbT^d)}^\theta  \\
 &\lesssim \big(h_n^{s-\gamma_0}\big)^{1-\theta} \big(h_n^{s-\gamma_1}\big)^{\theta} = h_n^{s-\gamma}. 
\end{align*}
This proves the first claim. 
%
%
To prove the second claim, it can be deduced from the proof of
 Lemma 32 of~\cite{manole2021} (see also~\cite{gine2002}) 
 that for all integers $\gamma \geq 0$,  there exists  $C>0$ such that for all $h_n < u \leq 1$, 
$$\bbP\Big( \|\hat q_n - q_{h_n}\|_{\calC^\gamma(\bbT^d)} \geq u/h_n^\gamma\Big) \leq C h_n^{-d(d+\gamma+2)} \exp\big( - nu^2 h_n^d/C\big).$$
If instead $\gamma > 0$ is a real number, it follows again from the
interpolation property of the periodic H\"older spaces that the following inequality holds (cf.~\citet[Corollary 1.7]{lunardi2018}),
$$\|\hat q_n - q_{h_n}\|_{\calC^\gamma(\bbT^d)} \lesssim 
\|\hat q_n-q_{h_n}\|_{\calC^{\lfloor\gamma\rfloor}}^{1-\theta}
\|\hat q_n-q_{h_n}\|_{\calC^{\lceil\gamma\rceil}}^{\theta}.$$
 The preceding two displays imply,
\begin{align*}
\bbP&\left(\|\hat q_n - q_{h_n}\|_{\calC^{\gamma}(\bbT^d)} \geq u/h_n^{\gamma}\right) \\
 &\leq \bbP\left(\|\hat q_n - q_{h_n}\|_{\calC^{\lfloor\gamma\rfloor}(\bbT^d)} \geq u/h_n^{\lfloor\gamma\rfloor}\right) + 
 \bbP\left(\|\hat q_n - q_{h_n}\|_{\calC^{\lceil\gamma\rceil}(\bbT^d)} \geq u/h_n^{\lceil\gamma\rceil}\right) \\ 
 &\lesssim h_n^{-d(d+\lceil \gamma\rceil +2)} \exp\big( - nu^2 h_n^d/C\big),
\end{align*}
from which the second claim follows. The final claim is now a direct consequence of the first Borel-Cantelli Lemma.
\end{proof}
%
We deduce the following Lemma. 
\begin{lemma}
\label{lem:kde_as}
Let $q \in  \calC^{s}_+(\bbT^d)$ for some $s > 0$, and let $0\leq \gamma < s$. 
Let $K$ satisfy condition~\Kernel{$s$}. 
Suppose that for some $c > 0$, 
$$h_n = c \cdot n^{-a}, \quad \text{with } 0 < a < \frac 1 {2\gamma+d},$$ 
Then, for any $b > 0$, 
there exist  constants $C,\delta > 0$ depending on $K,\gamma,b,c,a$ such that with probability at least $1-C/n^b$.  
\begin{enumerate}
\item  $\|\hat q_n\|_{\calC^{\gamma}(\bbT^d)} \leq C$. 
\item $\hat q_n$ is a valid density, in the sense that $\hat q_n \geq 0$ over $\bbT^d$ and $\int_{\bbT^d} \hat q_n = 1$. 
\end{enumerate}
In particular, there exist   constants $C, N > 0$, possibly depending on $q$, such that the  two preceding assertions
hold almost surely for all $n \geq N$. 
\end{lemma}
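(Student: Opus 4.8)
The plan is to deduce both assertions from Proposition~\ref{prop:kde_holder} through the decomposition $\hat q_n - q = (\hat q_n - q_{h_n}) + (q_{h_n} - q)$, in which the stochastic term $\hat q_n - q_{h_n}$ is controlled by the concentration inequality of that proposition and the deterministic bias term $q_{h_n}-q$ by the bound $\|q_{h_n}-q\|_{\calC^\gamma(\bbT^d)} \lesssim h_n^{s-\gamma}$. The quantitative engine is the observation that under the bandwidth condition $a < 1/(2\gamma+d)$ one has $n h_n^{2\gamma+d} = c^{2\gamma+d}\,n^{1-a(2\gamma+d)}$ diverging at a strictly positive polynomial rate, so that for any \emph{fixed} threshold $u>0$ the tail bound $C h_n^{-b_0}\exp(-n u^2 h_n^{2\gamma+d}/C)$ supplied by Proposition~\ref{prop:kde_holder}, whose prefactor $h_n^{-b_0} = c^{-b_0} n^{ab_0}$ grows only polynomially, decays faster than any negative power of $n$, and in particular is $\le C/n^b$ after enlarging $C$.

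First I would fix $q_{\min} := \min_{\bbT^d} q$, which is strictly positive since $q \in \calC^s_+(\bbT^d)$ and $\bbT^d$ is compact, set $u_0 := \min\{1, q_{\min}/4\}$, and define the event $A_n := \{\|\hat q_n - q_{h_n}\|_{\calC^\gamma(\bbT^d)} \le u_0\}$. For every $n$ with $h_n \le u_0$ (all but finitely many), Proposition~\ref{prop:kde_holder} applied with $u = u_0$ bounds $\bbP(A_n^c)$ by $C h_n^{-b_0}\exp(-n u_0^2 h_n^{2\gamma+d}/C)$, which by the observation above is $\le C/n^b$; the finitely many remaining indices are absorbed by taking $C$ large enough that $1 - C/n^b \le 0$ there, so that $\bbP(A_n^c) \le C/n^b$ holds for all $n$.

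Next I would check the two assertions on $A_n$. For the first, the triangle inequality and the continuous embedding $\calC^s(\bbT^d)\hookrightarrow \calC^\gamma(\bbT^d)$ (valid since $\gamma < s$) give, on $A_n$, $\|\hat q_n\|_{\calC^\gamma(\bbT^d)} \le u_0 + \|q_{h_n}-q\|_{\calC^\gamma(\bbT^d)} + \|q\|_{\calC^\gamma(\bbT^d)} \lesssim u_0 + h_n^{s-\gamma} + \|q\|_{\calC^s(\bbT^d)}$, which is bounded by a constant because $h_n = c n^{-a} \le c$ for all $n \ge 1$ and $s-\gamma>0$. For the second, the normalization $\int_{\bbT^d}\hat q_n = 1$ is deterministic: condition~\Kernel{$s$} forces $\int K = 1$, hence $\int_{\bbT^d}\widebar K_{h_n} = 1$ by the Poisson summation identity~\eqref{eq:poisson_summation}, and $\int_{\bbT^d}\widebar K_{h_n}\star Q_n = 1$ since $Q_n$ is a probability measure; for nonnegativity, using $\|\cdot\|_{L^\infty(\bbT^d)} \le \|\cdot\|_{\calC^\gamma(\bbT^d)}$ and the bias bound, on $A_n$ we get $\hat q_n(x) \ge q(x) - u_0 - C h_n^{s-\gamma} \ge q_{\min}/2 > 0$ for all $n$ large enough that $C h_n^{s-\gamma} < q_{\min}/4$ (again only finitely many exceptions, absorbed into $C$). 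This in fact exhibits a constant $\delta := q_{\min}/2$ with $\hat q_n \ge \delta$ on $A_n$ for $n$ large.

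Finally, for the almost-sure statement I would take $b>1$ above, so that $\sum_n \bbP(A_n^c) < \infty$; the first Borel--Cantelli lemma then yields that, almost surely, $A_n$ holds for all $n$ beyond some finite (random) index, and enlarging this index to also exceed the finitely many $n$ with $h_n > u_0$ or $C h_n^{s-\gamma} \ge q_{\min}/4$ produces an $N$ past which both assertions hold with $C$ as before. I do not expect any genuine obstacle here; the only points demanding care are the exponential-beats-polynomial bookkeeping in the tail bound and the routine absorption of the finitely many small-$n$ exceptions into the constants.
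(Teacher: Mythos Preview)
Your proposal is correct and proceeds exactly along the lines the paper intends: the lemma is stated immediately after Proposition~\ref{prop:kde_holder} with the phrase ``We deduce the following Lemma'' and no further proof, and your argument supplies precisely the omitted details---triangle-inequality splitting into bias and fluctuation, the observation that $nh_n^{2\gamma+d}\asymp n^{1-a(2\gamma+d)}$ grows polynomially so that the exponential tail dominates the polynomial prefactor $h_n^{-b_0}$, and Borel--Cantelli for the almost-sure claim. Your identification of $\delta = q_{\min}/2$ as a uniform lower bound on $\hat q_n$ is also the right reading of the otherwise unused constant $\delta$ in the statement, and is exactly what the downstream Lemma~\ref{lem:fitted_caffarelli} needs.
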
 
In particular, the following is an immediate consequence of Lemma~\ref{lem:kde_as} and 
Theorem~\ref{thm:caffarelli}.
\begin{lemma}
\label{lem:fitted_caffarelli}
Let  $p,q\in\calC^s_+(\bbT^d)$ and let $0 < \gamma < s$, $\gamma \not\in \bbN$.  Suppose that for some $c > 0$, 
$$h_n = c\cdot n^{-a},\quad\text{with } 0 < a < \frac 1 {2\gamma+d}.$$ 
Then, for any $b > 0$, there exist constants $C,\lambda$ depending on $\omega_s(p,q), \gamma,b,c$  
such that  
$$\|\hat\varphi_n\|_{\calC^{\gamma+2}(\calQ)} \leq \lambda, \quad \text{and } \hat \varphi_n \text{ is } \lambda^{-1} \text{-strongly convex,}$$
with probability at least $1-C/n^b$. In particular, there exist  constants $C,N,\lambda > 0$ depending
on $p$ and $q$ such that the above display holds almost surely for all $n \geq N$.
\end{lemma}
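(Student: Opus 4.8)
The plan is to combine Lemma~\ref{lem:kde_as} with Theorem~\ref{thm:caffarelli} in a completely routine way; the only mild subtlety is bookkeeping about which H\"older exponent to feed into each ingredient. First I would fix $b > 0$ and note that, since $\gamma \notin \bbN$ and $\gamma < s$, we may choose a non-integer $\gamma' \in (\gamma, s)$ still satisfying $a < 1/(2\gamma'+d)$ — this is possible because the map $t \mapsto 1/(2t+d)$ is continuous and strictly decreasing, so the strict inequality $a < 1/(2\gamma+d)$ persists under a small perturbation of $\gamma$. (In fact one may simply take $\gamma' = \gamma$; I keep the flexibility only to make the continuity of the constants transparent.) Apply Lemma~\ref{lem:kde_as} with this exponent: on an event $A_n$ of probability at least $1 - C/n^b$ we have $\|\hat q_n\|_{\calC^{\gamma'}(\bbT^d)} \leq C$ and $\hat q_n$ is a valid probability density, in particular $\hat q_n > 0$ on $\bbT^d$. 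The one point needing a word is the lower bound on $\hat q_n$: since $q \in \calC^s_+(\bbT^d)$ we have $q \geq \gamma_0 > 0$ for some constant, and Proposition~\ref{prop:kde_holder} (applied at exponent $0$, say) shows $\|\hat q_n - q\|_{L^\infty(\bbT^d)} \to 0$ on an event of probability at least $1 - C/n^b$; hence on this event $\hat q_n \geq \gamma_0/2$, giving $\|\hat q_n^{-1}\|_{L^\infty(\bbT^d)} \leq 2/\gamma_0$. Thus on an event $A_n$ with $\bbP(A_n) \geq 1 - C/n^b$ we control $\omega_{\gamma'}(\hat q_n)$ by a constant depending only on $\|q\|_{\calC^s(\bbT^d)}, \gamma, c$.

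Next I would invoke Theorem~\ref{thm:caffarelli}, applied to the pair of densities $p$ and $\hat q_n$ (on the event $A_n$, where $\hat q_n$ and $1/\hat q_n$ are bounded and $\hat q_n \in \calC^{\gamma'}(\bbT^d) \subseteq \calC^{\epsilon}(\bbT^d)$ for any $\epsilon \in (0,\gamma']$). Part (iv) of that theorem yields a constant $\lambda$, depending only on $\omega_{\gamma'}(p, \hat q_n)$, $d$, and $\gamma'$, such that $\|\hat\varphi_n\|_{\calC^{2+\gamma'}(\calQ)} \leq \lambda$ and $\nabla^2\hat\varphi_n \succeq I_d/\lambda$ over $\bbR^d$; since by part (ii) the map $\hat\varphi_n - \|\cdot\|^2/2$ is $\bbZ^d$-periodic, the $\calC^{2+\gamma'}(\calQ)$ bound upgrades to a $\calC^{2+\gamma'}(\bbT^d)$ bound (the periodic H\"older norm is comparable to the norm over one period together with the matching of derivatives across faces, which is automatic here). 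Because $\omega_{\gamma'}(p,\hat q_n)$ is bounded on $A_n$ by a constant depending only on $\omega_s(p,q), \gamma, c$, the resulting $\lambda$ inherits the same dependence, and $\calC^{2+\gamma'}(\bbT^d) \hookrightarrow \calC^{2+\gamma}(\bbT^d)$ gives $\|\hat\varphi_n\|_{\calC^{2+\gamma}(\bbT^d)} \leq \lambda$ as claimed. This establishes the high-probability statement.

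For the almost-sure statement, take $b > 1$ in the above (so that $\sum_n C/n^b < \infty$) and apply the first Borel–Cantelli lemma: the event $A_n^c$ occurs only finitely often, so there is a (random, hence $q$-dependent) $N$ such that for all $n \geq N$ both conclusions hold surely; absorbing the worst case over $n < N$ into the constants $C, \lambda$ completes the proof. I do not expect any genuine obstacle here — the entire argument is an assembly of Lemma~\ref{lem:kde_as}, the uniform consistency bound of Proposition~\ref{prop:kde_holder}, and Theorem~\ref{thm:caffarelli}(iv)–(ii); the only place warranting care is making the lower bound $\hat q_n \geq \gamma_0/2$ explicit (it is not literally stated in Lemma~\ref{lem:kde_as}), and tracking that all constants depend only on $\omega_s(p,q), \gamma, b, c$ and not on $n$.
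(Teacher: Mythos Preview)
Your proposal is correct and follows exactly the route the paper indicates: the paper states this lemma as ``an immediate consequence of Lemma~\ref{lem:kde_as} and Theorem~\ref{thm:caffarelli}'' and gives no further detail, so you have simply spelled out that consequence. Your observation that the uniform lower bound $\hat q_n \geq \gamma_0/2$ is not literally asserted in Lemma~\ref{lem:kde_as} (which only claims $\hat q_n \geq 0$) and must be supplied via Proposition~\ref{prop:kde_holder} is a genuine point of care --- note, though, that the unused constant $\delta$ in the statement of Lemma~\ref{lem:kde_as} suggests the authors intended $\hat q_n \geq \delta$ there, which would make the deduction truly immediate.
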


\subsection{Convergence Rate under Negative Sobolev  Norms}
We now turn to the question of bounding the convergence rate of the kernel density estimator
under the negative Riesz potential Sobolev norms. 
\begin{proposition}
\label{prop:kde_negative_sobolev}
Let $0 < \alpha < d$, $s \geq 0$, $r \geq 2$, and $q \in \calC_+^s(\bbT^d)$. 
Assume $K$ satisfies condition~\Kernel{$s+\alpha$}. 
Assume further that for some $c > 0$, $h_n \geq c \cdot n^{-1/d}$. 
Then, there exists a constant $C = C(\omega_s(q),K, c,\alpha,s,r) > 0$
such that
$$\|q_{h_n} - q\|_{H^{-\alpha,r}(\bbT^d)} \leq C h_n^{s+\alpha},$$
and, 
$$\bbE \|\hat q_n - q_{h_n}\|_{H^{-\alpha,r}(\bbT^d)}
\leq \Big(\bbE \|\hat q_n - q_{h_n}\|_{H^{-\alpha,r}(\bbT^d)}^r\Big)^{\frac 1 r}
 \leq C \frac{h_n^{\alpha - \frac d 2}}{\sqrt n}.$$
 Furthermore, 
 $$C^{-1} \frac{h_n^{2\alpha-d}}{n} 
\leq  \bbE \|\hat q_n - q_{h_n}\|_{H^{-\alpha}(\bbT^d)}^2 
 \leq C \frac{h_n^{2\alpha-d}}{n}.$$
\end{proposition}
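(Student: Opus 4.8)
The plan is to prove Proposition~\ref{prop:kde_negative_sobolev} by working directly on the Fourier side, since the Riesz potential norms are diagonalized by the Fourier transform and the periodization of the kernel density estimator has an explicit Fourier series. First I would recall from equation~\eqref{eq:poisson_summation} that $\calF[\widebar K_{h_n}](\xi) = \calF[K](h_n\xi)$ for all $\xi \in \bbZ^d$, so that the deterministic error $q_{h_n} - q$ has Fourier coefficients $(\calF[K](h_n\xi) - 1)\calF[q](\xi)$. Using condition~\Kernel{$s+\alpha$}, which gives $|\calF[K](\eta) - 1| \lesssim \|\eta\|^{s+\alpha}$, and the fact that $q \in \calC_+^s(\bbT^d) \subseteq H^{s,r}(\bbT^d)$ (hence $\||\cdot|^s \calF[q]\|$ controls the relevant multiplier sum in $L^r$), the bias bound reduces to a Fourier multiplier estimate: the multiplier $m_n(\xi) = \|\xi\|^{-\alpha}(\calF[K](h_n\xi)-1)$ satisfies $|m_n(\xi)| \lesssim h_n^{s+\alpha}\|\xi\|^s$, uniformly, and one applies an Lizorkin-type / Hörmander--Mikhlin periodic multiplier theorem (or simply splits into $\|\xi\| \lesssim 1/h_n$ and $\|\xi\| \gtrsim 1/h_n$ and uses the Schwartz decay of $\calF[K]$) to obtain $\|q_{h_n} - q\|_{H^{-\alpha,r}(\bbT^d)} \lesssim h_n^{s+\alpha}\|q\|_{H^{s,r}(\bbT^d)} \lesssim h_n^{s+\alpha}$.

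Next I would treat the stochastic term $\hat q_n - q_{h_n} = \frac1n\sum_{i=1}^n(\widebar K_{h_n}(Y_i - \cdot) - q_{h_n})$. For the $L^r$-in-expectation bound, I would combine the dual characterization of $H^{-\alpha,r}$ (Lemma~\ref{lem:sobolev_dual_norm}) with a Marcinkiewicz--Zygmund / Rosenthal inequality to reduce the $r$-th moment to the second moment plus a lower-order term, so that the heart of the matter is the $H^{-\alpha}(\bbT^d) = H^{-\alpha,2}(\bbT^d)$ variance computation. Here Parseval's identity is decisive: writing $(-\Delta)^{-\alpha/2}(\hat q_n - q_{h_n})$ in Fourier series,
\begin{align*}
\bbE\|\hat q_n - q_{h_n}\|_{H^{-\alpha}(\bbT^d)}^2
 &= \sum_{\xi \in \bbZ_*^d} \|\xi\|^{-2\alpha} \Var\big(\calF[\widebar K_{h_n}(Y - \cdot)](\xi)\big) \\
 &= \frac1n \sum_{\xi \in \bbZ_*^d} \|\xi\|^{-2\alpha} |\calF[K](h_n\xi)|^2 \Var\big(e^{-2\pi i\langle\xi, Y\rangle}\big).
\end{align*}
Since $Y \sim Q$ with density bounded above and below, $\Var(e^{-2\pi i\langle\xi,Y\rangle}) \asymp 1$ uniformly over $\xi \in \bbZ_*^d$, so the sum is $\asymp \frac1n \sum_{\xi \in \bbZ_*^d} \|\xi\|^{-2\alpha}|\calF[K](h_n\xi)|^2$. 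Treating this as a Riemann sum with mesh $h_n$ (legitimate because $h_n \gtrsim n^{-1/d}$ keeps the relevant range of $\xi$ large and $\calF[K]$ is Schwartz, so the sum converges and the endpoint/discretization errors are lower order), a change of variables $\zeta = h_n\xi$ gives
$$\frac1n \sum_{\xi \in \bbZ_*^d} \|\xi\|^{-2\alpha}|\calF[K](h_n\xi)|^2 \asymp \frac{h_n^{2\alpha - d}}{n}\int_{\bbR^d} \|\zeta\|^{-2\alpha}|\calF[K](\zeta)|^2\, d\zeta,$$
where the integral is finite precisely because $0 < \alpha < d$ (integrability near the origin) and $\calF[K]$ is Schwartz (integrability at infinity). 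This yields both the upper and the matching lower bound $C^{-1}h_n^{2\alpha-d}/n \le \bbE\|\hat q_n - q_{h_n}\|_{H^{-\alpha}(\bbT^d)}^2 \le C h_n^{2\alpha-d}/n$. The $L^r$ bound then follows: $\bbE\|\hat q_n - q_{h_n}\|_{H^{-\alpha,r}}^r \lesssim n^{-r/2}(\text{second-moment-type quantity})^{r/2} + n^{-(r-1)}(\text{tail term})$, and using $\|(-\Delta)^{-\alpha/2}(\widebar K_{h_n}(Y-\cdot) - q_{h_n})\|_{L^r} \lesssim h_n^{\alpha - d/r'}$-type single-observation bounds, the first term is $\asymp (h_n^{\alpha - d/2}/\sqrt n)^r$ and dominates.

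The main obstacle I anticipate is making the Riemann-sum approximation fully rigorous with explicit constants, in particular controlling the comparison between $\sum_{\xi \in \bbZ_*^d} \|\xi\|^{-2\alpha}|\calF[K](h_n\xi)|^2$ and $h_n^{-d}\int \|\zeta\|^{-2\alpha}|\calF[K](\zeta)|^2 d\zeta$ uniformly in $n$; the singular weight $\|\xi\|^{-2\alpha}$ near the origin means one cannot naively bound the discretization error by the sup of the gradient of the integrand, so one must handle the small-$\xi$ contribution separately (there $|\calF[K](h_n\xi)| \approx 1$ and $\sum_{0 < \|\xi\| \le 1/h_n}\|\xi\|^{-2\alpha} \asymp h_n^{2\alpha - d}$ directly by comparison with the integral over an annulus) and use Schwartz decay of $\calF[K]$ for the large-$\xi$ tail. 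A lemma of the type of Lemma~\ref{lem:riemann_sum_approx}, which the paper already invokes for an analogous matrix-valued sum, should carry over essentially verbatim. A secondary technical point is verifying $\Var(e^{-2\pi i\langle\xi,Y\rangle})$ is bounded below uniformly in $\xi$: this is where the hypothesis $1/q \in L^\infty$ enters — if $q$ had a Fourier coefficient equal to its total mass in some direction the variance could degenerate, but boundedness of $q$ and $1/q$ rules this out via a Riemann--Lebesgue-plus-compactness argument on the finitely many low frequencies combined with a crude uniform bound $\Var(e^{-2\pi i \langle \xi, Y\rangle}) = 1 - |\calF[q](\xi)|^2$ and $|\calF[q](\xi)| < 1$ for $\xi \ne 0$ whenever $q$ is not a.e.\ constant, which is automatic unless $P = Q$ is uniform — and even then one checks $|\calF[q](\xi)| \to 0$ so the infimum over $\xi \in \bbZ_*^d$ is attained and positive. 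The remaining steps — the multiplier bound for the bias and the Rosenthal reduction for the $L^r$ moment — are routine given the tools already assembled in the appendices.
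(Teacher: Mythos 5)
Your proposal is correct and follows essentially the same route as the paper's proof in Appendix~\ref{app:kde}: a Fourier-multiplier (Mikhlin-type) bound for the bias, Rosenthal's inequality for the $r$-th moment, and a low-frequency/high-frequency split of $\sum_{\xi}\|\xi\|^{-2\alpha}|\calF[K](h_n\xi)|^2$ (the content of Lemma~\ref{lem:sobolev_growth}) for the quadratic variance. One small improvement over your plan: rather than the Riemann--Lebesgue-plus-compactness argument you sketch for the uniform lower bound on $\Var\big(e^{-2\pi i\langle\xi,Y\rangle}\big) = 1 - |\calF[q](\xi)|^2$, note that for $\xi\neq 0$ one has $\Var\big(e^{-2\pi i\langle\xi,Y\rangle}\big) = \int_{\bbT^d} q(y)\,\bigl|e^{-2\pi i\langle\xi,y\rangle} - \calF[q](\xi)\bigr|^2\,dy \geq \|1/q\|_{L^\infty(\bbT^d)}^{-1}\big(1 + |\calF[q](\xi)|^2\big) \geq \|1/q\|_{L^\infty(\bbT^d)}^{-1}$, which gives the lower bound with an explicit constant depending only on $\omega_s(q)$ as the Proposition requires, and sidesteps the worry that compactness would yield a non-quantitative infimum.
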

When $\alpha$ = 0, the above result reduces to the traditional convergence
rate of the kernel density estimator under $L^r(\bbT^d)$ norms~\citep{gine2016}.
 When
$s = 0$ and $\alpha = 1$, this result has previously appeared in~\cite{divol2021a}, and our proof below 
is inspired by their approach. 

\begin{proof}[Proof of Proposition~\ref{prop:kde_negative_sobolev}]
We begin with the bound on the bias of $\hat q_n$. 
Recall the definition of the fractional
Laplacian $(-\Delta)^{-\alpha/2}$ and its associated Riesz kernel $I_\alpha$, given
in Appendix~\ref{app:function_spaces}. We have, 
\begin{align}
\label{eq:pf_bias_bound_besov}
\nonumber 
\|q_{h_n} - q\|_{H^{-\alpha,r}(\bbT^d)}
 &= \big\|I_\alpha \star \calK_{h_n}[q]\big\|_{L^r(\bbT^d)} \\ 
\nonumber 
 &= \big\|\calK_{h_n} [I_\alpha  \star q]\big\|_{L^r(\bbT^d)} \\  
 &= \big\|\calK_{h_n} \big [(-\Delta)^{-\alpha/2} q\big]\big\|_{L^r(\bbT^d)}.
\end{align} 
Now, let $\gamma = \alpha + s$, and notice that by definition of the Riesz potential spaces,
\begin{equation}
\label{eq:pf_kde_sobolev_frac_lapl_step}
\big\|(-\Delta)^{-\alpha/2}q\big\|_{H^{\gamma,r}(\bbT^d)}
 \lesssim \|q\|_{H^{s,r}(\bbT^d)}.
 \end{equation}
Therefore, the question of bounding the expression in equation~\eqref{eq:pf_bias_bound_besov} reduces
to the question of bounding the $L^r(\bbT^d)$ approximation error of the convolution of the
 $H^{\gamma,r}(\bbT^d)$ function $f := (-\Delta)^{-\alpha/2}[q]$. 
Specifically, we aim to show
%
$$\|\calK_{h_n} [f]\|_{L^r(\bbT^d)} \lesssim h_n^\gamma.$$
Define $M:= \|\cdot\|^{-\gamma}  \big(\calF[K](\cdot)-1\big)$, with $0/0=0$. We will show below that $M(a\cdot)$
is a $L^r(\bbT^d)$ Fourier multiplier for any given $a > 0$, in the sense that
$$\big\|\calF^{-1}[M(a\cdot) \calF f ]\big\|_{L^r(\bbT^d)} \leq C \|f\|_{L^r(\bbT^d)}$$
for all $f \in L^r(\bbT^d)$, for a constant $C > 0$ independent of $a$. 
Taking this fact for granted momentarily, let us show how the claim follows.
Using equation~\eqref{eq:poisson_summation}, we have
\begin{align*}
\|\calK_{h_n}[f]\|_{L^r(\bbT^d)}
 &= \big\|\calF^{-1}\big(\calF[\calK_{h_n} [f ]\big) \big\|_{L^r(\bbT^d)}  \\ 
 &= h_n^\gamma \big\|\calF^{-1}\big(M(h_n\xi)\|\xi\|^{\gamma} \calF[f](\xi)\big) \big\|_{L^r(\bbT^d)} \\
 &\lesssim h_n^\gamma \big\|\calF^{-1}\big(\|\xi\|^{\gamma} \calF[f](\xi)\big) \big\|_{L^r(\bbT^d)} 
 = h_n^\gamma \|f\|_{H^{\gamma,r}(\bbT^d)} \lesssim h_n^{s+\alpha} \|q\|_{H^{s,r}(\bbT^d)},
\end{align*} 
where we used equation~\eqref{eq:pf_kde_sobolev_frac_lapl_step} to obtain the final inequality. 

It thus remains to show that $M(a\cdot)$ is indeed a Fourier multiplier, with norm independent
of $a > 0$. 
To do so, it will suffice to show that $M$ satisfies the conditions of Mikhlin's multiplier
theorem~\citep[Theorem 6.2.7]{grafakos2008}. 
Abbreviate $g = \calF[K]$. By condition~\Kernel{$\gamma$}, $M$ is bounded over $\bbR^d$.
Furthermore, 
recall that condition~\Kernel{$\gamma$}
implies
 that $K$ is a kernel of order $\gamma-1$, i.e. $D^\kappa g(0) = 0$ for all multi-indices
 $\kappa$ such that $1 \leq |\kappa| \leq \gamma-1$. 
For any such $\kappa$, we have 
$$D^\kappa g(\xi) = D^\kappa g(0) + \sum_{\alpha: 1 \leq |\omega| \leq \gamma-1-|\kappa|} D^{\kappa+\omega} g(0) \xi^\omega
+ O(\|\xi\|^{\gamma-|\kappa|}) = O(\|\xi\|^{\gamma-|\kappa|}),$$
for  $\|\xi\|\leq 1$. Since $K$ and $g$ are Schwartz functions, the above bound also continues to hold trivially
for all $|\kappa| \geq \gamma$. Using the general Leibniz rule, we deduce that for all 
multi-indices $\omega$ satisfying $1 \leq |\omega| \leq \frac d 2 + 1$, we have\footnote{The notation $\kappa \leq \omega$ 
is to be understood componentwise. 
}
\begin{align*}
\big|D^\omega M(\xi)\big|
 &= \big|D^\omega \big( g(\xi)\|\xi\|^{-\gamma}\big)\big| \\
 &\lesssim \sum_{\kappa\leq\omega}\big|D^{\kappa} g(\xi)\big|\big| D^{\omega-\kappa} \|\xi\|^{-\gamma}\big| 
 \lesssim \sum_{\kappa\leq\omega}\|\xi\|^{\gamma-|\kappa|} \|\xi\|^{-\gamma-|\omega-\kappa|} 
 \lesssim \|\xi\|^{-|\omega|},
\end{align*}
for all $\|\xi\|\leq 1$. Finally, since $g$ is a Schwarz function, the last bound of the above display
continues to hold trivially when $\|\xi\| > 1$.
The conditions of Mikhlin's Multiplier Theorem are thus satisfied. This completes the proof of the bias bound.

To prove the first claim about the variance, we follow the proof of~\cite{divol2021a} closely. 
We would like to bound  $V = \bbE \|(1/n)\sum_{i=1}^n \big(U_i-\bbE U_i\big)\|_{L^r(\bbT^d)}^r$, where
$$U_i(x) = I_\alpha \star \widebar K_{h_n}^o(X_i-x) ,\quad x \in \bbT^d,$$
where we write
$\widebar K_{h_n}^o = \widebar K_{h_n} - 1$. 
Using Rosenthal's inequality~\citep{rosenthal1970,rosenthal1972}, it holds that
$$V \lesssim n^{-r/2} \int_{\bbT^d} \big( \bbE |U_1(x)|^2\big)^{r/2} dx + n^{1-r} \int_{\bbT^d} \bbE|U_1(x)|^r dx.$$
Notice that, due to the upper-boundedness of $q$, one has for any $x \in \bbT^d$, 
\begin{align*}
\bbE |U_1(x)|^r 
 &= \int_{\bbT^d} \big| I_\alpha\star \widebar K_{h_n}^o(y-x)\big|^r q(y)dy \\
 &\lesssim \int_{\bbT^d} \big| I_\alpha\star \widebar K_{h_n}^o(y-x)\big|^r dy
= \int_{\bbT^d} \big| I_\alpha\star \widebar K_{h_n}^o(y)\big|^r dy = \|\widebar K_{h_n}^o\|_{H^{-\alpha,r}(\bbT^d)}^r,
\end{align*}
where we used the translational invariance of $\calL$. 
We now make use of the following.
\begin{lemma}
\label{lem:sobolev_growth}
Let $0 < \alpha < d$, $r \geq 2$, and assume condition~\Kernel{$\alpha$} holds.  
Then, there exists  $C = C(\alpha,r) > 0$ such that 
$$C^{-1} h_n^{r\alpha - (r-1)d} \leq \|\widebar K_{h_n}^o\|_{H^{-\alpha,r}(\bbT^d)}^r \leq C h_n^{r\alpha - (r-1)d}.$$
\end{lemma}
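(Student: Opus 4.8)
The plan is to compute $\|\widebar K_{h_n}^o\|_{H^{-\alpha,r}(\bbT^d)}^r$ directly by passing to the spatial representation of the Riesz potential and exploiting the scaling and support properties of $K_{h_n}$. Recall that $\widebar K_{h_n}^o = \widebar K_{h_n} - 1$ has vanishing mean, so $I_\alpha \star \widebar K_{h_n}^o$ is well-defined, and by the Poisson summation formula (equation~\eqref{eq:poisson_summation}) its Fourier coefficients are $\|\xi\|^{-\alpha}\calF[K](h_n\xi)$ for $\xi \in \bbZ_*^d$, vanishing at $\xi = 0$. The first step is to relate $(-\Delta)^{-\alpha/2}\widebar K_{h_n}^o$ to the \emph{Euclidean} Riesz potential. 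Since $0 < \alpha < d$, the function $|x|^{-(d-\alpha)}$ (up to a normalizing constant $c_{d,\alpha}$) is locally integrable on $\bbR^d$ and serves as the convolution kernel of $(-\Delta)^{-\alpha/2}$ on $\bbR^d$; one writes $I_\alpha(x) = c_{d,\alpha}\|x\|_{\bbT^d}^{\alpha-d} + b(x)$ with $b$ smooth on $\calQ$ (this decomposition is analogous to the one used for the fundamental solution of $-\Delta$ in Appendix~\ref{app:pde}). Because $K_{h_n}$ is supported in a set of diameter $\lesssim h_n$ and $\int \widebar K_{h_n}^o = 0$, the contribution of the smooth part $b$ is negligible (of order $h_n^{\text{large}}$), and we reduce to estimating $\|\,|\cdot|^{\alpha-d} \star K_{h_n}^o\,\|_{L^r}$, where I suppress the periodization since the relevant mass lives near the support of $K_{h_n}$.

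The second step is the scaling computation. Write $K_{h_n}(x) = h_n^{-d}K(x/h_n)$. A change of variables shows $(|\cdot|^{\alpha-d} \star K_{h_n})(x) = h_n^{\alpha-d}\,(|\cdot|^{\alpha-d}\star K)(x/h_n)$, hence also for the centered version (the constant $1$ convolved with $|\cdot|^{\alpha-d}$ over $\bbT^d$ is just a constant, absorbed into the mean-zero bookkeeping). Therefore
\begin{equation*}
\big\||\cdot|^{\alpha-d}\star K_{h_n}^o\big\|_{L^r(\bbT^d)}^r \asymp h_n^{r(\alpha-d)} \int_{\bbR^d} \big|(|\cdot|^{\alpha-d}\star K)(y/h_n)\big|^r\,dy = h_n^{r(\alpha-d)+d}\int_{\bbR^d}\big|(|\cdot|^{\alpha-d}\star K)(z)\big|^r\,dz,
\end{equation*}
and $r(\alpha-d)+d = r\alpha - (r-1)d$, which is exactly the claimed exponent. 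So the whole statement reduces to showing that $\Phi := |\cdot|^{\alpha-d}\star K$ lies in $L^r(\bbR^d)$ with $\|\Phi\|_{L^r(\bbR^d)}$ a finite nonzero constant depending only on $\alpha, r, d$ (and $K$, but $K$ is fixed). For the upper bound: near the support of $K$, $\Phi$ is bounded (convolution of an $L^1$ function with $|\cdot|^{\alpha-d}$, which is locally integrable since $\alpha < d$), while for $\|z\|$ large, $\Phi(z) \sim c\|z\|^{\alpha-d}$ because $K$ integrates to one, and since $\alpha < d$ we have $r(d-\alpha) > d$ precisely when $r > d/(d-\alpha)$ — but this is NOT guaranteed for all $r \geq 2$. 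This is where the mean-zero structure is essential.

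The main obstacle, then, is the tail estimate: for general $r \geq 2$ we cannot afford $\Phi(z)$ to decay only like $\|z\|^{\alpha-d}$. The resolution is that the relevant object is not $|\cdot|^{\alpha-d}\star K$ but its centered version reflecting $\int \widebar K_{h_n}^o = 0$; equivalently, one should use the cancellation coming from the vanishing moments of $K$ (condition~\Kernel{$\alpha$} guarantees moments up to order $\lfloor\alpha-1\rfloor$ vanish, but here we only need the zeroth-order cancellation from $\int K = 1$ combined with the subtraction of the constant). Concretely, on the torus the constant function $1$ is itself in the picture: $(-\Delta)^{-\alpha/2}$ applied to $\widebar K_{h_n}-1$ annihilates the zero mode, so the correct Euclidean surrogate is $\Phi(z) - (\text{its limiting constant profile})$, which by a Taylor expansion of $|z-w|^{\alpha-d}$ around $|z|^{\alpha-d}$ and $\int K = 1$ decays like $\|z\|^{\alpha-d-1}$, giving $r(d+1-\alpha) > d$ for all $r \geq 2$ as long as $\alpha < 1 + d/2$; for larger $\alpha$ one uses higher vanishing moments from \Kernel{$\alpha$}. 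One must track this carefully — I would phrase it as: work on $\bbT^d$ throughout, decompose $I_\alpha = c_{d,\alpha}\|\cdot\|_{\bbT^d}^{\alpha-d} + b$ with $b\in\calC^\infty(\calQ)$, split the integral of $|I_\alpha \star K_{h_n}^o|^r$ over $B(\supp K_{h_n}, Ch_n)$ and its complement, bound the near region by the local-integrability scaling above, and bound the far region using that on $\calQ_x\setminus B(0,Ch_n)$ the periodized kernel is smooth and the mean-zero cancellation yields an extra power of $h_n/\|y\|$. The lower bound $C^{-1}h_n^{r\alpha-(r-1)d}$ follows by restricting the $L^r$ integral to a fixed ball where $I_\alpha \star K_{h_n}^o$ is, after rescaling, bounded below by a positive constant — e.g. choosing the test region where $\calF$-side estimates or a direct sign argument show $\Phi$ does not vanish identically. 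This matching lower bound is routine once the scaling identity is in hand.
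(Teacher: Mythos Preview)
Your spatial-side approach via the Riesz kernel is genuinely different from the paper's. The paper works entirely on the Fourier side: it applies the Hausdorff--Young inequality to bound $\|\widebar K_{h_n}^o\|_{H^{-\alpha,r}(\bbT^d)}$ by the $\ell^{r'}$ norm of the sequence $\|\xi\|^{-\alpha}\calF[K](h_n\xi)$, then splits that lattice sum into $\{\kappa\|h_n\xi\|\le 1/2\}$ and its complement, using condition K($\alpha$) on the first piece and the Schwartz decay of $\calF[K]$ on the second. This sidesteps the torus-to-$\bbR^d$ bookkeeping entirely.

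Your identified obstacle---that $\Phi = |\cdot|^{\alpha-d}\star K$ need not lie in $L^r(\bbR^d)$ because its tail is only $\sim |z|^{\alpha-d}$---is real, but your proposed resolution does not work: the leading term $|z|^{\alpha-d}$ in the large-$|z|$ expansion of $\Phi$ comes from $\int K = 1$ and is \emph{not} removed by higher vanishing moments of $K$ (those only kill subleading terms), nor by the mean-zero subtraction on the torus (which subtracts a fixed constant, not the profile $|z|^{\alpha-d}$). In fact the tail obstruction arises precisely in the regime $r(d-\alpha)\le d$, i.e.\ $r\alpha \ge (r-1)d$, and there the lemma as stated is \emph{false}: for $r=2$, $d=3$, $\alpha=2$, Plancherel gives $\|\widebar K_{h_n}^o\|_{H^{-2}(\bbT^3)}^2 = \sum_{\xi\neq 0}\|\xi\|^{-4}|\calF[K](h_n\xi)|^2 \to \sum_{\xi\neq 0}\|\xi\|^{-4}\in(0,\infty)$, not $\asymp h_n$. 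The paper's own proof quietly invokes ``the condition $r\alpha < d(r-1)$'' at the key step---an implicit hypothesis missing from the lemma statement. Under that hypothesis one has $r(d-\alpha)>d$, so $\Phi\in L^r(\bbR^d)$ directly and your scaling argument goes through with no tail cancellation needed; your near-origin restriction then gives the lower bound cleanly for all $r\ge 2$, which Hausdorff--Young alone cannot deliver when $r>2$.
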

The proof appears below. We thus have
\begin{equation}
\label{eq:pf_negH_kde_var_step1}
V \lesssim n^{-r/2}  \big(h_n^{2\alpha-d}\big)^{r/2} + n^{1-r} h_n^{r\alpha - (r-1)d}
= n^{-r/2}  h_n^{r\left(\alpha-\frac{d}{2}\right)} + n^{1-r} h_n^{r\alpha - (r-1)d}.
\end{equation}
Note that
\begin{align*}
c\cdot  n^{-r/2}  h_n^{r\left(\alpha-\frac{d}{2}\right)}\geq  n^{1-r} h_n^{r\alpha - (r-1)d}
 & \Longleftrightarrow h_n \geq c \cdot n^{-1/d},
\end{align*}
thus, under our assumption that $h_n\geq cn^{-1/d}$, the first term on the right-hand
side of equation~\eqref{eq:pf_negH_kde_var_step1} dominates, and we obtain
$$V\lesssim n^{-r/2} h_n^{r\left(\alpha - \frac d 2\right)}.$$
It thus remains to prove the last claim, for which it suffices to prove the lower bound. 
We~have,
\begin{align*}
\bbE\|\hat q_n-q_{h_n}\|_{H^{-\alpha}(\bbT^d)}^2
 &= \bbE \|(1/n)\sum_{i=1}^n \big(U_i-\bbE U_i\big)\|_{L^2(\bbT^d)}^2 \\
 &\asymp  \frac 1 n \int_{\bbT^d} \int_{\bbT^d} \big| I_\alpha\star \widebar K_{h_n}^o(y-x)\big|^2dx dy  
 = \frac 1 n \|\widebar K_{h_n}^o\|_{H^{-\alpha}(\bbT^d)}^2 \asymp \frac 1 n h_n^{2\alpha-d},
\end{align*}
where we used the fact that $q$ is bounded from above and below over $\bbT^d$, and we invoked Lemma~\ref{lem:sobolev_growth} in the final order assessment. 
The claim follows. 
\end{proof} 

\begin{proof}[{Proof of Lemma~\ref{lem:sobolev_growth}}]
Let
$$\kappa = \sup_{\xi \in \bbR^d \setminus\{0\}} \frac{|\calF[K](\xi) - 1|}{\|\xi\|},$$
which is finite by condition~\Kernel{1}. 
By the Hausdorff-Young inequality, notice that 
$$\|\widebar K_{h_n}^o\|_{H^{-\alpha,r}(\bbT^d)} = 
 \left\|\calF^{-1}\big[\|\cdot\|^{-\alpha} \calF[\widebar K_{h_n}^o]\big]\right\|_{L^r(\bbT^d)}
 \leq \big\|\big(\|\cdot\|^{-\alpha} \calF[\widebar K_{h_n}^o]\big)_{\xi \in \bbZ^d_*}\big\|_{\ell^{r'}(\bbZ^d_*)},$$
 where $r' = r/(r-1)$ is the H\"older conjugate of $r$. We thus have, 
 \begin{align*}
\|\widebar K_{h_n}^o\|_{H^{-\alpha,r}(\bbT^d)}^{r'} 
 &\leq \sum_{\xi \in \bbZ_*^d} \frac{|\calF[K](h_n\xi)|^{r'}}{\|\xi\|^{\alpha r'}},
 \end{align*}
%
where
\begin{align*}
S_{n,1} = \sum_{\substack{\xi \in \bbZ_*^d \\ \kappa\|h_n\xi\| \leq 1/2}} \frac{\big|\calF[K](h_n\xi)\big|^{r'}}{\|2\pi\xi\|^{r'\alpha}},\qquad 
S_{n,2} = \sum_{\substack{\xi \in \bbZ_*^d \\ \kappa\|h_n\xi\| > 1/2}} \frac{\big|\calF[K](h_n\xi)\big|^{r'}}{\|2\pi\xi\|^{r'\alpha}}.
\end{align*}
Regarding term $S_{n,1}$, apply condition~\Kernel{$\alpha$} to obtain
\begin{align*}
S_{n,1} 
 \geq\sum_{\substack{\xi \in \bbZ_*^d \\ \kappa\|h_n\xi\| \leq 1/2}} \frac{(1 - \kappa\|h_n\xi\|)_+^{r'}}{\|2\pi\xi\|^{r'\alpha}} 
 \geq\frac 1 {2^{r'}\|2\pi\|^{r'\alpha}} \sum_{\substack{\xi \in \bbZ_*^d \\ \kappa\|h_n\xi\| \leq 1/2}} \frac{1}{\|\xi\|^{r'\alpha}} 
 \asymp h_n^{r'\alpha-d},
\end{align*} 
where the final order assessment holds due to the condition $r\alpha < d(r-1)$, which implies
$r' \alpha < d$. A similar argument shows that $S_{n,1} \lesssim h_n^{r'\alpha-d}$. 
It thus remains to upper bound $S_{n,2}$. Recall that $K \in \calC_c^\infty(\bbR^d)$, thus
$K$ and $\calF[K]$ both belong to the Schwartz space. In particular, $\calF[K](\xi) \lesssim \|\xi\|^{-\ell}$ for any $\ell > 0$.
Choosing $\ell$ such that $ r'(\ell+\alpha) > d$, it follows that 
\begin{align*}
S_{n,2} \lesssim h_n^{- r' \ell} \sum_{\substack{\xi \in \bbZ_*^d \\ \kappa\|h_n\xi\|^\rho > 1/2}} \|\xi\|^{-r'(\ell+\alpha)}
 \lesssim h_n^{-r'\ell} h_n^{-d+r' (\ell+\alpha)} = h_n^{r'\alpha-d}.
\end{align*}
Deduce from here that 
$$\|\widebar K_{h_n}^o\|_{H^{-\alpha,r}(\bbT^d)}^r \asymp h_n^{\frac r {r'}(r'\alpha-d)} = h_n^{r\alpha - (r-1)d}.$$ 
This proves the claim. 
\end{proof}

\section{Additional Proofs from Section~\ref{sec:main_results}}
\label{app:additional_proofs_2}

\subsection{Proof of Lemma~\ref{cor:radial}}
\label{app:pf_cor_radial}
Since $K$ is radial, $\calF[K]$ is also radial, thus $\Sigma$ takes the form
$$\Sigma_0 = \int_{\bbR^d} g(\xi) \xi\xi^\top d\xi,$$
for a radial Schwartz function $g$. Consider the entry $(1,2)$ of this matrix: 
$$\sigma_{1,2} = \int_{\bbR^d} g(\xi) \xi_1\xi_2 d\xi.$$
By passing to the spherical coordinates:
\begin{align*}
\xi_1 &= r \cos\theta_1 \\ 
\xi_2 &= r \sin\theta_1\cos\theta_2 \\ 
\xi_3 &= r \sin\theta_1\sin\theta_2\cos\theta_3 \\ 
&\vdots \\
\xi_{d-1} &= r \sin\theta_1\dots \sin\theta_{d-2} \cos\theta_{d-1} \\
\xi_d &=  r \sin\theta_1\dots \sin\theta_{d-2} \sin\theta_{d-1},
\end{align*}
with $\theta_1,\dots,\theta_{d-2} \in [0,\pi]$, $\theta_{d-1} \in [0,2\pi]$, $r \geq 0$, 
and using Jacobian identity 
$$dx = (r^{d-1} \sin^{d-2}\theta_1 \sin^{d-3}\theta_2 \dots \sin\theta_{d-2}) d\theta_n \dots d\theta_1 dr,$$
we obtain:
\begin{align*}
\begin{multlined}[0.9\textwidth]
\sigma_{1,2}^2 = \int_0^\infty \int_0^\pi \dots \int_0^\pi \int_0^{2\pi} g(r) (r\cos\theta_1)\times \\[0.05in]
\times(r\sin\theta_1\cos\theta_2) 
(r^{d-1} \sin^{d-2}\theta_1 \sin^{d-3}\theta_2 \dots \sin\theta_{d-2}) d\theta_{d-1} \dots d\theta_1 dr.
\end{multlined}
\end{align*}
Notice that the integral with respect to $\theta_2$ in the above display vanishes, i.e.
$$ \int_0^\pi \cos\theta_2 \sin^{d-3}\theta_2 d\theta_2 = 0,$$
thus we obtain $\sigma_{1,2}^2 = 0$. A similar argument shows that the other off-diagonal entries
of $\Sigma_0$ vanish, and the claim follows.
\qed

\section{Additional Proofs from Section~\ref{sec:linearization}}
\label{app:additional_proofs_3}

\subsection{Proof of Lemma~\ref{lem:dual_norm_equiv}}
\label{app:pf_dual_norm_equiv}
Given $u \in H_0^2(\bbT^d)$, let $f = Lu$. By Lemma~\ref{lem:inverse_L_to_E}, 
$u$ is a weak $H_0^1(\bbT^d)$ solution to the equation 
$$Eu = f(\nabla\varphi_0^*)\det(\nabla\varphi_0^*),\quad \text{over } \bbT^d,$$
and hence satisfies 
$$\langle u,v\rangle_A = \langle f(\nabla\varphi_0^*)\det(\nabla\varphi_0^*), v\rangle_{L^2(\bbT^d)}
= \langle f, v(\nabla\varphi_0^*)\rangle_{L^2(\bbT^d)},$$
for all $v \in H_0^1(\bbT^d)$. Using the fact that $\langle\cdot,\cdot\rangle_A$ defines an equivalent
inner product to $\langle\cdot,\cdot\rangle_{H^1(\bbT^d)}$, we deduce that 
$$\|u\|_{H^1(\bbT^d)}^2 {\lesssim }
\langle u,u\rangle_A {= }\langle f,u(\nabla\varphi_0^*)\rangle_{L^2(\bbT^d)}
\leq \|f\|_{H^{-1}(\bbT^d)} \|u(\nabla\varphi_0^*)\|_{H^1(\bbT^d)}\lesssim 
\|f\|_{H^{-1}(\bbT^d)} \|u\|_{H^1(\bbT^d)},$$
where we used Lemma~\ref{lem:sobolev_comp}. We thus have $\|u\|_{H^1(\bbT^d)} \lesssim \|f\|_{H^{-1}(\bbT^d)}$, and to prove
the reverse inequality, use Lemma~\ref{lem:sobolev_dual_norm} to write
\begin{align*}
\|f\|_{H^{-1}(\bbT^d)} 
 &\asymp \sup_{\substack{v \in H^1(\bbT^d) \\ \|v\|_{H^1(\bbT^d)} = 1}}
\langle f,v\rangle_{L^2(\bbT^d)}  \\
 &= \sup_{\substack{v \in H^1(\bbT^d) \\ \|v\|_{H^1(\bbT^d)} = 1}}
\langle u,v(\nabla\varphi_0)\rangle_{A} 
\leq \|u\|_{H^1(\bbT^d)} \sup_{\substack{v \in H^1(\bbT^d) \\ \|v\|_{H^1(\bbT^d)} = 1}} \|v(\nabla\varphi_0)\|_{H^1(\bbT^d)}.
\end{align*}
The claim now follows from Lemma~\ref{lem:sobolev_comp}.  \qed 
%
%

\subsection{Proof of Lemma~\ref{lem:frechet}}  
\label{app:pf_lem_frechet}
 We will make use of the following Lemma, which is proven below. 
In what follows, for any integers $d,k\geq 1$ and any differentiable function $f:\bbR^{d\times k} \to \bbR$,
we denote by $\partial f(A)$   the $d\times k$ matrix with entries $(\partial f/\partial x_{ij})(A)$, $i=1, \dots, d$, $j=1, \dots, k$.
Furthermore, for any open subset $\Theta \subseteq \bbR^{d\times k}$ and $\alpha > 0$, we denote by $\calC^\alpha(\Omega;\Theta)$
the set of matrix-valued maps $A:\calQ \to \Theta$ with entries in $\calC^\alpha(\calQ)$. 
\begin{lemma}
\label{lem:holder_expansion}
Let $d,k \geq 1$ be integers, and $\Theta  \subseteq \bbR^{d \times k}$.
Let $f\in \calC^{2}(\Theta)$ and $A, B \in \calC^{1}(\calQ;\Theta)$. 
Then, there exists a universal constant $C > 0$ such that  function
$$g:\calQ \to \bbR, \quad g(x) = f(A(x) + B(x)) - f(A(x)) - \left\langle \partial f(A(x)), B(x)\right\rangle$$
satisfies
$$\|g\|_{\calC^\beta(\calQ)} \leq C \|f\|_{\calC^{2+\beta}(\Theta)}\Big(1\vee \|A\|_{\calC^1(\calQ;\Theta)}^{\beta}\Big) 
\|B\|_{\calC^1(\calQ;\Theta)}^2.$$
\end{lemma}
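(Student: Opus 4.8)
The plan is to prove Lemma~\ref{lem:holder_expansion} by a second-order Taylor expansion of $f$ along the segment joining $A(x)$ to $A(x)+B(x)$, followed by careful bookkeeping of H\"older norms of the resulting integral remainder. First I would write, for each fixed $x\in\calQ$ (and using that $\Theta$ may be enlarged so that the segment stays in the domain, or invoking a local version since only the values of $A,B$ matter), the exact identity
$$g(x) = \int_0^1 (1-t)\,\big\langle \partial^2 f\big(A(x)+tB(x)\big)\,[B(x)],\, B(x)\big\rangle\,dt,$$
where $\partial^2 f$ denotes the Hessian of $f$ viewed as a bilinear form on $\bbR^{d\times k}$. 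This reduces the problem to bounding the $\calC^\beta(\calQ)$ norm of an integrand of the form $H(t,x):=\big\langle \partial^2 f(A(x)+tB(x))[B(x)],B(x)\big\rangle$, uniformly in $t\in[0,1]$, and then using that $\|\int_0^1 H(t,\cdot)\,dt\|_{\calC^\beta(\calQ)}\le \sup_{t}\|H(t,\cdot)\|_{\calC^\beta(\calQ)}$.

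Next I would estimate $\|H(t,\cdot)\|_{\calC^\beta(\calQ)}$ by repeated use of the standard H\"older algebra inequalities: the product rule $\|fg\|_{\calC^\beta}\lesssim \|f\|_{\calC^\beta}\|g\|_{\calC^\beta}$ (which holds because $\calC^\beta(\calQ)$ is a Banach algebra for $\beta\in(0,1)$, up to a constant depending only on $\calQ$ hence universal) and the composition rule: for $\Phi\in\calC^{1+\beta}(\Theta)$ and $V\in\calC^{1}(\calQ;\Theta)$ one has $\|\Phi\circ V\|_{\calC^\beta(\calQ)}\lesssim \|\Phi\|_{\calC^{1+\beta}(\Theta)}\big(1\vee\|V\|_{\calC^1(\calQ;\Theta)}^\beta\big)$. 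Applying the composition rule with $\Phi=\partial^2 f$ (so $\|\Phi\|_{\calC^{1+\beta}}\le\|f\|_{\calC^{2+\beta}}$) and $V(x)=A(x)+tB(x)$ (whose $\calC^1$ norm is $\le\|A\|_{\calC^1}+\|B\|_{\calC^1}$, but since in our application $B$ is small we may bound this by $1\vee\|A\|_{\calC^1}$ up to absorbing a factor; more carefully I would keep the explicit bound and then note $\|B\|_{\calC^1}\le \|B\|_{\calC^1}$, so that the final exponent structure is $(1\vee\|A\|_{\calC^1}^\beta)$ times lower-order contributions that can be folded in) gives control of the Hessian term in $\calC^\beta$. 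Multiplying by the two factors of $\|B(x)\|$, each of which has $\calC^\beta(\calQ)$ norm $\lesssim \|B\|_{\calC^1(\calQ;\Theta)}$ by the product rule and the embedding $\calC^1\hookrightarrow\calC^\beta$, yields the claimed bound with a constant depending only on $d,k,\beta$, hence universal once $\beta$ is fixed as in \eqref{eq:beta}.

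The main obstacle is the precise form of the prefactor $\big(1\vee\|A\|_{\calC^1(\calQ;\Theta)}^\beta\big)$: one must be careful that the composition inequality produces exactly $\|A+tB\|_{\calC^1}^\beta$ and not a higher power, and that the contribution of $\|B\|_{\calC^1}$ inside $\|A+tB\|_{\calC^1}$ can be legitimately absorbed. I would handle this either by noting that in all applications of this lemma in Section~\ref{sec:linearization} one has $\|B\|_{\calC^1}$ bounded (indeed $B=\hat\varphi-\varphi_0$ with uniformly bounded $\calC^{2+\beta}$ norm), so that $1\vee\|A+tB\|_{\calC^1}^\beta \lesssim 1\vee\|A\|_{\calC^1}^\beta$ with the implied constant harmless; or, to keep the statement self-contained, by using $(a+b)^\beta\le a^\beta+b^\beta$ and the fact that the extra term $\|B\|_{\calC^1}^\beta\cdot\|B\|_{\calC^1}^2 \le (1\vee\|B\|_{\calC^1})^2\cdot\|B\|_{\calC^1}^2$ is dominated by a constant multiple of the right-hand side after a trivial enlargement. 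The remaining steps (verifying $\calC^\beta$ is an algebra on $\calQ$, the composition estimate, and pulling the sup over $t$ out of the integral) are entirely routine and I would cite Lemmas~\ref{lem:holder_products}--\ref{lem:holder_comp} and~\ref{lem:holder_interpolation} from Appendix~\ref{app:function_spaces} rather than reprove them.
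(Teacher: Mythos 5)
Your integral-remainder approach is a natural alternative, but as you yourself half-anticipate, it does not prove the lemma \emph{as stated}, and the gap is real, not cosmetic. The sticking point is precisely the composition $\partial^2 f\big(A(x)+tB(x)\big)$: the H\"older-composition estimate gives a factor $\bigl(1\vee\|A+tB\|_{\calC^1}^\beta\bigr)$, hence a residual term of the form
$$\|f\|_{\calC^{2+\beta}(\Theta)}\,\|B\|_{\calC^1(\calQ;\Theta)}^{\beta}\,\|B\|_{\calC^1(\calQ;\Theta)}^{2} = \|f\|_{\calC^{2+\beta}(\Theta)}\,\|B\|_{\calC^1(\calQ;\Theta)}^{2+\beta},$$
and this is \emph{not} dominated by $C\bigl(1\vee\|A\|_{\calC^1}^\beta\bigr)\|B\|_{\calC^1}^2$ once $\|B\|_{\calC^1}$ is large. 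Your ``trivial enlargement'' via $(a+b)^\beta\le a^\beta+b^\beta$ and $\|B\|^\beta\|B\|^2\le(1\vee\|B\|)^2\|B\|^2$ only re-derives the same unabsorbable extra power of $\|B\|$; it does not land you on the claimed right-hand side. Your other escape hatch---that in the downstream applications $\|B\|_{\calC^1}$ is uniformly bounded---is true but amounts to proving a different, application-specific statement rather than the lemma itself.

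The paper's proof sidesteps exactly this issue by introducing the intermediate function
$$h(x,y) = f(A(x)+B(y)) - f(A(x)) - \langle \partial f(A(x)), B(y)\rangle$$
and splitting $g(x)-g(y) = [g(x)-h(x,y)] + [h(x,y)-g(y)]$. In the first bracket only $B$ varies (with $A$ frozen at $x$), and the $\calC^2(\Theta)$ norm of $f$ suffices to get a Lipschitz bound in $\|x-y\|$ with \emph{no} $\|A\|^\beta$ or $\|B\|^\beta$ factor. In the second bracket only $A$ varies (with $B$ frozen at $y$), and the $\beta$-H\"older seminorm of $\partial^2 f$ is composed with $A(\cdot)+sB(y)$, where $B(y)$ is a \emph{constant}; this is where $\|A\|_{\calC^1}^\beta$, and only $\|A\|_{\calC^1}^\beta$, emerges. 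Your one-shot Taylor remainder composes $\partial^2 f$ with $A(x)+tB(x)$, where both arguments vary simultaneously, and that is irrecoverably what produces the extra $\|B\|^\beta$. If you want to salvage your route, you must perform this $A$/$B$ decoupling before estimating the H\"older seminorm; once you do, you essentially reproduce the paper's argument.

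A smaller remark: you invoke the composition estimate with $\|\Phi\|_{\calC^{1+\beta}(\Theta)}$ for $\Phi=\partial^2 f$, but $f\in\calC^{2+\beta}$ only gives $\partial^2 f\in\calC^\beta$; the correct invocation uses $\|\partial^2 f\|_{\calC^\beta(\Theta)}\le\|f\|_{\calC^{2+\beta}(\Theta)}$. This does not affect the main gap, but it should be corrected.
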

A proof of Lemma~\ref{lem:holder_expansion} appears in Appendix~\ref{app:holder_expansion}. 
We now turn to the claim. 
We prove the Fr\'echet differentiability of $\hat\Psi$, and a symmetric argument may be used for $\Psi$. 
Let $u \in \calC^{2+\beta}_0(\calQ)$. Then, 
\begin{align*}
 \big\|&\widehat \Psi[\varphi + u] - \widehat \Psi[\varphi] - \widehat \Psi_{\varphi}'[u]\big\|_{\calC^\beta(\calQ)} \\
 &= \big\| \det(\nabla^2\varphi)  \hat q(\nabla\varphi) - 
           \det(\nabla^2\varphi + \nabla^2 u) \hat q(\nabla\varphi+\nabla u)\\ &\qquad +
  		   \det(\nabla^2\varphi)\big[\hat q(\nabla\varphi)\langle (\nabla^2\varphi)^{-1}, \nabla^2 u\rangle+
  										   	 \langle \nabla u, \nabla \hat q(\nabla \varphi)\rangle\big] \big\|_{\calC^\beta(\calQ)} \\ 
 &\leq  \big\| \det(\nabla^2\varphi)
 		  \left[\hat q(\nabla\varphi) - \hat q(\nabla \varphi {+}  \nabla u )-
 		  \langle \nabla u, \nabla \hat q(\nabla \varphi)\rangle\right]   \big\|_{\calC^\beta(\calQ)}  
 		  \\ &+ 
  			\big\|\hat q(\nabla\varphi {+} \nabla u)
  			\left[  \det(\nabla^2\varphi) -\det(\nabla^2\varphi {+}  \nabla^2 u) \right] + \hat q(\nabla\varphi)\det(\nabla^2\varphi) 
  			\langle (\nabla^2\varphi)^{-1}, \nabla^2u \rangle\big\|_{\calC^\beta(\calQ)} \\
 &=:(I)+(II).
\end{align*}
We first bound $(I)$. By Lemma~\ref{lem:holder_products}, we have, 
$$
(I) \lesssim \|\varphi\|_{\calC^{2+\beta}(\calQ)}
 		  \big\|\hat q(\nabla\varphi) - \hat q(\nabla \varphi {+}  \nabla u )-
 		  \langle \nabla u, \nabla \hat q(\nabla \varphi)\rangle \big\|_{\calC^\beta(\calQ)}.$$
Now, since $\hat q \in \calC^{2+\beta}(\calQ)$, we may invoke
Lemma~\ref{lem:holder_expansion} to deduce
$$ (I) \lesssim   \|\hat q\|_{\calC^{2+\beta}(\calQ)} (1+\|\varphi\|_{\calC^{2+\beta}(\calQ)}^{1+\beta})\|u\|_{\calC^1(\calQ)}^{2}
    \lesssim \|u\|_{\calC^{2+\beta}(\calQ)}^{2}.
$$
To bound term $(II)$, apply Lemma~\ref{lem:holder_products} to obtain, almost surely, 
\begin{align*}
(II) &= \big\|\hat q(\nabla\varphi {+} \nabla u)
\left[  \det(\nabla^2\varphi) -\det(\nabla^2\varphi {+}  \nabla^2 u) \right] + \hat q(\nabla\varphi)\det(\nabla^2\varphi) 
\langle (\nabla^2\varphi)^{-1}, \nabla^2 u \rangle\big\|_{\calC^\beta(\calQ)}  \\ 
 &\leq \big\|\hat q(\nabla\varphi {+} \nabla u)
\left[  \det(\nabla^2\varphi) -\det(\nabla^2\varphi {+}  \nabla^2 u)+\det(\nabla^2\varphi) 
\langle (\nabla^2\varphi)^{-1}, \nabla^2 u \rangle \right]
\big\|_{\calC^\beta(\calQ)} \\ 
 &+ \big\|\big[ \hat q(\nabla\varphi+\nabla u ) - \hat q(\nabla\varphi)\big]\det(\nabla^2\varphi) 
\langle (\nabla^2\varphi)^{-1}, \nabla^2u \rangle\big\|_{\calC^\beta(\calQ)} \\
 &\lesssim \big\| \det(\nabla^2\varphi) -\det(\nabla^2\varphi {+}  \nabla^2 u)+\det(\nabla^2\varphi) 
\langle (\nabla^2\varphi)^{-1}, \nabla^2 u \rangle 
\big\|_{\calC^\beta(\calQ)} \\ 
 &+ \|u\|_{\calC^{2+\beta}(\bbT^d)}\big\|\hat q(\nabla\varphi+\nabla u ) - \hat q(\nabla\varphi)\big\|_{\calC^\beta(\calQ)}.
\end{align*}
By Lemma A.1 of~\cite{figalli2017} and Lemma~\ref{lem:holder_expansion}, the first term in the final line of the above display 
is of order  $O(\|u\|_{\calC^{2+\beta}(\calQ)}^{2})$. Furthermore, since $\hat q \in \calC^1(\calQ)$, 
the second term is of order $O(\|u\|_{\calC^{2+\beta}(\calQ)}^2)$. The claim follows.
\qed 

\subsection{Proof of Lemma~\ref{lem:holder_expansion}}
\label{app:holder_expansion}
By a first-order Taylor expansion, it readily holds that for all $x \in \calQ$,
$$|g(x)| \lesssim \|f\|_{\calC^{2}(\Theta)} \|B(x)\|^{2} \lesssim \|f\|_{\calC^{2}(\Theta)} \|B\|_{\calC^1(\Theta)}^{2}$$
It thus suffices to show that $g$ is uniformly $\beta$-H\"older continuous, i.e. 
$|g(x) - g(y)| \leq L \|x-y\|^\beta$ for some $L > 0$ and all $x,y \in \calQ$. 
To do so, define for all such $x,y$ the function
$$h(x,y) = f(A(x) + B(y)) - f(A(x)) - \langle \partial f(A(x)), B(y)\rangle.$$
We have, 
\begin{align*}
|g(x) - h(x,y)| 
 &= \left| f(A(x) + B(x)) - f(A(x) + B(y)) - \langle \partial f(A(x)), B(x) - B(y)\rangle\right| \\
 &\leq  \left| f(A(x) + B(x)) - f(A(x) + B(y)) - \langle \partial f(A(x)+B(y)), B(x) - B(y)\rangle\right| 
  \\ &+ \|\partial f(A(x) + B(y)) - \partial f(A(x))\| \|B(x)-B(y)\| =: (a) + (b). 
 \end{align*}
Clearly, 
\begin{align*}
(a) \leq \|f\|_{\calC^{2}(\Theta)} \|B(x) - B(y)\|^{2} \leq \|f\|_{\calC^2(\Theta)} \|B\|_{\calC^1(\calQ,\Theta)}^{2} \|x-y\|^{2}.
\end{align*}
Furthermore, 
\begin{align*}
(b) \leq \|f\|_{\calC^{2}(\Theta)} \|B(y)\| \|B(x) - B(y)\|
\leq \|f\|_{\calC^2(\Theta)} \|B\|_{\calC^1(\calQ)}^{2}\|x-y\|.
\end{align*}
Thus, 
$$|g(x) - h(x,y)| \leq \|f\|_{\calC^{2}(\Theta)} \|B\|_{\calC^1(\calQ)}^2\|x-y\|.$$
Furthermore, we have
\begin{align*}
\big|h(x,y) - g(y)\big|
 &= \Big| \big[f(A(x) + B(y)) - f(A(y) + B(y))\big] + \big[ f(A(x)) - f(A(y) )\big]  \\ &\qquad + 
 \langle \partial f(A(x))-\partial f(A(y)), B(y)\rangle \Big|\\
 &\leq \sum_{i=1}^d \sum_{j=1}^k \left|\frac{\partial^2 f}{\partial X_{ij}}(A(x)) - \frac{\partial^2 f}{\partial X_{ij}}(A(y))\right| |B_i(y)B_j(y)| \\ 
 &\leq \|f\|_{\calC^{2+\beta}(\Theta)} \|B\|_{L^\infty(\Theta)}^2 \|A(x) - A(y)\|^\beta \\ 
 &\leq \|f\|_{\calC^{2+\beta}(\Theta)} \|B\|_{\calC^1(\Theta)}^2 \|A\|_{\calC^1(\Theta)}^\beta \|x - y\|^\beta.
\end{align*}
 The claim follows from here. \qed


\subsection{Proof of Lemma~\ref{lem:comparing_psin_psi}}
\label{app:lem_comparing_psin_psi}

By Lemma~\ref{lem:frechet},
\begin{align*}
\big\|&\widehat \Psi'_{\varphi_0}[u] -  \Psi'_{\varphi_0}[u]\big\|_{\calC^\beta(\bbT^d)} \\
 &= \left\|\det(\nabla^2\varphi_0)\Big[\big(\hat q(\nabla\varphi_0) - 
 q(\nabla\varphi_0)\big)\langle \nabla^2\varphi_0, \nabla^2 u\rangle +	\langle \nabla u, \nabla\hat q(\nabla\varphi_0) - \nabla q(\nabla \varphi_0) \rangle\Big] \right\|_{\calC^\beta(\bbT^d)}  \\ 
 &\lesssim  \|\hat q(\nabla\varphi_0) - q(\nabla\varphi_0)\|_{\calC^\beta(\bbT^d)} \|\nabla^2 u\|_{\calC^\beta(\bbT^d)} +	
 \|\nabla u\|_{\calC^\beta(\bbT^d)} \|\nabla\hat q(\nabla\varphi_0) - \nabla q(\nabla \varphi_0)\|_{\calC^\beta(\bbT^d)},
 \end{align*}
 where we used Lemma~\ref{lem:holder_products} to bound the H\"older norms
 of products. By Lemma~\ref{lem:holder_comp}, we further deduce,
\begin{align*}
\big\|\widehat \Psi'_{\varphi_0}[u] -  \Psi'_{\varphi_0}[u]\big\|_{\calC^\beta(\bbT^d)}
 &\lesssim \|\hat q - q\|_{\calC^\beta(\bbT^d)} \|\nabla^2 u\|_{\calC^\beta(\bbT^d)} +	
 \|\nabla u\|_{\calC^\beta(\bbT^d)} \|\nabla\hat q - \nabla q\|_{\calC^\beta(\bbT^d)} \\
 &\lesssim \|u\|_{\calC^{2+\beta}(\bbT^d)}\|\hat q - q\|_{\calC^\beta(\bbT^d)}  +	
 \|u\|_{\calC^{1+\beta}(\bbT^d)} \|\hat q - q\|_{\calC^{1+\beta}(\bbT^d)},
\end{align*} 
as was to be shown.\qed   

\section{Additional Proofs from Section~\ref{sec:bias_variance_bounds}}
\label{app:extra_proofs_variance}
 \subsection{Proof of Lemma~\ref{lem:operator_G0}}
  \label{app:pf_lem_operator_G0}
Let $u = E_0^{-1}[f(S_{Y,x}(\cdot))]$. Notice that $\calB_0$ is symmetric, 
thus we have  for all $y \in \bbT^d$, 
\begin{align*}
f(y)
 &= E_0[u](S_{Y,x}^{-1}(y)) \\
 &= -\text{tr}\big(A_0^\top \nabla^2 u(S_{Y,x}^{-1}(y))\big) \\ 
 &= -\text{tr}\big((\calB_0^2 A_0)^\top \calB_0^{-2} \nabla^2 u(S_{Y,x}^{-1}(y))\big) \\
 &= -\text{tr}\big((\calB_0^2 A_0)^\top \nabla_y^2 u(S_{Y,x}^{-1}(y))\big)  \\
 &= G_0[u(S_{Y,x}^{-1}(\cdot))](y)
\end{align*} 
It follows that
$u = G_0^{-1}[f](S_{Y,x}(y)) = E_0^{-1}[f(S_{Y,x}(\cdot))](y)$, as was to be shown.\qed 

\subsection{Proof of Lemma~\ref{lem:Q_to_uniform_reduction}}
\label{app:pf_lem_Q_to_uniform_reduction}
To prove the claim, it suffices to show that 
$$\Big\| \Cov\big\{\nabla G_0^{-1}\big[\widebar K_{h_n}^o\big](Y- \nabla\varphi_0(x))\big\}
 - q(\nabla\varphi_0(x)) \Cov\big\{\nabla G_0^{-1}\big[\widebar K_{h_n}^o\big](U)\big\} \Big\|
\lesssim h_n^{2+\epsilon-d}$$
for some $\epsilon>0$.
Let $R(u) = \nabla G_0^{-1}[\widebar K_{h_n}^o](u)$, and notice that 
\begin{align*}
\Big\|\bbE[R&(Y-\nabla\varphi_0(x))R(Y-\nabla\varphi_0(x))^\top] - q(\nabla\varphi_0(x)) \bbE[R(U)R(U)^\top]\Big\| \\ 
 &= \left\| \int_{\bbT^d} R(y)R(y)^\top q(y+\nabla\varphi_0(x))dy - q(\nabla\varphi_0(x))\int_{\bbT^d} R(y)R(y)^\top dy \right|\\ 
 &= \left\| \int_{\bbT^d} R(y)R(y)^\top [  q(y+\nabla\varphi_0(x)) - q(\nabla\varphi_0(x)) \big] \right\|
 \lesssim \int_{\bbT^d} \|R(y)\|^2 \|y\|_{\bbT^d}dy,
\end{align*}
where we used the fact that $q \in \calC^1(\bbT^d)$. Reasoning in the same way as we did below the statement of Lemma~\ref{lem:Q_to_uniform_reduction}, 
we have $\bbE[R(U)] = 0$, and thus
\begin{align*}
\Big\|\bbE[R(Y-\nabla\varphi_0(x))] \Big\|
 &= \Big\|\bbE[R(Y-\nabla\varphi_0(x))] - q(\nabla\varphi_0(x))\bbE[R(U)] \Big\| 
\lesssim \int_{\bbT^d} \| R(y)\|\|y\|_{\bbT^d}dy. 
\end{align*}
By combining the previous two displays, and using Jensen's inequality, we have 
$$\Big\| \Cov\big\{R(Y- \nabla\varphi_0(x))\big\}
 - q(\nabla\varphi_0(x)) \Cov\big\{R(U)\big\} \Big\|
\lesssim 1+\calI,$$
with $\calI := \int_{\bbT^d} \|R(y)\|^2\|y\|dy$. 
To bound $\calI$, 
recall from Lemma~\ref{lem:fundamental_solution} that $G_0$ admits a 
periodic Green's function $\Gamma_{G_0}(y,z)$, which is continuously
differentiable away from the diagonal $z=y$, and whose gradient satisfies $\|\nabla \Gamma_{G_0}(y,z)\| \lesssim \|y-z\|_{\bbT^d}^{1-d}$. 
Deduce that,  
\begin{align*}
\calI 
 &\lesssim \int_{\bbT^d} \left( \int_{\bbT^d} \|y-z\|_{\bbT^d}^{1-d} \big|\widebar K_{h_n}^o(z)\big|dz\right)^2 \|y\|_{\bbT^d}dy \\
 &\lesssim h_n^{-2d} \int_{[-1/2,1/2]^d} \left( \int_{B(0,h_n)} \|y-z\|_{\bbT^d}^{1-d} dz\right)^2 \|y\|dy \\ 
 &\lesssim h_n^{-2d} \int_{[-1/2,1/2]^d}  \int_{B(0,h_n)} \int_{B(0,h_n)} \|y-z\|_{\bbT^d}^{1-d}\|y-w\|_{\bbT^d}^{1-d} \|y\| dz dw dy \\ 
 &\lesssim 1 + \calI_1 + \calI_2,
 \end{align*}
 where, given a constant $a > 2$ satisfying $(2-2d)/a \geq (5/2)-d$, we define 
\begin{align*}
\calI_1 &=  h_n^{-2d} \int_{B(0,h_n^{1/a})}  \int_{B(0,h_n)} \int_{B(0,h_n)} \|y-z\|_{\bbT^d}^{1-d}\|y-w\|_{\bbT^d}^{1-d} \|y\| dz dw dy \\ 
\calI_2 &=  h_n^{-2d} \int_{B(0,1/4)\setminus B(0, h_n^{1/a})}  \int_{B(0,h_n)} \int_{B(0,h_n)} \|y-z\|_{\bbT^d}^{1-d}\|y-w\|_{\bbT^d}^{1-d} \|y\| dz dw dy. 
\end{align*}
To bound $\calI_1$, 
invoke Lemmas~\ref{lem:miranda}--\ref{lem:simple_ball_integral} to obtain
\begin{align*}
\calI_1
 &\lesssim h_n^{\frac 1 a - 2d} \int_{B(0,h_n)} \int_{B(0,h_n)} \|z-w\|_{\bbT^d}^{2-d}dzdw
 \lesssim h_n^{\frac 1 a + 2 - 2d} \int_{B(0,h_n)} dw \lesssim 
 h_n^{\frac 1 a + 2 - d} \lesssim h_n^{\frac 5 2 - d}.
\end{align*}  
Regarding $\calI_2$, we have the crude bound
\begin{align*}
\calI_2 
 = h_n^{-2d} \int_{B(0,1/4)\setminus B(0,h_n^{1/a})}  
 \int_{B(0,h_n)} \int_{B(0,h_n)} \|y-z\|_{\bbT^d}^{1-d}\|y-w\|_{\bbT^d}^{1-d}  dz dw dy 
 \lesssim h_n^{\frac{2-2d}{a}}.
\end{align*} 
Combining these facts together with the definition of $a$, we arrive at the claim.\qed 

\subsection{Proof of Lemma~\ref{lem:riemann_sum_approx}}
\label{app:pf_lem_riemann_sum_approx}
Abbreviate $f(\xi) = (\calF[K](\xi)/2\pi)^2$. 
Define
$$g_n = h_n^{d-2 }\sum_{\xi\in \bbZ_*^d}\xi\xi^\top 
\frac{f(h_n\xi)}{\langle \calC_0\xi,\xi\rangle^2},\quad 
 g_\infty = \int_{\bbR^d} zz^\top \frac{f(z)dz}{\langle \calC_0 z,z\rangle^2}, $$
 and for any $\epsilon > 0$, define  
\begin{align*}
g_{n,\epsilon}
 &= h_n^{d-2 }\sum_{\substack{\xi\in \bbZ_*^d \\ \epsilon \leq \|h_n\xi\|_\infty< \epsilon^{-1}}}\xi\xi^\top 
\frac{f(h_n\xi)}{\langle \calC_0\xi,\xi\rangle^2},\quad 
g_{\infty,\epsilon} = \int_{[\epsilon^{-1},\epsilon]^d} zz^\top \frac{f(z)dz}{\langle \calC_0 z,z\rangle^2}.
\end{align*} 
Notice that
\begin{align*}
g_{n,\epsilon}&-g_{\infty,\epsilon} \\
 &=\sum_{\substack{\xi\in \bbZ_*^d \\ \epsilon \leq \|h_n\xi\|_\infty< \epsilon^{-1}}}
 \int_{I_{\xi,n}} \left[(h_n\xi)(h_n\xi)^\top \frac{f(h_n\xi)}{\langle \calC_0(h_n\xi),(h_n\xi)\rangle^2}  - 
                        zz^\top \frac{ f(z)}{\langle  \calC_0 z,z\rangle^2 }\right]dz
\end{align*} 
where $I_{\xi,n}$ is the hypercube of side length $h_n$, with vertices lying in $h_n\bbZ^d$,  whose vertex nearest to the origin
is at the point $h_n\xi$.  For all $\xi$ satisfying $\epsilon \leq \|h_n\xi\|_\infty< \epsilon^{-1}$ and $z \in I_{\xi,n}$, 
one readily shows that, for some $a > 0$, 
\begin{equation}
\label{eq:riemann_sum_deviation_pf}
\|g_{n,\epsilon}-g_{\infty,\epsilon}\| \lesssim \epsilon^{-a}h_n.
\end{equation}
 Next, we bound the distance between $g_{n,\epsilon}$ and $g_n$.  
Since $f$ is a Schwartz function, 
we have $|f(\xi)| \lesssim_\ell \|\xi\|^{-\ell}$ for all $\ell \geq 0$. 
Taking $\ell=d-1$, we obtain  
\begin{align*}
\Bigg\|h_n^{d-2}\sum_{\substack{\xi\in\bbZ_*^d \\  \|\xi h_n\|_\infty \geq 1/\epsilon}} \xi\xi^\top 
 \frac{f(h_n\xi)}{\langle \calC_0 \xi,\xi\rangle^2}  \Bigg\| 
 &\lesssim h_n^{d-2} \sum_{\substack{\xi\in\bbZ_*^d \\  \|\xi h_n\|_\infty \geq 1/\epsilon}}  \frac{\|h_n\xi\|^{1-d}}{\|\xi\|^2} 
 \asymp \epsilon,
\end{align*}
for a dimension-dependent constant $c_d > 0$.  Similarly,
\begin{align*}
\Bigg\|h_n^{d-2}\sum_{\substack{\xi\in\bbZ_*^d \\  \|\xi h_n\|_\infty < \epsilon}} \xi\xi^\top 
 \frac{f(h_n\xi)}{\langle \calC_0 \xi,\xi\rangle^2}  \Bigg\| 
 \asymp \epsilon^{d-2}.
\end{align*}
Combining the preceding two displays, we thus have
\begin{equation}
\label{eq:deviation_gne_gn}
\|g_{n,\epsilon} - g_{n}\| \lesssim \epsilon.
\end{equation}
Analogous derivations show that
\begin{equation}
\label{eq:deviation_ginftye_ginfty}
\|g_{\infty,\epsilon} - g_{\infty}\| \lesssim \epsilon.
\end{equation}
Combining equations~\eqref{eq:riemann_sum_deviation_pf},~\eqref{eq:deviation_gne_gn}, and~\eqref{eq:deviation_ginftye_ginfty}, we deduce
$$\|g_{n}-g_\infty\| \lesssim \epsilon + \epsilon^{-a} h_n,$$
for a large enough constant $a > 0$. The claim now follows by taking $\epsilon \asymp h_n^{\frac 1 {1+a}}$. 
 \qed

\subsection{Proof of Lemma~\ref{lem:var_bound_V2_tech}}
\label{sec:pf_lem_var_bound_V2_tech}
Recall that $\supp(K_{h_n}) \subseteq B(0, h_n)$.
Therefore, a point $y \in \bbT^d$ can only lie in the support of $K_{h_n}(Y-\nabla\varphi_0(\cdot))$ if
$$Y-\nabla\varphi_0(y) \in B(0, h_n),~~ \text{or equivalently, }~  y \in \nabla\varphi_0^*(B(Y, h_n)).$$
Since $\nabla\varphi_0^*$ is $\lambda$-Lipschitz, we have
$$\nabla\varphi_0^*(B(Y, h_n)) \subseteq B(\nabla\varphi_0^*(Y), \lambda h_n),$$
and we deduce that
$$\supp \big(K_{h_n}(Y-\nabla\varphi_0(\cdot))\big)  \subseteq B(\nabla\varphi_0^*(Y), \lambda h_n).$$ 
Notice that $S_x^{-1}$ is also Lipschitz with parameter $\lambda$, thus the same argument implies 
$$\supp \big(K_{h_n}(Y-S_x(\cdot))\big)  \subseteq B(S_x^{-1}(Y), \lambda h_n).$$
The claim follows. 
 \qed 

\subsection{Proof of Lemma~\ref{lem:ball_integral}}
\label{app:pf_lem_ball_integral}
Let $I = \bbE\left[\left(\int_{B(Z, \epsilon)} \|x-z\|_{\bbT^d}^{t-d}dz\right)^2\right]$. It holds that, 
\begin{align*}
I &\leq \gamma \bbE_{Z\sim \calL}\left[\left(\int_{B(Z, \epsilon)} \|x-z\|_{\bbT^d}^{t-d}dz\right)^2\right] \\ 
 &= \int_{\bbT^d} \int_{B(y,\epsilon)} \int_{B(y,\epsilon)}  \|x-z\|_{\bbT^d}^{t-d}\|x-w\|_{\bbT^d}^{t-d} dzdw dy  \\ 
 &= \int_{\bbT^d} \int_{B(0,\epsilon)} \int_{B(0,\epsilon)}  \|x-z-y\|_{\bbT^d}^{t-d}\|x-w-y\|_{\bbT^d}^{t-d} dzdw dy \\ 
 &= \int_{B(0,\epsilon)} \int_{B(0,\epsilon)} \left(\int_{\bbT^d} \|x-z-y\|_{\bbT^d}^{t-d}\|x-w-y\|_{\bbT^d}^{t-d}dy\right) dzdw  \\ 
 &= \int_{B(0,\epsilon)} \int_{B(0,\epsilon)} \left(\int_{\bbT^d} \|z-y\|_{\bbT^d}^{t-d}\|w-y\|_{\bbT^d}^{t-d}dy\right) dzdw  \\ 
 &\lesssim \int_{B(0,\epsilon)} \int_{B(0,\epsilon)} \|z-w\|_{\bbT^d}^{2t-d}  dzdw \\ 
 &\lesssim \int_{B(0,\epsilon)} \int_{B(0,\epsilon)} \|z-w\|^{2t-d}dzdw,
\end{align*}
where the penultimate line follows from Lemma~\ref{lem:miranda}, together with the condition
$t < d/2$.
Passing to spherical coordinates, we obtain
\begin{align*}
I
 &\leq  \int_{B(0,\epsilon)} \int_{B(0,\epsilon)} \big| \|z\|-\|w\|\big|^{2t-d}dzdw \\
 &\leq  \int_0^{\epsilon} \int_0^{\epsilon} | r-s|^{2t-d}r^{d-1}s^{d-1}drds \\  
 &=   \int_0^{\epsilon} \int_0^s (s-r)^{2t-d}r^{d-1}s^{d-1}drds 
  + \int_0^{\epsilon} \int_s^\epsilon (r-s)^{2t-d}r^{d-1}s^{d-1}drds \\  
 &=: (A) + (B).
 \end{align*}
 We  bound terms $(A)$ and $(B)$ in turn. On the one hand, 
 \begin{align*}
(A)
\leq  \int_0^{\epsilon} \int_0^s s^{2t-d}r^{d-1}s^{d-1}drds 
=  \int_0^{\epsilon} \int_0^s s^{2t-1}r^{d-1}drds 
\asymp \int_0^\epsilon s^{d+2t-1}ds \asymp \epsilon^{d+2t},
 \end{align*}
while on the other hand, 
\begin{align*}
(B)
 \leq \int_0^\epsilon \int_s^\epsilon r^{2t-1} s^{d-1}drds 
 \leq  \int_0^\epsilon \epsilon^{2t} s^{d-1}ds \asymp \epsilon^{d+2t}.
\end{align*}
This proves the claim.\qed

\subsection{Proof of Lemma~\ref{lem:regularity_un}}
\label{app:pf_lem_regularity_un}
By Lemmas~\ref{lem:isomorphism},~\ref{lem:holder_products},  and~\ref{lem:holder_comp}, we have
\begin{align*}
\|u_n\|_{\calC^{2+\beta}(\bbT^d)} 
\lesssim \|\det(\calB)\widebar K_{h_n}^o(Y-\nabla\varphi_0(\cdot))\|_{\calC^\beta(\bbT^d)}
\lesssim \|\widebar K_{h_n}^o\|_{\calC^\beta(\bbT^d)} 
\lesssim h_n^{-(d+\beta)}. 
\end{align*} 
Now, recall that $\lambda > 1$ is such that $\|\varphi_0\|_{\calC^3(\bbT^d)} \leq \lambda$
and $\nabla^2\varphi_0^* \succeq \lambda^{-1} I_d$ uniformly over $\bbT^d$. 
To prove the claim, it will be sufficient to show that for all
$y\in \bbT^d$ such that $\|y-X_0\|_{\bbT^d} \geq 2 \lambda h_n$, 
we have $\|\nabla^2 u_n(y)\| \lesssim  \|X_0-y\|_{\bbT^d}^{-d}$. 
To prove this, note that it suffices to consider all $y \in \bbT^d$ such that $\|X_0-y\| = \|X_0-y\|_{\bbT^d}$, 
and we shall assume this equality holds throughout the remainder of the proof. 

Assume thus that $\|y-X_0\| \geq 2 \lambda h_n$, 
and set 
$$B = B(y, \|y-X_0\|/2),\quad B_0 =  B(y, h_n/4).$$ 
Notice that for all $v \in B$, 
\begin{equation}
\label{eq:linfty_un_supp_sep}
\|v - X_0\| \geq \|y-X_0\| - \|v-y\| \geq \frac {\|y-X_0\|} {4}.
\end{equation}
Furthermore, we have $\mathrm{dist}(B,B_0) \geq \|y-X_0\|/4$, 
thus we may apply the a priori bound in Lemma~\ref{lem:a_priori_schauder} to obtain
\begin{align}
\label{eq:linfty_un_step}
\|u_n&\|_{\calC^{2}(B_0)} \lesssim \|y-X_0\|^{-2} 
\Big[\|\det(\calB) \widebar K_{h_n}^o(Y - \nabla\varphi_0(\cdot))\|_{\calC^\beta(B)} + 
\|u_n\|_{L^\infty(B)}\Big],
\end{align}
where we again used Lemma~\ref{lem:holder_products}.
Recall from Lemma~\ref{lem:var_bound_V2_tech} 
that $\supp(\widebar K_{h_n}(Y - \nabla\varphi_0(\cdot))) \subseteq B(X_0, \lambda h_n)$.
The latter is disjoint from $B$ by equation~\eqref{eq:linfty_un_supp_sep}, thus we have
$$\widebar K_{h_n}^o(Y - \nabla\varphi_0(v)) = -1\quad \text{for all } v \in B,$$
and hence, from equation~\eqref{eq:linfty_un_step} we obtain
\begin{align}
\label{eq:linfty_un_step2}
\nonumber 
\|u_n\|_{\calC^{2}(B_0)} &\lesssim 
\|y-X_0\|^{-2} \Big[\|\det(\calB)\|_{\calC^\beta(B)} + 
\|u_n\|_{L^\infty(B)}\Big]  \\
&\lesssim  \|y-X_0\|^{-2} \Big[1 + 
\|u_n\|_{L^\infty(B)}\Big].
\end{align}
It thus remains to bound $\|u_n\|_{L^\infty(B)}$. To this end, recall that we can write for any $v \in B$, 
$$u_n(v) = \int_{\bbT^d} \Gamma_{E}(z,v) \widebar K_{h_n}^o(Y  - \nabla\varphi_0(z))dz,$$
where $\Gamma_E$ is the periodic Green's function of $E$ (cf. Lemma~\ref{lem:fundamental_solution}). 
We thus have, for all $v \in B$,
\begin{align}
\label{eq:linfty_un_step3}
\nonumber
|u_n(v)| 
 &\lesssim \int_{\bbT^d} \|z-v\|_{\bbT^d}^{2-d} \big|\widebar K_{h_n}^o(Y  - \nabla\varphi_0(z))\big|dz  \\ 
\nonumber
 \nonumber
 &\lesssim 1 + h_n^{-d}\int_{B(0, \lambda h_n)} \|z-X_0-v\|_{\bbT^d}^{2-d}dz \\ 
 &\lesssim 1 +  \big[\|X_0-v\|_{\bbT^d} - \lambda h_n\big]^{2-d} 
 \lesssim 1 +  \|X_0-v\|_{\bbT^d}^{2-d},
\end{align}
where the final inequality follows from equation~\eqref{eq:linfty_un_supp_sep}. 
From equations~(\ref{eq:linfty_un_step2}--\ref{eq:linfty_un_step3}),
we obtain
\begin{align*} 
\|u_n\|_{\calC^{2}(B_0)} 
 &\lesssim \sup_{v\in B} \|y-X_0\|^{-2} \Big[1 + \|X_0-v\|_{\bbT^d}^{2-d}\Big]\\
 &\leq \|y-X_0\|^{-2} \big(\|X_0-y\|_{\bbT^d} - h_n/2\big)^{2-d}
 \lesssim \|y-X_0\|^{-d} .
\end{align*}
The claim follows.
\qed

\subsection{Proof of Lemma~\ref{lem:bounding_term_I2}}
\label{app:pf_lem_bounding_term_I2}
We will make use of the following gradient estimate for $u_n$.
\begin{lemma}
\label{lem:regularity_un_grad}
There exists a constant $C > 0$ such that for all $y \in \bbT^d$, 
$$\|\nabla u_n(y)\| \leq C \big( h_n \vee \|X_0-y\|_{\bbT^d}\big)^{1-d}.$$
\end{lemma}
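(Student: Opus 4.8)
The plan is to prove Lemma~\ref{lem:regularity_un_grad} by the same local a priori estimate strategy used in the proof of Lemma~\ref{lem:regularity_un}, replacing the interior Schauder bound by the interior gradient estimate of Lemma~\ref{lem:a_priori_gradient} (or, equivalently, a De Giorgi--Nash--Moser bound for $\nabla u_n$). Recall $u_n = E^{-1}[\det(\calB)\widebar K_{h_n}^o(Y-\nabla\varphi_0(\cdot))]$, which solves $Eu_n = \det(\calB)\widebar K_{h_n}^o(Y-\nabla\varphi_0(\cdot))$ over $\bbT^d$. First, for the ``near'' regime $\|y-X_0\|_{\bbT^d}\lesssim h_n$, the bound $\|\nabla u_n(y)\|\lesssim h_n^{1-d}$ follows directly from the fundamental-solution representation $\nabla u_n(v) = \int_{\bbT^d}\nabla_v\Gamma_E(z,v)\widebar K_{h_n}^o(Y-\nabla\varphi_0(z))\,dz$ together with the gradient bound $\|\nabla_v\Gamma_E(z,v)\|\lesssim\|z-v\|_{\bbT^d}^{1-d}$ from Proposition~\ref{prop:fundamental_solution} and the support bound $\supp(\widebar K_{h_n}(Y-\nabla\varphi_0(\cdot)))\subseteq B(X_0,\lambda h_n)$ from Lemma~\ref{lem:var_bound_V2_tech}: splitting the integral into the part near $X_0$, where $|\widebar K_{h_n}^o|\lesssim h_n^{-d}$ on a ball of radius $\lambda h_n$, and the far part where $\widebar K_{h_n}^o\equiv -1$, both contribute $O(1 + h_n^{1-d})\asymp h_n^{1-d}$ (mimicking equation~\eqref{eq:linfty_un_step3} but with the exponent $1-d$ in place of $2-d$).

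For the ``far'' regime $\|y-X_0\|_{\bbT^d}\geq 2\lambda h_n$, I would argue exactly as in the proof of Lemma~\ref{lem:regularity_un}. We may assume the representative is chosen so that $\|y-X_0\| = \|y-X_0\|_{\bbT^d}$. Set $B = B(y,h_n/2)$ and $B_{1/2} = B(y,h_n/4)$; as in equation~\eqref{eq:linfty_un_supp_sep}, every $v\in B$ satisfies $\|v-X_0\|>3\lambda h_n/2$, so $B$ is disjoint from $\supp(\widebar K_{h_n}(Y-\nabla\varphi_0(\cdot)))$ and hence $\widebar K_{h_n}^o(Y-\nabla\varphi_0(\cdot)) = -1$ on $B$. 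Thus $Eu_n = -\det(\calB)$ on $B$, which is a $\calC^\beta$ right-hand side with norm $\lesssim 1$. Applying the gradient estimate of Lemma~\ref{lem:a_priori_gradient} on the pair $B_{1/2}\subset\subset B$ (with $\mathrm{dist} = h_n/4$, absorbing the $h_n$-dependence of the constant via the scaling of the estimate), we obtain
$$\|\nabla u_n\|_{L^\infty(B_{1/2})} \lesssim h_n^{-1}\big(\|u_n\|_{L^\infty(B)} + \|\det(\calB)\|_{\calC^\beta(B)}\big) \lesssim h_n^{-1}\big(1 + \|u_n\|_{L^\infty(B)}\big),$$
and then the pointwise $L^\infty$ bound $\|u_n\|_{L^\infty(B)}\lesssim 1 + \|X_0-y\|_{\bbT^d}^{2-d}$ established inside the proof of Lemma~\ref{lem:regularity_un} (equation~\eqref{eq:linfty_un_step3}) gives $\|\nabla u_n(y)\|\lesssim h_n^{-1}\|X_0-y\|_{\bbT^d}^{2-d}$. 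This is \emph{weaker} than the claimed $\|X_0-y\|_{\bbT^d}^{1-d}$ when $\|X_0-y\|_{\bbT^d}\gg h_n$, so the crude Schauder-style argument alone is not enough — this is the main obstacle.

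To get the sharp exponent, I would instead refine the far-regime argument using the fundamental solution directly, rather than the local a priori estimate: on a ball $B' = B(y,\|X_0-y\|_{\bbT^d}/3)$ the function $u_n$ solves $Eu_n = -\det(\calB)$, and one can split $u_n = \tilde u_n + r_n$ where $r_n$ solves $Er_n = -\det(\calB)$ globally (so $r_n$ is smooth with bounded $\calC^{2+\beta}$ norm, independent of $n$) and $E\tilde u_n$ is supported in $B(X_0,\lambda h_n)$; then $\nabla \tilde u_n(y) = \int_{B(X_0,\lambda h_n)}\nabla_y\Gamma_E(z,y)\,[\det(\calB)\widebar K_{h_n}(Y-\nabla\varphi_0(z))]\,dz$, and since $\|\nabla_y\Gamma_E(z,y)\|\lesssim\|z-y\|_{\bbT^d}^{1-d}\asymp\|X_0-y\|_{\bbT^d}^{1-d}$ uniformly for $z\in B(X_0,\lambda h_n)$ (as $\|X_0-y\|_{\bbT^d}\geq 2\lambda h_n$), while $\int_{B(X_0,\lambda h_n)}|\det(\calB)\widebar K_{h_n}(Y-\nabla\varphi_0(z))|\,dz\lesssim 1$ by the change of variables $z\mapsto\nabla\varphi_0(z)$ and $\int|K_{h_n}|\lesssim 1$, we get $\|\nabla\tilde u_n(y)\|\lesssim\|X_0-y\|_{\bbT^d}^{1-d}$. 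Combining with $\|\nabla r_n(y)\|\lesssim 1\lesssim\|X_0-y\|_{\bbT^d}^{1-d}$ (valid since $\|X_0-y\|_{\bbT^d}\leq\sqrt d$) yields the claim. Merging the near and far regimes (both sides being $\asymp(h_n\vee\|X_0-y\|_{\bbT^d})^{1-d}$) completes the proof. The only bookkeeping care is that the global solvability of $Er_n = -\det(\calB)$ requires $\det(\calB)$ to have mean zero over $\bbT^d$, which it need not; this is harmless since we may subtract its mean and the constant-right-hand-side piece contributes nothing to the gradient — or, cleanly, we just apply Lemma~\ref{lem:a_priori_gradient} on a ball of radius comparable to $\|X_0-y\|_{\bbT^d}$ (not $h_n$) to the \emph{difference} $u_n - u_n(y)$, with the favorable scaling giving exactly the $\|X_0-y\|_{\bbT^d}^{1-d}$ factor.
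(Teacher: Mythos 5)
Your final argument coincides with the paper's: both represent $\nabla u_n(y)$ via the fundamental solution $\Gamma_E$, isolate the bounded contribution of the constant ``$-1$'' piece of $\widebar K_{h_n}^o$, and bound the remaining integral using $\|\nabla_y\Gamma_E(z,y)\|\lesssim\|z-y\|_{\bbT^d}^{1-d}$ together with the support bound from Lemma~\ref{lem:var_bound_V2_tech}. The paper writes this more economically as $\|\nabla u_n(y)\|\lesssim 1 + h_n^{-d}\int_{B(X_0,\lambda h_n)}\|y-z\|_{\bbT^d}^{1-d}\,dz$ and then handles both the near and far regimes via Lemma~\ref{lem:simple_ball_integral} and the triangle inequality, sidestepping your explicit $u_n=\tilde u_n + r_n$ decomposition and the attendant mean-zero bookkeeping by working at the level of the integral against $\nabla_y\Gamma_E$ rather than of $E^{-1}$; your observation that the naive Schauder-style imitation of Lemma~\ref{lem:regularity_un} would only give $h_n^{-1}\|X_0-y\|_{\bbT^d}^{2-d}$ is also correct and is precisely why the fundamental-solution route is needed here.
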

The proof of Lemma~\ref{lem:regularity_un_grad} appears in Appendix~\ref{app:pf_lem_regularity_un_grad}. 
With this result in place, we can bound $\bbE[\calI_2^2]$ in nearly the same way as $\bbE[\calI_1^2]$. 
To see this, notice first that from Lemma~\ref{lem:regularity_un_grad}, we have
\begin{align*}
\begin{multlined}
\bbE\left[\left(\int_{B(X_0,h_n)} \|x-y\|_{\bbT^d}^{1-d} \|\nabla u_n(y)\|dy\right)^2\right]  \\ 
 \lesssim h_n^{2(1-d)}\bbE\left[\left(\int_{B(X_0,h_n)} \|x-y\|_{\bbT^d}^{1-d} dy\right)^2\right] 
\lesssim h_n^{4-d},
\end{multlined}
\end{align*}
by Lemma~\ref{lem:ball_integral}. Thus, as before, 
\begin{align}
\nonumber 
\bbE[\calI_2^2]
 &\lesssim  h_n^{4-d} +  \bbE\left[\left(\int_{\calQ_0 \setminus B(0,h_n)} \|x-X_0-y\|_{\bbT^d}^{1-d} \|y\|_{\bbT^d}^{1-d} dy\right)^2\right].
\end{align}
Let the event $A_n$ be defined as in Step 4 of the proof of Lemma~\ref{lem:variance_bound_map}, and recall equation~\eqref{eq:pf_var_bound_repr}, from which it follows that
\begin{align*}
\bbE&\left[\left(\int_{\calQ_0\setminus B(0,h_n)} \|x-X_0-y\|_{\bbT^d}^{1-d} \|y\|_{\bbT^d}^{1-d} dy\right)^2 I( A_n)\right] 
\\ &\qquad 
\lesssim \bbE\left[\left(\int_{\calQ_0\setminus B(0,h_n)} \|y\|^{2-2d} dy\right)^2 I( A_n)\right]  
\lesssim h_n^{4-d}.
\end{align*}
Furthermore, by Lemma~\ref{lem:miranda}, 
\begin{align*}
\bbE&\left[\left(\int_{\calQ_0\setminus B(0,h_n)} \|x-X_0-y\|_{\bbT^d}^{1-d} \|y\|^{1-d} dy\right)^2 I( A_n^c)\right] \\ 
 & \lesssim \bbE\left[\left(\|x-X_0\|^{2-d}\right)^2 I( A_n^c)\right]  \asymp h_n^{4-d}\vee 1.
  \end{align*}The claim follows. 
\qed 

\subsection{Proof of Lemma~\ref{lem:regularity_un_grad}}
\label{app:pf_lem_regularity_un_grad}
From Lemma~\ref{lem:fundamental_solution} and the definition of $\widebar K_{h_n}^o$, we have  
for all $y \in \bbT^d$,  
\begin{align*}
\|\nabla u_n(y)\|
\lesssim 1 + \int_{\bbT^d} \|y-z\|_{\bbT^d}^{1-d} \big|\widebar K_{h_n}(Y - \nabla\varphi_0(z))\big| dz.
\end{align*}
By~Lemma~\ref{lem:var_bound_V2_tech}, we have $\supp(\widebar K_{h_n}(Y - \nabla\varphi_0(\cdot))) \subseteq B(X_0, \lambda h_n)$, thus 
\begin{align*}
\|\nabla u_n(y)\|
\lesssim 1 + h_n^{-d}\int_{B(X_0, \lambda h_n)} \|y-z\|_{\bbT^d}^{1-d}   dz \lesssim h_n^{1-d},
\end{align*}
by Lemma~\ref{lem:simple_ball_integral}.\qed 

\section{Additional Proofs from Section~\ref{sec:pf_main_results}}
\label{app:additional_proofs_5}

\subsection{Proof of Lemma~\ref{lem:expectation_zero}}
\label{app:pf_lem_expectation_zero}
 Let $x \in \bbT^d$. 
 Let us first show that $\bbE[\nabla Z_{n,i1}(x)] = 0$. Using Lemma~\ref{lem:inverse_L_to_E} and Lemma~\ref{lem:fundamental_solution}, 
 it can be deduced that  
 $$\nabla L^{-1}\big[\widebar K_{h_n}(Y_1-\cdot) - q_{h_n}](x) = 
 \int_{\bbT^d} \nabla \Gamma_{E}(\nabla\varphi_0^*(y),x)  \big[\widebar K_{h_n}(Y_1-y) - q_{h_n}(y)]dy,$$
 where $\Gamma_E$ is the periodic Green's function for the operator $E$, and so
 $$\bbE[\nabla Z_{n,1}(x)] 
  = \int_{\bbT^d}\int_{\bbT^d} \nabla \Gamma_{E}(\nabla\varphi_0^*(y),x)  \big[\widebar K_{h_n}(z-y) - q_{h_n}(y)]dydQ(z).$$
One may apply Fubini's theorem to change the order of integration in the above
display, since for any given $h_n > 0$, by Tonelli's theorem,
\begin{align*}
\int_{\bbT^d}\int_{\bbT^d} \big\| \nabla \Gamma_{E}(\nabla\varphi_0^*(y),x) \big[\widebar K_{h_n}(z-y) - q_{h_n}(y)\big] \big\|dydQ(z)   \\ 
 \lesssim h_n^{-d} \int_{\bbT^d} \| \nabla\varphi_0^*(y)-x \|^{1-d}dy
 < \infty.
 \end{align*}
Deduce that 
 $$\bbE[\nabla Z_{n,i1}(x)] 
  = \int_{\bbT^d} \nabla \Gamma_{E}(\nabla\varphi_0^*(y),x) \left(\int_{\bbT^d} \big[\widebar K_{h_n}(z-y) - q_{h_n}(y)]dQ(z)\right)dy = 0.$$
From here, we deduce
\begin{align*}
&\left\|\bbE [ \left((1/n) \textstyle\sum_{i=1}^n \nabla Z_{n,i}(x)\right)I_{A_n}] \right\|\\
 &~~\leq \left\|\bbE [ \left((1/n) \textstyle\sum_{i=1}^n \nabla Z_{n,i}(x)\right)] \right\| + \left\|\bbE [ \left((1/n) \textstyle\sum_{i=1}^n \nabla Z_{n,i}(x)\right)I_{A_n^\cp}] \right\| \\ 
 &~~=  \left\|\bbE [ \left((1/n) \textstyle\sum_{i=1}^n \nabla Z_{n,i}(x)\right)I_{A_n^\cp}] \right\|.
\end{align*} 
Now, recall from Lemma~\ref{lem:inverse_L_to_E} that 
$$\|Z_{n,1}\|_{\calC^{2+\beta}(\bbT^d)} \asymp 
\|\widebar K_{h_n}(Y_1-\cdot) - q_{h_n}]\|_{\calC^\beta(\bbT^d)} \lesssim h_n^{-(d+\beta)},$$
and we deduce that 
\begin{align*}
\left\|\bbE [ \left((1/n) \textstyle\sum_{i=1}^n \nabla Z_{n,i}(x)\right)I_{A_n}] \right\|
 \lesssim h_n^{-(d+\beta)} \bbP(A_n^\cp)\lesssim n^{a(d+\beta) -b}.
\end{align*} 
It is clear that the implicit constants in these various assertions 
do not depend on $x$, and the claim follows.
\qed

  \subsection{Proof of Lemma~\ref{lem:lyapunov_technical_map}}
\label{app:pf_lem_lyapunov_technical_map}
By Lemma~\ref{lem:inverse_L_to_E}, we have
$$\|\nabla Z_{n,1}(x)\| \lesssim \|L^{-1}[\hat q_{n}-q_{h_n}]\|_{\calC^{2+\beta}(\bbT^d)} \lesssim \|\hat q_n - q_{h_n}\|_{\calC^\beta(\bbT^d)}
\lesssim h_n^{-(d+\beta)}.$$
Thus, using Lemma~\ref{lem:variance_bound_map}, we have
$$\bbE \|\nabla Z_{n,1}(x)\|^{2+\delta} \lesssim h_n^{-\delta(d+\beta)} \bbE \|\nabla Z_{n,1}(x)\|^2 \lesssim h_n^{-\delta(d+\beta)+2-d},$$
as claimed.
\qed

\subsection{Proof of Proposition~\ref{prop:clt_projections}}
\label{app:pf_bandwidth_projection_clt}
Let $u_n = L^{-1}[\hat q_n - q]$. We will begin by bounding the quantity
$\langle u_n,v\rangle_A$, for any $v \in H_0^1(\bbT^d)$. 
By Lemma~\ref{lem:inverse_L_to_E},  the function $u_n$
solves the PDE 
$$E u_n = \det(\nabla^2\varphi_0) (\hat q_n(\nabla\varphi_0) - q(\nabla\varphi_0)),\quad \text{over } \bbT^d,$$
in the classical sense, and hence also in the weak $H_0^1(\bbT^d)$ sense
(cf. Appendix~\ref{app:pde}). Therefore, 
for all $v \in H_0^1(\bbT^d)$, 
\begin{align*}
\langle u_n,v\rangle_A 
 &= \langle \det(\nabla^2\varphi_0) (\hat q_n(\nabla\varphi_0) - q(\nabla\varphi_0)),v\rangle_{L^2(\bbT^d)} \\
 &= \langle \hat q_n-q, v(\nabla\varphi_0^*)\rangle_{L^2(\bbT^d)} \\
 &= \int_{\bbT^d} \big(v(\nabla\varphi_0^*) \star K_{h_n}\big)d(Q_n-Q) + \int_{\bbT^d}v(\nabla\varphi_0^*) d(Q_{h_n} - Q).
\end{align*}
Reasoning as in Lemma~48 of~\cite{manole2021}, it is easy to see that
$$\int_{\bbT^d} \big(v(\nabla\varphi_0^*) \star K_{h_n}\big)d(Q_n-Q) = O_p(n^{-1/2}),$$
and furthermore, 
$$\left|\int_{\bbT^d}v(\nabla\varphi_0^*) d(Q_{h_n} - Q)\right|
\leq \|v(\nabla\varphi_0^*)\|_{H^1(\bbT^d)} \|q_{h_n} - q\|_{H^{-1}(\bbT^d)} \lesssim \|q_{h_n}-q\|_{H^1(\bbT^d)},$$
where we used Lemma~\ref{lem:sobolev_comp}. By Proposition~\ref{prop:kde_negative_sobolev}, 
the final factor decays on the order $h_n^{s+1}$, thus we have shown 
$$\langle u_n, v\rangle_{L^2(\bbT^d)} = O_p(n^{-1/2}+h_n^{s+1}).$$
To prove the  claim, it thus remains to quantify the discrepancy between $\langle u_n,v\rangle_A$ and $\langle \hat\varphi_n- \varphi_0,v\rangle_A$. 
To this end, recall from equation~\eqref{eq:linearization_implication} that, 
for any given $\beta > 0$ sufficiently small,
$$\left\|\nabla\hat\varphi_n - \nabla \varphi_0- \nabla L^{-1}[\hat q_n - q] \right\|_{L^\infty(\bbT^d)} = O_p\left( h_n^{2s - 1-3\beta} + 
\frac 1 {nh_n^{d+1+4\beta}} \right).$$
It readily follows that 
\begin{align*}
\langle \hat\varphi_n - \varphi_0,v\rangle_A 
 &=  \langle L^{-1}[\hat q_n-q],v\rangle_A + O_p\left( h_n^{2s - 1-3\beta} + 
\frac 1 {nh_n^{d+1+4\beta}} \right) \\ 
 &=   O_p\left( n^{-1/2} + h_n^{s+1}+ \frac 1 {nh_n^{d+1+4\beta}} \right). 
\end{align*}
The claim follows. 
\qed

\subsection{Proof of Proposition~\ref{prop:L2_rate_map}}
\label{app:pf_L2_rate_map}

On the one hand, we have by Lemma~\ref{lem:L2_stab} that
$$\|\widebar T_{h_n} - T_0\|_{L^2(\bbT^d)} \asymp \|q_{h_n} - q\|_{H^{-1}(\bbT^d)}
\lesssim h_n^{s+1},$$
where the final inequality follows from Proposition~\ref{prop:kde_negative_sobolev}. 
On the other hand, under our assumptions, it follows from Proposition~\ref{prop:kde_holder} that 
$$\|q_{h_n}-q\|_{\calC^\epsilon(\bbT^d)} =o(1)$$
for some small enough $\epsilon > 0$, thus since $q$ is bounded away from zero by a positive constant
and itself lies in $\calC^\epsilon_+(\bbT^d)$, we deduce that
$\|q_{h_n}\|_{\calC^\epsilon(\bbT^d)} \lesssim 1.$
Thus, we may again apply Lemma~\ref{lem:L2_stab} and Proposition~\ref{prop:kde_negative_sobolev} to deduce that
$$\bbE \|\hat T_{h_n} - \widebar T_{h_n}\|_{L^2(\bbT^d)}^2 \asymp \bbE \|\hat q_n - q_{h_n}\|_{H^{-1}(\bbT^d)}^2
\asymp \frac{h_n^{2-d}}{n},$$
and, for any $r > 2$, 
$$\bbE \|\hat T_{h_n} - \widebar T_{h_n}\|_{L^2(\bbT^d)}^r \asymp \bbE \|\hat q_n - q_{h_n}\|_{H^{-1}(\bbT^d)}^r
\lesssim n^{-\frac r 2} h_n^{r\left(1-\frac d 2\right)},$$
as claimed.\qed

\section{Additional Proofs from Section~\ref{sec:discussion}}
\label{app:additional_discussion}

We state and prove the following elementary Lemma, which was used in the proof of Proposition~\ref{prop:clt_map_2d}.
Recall the sequence of matrices $(A_n)_{n\geq 1}$ defined therein.
\begin{lemma}
\label{app:pf_lem_riemann_sum_approx_d2}
For any fixed $c > 0$, 
$$ A_n = \int_{\xi: c \leq  \|\xi\| < 1/h_n} 
 \frac{\xi\xi^\top d\xi }{4\pi^2 \|\xi\|^4}+O(1).$$ 
\end{lemma}

\subsection{Proof of Lemma~\ref{app:pf_lem_riemann_sum_approx_d2}}
We reason similarly as in the proof of Lemma~\ref{app:pf_lem_riemann_sum_approx}.
Write 
$$g_n = \sum_{\substack{\xi\in \bbZ_*^d \\ \|\xi\| < 1/h_n}} 
\frac{\xi\xi^\top}{ \|\xi\|^4},\quad 
 g_\infty = \int_{z: c\leq \|z\| < 1/h_n}  \frac{zz^\top dz}{\|z\|^4},$$
It suffices to show that $g_n=g_\infty+ O(1)$. Notice that
\begin{align*}
g_{n}&-g_{\infty} =\sum_{\substack{\xi\in \bbZ_*^d \\ c\leq  \|\xi\| < 1/h_n}}
 \int_{I_{\xi}} \left[ \frac{\xi\xi^\top}{\|\xi\|^4} - \frac{zz^\top}{\|z\|^4} \right]dz + O(1), 
\end{align*} 
where 
we denote by $I_{\xi}$ the hypercube of unit length,  whose vertex nearest to the origin
is at the point $\xi$. Notice that for any $\xi$ such that $c\leq \|\xi\| < 1/h_n$, 
and for any $z \in I_\xi$, it holds that 
\begin{align*}
 \bigg\| \frac{\xi\xi^\top}{\|\xi\|^4}   
     - \frac{zz^\top}{\|z\|^4} \bigg\| 
 &\lesssim \left\|\frac{\xi\xi^\top}{\|\xi\|^4\|z\|^4}
   \big[ \|z\|^4 - \|\xi\|^4\big]\right\| + 
   \left\|\frac{1}{\|z\|^4} 
   \big[ \xi\xi^\top -  zz^\top \big]\right\| 
    \\ 
 &\lesssim \frac{\|\xi\|^2\big(\|\xi\|+\|z\|\big)^3}{\|\xi\|^4\|z\|^4}\|\xi-z\|
 + 
 \frac{\|\xi\| + \|z\|}{\|z\|^4} \|\xi-z\|\lesssim \frac 1 {\|\xi\|^3},
\end{align*}
where the final display follows from the fact that $z \in I_\xi$. We deduce that
$$\|g_{n}-g_{\infty}\| \lesssim 1 + \sum_{\substack{\xi\in \bbZ_*^d \\ c < \|\xi\| < 1/h_n}} \frac 1 {\|\xi\|^3} \lesssim 1,
$$
as claimed.
\qed

\section{Additional Technical Lemmas}
\label{app:additional_technical_lemmas}
The following stability bound is a consequence of Theorem 6 and Proposition 16 
of~\cite{manole2021}, together with the bound of~\citet[Theorem 1]{peyre2018}, and 
Caffarelli's regularity theory (Theorem~\ref{thm:caffarelli}).
See also~\citet[Lemma 5.1]{ghosal2022}. 
\begin{lemma}\label{lem:L2_stab}
Suppose that $p,q,\hat q \in \calC_+^\epsilon(\bbT^d)$ for some $\epsilon > 0$. Then,  
 if $\hat T$ denotes the unique optimal transport
map pushing forward $p$ onto $\hat q$, it holds that
$$C^{-1} \|\hat q-q\|_{H^{-1}(\bbT^d)} \leq \|\hat T - T_0\|_{L^2(\bbT^d)} \leq C \|\hat q-q\|_{H^{-1}(\bbT^d)},$$
for a constant $C = C(\omega_\epsilon(p,q,\hat q),d,\epsilon)$. 
\end{lemma}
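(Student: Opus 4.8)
The plan is to route the comparison through the quadratic Wasserstein distance $W_2(Q,\hat Q)$, which under the stated regularity is simultaneously comparable to $\|\hat T-T_0\|_{L^2(\bbT^d)}$ and to $\|\hat q-q\|_{H^{-1}(\bbT^d)}$; here $\hat Q$ denotes the measure with density $\hat q$. First I would record the consequences of Caffarelli's theory (Theorem~\ref{thm:caffarelli}) under the hypothesis $p,q,\hat q\in\calC_+^\epsilon(\bbT^d)$: all three densities are bounded above and below by positive constants depending only on $\omega_\epsilon(p,q,\hat q)$; the maps $T_0=\nabla\varphi_0$ and $\hat T=\nabla\hat\varphi$ are the unique continuous Brenier maps pushing $P$ onto $Q$ and onto $\hat Q$ respectively; and the potentials satisfy uniform $\calC^{2+\epsilon}(\calQ)$ bounds together with $\nabla^2\varphi_0,\nabla^2\hat\varphi\succeq I_d/\lambda$. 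Two elementary reductions follow: since $p$ is bounded above and below, $\|\cdot\|_{L^2(\bbT^d)}\asymp\|\cdot\|_{L^2(P)}$ on vector fields, with constants depending on $\omega_\epsilon(p,q,\hat q)$; and since $\int_{\bbT^d}(\hat q-q)\,d\calL=0$, the inhomogeneous norm $\|\hat q-q\|_{H^{-1}(\bbT^d)}$ agrees up to constants with the homogeneous norm $\|\hat q-q\|_{\dot H^{-1}(\bbT^d)}$.

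Next I would establish $\|\hat T-T_0\|_{L^2(\bbT^d)}\asymp W_2(Q,\hat Q)$. One direction is immediate: the map $(\hat T,T_0)$ pushes $P$ onto a coupling of $\hat Q$ and $Q$, so $W_2^2(\hat Q,Q)\le\int_{\bbT^d}\|\hat T-T_0\|^2\,dP\lesssim\|\hat T-T_0\|_{L^2(\bbT^d)}^2$. The reverse inequality $\|\hat T-T_0\|_{L^2(\bbT^d)}\lesssim W_2(\hat Q,Q)$ is the quantitative stability estimate for Brenier maps, where the uniform convexity and $\calC^{2+\epsilon}$ regularity of the potentials enter; I would cite Theorem~6 and Proposition~16 of~\cite{manole2021} (see also~\citet[Lemma 5.1]{ghosal2022}), checking that the constants there depend only on $\omega_\epsilon(p,q,\hat q)$, $d$, and $\epsilon$.

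Finally I would invoke Theorem~1 of~\cite{peyre2018} to obtain $W_2(\hat Q,Q)\asymp\|\hat q-q\|_{\dot H^{-1}(\bbT^d)}$: the upper bounds on the densities yield $\|\hat q-q\|_{\dot H^{-1}(\bbT^d)}\lesssim W_2(\hat Q,Q)$, and the lower bounds yield $W_2(\hat Q,Q)\lesssim\|\hat q-q\|_{\dot H^{-1}(\bbT^d)}$. Chaining the three comparisons of the last two paragraphs, together with the identification of the $\dot H^{-1}$ and $H^{-1}$ norms on the mean-zero function $\hat q-q$, gives both inequalities of the lemma with the asserted constant dependence.

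The main obstacle is the stability direction $\|\hat T-T_0\|_{L^2(\bbT^d)}\lesssim W_2(\hat Q,Q)$: the trivial coupling argument produces only the opposite bound, and upgrading it genuinely requires the global regularity and uniform convexity of the Brenier potential supplied by Caffarelli's theory, as well as care in tracking the dependence of the constant on $\omega_\epsilon(p,q,\hat q)$. Since this is exactly what the cited results in~\cite{manole2021} provide, the remaining work is organizational: assembling the pieces in the right order and verifying that every constant is of the claimed form.
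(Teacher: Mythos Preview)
Your proposal is correct and follows essentially the same route as the paper: the paper does not give a detailed argument but simply cites Theorem~6 and Proposition~16 of~\cite{manole2021}, Theorem~1 of~\cite{peyre2018}, and Caffarelli's regularity theory (Theorem~\ref{thm:caffarelli}), with a pointer to~\citet[Lemma 5.1]{ghosal2022}. Your sketch is a faithful unpacking of exactly these ingredients, routed through $W_2(Q,\hat Q)$ in the natural way.
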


%

The following bound can be deduced from the Leibniz rule together with equation (4.7)  of~\cite{gilbarg2001}. 

\begin{lemma}[Products of H\"older Functions]
\label{lem:holder_products}
For any $\alpha \geq 0$, there exists a constant $C=C(\alpha)>0$ such that for all $f,g \in \calC^\alpha(\bbT^d)$, 
$$ \|fg\|_{\calC^\alpha(\bbT^d)} \leq C_\alpha \|f\|_{\calC^\alpha(\bbT^d)} \|g\|_{\calC^\alpha(\bbT^d)}.$$
\end{lemma}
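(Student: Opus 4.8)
The plan is to reduce the bound to the classical product rules for sup-norms and H\"older seminorms, applied term by term after expanding $D^\gamma(fg)$ with the Leibniz rule. Write $k=\lfloor\alpha\rfloor$ and $\sigma=\alpha-k\in[0,1)$, and recall that, by the definition of the periodic H\"older spaces, $\|f\|_{\calC^\alpha(\bbT^d)}=\sum_{j=0}^{k}\sup_{|\gamma|=j}\|D^\gamma f\|_{L^\infty(\bbT^d)}+\sum_{|\gamma|=k}[D^\gamma f]_{\sigma}$, where $[w]_\sigma:=\sup_{x\neq y}|w(x)-w(y)|/\|x-y\|_{\bbT^d}^{\sigma}$ (which reads $[w]_0=\sup_{x\neq y}|w(x)-w(y)|$ when $\alpha$ is an integer). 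For any multi-index $\gamma$ with $|\gamma|\leq k$ one has $D^\gamma(fg)=\sum_{\delta\leq\gamma}\binom{\gamma}{\delta}D^\delta f\,D^{\gamma-\delta}g$, so it suffices to bound each summand $D^\delta f\,D^{\gamma-\delta}g$ in $L^\infty(\bbT^d)$ (for $|\gamma|\leq k$) and in the seminorm $[\cdot]_\sigma$ (for $|\gamma|=k$), the number of summands and the binomial weights being controlled by a constant depending only on $\alpha$ and $d$.

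First I would dispatch the sup-norm contributions: for $|\gamma|\leq k$ and $\delta\leq\gamma$, $\|D^\delta f\,D^{\gamma-\delta}g\|_{L^\infty(\bbT^d)}\leq\|D^\delta f\|_{L^\infty(\bbT^d)}\|D^{\gamma-\delta}g\|_{L^\infty(\bbT^d)}\leq\|f\|_{\calC^\alpha(\bbT^d)}\|g\|_{\calC^\alpha(\bbT^d)}$, and summing over the finitely many multi-indices gives $\sum_{j=0}^k\sup_{|\gamma|=j}\|D^\gamma(fg)\|_{L^\infty(\bbT^d)}\lesssim_{\alpha}\|f\|_{\calC^\alpha(\bbT^d)}\|g\|_{\calC^\alpha(\bbT^d)}$.

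Next I would treat the top-order seminorm, $|\gamma|=k$. For each $\delta\leq\gamma$ I would apply the product rule for seminorms (equation (4.7) of~\cite{gilbarg2001}), namely $[uv]_\sigma\leq\|u\|_{L^\infty(\bbT^d)}[v]_\sigma+[u]_\sigma\|v\|_{L^\infty(\bbT^d)}$, with $u=D^\delta f$ and $v=D^{\gamma-\delta}g$. When $|\delta|=k$ the factor $[D^\delta f]_\sigma$ is literally one of the terms of $\|f\|_{\calC^\alpha(\bbT^d)}$; when $|\delta|<k$, the map $D^\delta f$ is $\calC^1$ on $\bbT^d$ with $\|\nabla D^\delta f\|_{L^\infty(\bbT^d)}\leq\|f\|_{\calC^\alpha(\bbT^d)}$, and the elementary inequality $|D^\delta f(x)-D^\delta f(y)|\leq\min\bigl(2\|D^\delta f\|_{L^\infty(\bbT^d)},\,\|\nabla D^\delta f\|_{L^\infty(\bbT^d)}\|x-y\|_{\bbT^d}\bigr)$ together with $\sup_{x,y}\|x-y\|_{\bbT^d}\leq\sqrt d$ yields $[D^\delta f]_\sigma\lesssim_{d}\|D^\delta f\|_{L^\infty(\bbT^d)}+\|\nabla D^\delta f\|_{L^\infty(\bbT^d)}\leq 2\|f\|_{\calC^\alpha(\bbT^d)}$; the same bounds apply to the factors $D^{\gamma-\delta}g$ and its derivative coming from $g$. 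Hence $[D^\delta f\,D^{\gamma-\delta}g]_\sigma\lesssim_{\alpha,d}\|f\|_{\calC^\alpha(\bbT^d)}\|g\|_{\calC^\alpha(\bbT^d)}$, and summing over $\delta\leq\gamma$ and $|\gamma|=k$ controls the seminorm part of $\|fg\|_{\calC^\alpha(\bbT^d)}$. Adding the two groups of estimates gives $\|fg\|_{\calC^\alpha(\bbT^d)}\leq C(\alpha,d)\|f\|_{\calC^\alpha(\bbT^d)}\|g\|_{\calC^\alpha(\bbT^d)}$; in the degenerate case $\sigma=0$ the last sum is controlled directly by $2\sum_{|\gamma|=k}\|D^\gamma(fg)\|_{L^\infty(\bbT^d)}$, already bounded in the previous step.

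The argument is entirely routine and I do not anticipate a genuine obstacle. The only point meriting a line of care is that the ``off-diagonal'' seminorms $[D^\delta f]_\sigma$ with $|\delta|<k$ are not themselves terms of the $\calC^\alpha$ norm; these are absorbed, as above, into the sup-norms of $D^\delta f$ and its first derivatives, which on $\bbT^d$ requires only that the diameter $\sup_{x,y}\|x-y\|_{\bbT^d}$ is finite.
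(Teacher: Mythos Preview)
Your proposal is correct and follows precisely the approach the paper indicates: the paper does not give a full proof but simply states that the bound ``can be deduced from the Leibniz rule together with equation (4.7) of~\cite{gilbarg2001},'' which is exactly the product rule $[uv]_\sigma\leq\|u\|_{L^\infty}[v]_\sigma+[u]_\sigma\|v\|_{L^\infty}$ you invoke. Your handling of the off-diagonal seminorms $[D^\delta f]_\sigma$ with $|\delta|<k$ via the $\calC^1$ bound and the finite diameter of $\bbT^d$ is the natural way to fill in the details, and the dependence of your constant on $d$ (through the number of multi-indices) is expected even if the paper's notation suppresses it.
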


The following is straightforward. 
\begin{lemma}[Compositions of H\"older Functions]
\label{lem:holder_comp}
For any $\alpha,\beta \in (0,1]$, 
there exists a constant $C = C(\alpha,\beta) > 0$ such that for all $f \in \calC^{1+\alpha}(\bbT^d)$
and $g \in \calC^\beta(\bbT^d)$, 
$$ \|g\circ \nabla f\|_{\calC^{\alpha\beta}(\bbT^d)} \leq C \|f\|_{\calC^{1+\alpha}(\bbT^d;\bbT^d)}^\beta \|g\|_{\calC^\beta(\bbT^d)}.$$
\end{lemma}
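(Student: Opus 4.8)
\textbf{Proof plan for Lemma~\ref{lem:holder_comp}.}

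The plan is to reduce the estimate to the two defining pieces of the $\calC^{\alpha\beta}(\bbT^d)$ norm: the $L^\infty$ bound on $g\circ\nabla f$ (trivial, since $\|g\circ\nabla f\|_{L^\infty(\bbT^d)}\le\|g\|_{L^\infty(\bbT^d)}\le\|g\|_{\calC^\beta(\bbT^d)}$), and the $\alpha\beta$-H\"older seminorm, which is where all the work lies. For the seminorm, fix $x,y\in\bbT^d$ and write
$$|g(\nabla f(x))-g(\nabla f(y))|\le [g]_{\calC^\beta}\,\|\nabla f(x)-\nabla f(y)\|_{\bbT^d}^\beta.$$
Here $[g]_{\calC^\beta}$ denotes the $\beta$-H\"older seminorm of $g$, which is at most $\|g\|_{\calC^\beta(\bbT^d)}$. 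So the claim comes down to controlling $\|\nabla f(x)-\nabla f(y)\|_{\bbT^d}^\beta$ by $\|f\|_{\calC^{1+\alpha}(\bbT^d;\bbT^d)}^\beta\,\|x-y\|_{\bbT^d}^{\alpha\beta}$, which is just the $\beta$-th power of the statement that $\nabla f$ is $\alpha$-H\"older with seminorm $\le\|f\|_{\calC^{1+\alpha}(\bbT^d;\bbT^d)}$ — and that is exactly part of the definition of $\calC^{1+\alpha}$. Combining, $[g\circ\nabla f]_{\calC^{\alpha\beta}}\le \|g\|_{\calC^\beta(\bbT^d)}\,\|f\|_{\calC^{1+\alpha}(\bbT^d;\bbT^d)}^\beta$, and adding the $L^\infty$ term gives the result with $C=2$ (or $C=C(\alpha,\beta)$ absorbing the equivalence of norms on $\calC^{\alpha\beta}$).

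One minor technical point to address carefully: the torus distance $\|\cdot\|_{\bbT^d}$ is not a norm, so I would note that for the composition to make sense $\nabla f$ should be viewed as a map $\bbT^d\to\bbT^d$ (which is the content of writing $\calC^{1+\alpha}(\bbT^d;\bbT^d)$, cf.\ the abuse-of-notation conventions in Section~\ref{sec:notation}), and that both the H\"older seminorm of $g$ on $\bbT^d$ and the Lipschitz/H\"older estimate for $\nabla f$ are defined via this distance; the triangle-type inequality $\|\nabla f(x)-\nabla f(y)\|_{\bbT^d}\le C_d\,\|\nabla f(x)-\nabla f(y)\|$ for any Euclidean representatives is immediate and lets one pass freely between the two. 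Another subtlety, if $\beta$ is allowed to equal $1$: then $\calC^\beta=\calC^1$ and $[g]_{\calC^1}$ is the Lipschitz constant, and the argument is unchanged.

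I do not expect any serious obstacle here — the lemma is, as the paper itself says, ``straightforward'': it is essentially the observation that a $\beta$-H\"older function composed with an $\alpha$-H\"older function is $(\alpha\beta)$-H\"older, with the constants tracking multiplicatively. The only thing requiring a line of care is bookkeeping the torus metric and confirming that all three norms involved ($\|g\|_{\calC^\beta}$, $\|f\|_{\calC^{1+\alpha}}$, $\|g\circ\nabla f\|_{\calC^{\alpha\beta}}$) are the periodic versions defined in Appendix~\ref{app:function_spaces}, so that the supremum over $\bbT^d$ in each is legitimate and the constant $C$ depends only on $\alpha,\beta$ (and $d$, which one may suppress or fold in).
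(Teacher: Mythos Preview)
Your proposal is correct and is exactly the ``straightforward'' argument the paper has in mind (the paper gives no proof). One small caveat: your $L^\infty$ bound yields $\|g\|_{\calC^\beta}$ rather than $\|f\|_{\calC^{1+\alpha}}^\beta\|g\|_{\calC^\beta}$, so the combined estimate is really $(1+\|f\|_{\calC^{1+\alpha}}^\beta)\|g\|_{\calC^\beta}$ --- this matches the stated form only when $\|f\|_{\calC^{1+\alpha}}\gtrsim 1$, which is the case in every use of the lemma in the paper (where $f$ is a Brenier potential with controlled norm), but your claim that ``$C=2$'' is not literally correct without that assumption.
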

We next provide  similar bounds for Sobolev norms.
\begin{lemma}[Compositions of Sobolev Functions]
\label{lem:sobolev_comp}
Let $r > 1$. Let $f \in \calC^2(\bbT^d)$ be such that 
$$\nabla^2 f \succeq I_d/\lambda\text{ over } \bbT^d, \quad \text{and}\quad \|f\|_{\calC^2(\bbT^d)} \leq \lambda,$$
for some $\lambda > 0$. Then, 
there exists a constant $C=C(\lambda,d,r) > 0$ such that
for all $\alpha \in [0,1]$ and all $g \in H^{\alpha,r}(\bbT^d)$,
$$\|g\circ \nabla f\|_{H^{\alpha,r}(\bbT^d)} \lesssim C  \|g\|_{H^{\alpha,r}(\bbT^d)}.$$
\end{lemma}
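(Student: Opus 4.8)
\textbf{Proof proposal for Lemma~\ref{lem:sobolev_comp}.}

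The plan is to treat the two endpoints $\alpha=0$ and $\alpha=1$ separately and then obtain all intermediate values by complex interpolation (Lemma~\ref{lem:sobolev_interpolation}). For $\alpha=0$, the claim is the change-of-variables estimate $\|g\circ\nabla f\|_{L^r(\bbT^d)}\lesssim\|g\|_{L^r(\bbT^d)}$: since $\nabla f$ is a diffeomorphism of $\bbT^d$ whose Jacobian determinant $\det\nabla^2 f$ is bounded above and below by positive constants depending only on $\lambda,d$ (using $\nabla^2 f\succeq I_d/\lambda$ and $\|f\|_{\calC^2}\le\lambda$, together with the fact that the eigenvalues of $\nabla^2 f$ are then all in $[\lambda^{-1},\lambda]$), a substitution $y=\nabla f(x)$ gives $\int_{\bbT^d}|g(\nabla f(x))|^r dx=\int_{\bbT^d}|g(y)|^r\det\nabla^2 f(\nabla f^{-1}(y))^{-1}dy\lesssim\|g\|_{L^r}^r$. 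For $\alpha=1$, by Lemma~\ref{lem:equiv_H1_norm} it suffices to bound $\|g\circ\nabla f\|_{L^r}$ and $\|\nabla(g\circ\nabla f)\|_{L^r}$; the first is the $\alpha=0$ case, and for the second the chain rule gives $\nabla(g\circ\nabla f)(x)=\nabla^2 f(x)\,(\nabla g)(\nabla f(x))$, so $\|\nabla(g\circ\nabla f)\|_{L^r}\le\|\nabla^2 f\|_{L^\infty}\,\|(\nabla g)\circ\nabla f\|_{L^r}\lesssim_\lambda\|\nabla g\|_{L^r}\lesssim\|g\|_{H^{1,r}}$, again using the $\alpha=0$ change-of-variables bound applied coordinatewise to $\nabla g$.

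Having established the bounds at $\alpha=0$ and $\alpha=1$, I would conclude as follows. Fix $r>1$ and consider the (linear) composition operator $T:h\mapsto h\circ\nabla f$. We have shown that $T$ is bounded $L^r(\bbT^d)\to L^r(\bbT^d)=H^{0,r}(\bbT^d)$ and bounded $H^{1,r}(\bbT^d)\to H^{1,r}(\bbT^d)$, with operator norms depending only on $\lambda,d,r$. By Lemma~\ref{lem:sobolev_interpolation}, for $\alpha\in(0,1)$ one has $H^{\alpha,r}(\bbT^d)=(H^{0,r}(\bbT^d),H^{1,r}(\bbT^d))_{[\alpha]}$ as complex interpolation spaces, and the general interpolation property of bounded linear operators (e.g.~\cite{bergh1976}) yields that $T$ is bounded $H^{\alpha,r}(\bbT^d)\to H^{\alpha,r}(\bbT^d)$ with norm at most the geometric mean of the two endpoint norms. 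Since $\calC^2$ is continuously embedded in each $H^{\alpha,r}$ for the relevant range, the hypothesis $g\in H^{\alpha,r}(\bbT^d)$ is exactly what is needed, and we obtain $\|g\circ\nabla f\|_{H^{\alpha,r}(\bbT^d)}\le C(\lambda,d,r)\|g\|_{H^{\alpha,r}(\bbT^d)}$ uniformly in $\alpha\in[0,1]$, as claimed.

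The main obstacle is the $\alpha=1$ step when $r\neq 2$: one must be careful that the chain rule and the product estimate $\|\nabla^2 f\cdot(\nabla g)\circ\nabla f\|_{L^r}\lesssim\|\nabla^2 f\|_{L^\infty}\|(\nabla g)\circ\nabla f\|_{L^r}$ only require $\nabla^2 f\in L^\infty$, which is guaranteed by $\|f\|_{\calC^2}\le\lambda$, and that no fractional smoothness of $f$ beyond $\calC^2$ is needed because the only place differentiability of $f$ enters is through this one application of the chain rule. A secondary technical point is to verify that the interpolation identity of Lemma~\ref{lem:sobolev_interpolation} is stated for exactly the inhomogeneous Riesz-potential spaces used here and that the constants can be tracked through it; this is routine given the references already cited in the excerpt. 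One could alternatively avoid interpolation entirely and argue directly via the Mikhlin-type or commutator estimates, but the interpolation route is cleanest and reuses machinery already invoked elsewhere in the paper.
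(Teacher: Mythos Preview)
Your proposal is correct and follows essentially the same approach as the paper: prove the endpoint cases $\alpha=0$ (change of variables using the bounded Jacobian of the diffeomorphism $\nabla f$) and $\alpha=1$ (chain rule plus the $\alpha=0$ bound applied to $\nabla g$, together with Lemma~\ref{lem:equiv_H1_norm}), then interpolate via Lemma~\ref{lem:sobolev_interpolation}. The only minor addition in the paper's version is the remark that the chain rule identity $\nabla(g\circ\nabla f)=\nabla^2 f\,\nabla g(\nabla f)$ holds in the sense of weak derivatives, which is needed since $g$ is only assumed to lie in $H^{1,r}$.
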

\begin{proof}[Proof of Lemma~\ref{lem:sobolev_comp}]
When $\alpha=0$, we have for all $g \in L^r(\bbT^d)$, 
\begin{align*}
\|g\circ \nabla f\|_{L^r(\bbT^d)}^r 
 = \int |g(\nabla f)|^r    
 = \int |g|^r \det(\nabla^2 f^*) 
 \leq \lambda  \|g\|_{L^r(\bbT^d)}^r. 
 \end{align*} 
To prove the claim when $\alpha=1$, let $g \in H^{1,r}_0(\bbT^d)$, and notice
that the identity $\nabla(g\circ \nabla f) = \nabla^2 f \nabla g(\nabla f)$ 
holds in the sense of weak derivatives. Together with Lemma~\ref{lem:equiv_H1_norm}, we deduce
\begin{align*}
\|g\circ \nabla f\|_{H^{1,r}(\bbT^d)} ^r
 &\asymp \|g\circ \nabla f\|_{L^r(\bbT^d)}^r + \int \|\nabla^2 f\cdot  (\nabla g(\nabla f))\|^r  \\
    &\lesssim_\lambda \|g\|_{L^r(\bbT^d)}^r + \int \| \nabla^2 f(\nabla f^*)\cdot \nabla g   \|^r \det(\nabla^2 f^*)
 \lesssim_\lambda  \|g\|_{H^{1,r}(\bbT^d)}^r.
 \end{align*}
The bound for remaining values of $\alpha$ follows by interpolation. 
Indeed, notice that the operator
$$F: H^{\alpha,r}(\bbT^d) \to H^{\alpha,r}(\bbT^d), \quad Fg = g(\nabla f)$$
is well-defined and linear. Let $\|F\|_{\alpha,r} = \sup\{ \|Fg\|_{H^{\alpha,r}(\bbT^d)}: g \in H^{\alpha,r}(\bbT^d)\}$
denote its operator norm. Since $H^\alpha(\bbT^d)$ is the $\alpha$-complex
interpolation space of $L^2(\bbT^d)$ and $H^1(\bbT^d)$~(cf. Theorem~3.6.1/2 of~\cite{schmeisser1987}), 
we deduce~\citep[Theorem 2.6]{lunardi2018}
$$\|F\|_\alpha \leq \|F\|_0^{1-\alpha} \|F\|_{1}^\alpha \lesssim_\lambda 1.$$
The claim follows.
\end{proof}



The following bound can be deduced from~\citet[Theorem 11,I]{miranda2013}.  
\begin{lemma}[Composition of Riesz Potentials]\label{lem:miranda}
Let $d \geq 1$. For all $\alpha_1,\alpha_2 \geq 0$, 
there exists a constant $C > 0$ such that for any $w_1, w_2 \in \bbT^d$,
$$\int_{\bbT^d}  \|y-w_1\|_{\bbT^d}^{\alpha_1-d}\|y-w_2\|_{\bbT^d}^{\alpha_2-d}dy \leq C 
\begin{cases}
\|w_1-w_2\|_{\bbT^d}^{\alpha_1+\alpha_2-d}, & \alpha_1+\alpha_2 < d, \\ 
\log\big(1 / \|w_1-w_2\|\big), & \alpha_1+\alpha_2 = d, \\ 
1, & \alpha_1+\alpha_2 > d.
\end{cases}$$
\end{lemma}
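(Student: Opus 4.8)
\textbf{Proof plan for Lemma~\ref{lem:miranda} (composition of Riesz potentials on the torus).}

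The plan is to reduce the torus estimate to a standard Euclidean one. First I would fix $w_1, w_2 \in \bbT^d$ and set $\delta = \|w_1 - w_2\|_{\bbT^d}$. Working on the fundamental domain $\calQ_{w_1} = w_1 + [-1/2,1/2]^d$, I choose representatives so that $\|y - w_1\|_{\bbT^d} = \|y - w_1\|$ on this cube; since $\calQ_{w_1}$ is a cube of side one containing $w_1$ in its center, for $y$ in this domain we also have $\|y - w_2\|_{\bbT^d} \asymp \|y - w_2\|$ with the appropriate representative of $w_2$, up to a bounded multiplicative factor depending only on $d$ (because the torus metric and the Euclidean metric on $[-1/2,1/2]^d$ are comparable away from the boundary identifications, and the singular weights only blow up near the diagonals). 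Thus the integral is bounded, up to a dimensional constant, by $\int_{\bbR^d} \|y - w_1\|^{\alpha_1 - d} \|y - w_2\|^{\alpha_2 - d}\, dy$ restricted to a bounded neighborhood, which is the classical Riesz composition (or "fractional integration") integral.

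Next I would invoke the standard Euclidean estimate: for $\alpha_1, \alpha_2 \in (0,d)$ with $\alpha_1 + \alpha_2 < d$ one has $\int_{\bbR^d} \|y - w_1\|^{\alpha_1 - d}\|y - w_2\|^{\alpha_2 - d}\,dy \asymp \|w_1 - w_2\|^{\alpha_1 + \alpha_2 - d}$, which follows by the scaling $y \mapsto w_1 + \delta(y - w_1)$ reducing to the case $\delta = 1$, together with the observation that the resulting integral converges: near $y = w_1$ the integrand behaves like $\|y - w_1\|^{\alpha_1 - d}$ (integrable since $\alpha_1 > 0$), near $y = w_2$ like $\|y - w_2\|^{\alpha_2 - d}$ (integrable since $\alpha_2 > 0$), and at infinity like $\|y\|^{\alpha_1 + \alpha_2 - 2d}$ (integrable since $\alpha_1 + \alpha_2 < d \leq 2d - d$). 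Under our hypothesis $\alpha_1, \alpha_2 \in (1,d)$ the local integrability is automatic; the only constraint needed for the decay at infinity in the full-space version is $\alpha_1 + \alpha_2 < d$. On the torus, however, the domain of integration is bounded, so no decay hypothesis is required — but then the right-hand side must be interpreted carefully when $\alpha_1 + \alpha_2 \geq d$. Since the statement as quoted asserts the clean bound $\|w_1 - w_2\|_{\bbT^d}^{\alpha_1 + \alpha_2 - d}$, I would appeal directly to Theorem~11,I of~\cite{miranda2013}, which gives exactly this composition bound for Riesz kernels on bounded domains, and the periodic version follows by the localization argument above (covering $\bbT^d$ by finitely many translated unit cubes and applying the Euclidean result on each).

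The main obstacle is the bookkeeping near the boundary of the fundamental domain, where the torus distance $\|y - w\|_{\bbT^d}$ switches representatives: one must check that splitting the integral into the region near $w_1$, the region near $w_2$, and the bulk, and in each region replacing the torus norm by a suitable Euclidean norm, introduces only dimension-dependent constants and does not distort the final power of $\delta$. This is routine but is where care is needed — in particular one should handle separately the regime $\delta \gtrsim 1$ (where both weights are bounded below by constants on most of $\bbT^d$ and the claim is trivial since $\|w_1 - w_2\|_{\bbT^d}^{\alpha_1 + \alpha_2 - d} \asymp 1$) and the regime $\delta \ll 1$ (where the Euclidean scaling argument applies verbatim on a ball of radius $\asymp 1$ around $w_1$, and the contribution from outside that ball is bounded by a constant, hence absorbed since $\alpha_1 + \alpha_2 - d$ could be negative making $\delta^{\alpha_1+\alpha_2-d}$ large — so in fact one only needs the bound to hold, i.e.\ the left side is $\lesssim \delta^{\alpha_1+\alpha_2-d}$, which is the weakest in the small-$\delta$ regime). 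This confirms the stated inequality.
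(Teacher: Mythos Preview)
Your proposal is correct and takes essentially the same approach as the paper: the paper does not give a proof at all, but simply states that the bound ``can be deduced from \citet[Theorem 11,I]{miranda2013},'' which is exactly the reference you invoke after your torus-to-Euclidean localization. Your added detail on passing to a fundamental domain and handling the small-$\delta$ versus $\delta \gtrsim 1$ regimes is a reasonable sketch of how that deduction goes, and is more than the paper itself provides.
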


We also state the following related estimate.
\begin{lemma}
\label{lem:simple_ball_integral}
Let $t \in (1,d)$. Then, for any $\epsilon \in (0,1/4)$ and $x \in B(0,1/4)$,
$$\int_{B(0, \epsilon)} \|y-x\|_{\bbT^d}^{t-d}dy \leq  \epsilon^t \wedge \|x\|^{t-d}\epsilon^d.$$
\end{lemma}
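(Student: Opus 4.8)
The plan is to discard the toral geometry and reduce to a one-dimensional radial integral. Throughout, let $C$ denote a constant depending only on $d$ and $t$ whose value may change from line to line, and read the asserted inequality as holding up to such a $C$. Fix the representative of $x$ in $\bbR^d$ with $\|x\| = \|x\|_{\bbT^d} =: \delta$, and set $R = \epsilon \vee \delta$. Since the estimate is only ever applied with $\epsilon \downarrow 0$, I would first dispose of the case $\epsilon > 1/8$: then $B(0,\epsilon)$ is covered by $\lesssim \epsilon^d$ unit cells, so $\int_{B(0,\epsilon)} \|y-x\|_{\bbT^d}^{t-d}\,dy \le C\epsilon^d$, using that $w \mapsto \|w\|_{\bbT^d}^{t-d}$ is integrable on $[-1/2,1/2]^d$ because $t-d>-d$; thus one simply restricts attention to $\epsilon \le 1/8$, which is the only regime used.

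So assume $\epsilon \le 1/8$. I would further split on whether $\delta \le 1/8$ or $\delta > 1/8$. If $\delta > 1/8$, then $R > 1/8$, and since $\epsilon \le 1/8$ the ball $B(0,\epsilon)$ lies inside the single fundamental cube $[-1/8,1/8]^d$; by translation invariance of Lebesgue measure and periodicity of $\|\cdot\|_{\bbT^d}$, together with the elementary fact that $\|w\|_{\bbT^d}=\|w\|$ for $w\in[-1/2,1/2]^d$, one gets
$$\int_{B(0,\epsilon)} \|y-x\|_{\bbT^d}^{t-d}\,dy \le \int_{[-1/2,1/2]^d}\|w\|_{\bbT^d}^{t-d}\,dw = \int_{[-1/2,1/2]^d}\|w\|^{t-d}\,dw \le C,$$
and $C \le C\,R^t$ since $R > 1/8$.

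The main case is $\epsilon \le 1/8$ and $\delta \le 1/8$, so that $2R \le 1/2$. The triangle inequality gives the Euclidean containment $B(0,\epsilon) \subseteq B(x,\epsilon+\delta) \subseteq B(x,2R)$; and on $B(x,2R)$, any $y$ satisfies $\|y-x\| \le 2R \le 1/2$, whence the toral distance coincides with the Euclidean one, $\|y-x\|_{\bbT^d} = \|y-x\|$. Integrating the nonnegative function $\|y-x\|^{t-d}$ over the larger ball and passing to polar coordinates centred at $x$,
\[
\int_{B(0,\epsilon)} \|y-x\|_{\bbT^d}^{t-d}\,dy \;\le\; \int_{B(x,2R)} \|y-x\|^{t-d}\,dy \;=\; d\,c_d\int_0^{2R} r^{t-1}\,dr \;=\; \frac{d\,c_d}{t}\,(2R)^t \;\le\; C\,R^t,
\]
where $c_d$ is the volume of the unit ball in $\bbR^d$ and the radial integral converges precisely because $t > 1 > 0$. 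This is the claimed bound.

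The argument is entirely routine; the only point demanding attention is the bookkeeping around the torus metric — ensuring the relevant balls have Euclidean radius at most $1/2$ so that $\|\cdot\|_{\bbT^d}$ and $\|\cdot\|$ agree on them — which is exactly why the preliminary reductions to $\epsilon \le 1/8$ and $\|x\|_{\bbT^d} \le 1/8$ are carried out first. No genuine obstacle arises.
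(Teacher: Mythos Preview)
Your argument is correct, and the interpretation of the inequality as holding up to a constant $C=C(d,t)$ is the right one: taking $x=0$ gives $\int_{B(0,\epsilon)}\|y\|^{t-d}\,dy=(d\omega_d/t)\,\epsilon^t$, so a constant is unavoidable. Your route differs from the paper's. The paper integrates in polar coordinates centred at the origin, first invoking the reverse triangle inequality $\|y-x\|_{\bbT^d}\geq\big|\|y\|-\|x\|_{\bbT^d}\big|$ and then splitting the resulting one-dimensional integral $\int_0^\epsilon|r-\delta|^{t-d}r^{d-1}\,dr$ at $r=\delta$. You instead centre the polar coordinates at $x$: the containment $B(0,\epsilon)\subseteq B(x,2R)$ places the singularity of the integrand at the origin of the new coordinates, and the radial integral $\int_0^{2R}r^{t-1}\,dr$ converges for all $t>0$. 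Your approach is the more robust one: the paper's reverse-triangle step spreads the point singularity at $y=x$ over the entire sphere $\|y\|=\delta$, and the one-dimensional integral it produces actually diverges whenever $t\le d-1$ and $\delta\in(0,\epsilon)$, so that route needs repair in that range of $t$.
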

{\bf Proof of Lemma~\ref{lem:simple_ball_integral}.} 
With the convention that $\int_\emptyset (\cdot) = 0$, we have, 
\begin{align*}
\int_{B(0, \epsilon)} \|y-x\|_{\bbT^d}^{t-d}dy
 &\leq \int_{B(x, \epsilon/2)} \|y-x\|_{\bbT^d}^{t-d}dy + 
       \int_{B(0, \epsilon) \setminus B(x, \epsilon/2)} \|y-x\|_{\bbT^d}^{t-d}dy \\ 
 &\lesssim \int_0^{\epsilon/2} r^{t-d}r^{d-1}dr + 
       \int_{B(0, \epsilon)} \epsilon^{t-d}dy \lesssim \epsilon^t.
\end{align*} 
Furthermore, if $\|x\| \geq 2\epsilon$ we have
\begin{align*}
\int_{B(0, \epsilon)} \|y-x\|_{\bbT^d}^{t-d} dy 
 &\leq \int_{B(0,\epsilon)} \big| \|x\|-\|y\|\big|^{t-d}dy \\ 
 &\leq \int_0^\epsilon \big| \|x\|-r\big|^{t-d}r^{d-1}dr
  \lesssim \|x\|^{t-d} \int_0^\epsilon r^{d-1}dr \lesssim \|x\|^{t-d} \epsilon^d.
\end{align*}
This proves the  claim. \qed 

\vspace{0.1in}

Finally, we state a multivariate version of Lyapunov's central limit theorem. 
\begin{lemma} 
\label{lem:lyapunov}
Let $\{U_{n,i}:1 \leq i \leq n < \infty\}$ be a triangular array of independent
random variables in $\bbR^d$ with mean zero and finite second moment. Define
$V_n = \sum_{i=1}^n \Cov[U_{n,i}]$, 
and let $v_n^2 = \lmin(V_n)$, 
for all $n=1, 2, \dots$. 
If for some $\delta > 0$, Lyapunov's condition
$$\frac{1}{v_n^{2+\delta}} \sum_{i=1}^n \bbE\Big[ |U_{n,i} |^{2+\delta}\Big] = o(1)$$
holds, then
$ V_n^{-1/2} \sum_{i=1}^n U_{n,i} \overset{d}{\longrightarrow} N(0,I_d).$ 
\end{lemma}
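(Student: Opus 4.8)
\textbf{Proof plan for Lemma~\ref{lem:lyapunov} (multivariate Lyapunov CLT).}
The plan is to reduce the multivariate statement to the classical one-dimensional Lyapunov central limit theorem via the Cramér--Wold device. First I would normalize: set $W_{n,i} = V_n^{-1/2} U_{n,i}$, so that $\sum_{i=1}^n \Cov[W_{n,i}] = V_n^{-1/2} V_n V_n^{-1/2} = I_d$, and the goal becomes showing $\sum_{i=1}^n W_{n,i} \overset{d}{\longrightarrow} N(0,I_d)$. Fix an arbitrary unit vector $\theta \in \bbR^d$ and consider the scalar triangular array $\xi_{n,i} = \langle \theta, W_{n,i}\rangle$. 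These are independent within each row, have mean zero, and satisfy $\sum_{i=1}^n \Var(\xi_{n,i}) = \theta^\top I_d \theta = 1$, so the normalizing variance for the one-dimensional CLT is exactly $1$.

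Next I would verify the scalar Lyapunov condition for $\{\xi_{n,i}\}$. By Cauchy--Schwarz, $|\xi_{n,i}| \le |W_{n,i}| = |V_n^{-1/2} U_{n,i}| \le \|V_n^{-1/2}\|_{\mathrm{op}} |U_{n,i}|$, and since $V_n \succeq v_n^2 I_d$ we have $\|V_n^{-1/2}\|_{\mathrm{op}} \le v_n^{-1}$, whence $\bbE|\xi_{n,i}|^{2+\delta} \le v_n^{-(2+\delta)} \bbE |U_{n,i}|^{2+\delta}$. Summing over $i$ and invoking the hypothesis gives
$$\sum_{i=1}^n \bbE |\xi_{n,i}|^{2+\delta} \le \frac{1}{v_n^{2+\delta}} \sum_{i=1}^n \bbE |U_{n,i}|^{2+\delta} = o(1),$$
which is precisely Lyapunov's condition for the array $\{\xi_{n,i}\}$ since its total variance is $1$. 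The classical one-dimensional Lyapunov CLT then yields $\sum_{i=1}^n \xi_{n,i} = \langle \theta, \sum_{i=1}^n W_{n,i}\rangle \overset{d}{\longrightarrow} N(0,1)$.

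Finally, since $\theta$ was an arbitrary unit vector (and the case $\theta = 0$ is trivial, while general nonzero $\theta$ follows by scaling, giving limit $N(0,\|\theta\|^2)$), the Cramér--Wold theorem implies $\sum_{i=1}^n W_{n,i} \overset{d}{\longrightarrow} N(0,I_d)$, i.e. $V_n^{-1/2}\sum_{i=1}^n U_{n,i} \overset{d}{\longrightarrow} N(0,I_d)$, as claimed. There is no real obstacle here; the only point requiring a moment's care is the operator-norm bound $\|V_n^{-1/2}\|_{\mathrm{op}} \le v_n^{-1}$, which holds because $V_n$ is symmetric positive definite with smallest eigenvalue $v_n^2$, so that $V_n^{-1/2}$ has largest eigenvalue $v_n^{-1}$. \qed
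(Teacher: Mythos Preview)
Your proof is correct and is the standard Cram\'er--Wold reduction to the scalar Lyapunov CLT. The paper itself states this lemma without proof, treating it as a known technical fact, so there is no alternative approach to compare against.
\qed
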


\section*{Acknowledgements}
TM would like to thank Alberto Gonz\'alez-Sanz for his detailed comments on an earlier
version of this draft. TM also thanks Axel Munk and Housen Li for discussions related
to this work. TM was supported in part by the Natural Sciences and Engineering Research Council of Canada,
through a PGS D scholarship. TM, SB, and LW gratefully acknowledge the support 
of   National Science Foundation grant   DMS-2310632.
JNW gratefully acknowledges the support of National Science Foundation grant DMS-2210583 and a fellowship from the Alfred P.~Sloan Foundation.

\bibliographystyle{apalike}
\bibliography{manuscript_arxiv}

\end{document}